\documentclass[11pt,a4paper]{article}
\usepackage[utf8]{inputenc}

\usepackage{amsmath} 
\usepackage{amsthm} 
\usepackage{amssymb}	
\usepackage{graphicx}
\usepackage{eucal}
\usepackage{stmaryrd}
\usepackage[shortlabels]{enumitem}
\usepackage{pdfpages}
\usepackage[textwidth=16cm,centering,headheight=24pt]{geometry} 
\usepackage{comment}
%

\newcommand{\abs}[1]{\left| #1 \right|} 
\newcommand{\norm}[1]{\left\| #1 \right\|}
\newcommand{\pd}[2]{\frac{\partial #1}{\partial #2}} 
\newcommand{\R}{\mathbb R}
\newcommand{\C}{\mathbb C}
\newcommand{\N}{\mathbb N}
\newcommand{\M}{\mathbb{M}}

\newcommand{\Z}{\mathbb Z}
\newcommand{\K}{\mathbb K}
\renewcommand{\H}{\mathcal{H}}
\newcommand{\T}{\mathrm{T}}
\renewcommand{\S}{\mathbb{S}}
\newcommand{\F}{\mathbb{F}}

\newcommand{\eps}{\varepsilon}
\newcommand{\ip}[2]{\left\langle #1, \, #2\right\rangle}
\newcommand{\BBB}{\color{blue}}
\newcommand{\RRR}{\color{violet}}

\newcommand{\res}{\mathbin{\vrule height 1.6ex depth 0pt width
0.13ex\vrule height 0.13ex depth 0pt width 1.3ex}}


\DeclareMathOperator{\vol}{vol}
\DeclareMathOperator{\U}{U}

\DeclareMathOperator{\D}{D}
\let\d\relax
\DeclareMathOperator{\d}{d}

\DeclareMathOperator{\Harm}{Harm}
\DeclareMathOperator{\Sk}{Sk}
\DeclareMathOperator{\dist}{dist}

\newtheorem{theorem}{Theorem}
\newtheorem{goal}[theorem]{Goal}
\newtheorem{lemma}[theorem]{Lemma}
\newtheorem{prop}[theorem]{Proposition}
\newtheorem{corollary}[theorem]{Corollary}

\newtheorem{mainthm}{Theorem}           

\newtheorem{maincorollary}[mainthm]{Corollary}

\theoremstyle{definition}
\newtheorem{definition}{Definition}
\newtheorem{remark}[theorem]{Remark}

\theoremstyle{remark}
\newtheorem{step}{Step}

\title{The Yang-Mills-Higgs functional on complex line bundles:\\
$\Gamma$-convergence and the London equation}
\date{\today}
\author{Giacomo Canevari, Federico Luigi Dipasquale and Giandomenico Orlandi}



\newcommand{\Addresses}{{
  \bigskip
  \footnotesize

  Giacomo~Canevari, \textsc{Dipartimento di Informatica, Universit\`{a} di Verona, Strada Le Grazie 15, 37134 Verona, Italy}\par\nopagebreak
  \textit{E-mail address}: \texttt{giacomo.canevari@univr.it}

  \medskip

  Federico Luigi Dipasquale (Corresponding author), \textsc{Dipartimento di Informatica, Universit\`{a} di Verona, Strada Le Grazie 15, 37134 Verona, Italy}\par\nopagebreak
  \textit{E-mail address}: \texttt{federicoluigi.dipasquale@univr.it}

  \medskip

  Giandomenico Orlandi, \textsc{\textsc{Dipartimento di Informatica, Universit\`{a} di Verona, Strada Le Grazie 15, 37134 Verona, Italy}}\par\nopagebreak
  \textit{E-mail address}: \texttt{giandomenico.orlandi@univr.it}

}}

\begin{document}

\maketitle

\begin{abstract}
We consider the Abelian Yang-Mills-Higgs functional, in the non-self dual scaling, on a complex line bundle over a closed Riemannian manifold of dimension $n\geq 3$. This functional is the natural generalisation of the Ginzburg-Landau model for superconductivity to the non-Euclidean setting. We prove a $\Gamma$-convergence result, in the strongly repulsive limit, on the functional rescaled by the logarithm of the coupling parameter. As a corollary, we prove that the energy of minimisers concentrates on an area-minimising surface of dimension $n-2$, while the curvature of minimisers converges to a solution of the London equation.
\end{abstract}


\section*{Introduction}
\addcontentsline{toc}{section}{Introduction}

Let~$(M, \, g)$ be a smooth, compact, connected, oriented, Riemannian manifold
without boundary, of dimension~$n\geq 3$. Let~$E\to M$ be a Hermitian complex 
line bundle on~$M$, equipped with a (smooth) reference connection~$\D_0$.
For any~$1$-form $A\in W^{1,2}(M, \, \T^*M)$, we denote by~$\D_A := \D_0 - iA$
the associated connection and by~$F_A$ the curvature~$2$-form of~$\D_A$. 
Let~$\eps > 0$ be a small parameter.
For any section~$u\in W^{1,2}(M, \, E)$ of~$E$ and 
any $1$-form~$A\in W^{1,2}(M, \, \T^* M)$,
we consider the \emph{Ginzburg-Landau} or 
\emph{Abelian Yang-Mills-Higgs} functional
\begin{equation} \label{magneticGL}
 G_\eps(u, \, A) 
 := \int_{M} \left(\frac{1}{2}\abs{\D_A u}^2
  + \frac{1}{4\eps^2} (1 - \abs{u}^2)^2 
  + \frac{1}{2}\abs{F_A}^2 \right) \vol_g
\end{equation}
In this paper, we prove a convergence result for minimisers in the limit as~$\eps\to 0$:
given a sequence of minimisers~$\{(u_\eps^{\min}, \, A_\eps^{\min})\}$,
the energy density of~$\{(u_\eps^{\min}, \, A_\eps^{\min})\}$ concentrates
on an $(n-2)$-dimensional surface~$S_*$, which is area-minimising 
in a distinguished homology class, determined 
by the topology of the bundle
$E\to M$ (i.e., the Poincar\'e-dual to the first
Chern class~$c_1(E)\in H^2(M; \, \Z)$). 
On the other hand, the curvature of~$A_\eps^{\min}$ 
converges to a solution of the London equation, 
with a singular source term carried by~$S_*$. 
This convergence result for minimisers is stated in Corollary~\ref{cor:London} 
and it is deduced from our main result, Theorem~\ref{maingoal:G} below, which 
provides a full $\Gamma$-convergence theorem for the functionals $G_\eps$. Moreover, and in agreement with known results in the Euclidean setting (c.f., e.g., \cite{JerrardSoner-GL, ABO2}), Theorem~\ref{maingoal:G} shows that energy concentration on topological singular sets is not unique to minimisers but is a general feature for sequences $\{(u_\eps ,\,A_\eps)\}$ satisfying a (natural) logarithmic asymptotic bound on the energy.

Functionals of the form~\eqref{magneticGL} were originally
proposed by V.~Ginzburg and L.~Landau in 1950~\cite{GinzburgLandau} as a 
model of 
superconductors subject to a magnetic field
(where~$u$ is an order parameter such that~$\abs{u}^2$ is 
proportional to the density of electronic Cooper pairs, 
while~$A$ is the vector potential for the magnetic field).
The theory accounts for most commonly observed effects 
(such as the quantisation of magnetic flux, 
the Meissner effect, and the emergence of Abrikosov vortex lattices,
see~e.g.~\cite{Tinkham}); moreover, it can be justified
as a suitable limit of Bardeen, Cooper and Schrieffer's
microscopic theory~\cite{BCS}.
Ginzburg-Landau functionals, or variants thereof,
arise in other areas of physics --- for instance,
superfluidity 
(see e.g.~\cite{DuGunzburgerPeterson})
and 
particle physics, for~\eqref{magneticGL}
is the Abelian version of Yang-Mills-Higgs 
action functional in gauge theory (see e.g.~\cite{JaffeTaubes}).

Being invariant under gauge transformations is, indeed,
one of the most prominent features of the functional~\eqref{magneticGL}.
In a Euclidean domain~$\Omega\subseteq\R^n$,
the functional~\eqref{magneticGL} reduces to
\begin{equation}\label{classic-GL-magn}
	\mathcal{GL}^{\text{magn}}_\eps(u, \, A) 
	:= \int_\Omega \left(\frac{1}{2} \abs{\nabla u - i A u}^2 
	+ \frac{1}{4\eps^2} (1-\abs{u}^2)^2 + \frac{1}{2} \abs{\d A}^2 \right) \d x,
\end{equation}
where $u \in W^{1,2}(\Omega,\,\C)$ is a complex-valued map and $A \in W^{1,2}(\Omega, \, \T^* \Omega)$ is a real-valued one-form on $\Omega$. 
Assuming for simplicity that~$\Omega$ is simply connected,
the invariance of~\eqref{classic-GL-magn} under gauge-transformations
can be expressed as follows:
for any $(u, \, A) \in W^{1,2}(\Omega,\,\C) \times W^{1,2}(\Omega,\,\T^* \Omega)$
and any~$\theta\in W^{2,2}(\Omega, \, \R)$, there holds
\begin{equation} \label{gaugeinv-Eucl}
	\mathcal{GL}_\eps^{\text{magn}}(u, \, A) 
	= \mathcal{GL}_\eps^{\text{magn}}(e^{i\theta}u, \, A + \d\theta).
\end{equation}
The property~\eqref{gaugeinv-Eucl} suggests that Ginzburg-Landau functionals
are naturally set in the context of complex line bundles over a manifold.
Indeed, the gauge group~$\S^1\simeq\mathrm{U}(1)$ acts 
on a pair~$(u, \, A)$ 
precisely in the same way as transition functions of 
a bundle act (locally) on sections and connection forms.
In this general setting, gauge-invariance takes the following general form:
for any~$(u, \, A)\in W^{1,2}(M, \, E)\times W^{1,2}(M, \, \T^*M)$
and~$\Phi \in W^{2,2}(M, \, \S^1)$, consider the transformation
\begin{equation}\label{gauge-transf}
	(u, \, A ) \mapsto (\Phi u, A - i\,\Phi^{-1} \, \d\Phi),
\end{equation}
where~$\Phi u$ is defined by the fibre-wise action
of the structure group~$\mathrm{U}(1)\simeq\S^1$ on~$E$. Then 
there holds
\begin{equation} \label{gaugeinv}
 G_\eps(u, \, A) = G_\eps(\Phi u, \, A - i\,\Phi^{-1} \, \d\Phi)
\end{equation}
Gauge-invariance will play a crucial r\^{o}le in this paper. Physically, each observable quantity must be gauge-invariant and the energy is only one 
of them. For instance, each term in the energy density of $G_\eps$ is gauge-invariant, as well as $F_A$ (whose physical counterpart in superconductivity is, in fact, the magnetic field).

Minimisers of gauge-invariant functionals on manifolds, 
such as~\eqref{magneticGL}, include several examples
of physically relevant objects (such as Bogomolnyi monopoles, vortices, instantons, Hermite-Einstein connections on K\"alher manifolds).
Moreover, such objects play an important r\^ole in topology and complex geometry.
For example, Yang-Mills minimisers are used in the classification 
of stable holomorphic vector bundles on complex manifolds
(this is the content of Donaldson, Uhlenbeck and Yau's 
theorem~\cite{Donaldson, UhlenbeckYau}; for generalisations
to Yang-Mills-Higgs minimisers, 
see e.g.~\cite{Bradlow1990, Bradlow1991, Garcia-Prada}).
Functionals such as~\eqref{magneticGL}
may also be coupled to Einstein equations;
this approach is relevant to cosmology, as it provides
a model for gravity-driven spontaneous symmetry breaking~\cite{SpruckYang}.

We are interested in
the asymptotic regime as~$\eps\to 0$, which is known as
the \emph{London limit} within the context of 
superconductivity, or the \emph{strongly repulsive limit}, 
within the context of particle physics.
This limit is characterised by the emergence of
\emph{topological singularities} ---
the energy of minimisers concentrates,
to leading order, on $(n-2)$-dimensional sets, whose 
global structure depends on the topology of the bundle~$E\to M$.
We are interested in providing a variational characterisation,
in the sense of~$\Gamma$-convergence,
of those singularities 
as being area-minimising in a homology class.
We expect that this variational characterisation
will provide indications on the dynamics
of singularities arising in the limit of the
corresponding time-dependent models. 
For instance, for a generalisation of~\eqref{magneticGL}
on a Lorentzian manifold, we expect that the energy 
concentrate on time-like relativistic strings or $M$-branes
(see e.g.~\cite{Jerrard2011, BellettiniNovagaOrlandi2010, BellettiniNovagaOrlandi2012}
for the analysis of related problems in Minkowski space-time).
Moreover, we expect the heat flow of~\eqref{magneticGL}
to be related to motion by mean curvature
(see~\cite{BethuelOrlandiSmets-Annals} for the asymptotics
of a non-gauge-invariant problem, with no dependence on~$A$,
in the Euclidean setting).

The asymptotic analysis of~\eqref{magneticGL} 
in the limit as~$\eps\to 0$ heavily relies on
analogous results for the simplified Ginzburg-Landau functional, i.e.
\begin{equation}\label{eq:classic-GL}
	\mathcal{GL}_\eps(u) := \int_\Omega \left(\frac{1}{2} \abs{\nabla u}^2 + \frac{1}{4\eps^2}(1-\abs{u}^2)^2 \right) \d x
\end{equation}
The functional~\eqref{eq:classic-GL} was first considered
by Bethuel, Brezis and H\'elein~\cite{BBH}, in case~$\Omega$
is a two-dimensional Euclidean domain. 
$\Gamma$-convergence results for~\eqref{eq:classic-GL},
in case~$\Omega\subseteq\R^n$ with~$n\geq 2$,
were proved in~\cite{JerrardSoner-GL,ABO2}.
These results show that energy concentration on area-minimising surfaces
(of codimension two) is already a feature of the 
simplified functionals~\eqref{eq:classic-GL}
--- cf. Modica and Mortola's $\Gamma$-convergence result 
in the real-valued case, \cite{ModicaMortola}.
In fact, both the functionals~\eqref{classic-GL-magn}, \eqref{eq:classic-GL}
and the corresponding heat flows
have been extensively studied in Euclidean domains 
(see e.g.~\cite{SandierSerfaty-book, PacardRiviere-book}),
as well as on Riemannian manifolds~\cite{ChenSternberg, Stern2021, Colinet}.

In case~$E$ is the tangent bundle on a closed Riemann surface 
and the connection~$\D_A$ is fixed, asymptotics for minimisers 
and gradient-flow solutions are available in~\cite{IgnatJerrard, AG-Reno}.
Minimisers of~\eqref{magneticGL} in the limit as~$\eps\to 0$
were studied in case the base manifold~$M$ is 
two-dimensional~\cite{Orlandi, Qing}.
Other results address a \emph{self-dual} variant of the
functional~\eqref{magneticGL}, 
in which the curvature term $\abs{F_A}^2$ is replaced by $\eps^2 \abs{F_A}^2$
--- see e.g.~\cite{HongJostStruwe} for the two-dimensional case
and~\cite{PigatiStern, ParisePigatiStern} for the higher-dimensional case.
In a way, 
self-duality is a sort of additional symmetry in the Yang-Mills-Higgs functionals, 
with applications in the theory of minimal surfaces, as explored e.g.~in the recent papers \cite{PigatiStern,Cheng2021,ParisePigatiStern}. 
However, the non self-dual scaling~\eqref{magneticGL} appears to be more closely related to the
original physical motivation of the Ginzburg-Landau functional.
To better explain this point, let us introduce the 
\emph{gauge-invariant Jacobian}~$J(u, \, A)$ of a pair $(u, \, A)$, defined pointwise (when the right-hand-side of~\eqref{pointwise-Jac} below makes sense) as the $2$-form
\begin{equation} \label{pointwise-Jac}
 J(u, \, A)(X, \, Y) := \ip{i\D_{A,X} u}{\D_{A,Y} u}
 + \frac{1}{2} \left(1 - \abs{u}^2\right) F_A(X, \, Y)
\end{equation}
for any smooth test field~$X$, $Y$ on~$M$. Here~$\ip{\cdot}{\cdot}$ is the real-valued scalar product induced by the Hermitian form on~$E$.  
Plainly, $J(u,\, A)$ is well-defined for any $(u, \, A)\in W^{1,2}(M, \, E)\times W^{1,2}(M, \, \T^*M)$ such that~$G_\eps(u, \, A) < +\infty$. In superconductivity, there is a physical observable associated with gauge-invariant Jacobians: the \emph{supercurrent vorticity} (cf., e.g., \cite[Chapter~5]{Tinkham} and \cite[Chapter~1]{JaffeTaubes}).

As it is easy to check, for minimisers 
$(u_\eps^{\min}, \, A_\eps^{\min})$
of the non-self dual functional~\eqref{magneticGL},
the curvatures~$F_\eps := F_{A_\eps^{\min}}$ satisfy the (gauge-invariant) \emph{London equation}
\begin{equation} \label{London}
 -\Delta F_\eps + F_\eps = 2 J_\eps,
\end{equation}
a distinctive feature of superconductivity (see, e.g., \cite[Chapters~1 and~5]{Tinkham}). 
In Corollary~\ref{cor:London} below, we show that
 the curvatures~$F_\eps = F_{A^{\min}_\eps}$
converge, up to extraction of a subsequence, to a
solution~$F_*$ of the (limiting) London equation
\begin{equation} \label{lim_London}
 -\Delta F_* + F_* = 2\pi J_*.
\end{equation}
The right-hand side~$J_*$ of~\eqref{lim_London} is the 
limit of the Jacobians, i.e.~$J_\eps\to \pi J_*$
(cf. Theorem~\ref{maingoal:G}). $J_*$ is a singular measure,
with values in~$2$-forms, carried by an area-minimising set of dimension~$n-2$.

By contrast, minimisers of the self-dual functionals  
do \emph{not} converge to solutions of~\eqref{lim_London};
in the limit as~$\eps\to 0$, 
Moreover, while minimisers of the self-dual functionals 
have uniformly bounded energy as~$\eps\to 0$,
minimisers~$(u_\eps^{\min}, \, A_\eps^{\min})$
of~\eqref{magneticGL} on a non-trivial bundle must
satisfy~$G_\eps(u_\eps^{\min}, \, A_\eps^{\min})\to+\infty$
as~$\eps\to 0$ --- in fact, $G_\eps(u_\eps^{\min}, \, A_\eps^{\min})$
is of the order of~$\abs{\log\eps}$, as we will see below (cf. Remark~\ref{rk:energy-bound-min}). Such a large amount of energy allows for wild oscillations in the phases of the maps $u_\eps$, preventing compactness even in weak topologies in Sobolev spaces and making the proof of Theorem~\ref{maingoal:G} quite challenging, as in fact is for its Euclidean counterpart in \cite{JerrardSoner-GL, ABO2}. 

One might expect that critical points
of~\eqref{magneticGL} converge, in a suitable sense, 
to minimal surfaces, as is the case for~\eqref{BBH-GL} (see~\cite{BethuelBrezisOrlandi}) 
and for the self-dual Yang-Mills-Higgs energies 
(see~\cite{PigatiStern}). 
Here, we prove this result for sequences of minimisers of~\eqref{magneticGL}
(see Corollary~\ref{cor:London} below);
we plan to investigate non-minimising critical 
points in a forthcoming work~\cite{CanevariDipasqualeOrlandiII}.

\paragraph*{Main results.}
We consider sequences~$\{(u_\eps, A_\eps)\}\subset 
W^{1,2}(M,E) \times W^{1,2}(M,\T^*M)$
that satisfy a \emph{logarithmic energy bound}
\begin{equation}\label{eq:bound-Geps}
 \sup_{\eps>0} \frac{G_\eps(u_\eps, \, A_\eps)}{\abs{\log\eps}} < +\infty
\end{equation}
Our main result, Theorem~\ref{maingoal:G} below, 
is a $\Gamma$-convergence result for the rescaled 
functionals~$\frac{G_\eps}{\abs{\log\eps}}$. 
The topology of the $\Gamma$-convergence is defined in terms of
the Jacobian of~$(u_\eps, \, A_\eps)$.

We denote by~$\star$ the Hodge dual operator, regarded
as a map from~$k$-forms to~$(n-k)$-currents and from~$k$-currents
to~$(n-k)$-forms (upon composition with the natural isomorphism 
between vectors and covectors, induced by the metric
--- see~\eqref{musicalstar} below and
Appendix~\ref{sect:currents} for more details).
We consider a distinguished, non-empty class~$\mathcal{C}$
of integer-multiplicity, rectifiable~$(n-2)$-currents with no boundary.
The class~$\mathcal{C}$ is uniquely defined in terms of 
the topology of the bundle~$E\to M$; more precisely,
$\mathcal{C}\in H_{n-2}(M; \, \Z)$ is Poincar\'e-dual to 
the first Chern class~$c_1(E)\in H^2(M; \, \Z)$ of the bundle
(see~\eqref{C} below for more details).

\begin{mainthm} \label{maingoal:G}
 The following statements hold.
 \begin{enumerate}[label=(\roman*)]
  \item Let~$\{(u_\eps, \, A_\eps)\}_\eps$ be a sequence  
  in~$W^{1,2}(M, \, E)\times W^{1,2}(M, \, \T^*M)$ 
  that satisfies~\eqref{eq:bound-Geps}.
  Then, there exist a (non-relabelled) subsequence and
  a bounded measure~$J_*$, with values in~$2$-forms, such that 
  $J(u_\eps, \, A_\eps)\to \pi J_*$ 
  in~$W^{-1,p}(M)$ for any~$p$ with~$1 \leq p < n/(n-1)$ and
  \begin{equation}\label{eq:lower-bound-Geps-intro}
   \pi \abs{J_*}\!(M) \leq
   \liminf_{\eps\to 0}\frac{G_\eps(u_\eps, \, A_\eps)}{\abs{\log\eps}}
  \end{equation}
  Moreover, $\star J_*$ is an integer-multiplicity, rectifiable~$(n-2)$-cycle 
  in the class~$\mathcal{C}$.
  
  \item Let~$S_*$ be an integer-multiplicity, rectifiable~$(n-2)$-cycle 
  in the class~$\mathcal{C}$ and let~$J_* := \star S_*$
  be the dual~$2$-form. 
  Then, there exists a sequence $\{(u_\eps, \, A_\eps)\}_\eps$ 
  such that $J(u_\eps, \, A_\eps)\to \pi J_*$ 
  in~$W^{-1,p}(M)$ for any~$p$ with~$1 \leq p < n/(n-1)$ and
  \[
   \limsup_{\eps\to 0}\frac{G_\eps(u_\eps, \, A_\eps)}{\abs{\log\eps}} 
   \leq \pi \abs{J_*}\!(M) 
  \]
 \end{enumerate}
\end{mainthm}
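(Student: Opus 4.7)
The strategy is to adapt the Euclidean $\Gamma$-convergence arguments of Alberti-Baldo-Orlandi \cite{ABO2} and Jerrard-Soner \cite{JerrardSoner-GL} to the line-bundle setting, relying on gauge invariance for localisation and on Chern-Weil theory to track the homology class in the limit. To prove the compactness half of~(i), I would first derive, by Cauchy-Schwarz applied to~\eqref{pointwise-Jac}, the pointwise inequality
\begin{equation*}
\abs{J(u,A)} \leq \frac{1}{2}\abs{\D_A u}^2 + \frac{1}{8\eps^2}(1-\abs{u}^2)^2 + \frac{\eps^2}{2}\abs{F_A}^2,
\end{equation*}
which together with~\eqref{eq:bound-Geps} gives an $L^1$ bound on $J(u_\eps, A_\eps)$ of order $\abs{\log\eps}$; this growth is absorbed in the weaker $W^{-1,p}$ topology via a splitting argument. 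On the good region $\{\abs{u_\eps} \geq 1/2\}$, the Jacobian agrees, modulo an exact form, with the curvature of the induced connection on the pulled-back circle bundle $u_\eps/\abs{u_\eps}$, whose mass stays uniformly bounded. On the bad region, the mass is controlled by $\eps \cdot G_\eps(u_\eps,A_\eps)^{1/2}$, and this error vanishes in $W^{-1,p}$ via H\"older and the Sobolev embedding (using $p < n/(n-1)$).

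For the lower bound and the structural description of the limit, I would foliate $M$ locally by $2$-dimensional surfaces and slice the Jacobians accordingly. On almost every slice, an intrinsic vortex-ball construction for the magnetic Ginzburg-Landau energy (in the spirit of Jerrard and Sandier) yields a lower bound of $\pi\abs{\log\eps}$ per unit modulus of vortex degree. A Fubini-type coarea identity for currents then integrates these slice-wise estimates into the global bound~\eqref{eq:lower-bound-Geps-intro}, while the slice-wise integer-multiplicity structure, combined with a White-type rectifiability criterion, gives the $(n-2)$-rectifiability of $\star J_*$ with integer multiplicity. The cycle property follows from $\d J(u_\eps, A_\eps) \to 0$ distributionally, obtained by differentiating~\eqref{pointwise-Jac}. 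The identification $[\star J_*] \in \mathcal{C}$ rests on the fact that the de Rham class of $F_{A_\eps}$ equals $2\pi \, c_1(E)$ for every $\eps$ by Chern-Weil, and that $J(u_\eps, A_\eps) - \tfrac{1}{2} F_{A_\eps}$ is exact modulo a $W^{-1,p}$-error of order $\eps$, so that $[\pi J_*]$ is Poincar\'e dual to $c_1(E)$ as required.

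For the recovery sequence in~(ii), I would first reduce to the case of a smooth oriented $(n-2)$-submanifold $\Sigma$ with integer multiplicity $k$, by polyhedral approximation of $S_*$ followed by a diagonal argument, as in \cite{ABO2}. In a tubular neighbourhood of $\Sigma$ of fixed width $\delta$, I set $u_\eps(y) = f(\abs{y}/\eps)\, e^{i k \theta_y}$ and $A_\eps = k\,(1 - \chi(\abs{y}))\, \d\theta_y$ in normal coordinates $y$, where $f$ is the standard radial vortex profile and $\chi$ a cutoff equal to $1$ at the origin; this produces a normal-slice energy of $\pi k^2 \abs{\log\eps} + O(1)$, hence a total energy matching $\pi \abs{J_*}(M)\abs{\log\eps} + O(1)$ after integrating along $\Sigma$. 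Away from $\spt(S_*)$, the bundle $E$ is smoothly trivialisable (because $[S_*]$ is Poincar\'e dual to $c_1(E)$), so a smooth section $u_*$ with $\abs{u_*} \equiv 1$ is available, which I pair with a reference connection compatible with the far-field of the vortex construction; the two pieces are glued across the tubular boundary via a suitable gauge transformation and a radial cutoff.

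I expect the hardest step to be the identification of the homology class of $\star J_*$ in part~(i), which requires reconciling the weak $W^{-1,p}$-convergence of the analytic Jacobians with the topological Chern-Weil identity: this is the feature that genuinely distinguishes the line-bundle setting from the Euclidean case of \cite{JerrardSoner-GL, ABO2}. A close second difficulty is the global gauge-matching in the recovery sequence, in particular controlling the errors introduced by the (lower-dimensional) singular set of the rectifiable current $S_*$ when gluing the vortex construction to a trivialising section on $M \setminus \spt(S_*)$.
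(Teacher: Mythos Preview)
Your overall approach is genuinely different from the paper's and in principle could work, but there are two concrete gaps.

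\textbf{Homology class of~$\star J_*$.} Your argument via Chern--Weil is too weak: the identity $[F_{A_\eps}] = 2\pi\, c_1(E)$ in $H^2_{\mathrm{dR}}(M;\R)$, together with exactness of $J(u_\eps,A_\eps)-\tfrac12 F_{A_\eps}$, pins down only the \emph{real} cohomology class of $\pi J_*$. What is claimed is that $\star J_*$ lies in the \emph{integer} class~$\mathcal{C}\in H_{n-2}(M;\Z)$, which is strictly finer (think of a torsion-free manifold where several integral classes share the same real image, or of the possibility that $\star J_*$ is merely a real cycle). The paper closes this gap not by cohomological bookkeeping but by producing an actual $W^{1,p}$ unit section $w_*$ of~$E$ with $J(w_*)=\pi J_*$; then a pointwise comparison with a transverse smooth section (Proposition~\ref{prop:homologyPPS}) forces $\star J_*\in\mathcal{C}$. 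You would need some analogous mechanism; the de Rham class alone does not suffice.

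\textbf{Recovery sequence, multiplicity $k\geq 2$.} Your tubular profile $f(\abs{y}/\eps)\,e^{ik\theta_y}$ has normal-slice energy $\pi k^2\abs{\log\eps}+O(1)$, as you correctly compute, but the target is $\pi\abs{J_*}(M)=\pi\, k\,\H^{n-2}(\Sigma)$ when $\Sigma$ carries multiplicity~$k$. So your construction overshoots by a factor~$k$ whenever $k\geq 2$. One fixes this, as in~\cite{ABO2}, by splitting into $k$ nearby unit-degree vortices (or by reducing at the polyhedral-approximation stage to multiplicity-one faces); a single degree-$k$ core does not achieve the sharp constant.

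\textbf{Comparison of strategies.} For the lower bound you propose slicing and a direct magnetic vortex-ball construction. The paper instead takes an indirect route: it replaces $(u_\eps,A_\eps)$ by an ``optimised'' pair whose curvature satisfies the London equation, then chooses a gauge $\Phi_\eps$ so that $\Phi_\eps\!\cdot\! A_\eps$ is bounded in $W^{2,p}$ (for $p<\tfrac{n}{n-1}$) and of order $\abs{\log\eps}^{1/2}$ in $W^{1,2}$; interpolation gives $\norm{\Phi_\eps\!\cdot\! A_\eps}_{L^2}=o(\abs{\log\eps}^{1/2})$, whence $G_\eps$ and $E_\eps$ agree to leading order and one inherits everything (compactness, lower bound, structure of~$J_*$) from the already-proved non-gauge Theorem~\ref{maingoal:E}. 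The upper bound then comes for free by taking $A_\eps\equiv 0$. Your direct route is more self-contained but requires a magnetic vortex-ball estimate on slices with the correct constant~$\pi$ in the non-self-dual scaling, which you should state and justify explicitly; the paper's route avoids this by reducing to the classical (non-magnetic) estimates of~\cite{JerrardSoner-GL,ABO2}.
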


\begin{remark}
 Not every pair~$(u, \, A)\in W^{1,2}(M, \, E)\times W^{1,2}(M, \, \T^*M)$
 has finite energy. Conversely, not every Sobolev pair
 $(u, \, A)\in W^{1,1}(M, \, E)\times W^{1,1}(M, \, \T^*M)$ with~$G_\eps(u, \, A)<+\infty$ is such that $(u, \, A)\in W^{1,2}(M, \, E)\times W^{1,2}(M, \, \T^*M)$ (for instance, it may or may not be true that $\abs{\D_0u} = \abs{\D_A u + iAu}\in L^2(M)$). For simplicity, we have chosen to state our results in terms of $W^{1,2}$-pairs, but our results extend to all Sobolev pairs of finite energy, with no significant change in the proofs.
\end{remark}

\begin{remark}\label{rk:energy-bound-min}
	Since the class $\mathcal{C}$ is always not empty, a straightforward consequence of Theorem~\ref{maingoal:G} is that the energy of sequences of \emph{minimisers} of $G_\eps$ is \emph{automatically} of order $\abs{\log \eps}$ as $\eps \to 0$. 
\end{remark}

As a corollary, we obtain a variational characterisation
for the limit of a sequence of minimisers. 
Given~$(u, \, A)\in W^{1,2}(M, \, E)\times W^{1,2}(M, \, \T^*M)$,
we denote the (rescaled) energy density of~$(u, \, A)$ as
\begin{equation} \label{mu_eps}
 \mu_\eps(u, \, A) := \frac{1}{\abs{\log\eps}}\left(\frac{1}{2}\abs{\D_A u}^2
  + \frac{1}{4\eps^2} (1 - \abs{u}^2)^2 
  + \frac{1}{2}\abs{F_A}^2 \right) 
\end{equation}

\begin{maincorollary} \label{cor:London}
 Let~$(u_\eps^{\min}, \, A_\eps^{\min})$ be a 
 minimiser of~$G_\eps$ in~$W^{1,2}(M, \, E)\times W^{1,2}(M, \, \T^*M)$.
 Then, there exist bounded measures~$J_*$, $F_*$, with values in~$2$-forms,
 and a (non-relabelled) subsequence such that 
 \[
   J(u_\eps^{\min}, \, A_\eps^{\min})\to \pi J_* 
   \quad \textrm{in } W^{-1,p}(M), \qquad
   F_{A_\eps^{\min}} \to F_* \quad \textrm{in } W^{1,p}(M)
 \]
 for any~$p < n/(n-1)$, and $\mu_\eps(u_\eps^{\min}, \, A_\eps^{\min})\rightharpoonup \pi \abs{J_*}$ weakly$^*$ as measures. 
 Moreover, the current~$\star J_*$ belongs to~$\mathcal{C}$
 and has minimal mass among all the currents in~$\mathcal{C}$, 
 while~$F_*$ satisfies the London equation
 \begin{equation} \label{Londonnonorfana}
   -\Delta F_* + F_* = 2\pi J_*
 \end{equation}
\end{maincorollary}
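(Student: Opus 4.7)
The plan is to combine both parts of Theorem~\ref{maingoal:G} with the Euler--Lagrange equation for $A$, which for minimisers is precisely the London equation~\eqref{London}. Since the class $\mathcal{C}$ is non-empty, picking any $S\in\mathcal{C}$ and applying part~(ii) of Theorem~\ref{maingoal:G} produces a pair $(v_\eps,B_\eps)$ with $G_\eps(v_\eps,B_\eps)\leq C\abs{\log\eps}$, so by minimality the sequence $(u_\eps^{\min},A_\eps^{\min})$ itself satisfies the logarithmic bound~\eqref{eq:bound-Geps}. Part~(i) of Theorem~\ref{maingoal:G} then provides a (non-relabelled) subsequence, a bounded measure $J_*$ with values in $2$-forms, with $\star J_*\in\mathcal{C}$, and the convergence $J(u_\eps^{\min},A_\eps^{\min})\to\pi J_*$ in $W^{-1,p}(M)$ for every $p<n/(n-1)$, together with the lower bound $\pi\abs{J_*}\!(M)\leq\liminf_{\eps\to 0} G_\eps(u_\eps^{\min},A_\eps^{\min})/\abs{\log\eps}$.

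For mass-minimality of $\star J_*$ in $\mathcal{C}$, let $S\in\mathcal{C}$ be arbitrary, set $J:=\star S$, and let $(v_\eps,B_\eps)$ be the recovery sequence for $S$ provided by part~(ii) of Theorem~\ref{maingoal:G}. Minimality of $(u_\eps^{\min},A_\eps^{\min})$ gives $G_\eps(u_\eps^{\min},A_\eps^{\min})\leq G_\eps(v_\eps,B_\eps)$, whence $\liminf G_\eps(u_\eps^{\min},A_\eps^{\min})/\abs{\log\eps}\leq\pi\abs{J}\!(M)$; combined with the lower bound above this yields $\abs{J_*}\!(M)\leq\abs{J}\!(M)$ for every $S\in\mathcal{C}$, so $\star J_*$ has minimal mass in $\mathcal{C}$. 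Taking $S=\star J_*$ itself forces equality throughout, and hence $G_\eps(u_\eps^{\min},A_\eps^{\min})/\abs{\log\eps}\to\pi\abs{J_*}\!(M)$. The weak$^*$ convergence $\mu_\eps(u_\eps^{\min},A_\eps^{\min})\rightharpoonup\pi\abs{J_*}$ then follows by combining this total-mass convergence with a localised version of the $\Gamma$-liminf inequality of Theorem~\ref{maingoal:G}(i), valid on arbitrary open subsets of $M$ by inspection of the proof (the underlying Jacobian-based estimates being inherently local), via the standard Portmanteau-type argument for positive measures: once the global mass converges, the localised lower bound upgrades automatically to the matching upper bound on closed sets by complementation.

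Finally, the variation of $G_\eps$ in $A$ yields \eqref{London} for minimisers, and the previous step provides strong convergence of the right-hand side $2J_\eps$ to $2\pi J_*$ in $W^{-1,p}(M)$. $L^p$-elliptic regularity for the invertible operator $-\Delta+1$ acting on $2$-forms then furnishes a uniform $W^{1,p}$ bound on $F_{A_\eps^{\min}}$ and, along a further subsequence, strong convergence in $W^{1,p}$ to a distributional solution $F_*$ of~\eqref{Londonnonorfana}. The main technical obstacle in this chain is the localisation of the $\Gamma$-liminf inequality needed for the weak$^*$ convergence of $\mu_\eps$; the remaining steps are soft consequences of Theorem~\ref{maingoal:G} combined with classical elliptic theory.
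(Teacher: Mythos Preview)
Your proposal is correct and follows essentially the same route as the paper: logarithmic energy bound via part~(ii) of Theorem~\ref{maingoal:G} and minimality, compactness and mass-minimality from the $\Gamma$-convergence statements, the local $\Gamma$-liminf inequality (which the paper records explicitly as Remark~\ref{rk:local_liminf-G}) combined with total-mass convergence for the weak$^*$ convergence of $\mu_\eps$, and finally the London equation for minimisers plus $W^{-1,p}\to W^{1,p}$ elliptic regularity for $-\Delta+\mathrm{Id}$ (Lemma~\ref{lemma:elliptic_reg}) to obtain convergence of the curvatures. The only cosmetic difference is that the paper packages the curvature argument through the notion of ``optimised sequence'' (Proposition~\ref{prop:boundedness-Aeps-W2p}), whose content for minimisers is exactly the Euler--Lagrange/London equation you invoke; note also that the elliptic estimate gives a genuine Cauchy property for $F_{A_\eps^{\min}}$ in $W^{1,p}$, so no further subsequence extraction is needed there.
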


In other words, the energy of the 
minimisers~$(u_\eps^{\min}, \, A_\eps^{\min})$ concentrate,
to leading order, on the support of a $(n-2)$-dimensional
current, which is dual to the limit Jacobian~$J_*$
and  minimises of the area in its homology
class. Moreover, the limit curvature~$F_*$ is uniquely determined
from~$J_*$, via the London equation. Due to gauge-invariance~\eqref{gaugeinv},
we cannot expect compactness for the minimisers themselves,
$u_\eps^{\min}$, $A_\eps^{\min}$. However, it seems plausible 
that compactness (in suitable norms) should be restored
if we make a suitable choice of the gauge.
We plan to address this point in a forthcoming
paper~\cite{CanevariDipasqualeOrlandiII}. 

As an intermediate step towards the proof of Theorem~\ref{maingoal:G},
we prove a $\Gamma$-convergence result for a simpler functional
that only depends on the variable~$u$. For any~$u\in W^{1,2}(M, \, E)$, 
we consider
\begin{equation} \label{BBH-GL}
 E_\eps(u) 
 := \int_{M} \left(\frac{1}{2}\abs{\D_0 u}^2
  + \frac{1}{4\eps^2} (1 - \abs{u_\eps}^2)^2 \right) \vol_g
\end{equation}
The functional~$E_\eps$ is analogous to the non-gauge-invariant
version of the Ginzburg-Landau functional, \eqref{eq:classic-GL}.
We define~$J(u) := J(u, \, 0)$, i.e.~$J(u)$ is the Jacobian of~$u$
with respect to the reference connection~$\D_0$.

\begin{mainthm} \label{maingoal:E}
 The following statements hold.
 \begin{enumerate}[label=(\roman*)]
  \item Let~$\{u_\eps\}_\eps \subset W^{1,2}(M, \, E)$
  be a sequence such that
  \begin{equation}\label{eq:bound-Eeps-intro}
   \sup_{\eps>0} \frac{E_\eps(u_\eps)}{\abs{\log\eps}} < +\infty
  \end{equation}
  Then, there exists a (non-relabelled) subsequence and
  a bounded measure~$J_*$, with values in~$2$-forms, such that 
  $J(u_\eps)\to \pi J_*$ 
  in~$W^{-1,p}(M)$ for any~$p$ with~$1 \leq p < n/(n-1)$ and
  \begin{equation}\label{eq:lower-bound-Eeps}
   \pi \abs{J_*}\!(M) \leq 
   \liminf_{\eps\to 0}\frac{E_\eps(u_\eps)}{\abs{\log\eps}}
  \end{equation}
  Moreover, $\star J_*$ is an integer-multiplicity, rectifiable~$(n-2)$-cycle 
  in the class~$\mathcal{C}$.
  
  \item Let~$S_*$ be an integer-multiplicity, rectifiable~$(n-2)$-cycle 
  in the class~$\mathcal{C}$ and let~$J_* := \star S_*$
  be the dual~$2$-form. 
  Then, there exists a sequence $\{u_\eps\}_\eps$ in~$W^{1,2}(M, \, E)$
  such that $J(u_\eps)\to \pi J_*$ 
  in~$W^{-1,p}(M)$ for any~$p$ with~$1 \leq p < n/(n-1)$ and
  \[
   \limsup_{\eps\to 0}\frac{E_\eps(u_\eps)}{\abs{\log\eps}}
   \leq \pi \abs{J_*}\!(M) 
  \]
 \end{enumerate}
\end{mainthm}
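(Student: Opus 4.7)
The plan is to reduce Theorem~\ref{maingoal:E} to the Euclidean $\Gamma$-convergence results of Jerrard-Soner~\cite{JerrardSoner-GL} and Alberti-Baldo-Orlandi~\cite{ABO2} via local trivialisations of $E$, while carefully tracking the bundle topology at the compactness, lower bound, and recovery stages. For part (i), I first perform the standard truncation so that $\abs{u_\eps} \leq 1$ without substantially increasing the energy. The main observation is that $j(u) := \ip{iu}{\D_0 u}$ is a globally defined real $1$-form on $M$ and, by a direct calculation involving the curvature of $\D_0$, it satisfies the pointwise identity
\[
 J(u, \, 0) = \tfrac{1}{2}\,\d j(u) + \tfrac{1}{2}\bigl(1 - 2\abs{u}^2\bigr)\, F_0,
\]
where $F_0$ is the smooth real curvature $2$-form of $\D_0$, with $[F_0/(2\pi)] = c_1(E)$ in de Rham cohomology. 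Combined with the pointwise bound $\abs{J(u, 0)} \leq \abs{\D_0 u}^2 + C\bigl\lvert 1-\abs{u}^2\bigr\rvert$ and the flat-norm/ball-construction argument of~\cite{JerrardSoner-GL,ABO2} applied to $j(u_\eps)$, this produces uniform $W^{-1,p}(M)$ bounds for $J(u_\eps)$ and a limit measure $J_*$ with $J(u_\eps) \to \pi J_*$ along a subsequence. Taking the exterior derivative in the identity and using $\abs{u_\eps}^2 \to 1$ in $L^2$ (forced by the potential term) shows $\d J_* = 0$, so $\star J_*$ is a cycle.

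To upgrade to the lower bound and integer multiplicity rectifiability, cover $M$ by finitely many coordinate balls $\{U_\alpha\}$ on which $E$ admits unit trivialising sections; in each trivialisation $u_\eps$ becomes a $\C$-valued map $u_\eps^\alpha$, $\D_0$ takes the form $\d - iA_\alpha$ for a smooth real $1$-form $A_\alpha$, and the contribution of $A_\alpha$ to the local energy and Jacobian is bounded uniformly in $\eps$, hence negligible at scale $\abs{\log\eps}$. Applying the Euclidean structure theorem of~\cite{ABO2} on each $U_\alpha$ yields integer rectifiability of $\star J_*|_{U_\alpha}$ together with the local mass bound $\pi\abs{J_*}(U_\alpha) \leq \liminf E_\eps(u_\eps)|_{U_\alpha}/\abs{\log\eps}$; patching via a Whitney-type decomposition gives the global lower bound and rectifiability. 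Finally, the homology class is identified by testing $J_*$ against smooth closed $(n-2)$-forms $\omega$: integration by parts in the displayed identity gives
\[
 \int_M J(u_\eps)\wedge\omega = \tfrac{1}{2}\int_M\bigl(1-2\abs{u_\eps}^2\bigr)\, F_0\wedge\omega \,\,\xrightarrow[\eps\to 0]{}\,\, -\tfrac{1}{2}\int_M F_0\wedge\omega,
\]
so $\pi[J_*] = -\tfrac{1}{2}[F_0]$ in $H^2_{\mathrm{dR}}(M)$, identifying $\star J_*$ as a representative of $\mathcal{C}$ (modulo orientation conventions in the definition of $\mathcal{C}$).

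For the recovery sequence in part (ii), I first approximate $S_*$ by a smooth or polyhedral integer cycle in $\mathcal{C}$ via density of smooth cycles within a fixed homology class. Since $[S_*]$ is Poincar\'e dual to $c_1(E)$, classical obstruction theory and transversality produce a smooth section $s_0$ of $E$ whose zero locus coincides with $\spt S_*$ with the correct multiplicity on each component. I then insert a standard degree-one vortex profile along the normal directions to $\spt S_*$: in a tubular neighborhood,
\[
 u_\eps(x) := f\!\bigl(\dist(x, \, \spt S_*)/\eps\bigr)\,\frac{s_0(x)}{\abs{s_0(x)}},
\]
where $f$ is the standard radial vortex profile from the $\R^2$ Ginzburg-Landau problem, and $u_\eps := s_0/\abs{s_0}$ outside the tube. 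A Fubini-type computation using the tubular neighborhood diffeomorphism reduces the cross-sectional energy to $(\pi + o(1))\abs{\log\eps}$, and integrating along $\spt S_*$ yields $E_\eps(u_\eps) \leq \pi\abs{J_*}\!(M)\abs{\log\eps} + O(1)$; the convergence $J(u_\eps) \to \pi J_*$ in $W^{-1,p}$ follows by direct inspection of the vortex Jacobian on each normal $2$-disk.

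The main technical difficulty will lie in part (i): assembling the local structure theorems of~\cite{ABO2} into a clean global statement, matching the intrinsic gauge-invariant Jacobian $J(u_\eps)$ with the local Euclidean Jacobians of the $u_\eps^\alpha$ across chart overlaps, and pinning the homology class of $\star J_*$ to be $\mathcal{C}$ itself rather than any other representative of the same de Rham class. A secondary difficulty in part (ii) is ensuring that the vortex construction near $\spt S_*$ is globally consistent with the topology of $E$, which is precisely where the hypothesis $[S_*] \in \mathcal{C}$ enters.
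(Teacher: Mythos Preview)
Your overall strategy---localise, apply the Euclidean results of~\cite{JerrardSoner-GL,ABO2}, and patch---matches the paper's, and your treatment of compactness and the lower bound is essentially correct (the paper uses a Vitali--Besicovitch covering where you invoke a Whitney decomposition, but this is cosmetic). There is, however, one genuine gap and a couple of inaccuracies.

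\textbf{The identity is incorrect.} By the definition~\eqref{magneticJac}, one has simply $J(u,0)=\tfrac12\d j(u)+\tfrac12 F_0$; your extra factor $(1-2|u|^2)$ is spurious. With the correct identity, pairing against a closed $(n-2)$-form $\omega$ gives $\int_M J(u_\eps)\wedge\omega=\tfrac12\int_M F_0\wedge\omega$ for \emph{every} $\eps$, so you still recover $\pi[J_*]=\tfrac12[F_0]$ in $H^2_{\mathrm{dR}}(M)$.

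\textbf{The homology identification is incomplete.} Your pairing argument determines only the \emph{real} (de~Rham) cohomology class of $J_*$. But $\mathcal{C}$ is an \emph{integral} homology class---it is defined in~\eqref{C} as the coset $\llbracket Z\rrbracket+\partial\{\text{integral }(n{-}1)\text{-currents}\}$---and in the presence of torsion in $H_{n-2}(M;\Z)$ there can be several integral classes with the same real image. You flag this as a difficulty but offer no mechanism to resolve it. The paper's method is to produce an actual unit-length section $w_*\in W^{1,p}(M,E)$ with $J(w_*)=\pi J_*$ (Lemma~\ref{lemma:comp_u}), obtained from $u_\eps$ by a carefully chosen gauge rotation $\Phi_\eps$ (Lemma~\ref{lemma:choiceofgauge}) together with a radial correction $\rho_\eps$ so that $w_\eps=\rho_\eps\Phi_\eps u_\eps$ is bounded in $W^{1,p}$; then Proposition~\ref{prop:homologyPPS}, which compares $w_*$ to a transverse smooth section via an $\S^1$-valued quotient, pins down the integral class. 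This construction is the missing idea in your argument.

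\textbf{The recovery sequence needs refinement.} Your section $s_0$ with ``zero locus $\spt S_*$ with the correct multiplicity'' cannot be a \emph{smooth transverse} section when multiplicities exceed~$1$, and a single degree-$d$ vortex costs $\pi d^2|\log\eps|$ rather than $\pi d|\log\eps|$. The paper avoids this by first passing to polyhedral cycles (Proposition~\ref{prop:approximation_chains}) and then invoking the Euclidean recovery of~\cite[Theorem~5.5]{ABO2} inside disjoint conical neighbourhoods $V_K$ of the $(n-2)$-simplices; that result already handles arbitrary integer multiplicities by placing well-separated degree-one vortices.
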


The proof of Theorem~\ref{maingoal:E} depends heavily on
analogous $\Gamma$-convergence results obtained in the Euclidean
setting~\cite{JerrardSoner-GL, ABO2}. The most delicate point 
is, probably, showing that the limit Jacobian~$J_*$
satisfies~$\star J_*\in\mathcal{C}$, for this is a
global topological property that cannot be deduced
from localisation arguments. The proof of this fact 
is contained in Section~\ref{sect:homology}.

Once Theorem~\ref{maingoal:E} is proved, the upper bound~(ii)
in the statement of Theorem~\ref{maingoal:G} follows immediately.
On the other hand, the proof of the lower bound in Theorem~\ref{maingoal:G}
relies on the London equation, as well as Theorem~\ref{maingoal:E}.
More precisely, given a sequence~$\{(u_\eps, \, A_\eps)\}$ 
that satisfies the logarithmic energy bound~\eqref{eq:bound-Geps},
we construct a sequence of \emph{bounded} sections $\{v_\eps\}$ and a sequence of
$1$-forms~$\{B_\eps\}$ in such
a way that $G_\eps({\RRR{v_\eps}}, \, B_\eps) \leq G_\eps(u_\eps, \, A_\eps)$,
the difference~$J(u_\eps, \, A_\eps) - J({\RRR{v_\eps}}, \, B_\eps)$
is small in~$W^{-1,p}(M)$ and, most importantly,
the curvatures~$F_{B_\eps}$ satisfy the London equation~\eqref{London}. 
Specifically, the sections $v_\eps$ are obtained from the corresponding $u_\eps$ by a truncation argument while the 1-forms~$B_\eps$ are obtained by minimising the auxiliary energy functionals 
\[
	\mathcal{F}_\eps(B) := \int_M \abs{\D_B v_\eps}^2 + \abs{F_B}^2\,\vol_{g}
\]
in a suitable class, while keeping~$v_\eps$ fixed (see Section~\ref{sec:maingoalG} for full details).
By exploiting the ellipticity and the gauge-invariance of the London equation, 
and up to a suitable choice of gauge,
we can then show that the difference~$G_\eps(v_\eps, \, B_\eps) - E_\eps(v_\eps)$
is of order smaller than~$\abs{\log\eps}$.
Once this is done, the conclusion follows by Theorem~\ref{maingoal:E}.

The paper is organised as follows. In Section~\ref{sec:preliminaries},
we set some notation and recall a few useful properties of
the Jacobian, $J(u, \, A)$. 
Section~\ref{sec:maingoalE} is devoted to the proof 
of Theorem~\ref{maingoal:E}, while Section~\ref{sec:maingoalG} 
contains the proof of Theorem~\ref{maingoal:G} and Corollary~\ref{cor:London}.
The paper is completed by a series of appendices,
which contain a review of some background material
on Sobolev spaces for sections of a vector bundle and 
currents, as well as the proof of a few technical results.

\numberwithin{equation}{section}
\numberwithin{definition}{section}
\numberwithin{theorem}{section}

\section{Preliminaries}
\label{sec:preliminaries}

We will frequently encounter Sobolev spaces of sections of a bundle.
For the convenience of the reader, we provide the definitions
and state some basic properties in Appendix~\ref{sect:bundles}.
The notation we use is fairly self-explanatory. 
However, we stress that the symbols such as~$W^{1,p}(M, \, E)$
denote Sobolev spaces of~$W^{1,p}$-\emph{sections} of the bundle~$E\to M$.
In case~$E$ is a trivial bundle, $E=M\times\C$ or~$E=M\times\R$,
we write~$W^{1,p}(M, \, \C)$, $W^{1,p}(M, \, \R)$
instead of~$W^{1,p}(M, \, M\times\C)$, $W^{1,p}(M, \, M\times\R)$.

The notation we use for differential forms is rather standard, too.
For instance, we denote as $\#\colon\Lambda^k\T^* M\to\Lambda_k\T M$,
$\flat\colon \Lambda_k\T M \to \Lambda^k\T^* M$ the isometric 
isomorphisms between vectors and forms induced by the metric on~$M$, 
and by $*\colon \Lambda^k\T^*M\to\Lambda^{n-k}\T^* M$
the Hodge dual operator induced by the metric and the orientation of~$M$.
We define an operator
\begin{equation*} 
 \star\colon \Lambda^k\T^*M\to\Lambda_{n-k}\T^* M, \qquad
 \star\colon \Lambda_k\T^*M\to\Lambda^{n-k}\T^* M
\end{equation*}
as follows: for any~$k$-form~$\omega$ and any~$k$-vector~$v$,
\begin{equation} \label{musicalstar}
 \star\omega := (*\omega)^\#, \qquad \star v := *(v^\flat)
\end{equation}
The operator~$\star$ can be extended to
an operator between currents and form-valued distributions;
see Appendix~\ref{sect:Hodge} for details.
We denote by~$\d$, $\d^*$ the exterior differential and  codifferential
of forms, respectively. The codifferential of a~$k$-form~$\omega$ is
defined by~$\d^*\omega := (-1)^{n(k-1)+1} *\d* \, \omega$.

We recall a few basic notions about currents in Appendix~\ref{sect:currents}.
Given a current~$S$, we will write~$\partial S$ for its boundary
(defined as in~\eqref{boundary} below), $\M(S)$ for its mass
(see~\eqref{mass} below) and~$\F(S)$ for its (integer-multiplicity) flat norm
(see~\eqref{flat} below).

It is convenient to revisit the definition of Jacobian, \eqref{pointwise-Jac}.
Given a pair~$(u, \, A)\in W^{1,2}(M, \, E)\times W^{1,2}(M, \, \T^*M)$
such that~$\D_Au \in L^2(M, \, \T^* M\otimes E)$,
we define the \emph{gauge-invariant pre-Jacobian} as the~$1$-form
\begin{equation} \label{magneticpreJac}
 j(u, \, A) := \ip{\D_A u}{i u} 
  = \ip{\D_0 u}{i u} - \abs{u}^2 A
\end{equation}
Under the assumptions above, $j(u, \, A)$ is integrable,
so it makes sense to consider its differential
in the sense of distributions.
We define the (distributional) \emph{gauge-invariant Jacobian} as
\begin{equation} \label{magneticJac}
 J(u, \, A) := \frac{1}{2}\d j(u, \, A) + \frac{1}{2}F_A
\end{equation}
In case the pair~$(u, \, A)$ is smooth,
an explicit computation shows that the Jacobian 
as defined by~\eqref{magneticJac} agrees with~\eqref{pointwise-Jac}.
The same remains true if~$(u, \, A)$ satisfies~$G_\eps(u, \, A) <+\infty$,
by a truncation and density argument.
However, \eqref{magneticJac} allows us to define the Jacobian
for a broader class of pairs~$(u, \, A)$
--- for instance, when~$u\in W^{1,1}(M, \, E)\cap L^\infty(M, \, E)$
and~$A\in L^1(M, \, \T^*M)$.

\begin{remark}\label{rk:locality-jac}
Since, for any vector field $X$ on $M$, $(\D_{A, X} u)(p)$ depends only on $X(p)$ and the values of $u$ along any smooth curve representing $X(p)$ \cite[p.~501]{Lee}, it follows that $j(u,\,A)$ is a local operator and commutes with restrictions. Moreover, since the local representations $F_A$ are independent of any chosen local trivialization because $E \to M$ is of rank 1 \cite[Chapter~V,~Remark~3.2]{DeMailly}, we see that also $J(u,A)$ is a local operator and commutes with restrictions. 
\end{remark} 

\begin{remark}\label{rk:diff-jJAB}
    Let~$u\colon M\to E$, $A\colon M\to\T^*M$, $B\colon M\to\T^* M$
    be such that~$J(u, \, A)$, $J(u, \, B)$ are well-defined
    --- for instance, $u\in (W^{1,1}\cap L^\infty)(M, \, E)$, 
    $A\in L^1(M, \, \T^*M)$, $B\in L^1(M, \, \T^*M)$.
	As an immediate consequence of~\eqref{magneticpreJac}--\eqref{magneticJac},
    we have
	\begin{equation}\label{eq:diff-jJAB}
		\begin{aligned}
			& j(u, A) - j(u, B) = (B - A) \abs{u}^2 \\
			& J(u, A) - J(u,B) = \frac{1}{2} \d((A-B) (1-\abs{u}^2)).
		\end{aligned}
	\end{equation}
	In particular, if~$\abs{u}=1$ a.e. then~$J(u, \, A) = J(u, \, B)$.
\end{remark}

The distributional Jacobian 
is continuous with respect to suitable notions of weak convergence.
For the convenience of the reader, we recall a result
in this direction. We denote by~$j(u)$, $J(u)$ the pre-Jacobian
and Jacobian with respect to the reference connection~$\D_0$, i.e.
\begin{equation} \label{reference-Jac}
 j(u) := j(u, \, 0) = \ip{\D_0 u}{i u}, \qquad
 J(u) := J(u, \, 0) = \frac{1}{2}\d j(u) + \frac{1}{2} F_0
\end{equation}
where~$F_0$ is the curvature of the reference connection~$\D_0$.
$j(u)$ and~$J(u)$ are well-defined for any~$u\in (W^{1,p}\cap L^q)(M, \, E)$,
with~$p\in [1, \, +\infty]$ and~$q := p^\prime \in [1, \, +\infty]$
such that $1/p + 1/q = 1$.

\begin{prop} \label{prop:contjac}
 Let~$p\in [1, \, +\infty]$ and~$q := p^\prime \in [1, \, +\infty]$
 be such that $1/p + 1/q = 1$.
 If the sequence~$\{u_\eps\}\subset(W^{1,p}\cap L^q)(M, \, E)$
 is such that $u_\eps\to u$ weakly in~$W^{1,p}(M)$ and strongly in~$L^q(M)$,
 then~$J(u_\eps)\rightharpoonup^* J(u)$ weakly$^*$ in the sense of distributions.
\end{prop}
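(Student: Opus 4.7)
The plan is to reduce everything to the convergence of the pre-Jacobian and then invoke weak continuity of the distributional exterior differential. By definition~\eqref{reference-Jac},
\[
 J(u_\eps) - J(u) = \tfrac{1}{2} \, \d\bigl(j(u_\eps) - j(u)\bigr),
\]
since the curvature $F_0$ of the reference connection $\D_0$ does not depend on $u$. Because the exterior differential is weak-$*$ continuous on distributions, it suffices to show that $j(u_\eps)\rightharpoonup^* j(u)$ as distributional $1$-forms, i.e.\ when tested against arbitrary smooth vector fields on $M$.

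For this, I would use bilinearity of $\ip{\cdot}{\cdot}$ to split the difference as
\[
 j(u_\eps) - j(u) = \ip{\D_0 u_\eps}{i(u_\eps - u)} + \ip{\D_0(u_\eps - u)}{iu}.
\]
The first summand tends to zero in $L^1(M, \, \T^*M)$ by H\"older's inequality: $\{\D_0 u_\eps\}$ is bounded in $L^p(M, \, \T^*M \otimes E)$ (weak convergence in $W^{1,p}$, interpreted as weak-$*$ when $p=\infty$, forces norm boundedness), while $\norm{u_\eps - u}_{L^q(M, \, E)} \to 0$ by hypothesis. For the second summand, pair against an arbitrary smooth vector field $X$ on $M$:
\[
 \int_M \ip{\D_0(u_\eps - u)}{iu}(X)\,\vol_g
 = \int_M \ip{\D_{0,X}(u_\eps - u)}{iu}\,\vol_g \to 0,
\]
since $\D_{0,X}(u_\eps - u)\rightharpoonup 0$ weakly in $L^p(M, \, E)$ (as the bounded linear image of the weak convergence $\D_0(u_\eps - u)\rightharpoonup 0$ in $L^p(M, \, \T^*M \otimes E)$), while $iu\in L^q(M, \, E)$ is a fixed test element. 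This yields $j(u_\eps)\to j(u)$ in $\mathcal{D}'(M, \, \T^*M)$, and taking distributional $\d$ concludes.

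The main (and essentially the only) technical point is the standard weak-strong duality passage to the limit in a product of two sequences, one converging weakly and the other strongly. No deeper obstacle is anticipated, since the $u$-independent curvature term $\tfrac{1}{2}F_0$ in $J(u)$ cancels in the difference $J(u_\eps) - J(u)$, and the remaining $\d j$-piece is controlled by the bilinear decomposition above.
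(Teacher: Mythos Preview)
Your proof is correct and follows essentially the same approach as the paper: reduce to weak-$*$ convergence of the pre-Jacobians $j(u_\eps)$ (since the $F_0$ term cancels) and then use the H\"older inequality. The paper's proof is a one-line sketch invoking H\"older, and your bilinear splitting together with the weak--strong pairing argument is exactly the standard way to make that sketch precise.
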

\begin{proof} 
 Thanks to~\eqref{reference-Jac}, it suffices to show that
 $j(u_\eps)\rightharpoonup^* j(u)$ weakly$^*$ in the sense of distributions.
 This follows from the H\"older inequality.
\end{proof}

In fact, the Jacobian~$J(u, \, A)$ satisfies suitable
continuity properties as a function
of both~$u$ and~$A$ (see e.g.~\cite[Proposition~3.1]{ParisePigatiStern}).

Finally, we introduce a distinguished
(homology) class of~$(n-2)$ currents, $\mathcal{C}\in H_{n-2}(M; \, \Z)$,
as follows. Let~$w\colon M\to E$ be a \emph{smooth} section of~$E$.
We assume that~$w$ is transverse to the zero-section of~$E$;
that means, for any point~$p\in M$ such that~$w(p) = 0$,
the differential~$\d_p w$ induces a \emph{surjective} linear map
\[
 \d_pw\colon \T_p M\to E_p\simeq\C
\]
where~$E_p$ is the fibre of~$E$ at~$p$. Such a section~$w$ exists;
in fact, by Thom's transversality theorem 
(see e.g.~\cite[Theorem~14.6]{BrockerJanich}), any smooth section
can be approximated (e.g., uniformly) by transverse sections.
Transversality implies that the inverse image~$Z := w^{-1}(0)$
is a smooth manifold without boundary, of dimension~$n - 2$.
As both~$M$ and~$E$ are oriented manifolds (the orientation on~$E$
is the one induced by the complex structure on each fibre),
the manifold~$Z$ can be given an orientation, in a natural way
\cite[Proposition~12.7]{BottTu}. As a consequence, 
there is a well-defined $(n-2)$-current~$\llbracket Z\rrbracket$,
carried by~$Z$, with unit multiplicity.
We have~$\partial\llbracket Z\rrbracket = 0$, 
because~$Z$ is manifold without boundary. We define
\begin{equation} \label{C}
 \begin{split}
  \mathcal{C} := \big\{\llbracket Z\rrbracket + \partial R &\colon 
  R \textrm{ is an integer-multiplicity, rectifiable $(n-1)$-current} \\
  &\qquad \textrm{with } \M(R) + \M(\partial R) < +\infty \big\}
 \end{split}
\end{equation}
The class~$\mathcal{C}$ does not depend on the choice
of~$w$ (see~\cite[Proposition~12.8]{BottTu}).
Moreover, by the boundary rectifiability theorem
(see e.g.~\cite[Theorem~6.3]{Simon-GMT}), all the elements 
of~$\mathcal{C}$ are integer-multiplicity, rectifiable 
$(n-2)$-currents with no boundary. 
In the topological jargon, $\mathcal{C}\in H_{n-2}(M; \, \Z)$
is Poincar\'e-dual to the first Chern class~$c_1(E)\in H^2(M; \, \Z)$.
Equivalently, $\mathcal{C}$ is Poincar\'e-dual to the Euler class of~$E$, 
regarded as a real bundle over~$M$ with the orientation
induced by the complex structure.
The following statement motivates our interest in the class~$\mathcal{C}$.

\begin{prop} \label{prop:homologyPPS}
 Let~$u\in W^{1,1}(M, \, E)$ be such that
 $\abs{u}=1$ a.e. If~$J(u)$ is a bounded measure, then
 $\frac{1}{\pi}\star J(u)$ is an integer-multiplicity rectifiable
 $(n-2)$-current and
 \[
  \frac{1}{\pi} \star J(u) \in\mathcal{C}
 \]
\end{prop}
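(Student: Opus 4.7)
My plan is to prove integer-multiplicity rectifiability of $\tfrac{1}{\pi}\star J(u)$ by a local reduction to the Euclidean Alberti--Baldo--Orlandi structure theorem~\cite{ABO2}, and then to identify its integer homology class as $\mathcal{C}$ by comparing $u$ to a smooth transverse reference section of $E$ away from its zero-set.

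\textbf{Rectifiability.} By the locality of $j$ and of $F_A$ (Remark~\ref{rk:locality-jac}), it suffices to work in unitary trivialisations $\Phi_\alpha\colon E|_{U_\alpha} \xrightarrow{\sim} U_\alpha \times \C$. There $u$ becomes a scalar $\S^1$-valued map $u_\alpha \in W^{1,1}(U_\alpha,\,\S^1)$ and \eqref{magneticpreJac}--\eqref{magneticJac} give $J(u)|_{U_\alpha} = \tfrac{1}{2}\,\d\langle \d u_\alpha,\, iu_\alpha\rangle + \tfrac{1}{2}\,F_0|_{U_\alpha}$, with $F_0$ smooth. Hence the classical complex Jacobian of $u_\alpha$ is a bounded measure, and the Euclidean ABO theorem gives integer-multiplicity rectifiability of $\tfrac{1}{\pi}\star \tfrac{1}{2}\,\d\langle\d u_\alpha,\,iu_\alpha\rangle$ on $U_\alpha$. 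A finite-atlas assembly yields the corresponding global statement on $M$; the boundary vanishes, since $\d J(u) = \tfrac{1}{2}\,\d F_0 = 0$ by Bianchi's identity.

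\textbf{Homology class.} Fix the smooth transverse section $w$ with zero-set $Z$ used in the definition of $\mathcal{C}$ and put $w_0 := w/|w|$ on $M\setminus Z$, a smooth unit section of $E$. A local computation gives $\d j(w_0) + F_0 = 0$, so $J(w_0) = 0$ on $M\setminus Z$. The scalar ratio $\psi := \bar{w}_0 u \in W^{1,1}(M\setminus Z,\,\S^1)$ satisfies $u = w_0\psi$ and, expanding $\D_0 u = (\D_0 w_0)\psi + w_0\,\d\psi$ together with $|w_0|=|\psi|=1$, one obtains $j(u) = j(w_0) + \langle \d\psi,\,i\psi\rangle$ on $M\setminus Z$, hence
\[
  J(u) \;=\; \tfrac{1}{2}\,\d\langle \d\psi,\, i\psi\rangle \qquad\text{on } M\setminus Z.
\]
So on $M\setminus Z$ the gauge-invariant Jacobian of $u$ coincides with the classical Jacobian of the $\S^1$-valued scalar $\psi$; ABO then gives that the restriction of $\tfrac{1}{\pi}\star J(u)$ to $M\setminus Z$ is integer-multiplicity rectifiable and boundary-free in $M\setminus Z$. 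To match the full $\tfrac{1}{\pi}\star J(u)$ with $\llbracket Z\rrbracket$ modulo an integer boundary, I would slice $\tfrac{1}{\pi}\star J(u)$ by the smooth function $|w|$: transversality of $w$ forces $w_0$ to have winding $+1$ on every normal $2$-disk to $Z$, so $\psi$ carries this winding and the level-set slices of the Jacobian represent $+1$ linkings of $Z$ on each fibre. Federer--Fleming compactness inside a tubular neighbourhood of $Z$, with the mass bound inherited from $|J(u)|(M) <+\infty$, produces an integer-multiplicity rectifiable $(n-1)$-current $R$ with $\M(R)+\M(\partial R)<+\infty$ and
\[
  \tfrac{1}{\pi}\star J(u) \;-\; \llbracket Z\rrbracket \;=\; \partial R,
\]
which is exactly membership in $\mathcal{C}$.

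\textbf{Main obstacle.} The subtle step is the slicing-and-compactness near $Z$: one has to upgrade the \emph{local} winding information of $\psi$ around the normal disks of $Z$ into a globally defined, finite-mass, integer-rectifiable $(n-1)$-filling between $\tfrac{1}{\pi}\star J(u)$ and $\llbracket Z\rrbracket$, rather than merely a real cohomology equality. This is exactly where the bounded-measure hypothesis on $J(u)$ is indispensable, through Federer--Fleming compactness in the integer-rectifiable flat norm; the transversality of $w$ is what guarantees that the resulting integer class is $[\llbracket Z\rrbracket] = \mathrm{PD}(c_1(E))$ and not a nontrivial torsion perturbation of it.
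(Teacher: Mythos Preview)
Your overall strategy matches the paper's: compare $u$ to the reference unit section $w_0 := w/|w|$ via the scalar ratio $\psi = \bar{w}_0 u$, and invoke Euclidean ABO results. One minor slip first: your local formula $J(u)|_{U_\alpha} = \tfrac{1}{2}\d\langle\d u_\alpha, iu_\alpha\rangle + \tfrac{1}{2}F_0$ drops the term $-\tfrac{1}{2}\d(\gamma_0|u_\alpha|^2)$ coming from~\eqref{compare_preJac}. When $|u|=1$ this equals $-\tfrac{1}{2}\d\gamma_0 = -\tfrac{1}{2}F_0$ and \emph{cancels} your $\tfrac{1}{2}F_0$; the correct identity is simply $J(u)|_{U_\alpha} = \bar{J}(u_\alpha)$ (cf.~Remark~\ref{rk:diff-jJAB}). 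With your formula as written, $\tfrac{1}{\pi}\star J(u)$ would carry a smooth residual piece and fail to be integer-rectifiable.

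The real gap is the homology step. You establish $J(u) = \bar{J}(\psi)$ only on $M\setminus Z$, and then propose to recover both $\llbracket Z\rrbracket$ and the filling $R$ by ``slicing $\tfrac{1}{\pi}\star J(u)$ by $|w|$'' combined with Federer--Fleming compactness. None of this works as stated: slicing a current by a real function yields lower-dimensional slices, not an $(n{-}1)$-filling; Federer--Fleming compactness produces subsequential limits, not boundaries; and ``$\psi$ carries this winding'' is backwards---$\psi$ has winding equal to that of $u$ \emph{minus} $1$ on normal circles to $Z$, which is unconstrained. The paper closes this gap by working \emph{globally} from the outset. Transversality of $w$ gives $w_0\in W^{1,p}(M,E)$ for all $p<2$, so $\Phi := \psi$ extends to $W^{1,1}(M,\S^1)$, and the identity $J(u) = \bar{J}(\Phi) + J(w_0)$ holds as distributions on all of $M$, not just $M\setminus Z$. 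The key topological input is then the direct computation $\star J(w_0) = \pi\llbracket Z\rrbracket$ as currents on $M$ (local winding $+1$ on normal disks, and $J(w_0)=0$ away from $Z$). This immediately gives $\tfrac{1}{\pi}\star J(u) - \llbracket Z\rrbracket = \tfrac{1}{\pi}\star\bar{J}(\Phi)$, and the coarea/level-set theorem~\cite[Theorem~3.8]{ABO1} applied to the \emph{global} $\S^1$-valued map $\Phi$ exhibits the right-hand side as $(-1)^{n-1}\partial\llbracket\Phi^{-1}(y)\rrbracket$ for a.e.~$y\in\S^1$. The filling $R$ is a level set of $\Phi$; no tubular-neighbourhood analysis or compactness argument is needed.
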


Proposition~\ref{prop:homologyPPS} establishes a 
topological relationship between the singular set of a unit-norm section
$M\to E$ and the zero set of a generic \emph{smooth} section.
A similar principle applies to several different contexts;
see, for instance, \cite[Theorem~3.8]{ABO1}
and~\cite[Proposition~5.3]{ABO2} for results in the Euclidean setting,
\cite[Proposition~2.8]{BaldoOrlandi1997} for sections of real line bundles,
\cite{BaldoOrlandi1999} for sections of a principal $\Z$- or~$\Z/p\Z$-bundle,
\cite[Corollary~3.3]{ParisePigatiStern} for sections of
complex line bundles. For the reader's convenience, we reproduce below 
the key steps of the proof in our context, emphasising the topological aspects.

\begin{proof}[Sketch of the proof of Proposition~\ref{prop:homologyPPS}]
 Let~$w$ be a smooth section of~$E\to M$ that is transverse
 to the zero section of~$E$, as above. Let~$u_0 := w/\abs{w}$.
 The section~$u_0$ is well-defined and smooth away from~$Z := w^{-1}(0)$.
 Moreover, $u_0\in W^{1,p}(M, \, E)$ for any~$p\in [1, \, 2)$.
 Indeed, since~$w$ is transverse to the zero section of~$E$,
 the differential~$\d w$ restricted to the normal bundle~$\mathrm{N}Z$ of~$Z$
 is a fibre-wise isomorphism~$\mathrm{N}Z\to E$. As~$M$ is compact,
 it follows that there exists a constant~$C$ such that, for any~$x\in M$,
 \begin{equation} \label{homology1}
  \dist(x, \, Z) \geq C \abs{w(x)}
 \end{equation}
 Therefore,
 \begin{equation} \label{homology2}
  \int_{M} \abs{\D_0 u_0}^p \, \vol_g
   \lesssim \int_{M} \frac{\abs{\D_0 w}^p}{\abs{w}^p} \, \vol_g
   \lesssim \norm{\D_0w}_{L^\infty(M)}^p 
    \int_{M} \frac{\vol_g}{\dist^p(\cdot, \, Z)} 
 \end{equation}
 The integral at the right-hand side is finite for any~$p\in [1, \, 2)$,
 because~$Z$ has codimension~$2$ (see e.g.~\cite[Lemma~8.3]{ABO2}).
 Moreover, for a suitable orientation of~$Z$
 (as in~\cite[Proposition~12.7]{BottTu}), there holds
 \begin{equation} \label{homology3}
  \star J(u_0) = \pi \llbracket Z \rrbracket
 \end{equation}
 As the Jacobian is a local operator (by Remark~\ref{rk:locality-jac}),
 it suffices to check that~\eqref{homology3} is satisfied in an arbitrary
 coordinate neighbourhood~$U\subseteq M$. Due to Remark~\ref{rk:diff-jJAB},
 $J(u_{0|U})$ is equal to the Jacobian of~$u_{0|U}$ with respect to the
 flat connection, $\bar{J} := \frac{1}{2} \d\langle \d u_0, iu_0\rangle$.
 For the latter, we have~$\star\bar{J} = \pi\llbracket Z\rrbracket$ in~$U$
 (this computation is similar to, 
 e.g.,~\cite[Example~3.4]{JerrardSoner-Jacobians}) 
 and hence \eqref{homology3} follows.
 
 Let~$u\in W^{1,1}(M)$ be such that~$\abs{u}=1$ a.e.~in~$M$.
 We define a map~$\Phi\colon M\to\C$ as
 \[
  \Phi := \ip{u}{u_0} + i \, \ip{u}{i u_0}
 \]
 so that~$u = \Phi u_0$. As~$\abs{u}= \abs{u_0} = 1$ a.e.,
 we have~$\abs{\Phi}=1$ a.e.~in~$M$. Moreover, $\Phi\in W^{1,1}(M)$
 and, by explicit computation,
 \begin{equation} 
  \star J(u) = \star \bar{J}(\Phi) + \star J(u_0)
  \stackrel{\eqref{homology3}}{=} \star \bar{J}(\Phi) + \pi \llbracket Z\rrbracket
  \label{homology4}
 \end{equation}
 where~$\bar{J}(\Phi) := \frac{1}{2} \d\ip{\d\Phi}{i\Phi}$.
 Let~$E\subseteq M$ be the set of points where~$\Phi$ is not approximately
 differentiable. Then, $E$ is negligible (with respect to the volume measure)
 and~$M\setminus E$ can be written as a finite union 
 of closed sets, $M\setminus E = \bigcup_{j=1}^{+\infty} F_j$, such that
 $\Phi$ is Lipschitz-continuous on each~$F_j$ (see e.g.~\cite[3.1.18]{Federer}, 
 \cite[Theorems~11 and~12]{Hajlasz2000}).
 As a consequence, for~$\H^1$-a.e.~$y\in\S^1$, the 
 set~$N_y := \Phi^{-1}(y)\setminus E$ is countably~$\H^{n-1}$-rectifiable
 and, by the coarea formula, $\H^{n-1}(N_y) <+\infty$
 (the details of the argument are as in~\cite[Section~7.5]{ABO1}). 
 For a.e.~$y\in\S^1$, we have 
 \begin{equation} \label{homology5}
  \star \bar{J}(\Phi) = \pi \, (-1)^{n-1} \, \partial\llbracket N_y\rrbracket
 \end{equation}
 Indeed, for each coordinate neighboorhood~$U\subseteq M$,
 we have~$\star \bar{J}(\Phi) = \pi \, (-1)^{n-1} \, \partial\llbracket N_y\rrbracket$
 in~$U$ due to~\cite[Theorem~3.8]{ABO1}. As the Jacobian is a local
 operator, \eqref{homology5} follows by partition of unity. 
 In particular, $\frac{1}{\pi}\star \bar{J}(\Phi)$
 is the boundary of an integer-multiplicity, rectifiable~$(n-1)$-current.
 Suppose now that~$J(u)$ is a bounded measure. Then,
 $\bar{J}(\Phi)$ is a bounded measure, too.
 By combining~\eqref{homology4} and~\eqref{homology5},
 we obtain that~$\frac{1}{\pi}\star \bar{J}(\Phi)$ belongs to
 the class~$\mathcal{C}$ defined by~\eqref{C}.
 By Federer and Fleming's boundary rectifiability 
 theorem~\cite[4.2.16(2)]{Federer},
 if~$J(u)$ has finite mass, then~$\frac{1}{\pi}\star J(u)$
 is an integer-multiplicity rectifiable $(n-2)$-current.
 This completes the proof.
\end{proof}

\section{$\Gamma$-convergence for the functional~$E_\eps$}
\label{sec:maingoalE}

The aim of this section is to prove Theorem~\ref{maingoal:E}.

\subsection{Compactness for the Jacobians}
\label{sec:compactnessE}

\subsubsection{A truncation argument}

In some of the arguments below, it will be useful to assume
that sequence~$u_\eps$ is uniformly bounded in~$L^\infty(M, \, E)$.
Fortunately, it is possible to reduce to this case 
by means of a classical truncation argument.
We formulate this argument in terms of 
pairs~$(u_\eps, \, A_\eps)$, for later use.
Let~$\{(u_\eps, \, A_\eps)\} \subset W^{1,2}(M, \, E)\times W^{1,2}(M, \, \T^*M)$ 
be a sequence that satisfies
\begin{equation} \label{trunc:hp}
 \sup_{\eps> 0} \frac{G_\eps(u_\eps, \, A_\eps)}{\abs{\log\eps}} < +\infty
\end{equation}
For each $\eps>0$, we define~$v_\eps\in W^{1,2}(M, \, E)$ as
\begin{equation} \label{trunc:v}
 v_\eps := \begin{cases}
              u_\eps & \textrm{where } \abs{u_\eps} \leq 1 \\[8pt]
              \dfrac{u_\eps}{\abs{u_\eps}}
               & \textrm{where } \abs{u_\eps} > 1.
             \end{cases}
\end{equation}
By construction, $\abs{v_\eps} \leq 1$ in~$M$ for all $\eps > 0$.

\begin{lemma} \label{lemma:truncation}
 The sequence~$\{ v_\eps \}$ defined by~\eqref{trunc:v} satisfies
 \begin{equation} \label{trunc:energy}
  G_\eps(v_\eps, \, A_\eps) \leq G_\eps(u_\eps, \, A_\eps)
 \end{equation}
 for any~$\eps> 0$. Moreover, under the assumption~\eqref{trunc:hp}
 there holds
 \begin{equation} \label{trunc:J}
  J(u_\eps, \, A_\eps) - J(v_\eps, \, A_\eps) \to 0
  \qquad \textrm{in } W^{-1, p}(M) 
  \quad \textrm{for any } p \textrm{ such that } 1 \leq p < \frac{n}{n-1}
 \end{equation}
 as~$\eps\to 0$.
\end{lemma}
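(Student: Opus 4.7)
The proof separates into the two claims of the lemma.

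For the energy inequality~\eqref{trunc:energy} I would argue pointwise. The curvature term is untouched by the truncation, and $(1-|v_\eps|^2)^2$ is trivially dominated by $(1-|u_\eps|^2)^2$ (the two coincide on $\{|u_\eps|\leq 1\}$, while the former vanishes on $\{|u_\eps|>1\}$). Only the kinetic term requires work. Setting $\rho:=|u_\eps|$ and using that $\D_0$ is compatible with the Hermitian metric together with $\ip{iAu}{u}=0$, I would derive the identity $d\rho=\rho^{-1}\ip{\D_{A_\eps}u_\eps}{u_\eps}$. The Leibniz rule for $v_\eps=\rho^{-1}u_\eps$ then gives, on $\{|u_\eps|>1\}$,
\[
  \D_{A_\eps}v_\eps \;=\; \rho^{-1}\D_{A_\eps}u_\eps \;-\; \rho^{-2}\,d\rho\otimes u_\eps,
\]
and expanding the square while substituting the expression for $d\rho$ produces
\[
  |\D_{A_\eps}v_\eps|^2 \;=\; \rho^{-2}\bigl(|\D_{A_\eps}u_\eps|^2-|d\rho|^2\bigr) \;\leq\; |\D_{A_\eps}u_\eps|^2,
\]
since $\rho>1$ and $|d\rho|\leq|\D_{A_\eps}u_\eps|$ by Cauchy--Schwarz. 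Summing the three terms yields~\eqref{trunc:energy}.

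For the Jacobian convergence~\eqref{trunc:J}, the $F_{A_\eps}$-contributions in~\eqref{magneticJac} cancel, so
\[
  J(u_\eps,A_\eps)-J(v_\eps,A_\eps) \;=\; \tfrac{1}{2}\,d h_\eps, \qquad h_\eps:=j(u_\eps,A_\eps)-j(v_\eps,A_\eps).
\]
On $\{|u_\eps|\leq 1\}$, $h_\eps=0$. On $\{|u_\eps|>1\}$, writing $u_\eps=\rho v_\eps$ with $|v_\eps|=1$ and exploiting $\ip{v_\eps}{iv_\eps}=0$, a short computation produces $j(u_\eps,A_\eps)=\rho^2\,j(v_\eps,A_\eps)$, so $h_\eps=(|u_\eps|^2-1)\,j(v_\eps,A_\eps)$. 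Combining this with $|j(v_\eps,A_\eps)|\leq|\D_{A_\eps}v_\eps|\leq|\D_{A_\eps}u_\eps|$ from Part~1 yields the pointwise bound
\[
  |h_\eps| \;\leq\; (|u_\eps|^2-1)\,|\D_{A_\eps}u_\eps|.
\]
Cauchy--Schwarz together with the energy bounds $\|\D_{A_\eps}u_\eps\|_{L^2}^2\lesssim|\log\eps|$ and $\|1-|u_\eps|^2\|_{L^2}^2\lesssim\eps^2|\log\eps|$ then gives $\|h_\eps\|_{L^1(M)}\lesssim\eps|\log\eps|\to 0$.

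The main obstacle is to upgrade this $L^1$-bound into $W^{-1,p}$-convergence of $dh_\eps$ for the entire subcritical range $p\in[1,n/(n-1))$, since the direct estimate $\|dh_\eps\|_{W^{-1,p}}\leq\|h_\eps\|_{L^p}$ requires more than $L^1$-smallness. The plan is to test $dh_\eps$ against $\phi\in W^{1,p'}(M)$ with $p'>n$, integrate by parts to obtain $\ip{h_\eps}{d^*\phi}$, and apply a three-factor H\"older inequality with exponents $(r_1,2,p')$ satisfying $1/r_1+1/2+1/p'=1$, i.e.\ $r_1=2p'/(p'-2)$. The second and third factors are controlled by $\|\D_{A_\eps}u_\eps\|_{L^2}=O(\sqrt{|\log\eps|})$ and $\|\phi\|_{W^{1,p'}}$; the delicate factor $\||u_\eps|^2-1\|_{L^{r_1}}$ is handled by interpolating the $L^2$-smallness against a higher-integrability bound for $|u_\eps|^2$ coming from the uniform $L^4$-bound on $|u_\eps|$ (a consequence of the $L^2$-smallness of $1-|u_\eps|^2$) together with the Sobolev embedding applied to $|u_\eps|\in W^{1,2}(M)$ via Kato's inequality. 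The condition $p<n/(n-1)$ is precisely what forces $r_1$ to lie strictly below the critical Sobolev exponent for $|u_\eps|^2$, which in turn produces the positive power of $\eps$ necessary to defeat the $|\log\eps|$ factor and give convergence to zero.
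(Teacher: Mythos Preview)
Your argument for~\eqref{trunc:energy} and the bound $\norm{h_\eps}_{L^1(M)}\lesssim\eps\abs{\log\eps}$ (hence $W^{-1,1}$-convergence of the Jacobian difference) are correct and essentially the paper's.

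The gap is in the upgrade to $W^{-1,p}$ for $1<p<n/(n-1)$. Your three-factor H\"older needs $\norm{\,\abs{u_\eps}^2-1\,}_{L^{r_1}}$ with $r_1\in(2,\,2^*)$, and you propose to handle this by interpolating the $L^2$-smallness against ``higher integrability of $\abs{u_\eps}^2$''. But Kato plus Sobolev only gives $\abs{u_\eps}\in L^{2^*}$, hence $\abs{u_\eps}^2\in L^{n/(n-2)}$; for every $n\ge 4$ this exponent is $\le 2$, and the uniform $L^4$-bound on $\abs{u_\eps}$ likewise yields only $\abs{u_\eps}^2\in L^2$. So there is no $L^q$-bound on $\abs{u_\eps}^2-1$ with $q>2$ to interpolate against, and your claim that ``$p<n/(n-1)$ forces $r_1$ below the critical Sobolev exponent for $\abs{u_\eps}^2$'' is false for all $n\ge 4$ (and covers only part of the range even for $n=3$). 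Absent an $L^\infty$-bound on $u_\eps$---which is precisely what the truncation is meant to supply---the interpolation you sketch cannot close. The paper avoids this obstruction by bounding the Jacobian difference \emph{itself} in $L^1$: from the pointwise identity $J(u,A)[X,Y]=\ip{i\D_{A,X}u}{\D_{A,Y}u}+\tfrac12(1-\abs{u}^2)F_A(X,Y)$, the energy bound gives directly $\norm{J(u_\eps,A_\eps)-J(v_\eps,A_\eps)}_{L^1(M)}\lesssim\abs{\log\eps}$. Then the interpolation inequality for form-valued measures (Lemma~\ref{lemma:interpolation}),
\[
  \norm{\omega}_{W^{-1,p}(M)}\;\lesssim\;\norm{\omega}_{W^{-1,1}(M)}^{\alpha}\,\abs{\omega}\!(M)^{1-\alpha},\qquad \alpha=1+\tfrac{n}{p}-n,
\]
applied to $\omega=J(u_\eps,A_\eps)-J(v_\eps,A_\eps)$ yields $\norm{\omega}_{W^{-1,p}}\lesssim\eps^{\alpha}\abs{\log\eps}\to 0$. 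The missing ingredient is thus an $L^1$ (mass) bound on the Jacobian difference \emph{as a $2$-form}, obtained from the pointwise formula for $J$, rather than an $L^p$-bound on the pre-Jacobian difference $h_\eps$.
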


In the proof of Lemma~\ref{lemma:truncation} and in the sequel,
we will make repeated use of the following observation.

\begin{lemma} \label{lemma:Du}
 For any~$u\in W^{1,2}(M, \, E)$ and any~$A\in W^{1, 2}(M, \, \T^*M)$,
 there holds
 \[
  \D_A u = \frac{\d(\abs{u})}{\abs{u}} u + \frac{j(u, \, A)}{\abs{u}^2} \, i u
 \]
 a.e.~in the set~$\{u\neq 0\}$.
\end{lemma}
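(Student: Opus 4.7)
The strategy is to decompose $\D_A u$ at each point of $\{u\neq 0\}$ into its components along $u$ and along $iu$. Since the Hermitian metric on $E$ has real part a real inner product with respect to which $u$ and $iu$ are orthogonal of equal norm $|u|$, the pair $\{u, iu\}$ forms an orthogonal basis of the real 2-dimensional fibre $E_p$ wherever $u(p)\neq 0$. Consequently, a.e.~on $\{u\neq 0\}$, we may write
\[
\D_A u \;=\; \frac{\ip{\D_A u}{u}}{|u|^2}\, u \;+\; \frac{\ip{\D_A u}{iu}}{|u|^2}\,(iu),
\]
as an equality of $E$-valued 1-forms. The coefficient of $iu$ is immediately $j(u,A)/|u|^2$ by the definition~\eqref{magneticpreJac}.

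To identify the coefficient of $u$, I would use that the connection $\D_A = \D_0 - iA$ is compatible with the Hermitian metric on $E$. Indeed, $\D_0$ is metric by assumption, and the perturbation $-iA\,u$ is orthogonal to $u$ in the real sense, so adding it does not affect the real-inner-product calculation. Metric compatibility yields
\[
\d(|u|^2) \;=\; 2\,\ip{\D_A u}{u}
\]
a.e.~on $M$, as an identity of $L^1$ 1-forms. Combined with the chain rule $\d(|u|^2) = 2|u|\,\d|u|$ on $\{u\neq 0\}$, this gives $\ip{\D_A u}{u} = |u|\,\d|u|$ there, so the coefficient of $u$ equals $\d(|u|)/|u|$, yielding the asserted formula.

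The main technical point, then, is the Sobolev chain rule for $|u|$. Since $u\in W^{1,2}(M,E)$, standard arguments (passing to local trivialisations and reducing to the scalar case) show that $|u| = \sqrt{\ip{u}{u}} \in W^{1,2}(M,\R)$, with $\d|u|$ vanishing a.e.~on $\{u=0\}$ and satisfying $|u|\,\d|u| = \ip{\D_0 u}{u}$ a.e.~on $M$ (e.g., by approximating $t\mapsto \sqrt{t+\delta}$ and letting $\delta\to 0^+$ off the zero set). Since $-iA\,u$ contributes $\ip{-iAu}{u} = -|u|^2\, A \cdot \ip{i}{1} = 0$ pointwise, the same identity holds with $\D_0$ replaced by $\D_A$; dividing by $|u|^2$ on $\{u\neq 0\}$ completes the proof. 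I expect the only delicate issue to be justifying this chain rule under the hypothesis $u \in W^{1,2}$ and $A \in W^{1,2}$ (rather than smooth), which is routine but worth recording carefully via truncation/mollification.
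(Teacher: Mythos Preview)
Your proof is correct and follows essentially the same approach as the paper: decompose $\D_A u$ in the orthogonal frame $(u,\,iu)$ on $\{u\neq 0\}$, identify the $iu$-coefficient as $j(u,A)/|u|^2$ by definition, and use metric compatibility of $\D_A$ to rewrite $\ip{\D_A u}{u}$ as $\frac{1}{2}\d(|u|^2)$, then the chain rule to obtain $\d(|u|)/|u|$. The paper's proof is terser---it simply writes $\ip{\D_A u}{u}=\d(\tfrac{1}{2}|u|^2)$ invoking~\eqref{eq:metricity} and leaves the Sobolev chain rule implicit---so your additional remarks on justifying $|u|\in W^{1,2}$ and $\d(|u|^2)=2|u|\,\d|u|$ a.e.\ are a welcome bit of care rather than a deviation.
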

\begin{proof}
 In~$\{u\neq 0\}$, the pair~$(u, \, iu)$ is an orthogonal
 frame for (the real bundle associated with)~$E$. Therefore, 
 keeping in mind that the connection~$\D_A$ 
 is compatible with the metric (i.e., \eqref{eq:metricity}), we obtain
 \[
  \D_A u 
   = \ip{\D_A u}{u} \frac{u}{\abs{u}^2} 
    + \ip{\D_A u}{iu} \frac{iu}{\abs{u}^2}
   = \d\left(\frac{1}{2} \abs{u}^2\right) \frac{u}{\abs{u}^2}
   + j(u, \, A) \frac{iu}{\abs{u}^2}
 \]
 in~$\{u\neq 0\}$.
\end{proof}

\begin{proof}[Proof of Lemma~\ref{lemma:truncation}]
 Towards the proof of~\eqref{trunc:energy}, we observe that
 Lemma~\ref{lemma:Du} implies
 \begin{equation} \label{trunc1}
  \begin{split}
   \D_{A_\eps} v_\eps 
   = - \frac{\d(\abs{v_\eps})}{\abs{v_\eps}^2} u_\eps 
    + \frac{1}{\abs{u_\eps}} \D_{A_\eps} u_\eps
   = j(u_\eps, \, A_\eps) \frac{i u_\eps}{\abs{u_\eps}^3}
  \end{split}
 \end{equation}
 in the set~$\{\abs{u_\eps} > 1\}$. As a consequence,
 \begin{equation} \label{trunc2}
   \abs{\D_{A_\eps} v_\eps}^2 
   = \frac{\abs{j(u_\eps, \, A_\eps)}^2}{\abs{u_\eps}^4}
    \leq \abs{\d(\abs{u_\eps})}^2
     + \frac{\abs{j(u_\eps, \, A_\eps)}^2}{\abs{u_\eps}^2}
   = \abs{\D_{A_\eps} u_\eps}^2
 \end{equation}
 in~$\{\abs{u_\eps} > 1\}$. On the other hand, in~$\{\abs{u_\eps} > 1\}$ we 
 obviously have $0 = (1 - \abs{v_\eps}^2)^2 \leq (1-\abs{u_\eps}^2)^2$, 
 hence the inequality~\eqref{trunc:energy} follows.
 From~\eqref{trunc1}, we obtain $j(v_\eps, \, A_\eps) = j(u_\eps, \, A_\eps) \abs{u_\eps}^{-2}$ in~$\{\abs{u_\eps} > 1\}$ and hence,
 \begin{equation} \label{trunc3}
  \begin{split}
   \norm{j(u_\eps, \, A_\eps) - j(v_\eps, \, A_\eps)}_{L^1(M)}
   &= \norm{\left(\abs{u_\eps}^2 - 1\right)^+ j(v_\eps, \, A_\eps)}_{L^1(M)} \\
   &\leq \norm{\abs{u_\eps}^2 - 1}_{L^2(M)} \norm{\D_{A_\eps} v_\eps}_{L^2(M)} \\
   &\hspace{-.17cm} \stackrel{\eqref{trunc2}}{\leq} 
    \norm{\abs{u_\eps}^2 - 1}_{L^2(M)} \norm{\D_{A_\eps} u_\eps}_{L^2(M)}
  \end{split}
 \end{equation}
 By the energy estimate~\eqref{trunc:hp}, we deduce
 that $\norm{j(u_\eps, \, A_\eps) - j(v_\eps, \, A_\eps)}_{L^1(M)}
 \lesssim \eps \abs{\log\eps}$. Since
 $J(u_\eps, \, A_\eps) - J(v_\eps, \, A_\eps) 
 = \frac{1}{2}\d(j(u_\eps, \, A_\eps) - j(v_\eps, \, A_\eps))$, it follows
 \begin{equation} \label{trunc4}
  \norm{J(u_\eps, \, A_\eps) - J(v_\eps, \, A_\eps)}_{W^{-1,1}(M)}
  \lesssim \eps\abs{\log\eps}
 \end{equation}
 Let us consider the $2$-form~$\sigma(u_\eps, \, A_\eps)$
 defined by $\sigma(u_\eps, \, A_\eps)[X, \, Y] := 
 \ip{i\D_{A_\eps, X} u_\eps}{ \D_{A_\eps, Y}u_\eps}$
 for any smooth vector fields~$X$, $Y$.
 By a direct computation, we have
 \begin{equation} \label{trunc5}
  J(u_\eps, \, A_\eps) = \sigma(u_\eps, \, A_\eps) 
   + \frac{1}{2}(1 - \abs{u_\eps}^2) F_{A_\eps}
 \end{equation}
 From~\eqref{trunc5} and the energy estimate~\eqref{trunc:hp}, we deduce
 \begin{equation} \label{trunc6}
  \norm{J(u_\eps, \, A_\eps) - J(v_\eps, \, A_\eps)}_{L^1(M)}
  \lesssim \abs{\log\eps}
 \end{equation}
 Lemma~\ref{lemma:interpolation} implies
 \begin{equation*} 
  \begin{split}
   &\norm{J(u_\eps, \, A_\eps) - J(v_\eps, \, A_\eps)}_{W^{-1,p}(M)} \\
   &\qquad \lesssim \norm{J(u_\eps, \, A_\eps) - J(v_\eps, \, A_\eps)}
    _{W^{-1,1}(M)}^\alpha 
    \norm{J(u_\eps, \, A_\eps) - J(v_\eps, \, A_\eps)}_{L^1(M)}^{1-\alpha}
   \stackrel{\eqref{trunc4}, \, \eqref{trunc6}}{\lesssim}
    \eps^\alpha\abs{\log\eps}
  \end{split}
 \end{equation*}
 where~$\alpha = 1 + n/p - n$.
 Therefore, \eqref{trunc:J} follows.
\end{proof}

\subsubsection{The Jacobians are compact in~$W^{-1,p}(M)$}

Throughout the rest of this section, we consider
a sequence $\{u_\eps \}\subset W^{1,2}(M, \, E)$ that satisfies the energy bound
\begin{equation} \label{compE:hp}
  \sup_{\eps>0} \frac{E_\eps(u_\eps)}{\abs{\log\eps}} < +\infty
\end{equation}
We work with the reference connection~$\D_0$ on~$E$ and
denote by~$j(u_\eps) := j(u_\eps, \, 0)$, $J(u_\eps) := J(u_\eps, \, 0)$
the pre-Jacobian and Jacobian of~$u_\eps$ with respect to~$\D_0$
(see~\eqref{reference-Jac}).
Our first goal is to prove that~$\{ J(u_\eps)\}$ is compact 
in~$W^{-1,p}(M)$, for any~$p < n/(n-1)$.

\begin{lemma} \label{lemma:compactnessE}
 Let~$u_\eps\in W^{1,2}(M, \, E)$ be a sequence that satisfies~\eqref{compE:hp}.
 Then, there exists a (non-relabelled) subsequence and a 
 bounded measure~$J_*$, with values in~$2$-forms, such that
 $J(u_\eps)\to \pi J_*$ in~$W^{-1,p}(M)$ for any~$p$ such that 
 $1 \leq p < n/(n-1)$.
\end{lemma}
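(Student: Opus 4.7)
The plan is to reduce to the Euclidean Jerrard--Soner Jacobian estimate via localization in a trivializing atlas, obtain a uniform bound on the Jacobians as bounded measures, and then upgrade weak-$*$ convergence to convergence in $W^{-1,p}$ by a Rellich-type compact embedding. As a preliminary reduction, I apply the truncation argument of Lemma~\ref{lemma:truncation} with $A_\eps\equiv 0$, replacing $u_\eps$ by $v_\eps$ satisfying $\abs{v_\eps}\leq 1$; since $E_\eps(v_\eps)\leq E_\eps(u_\eps)$ and $J(u_\eps)-J(v_\eps)\to 0$ in $W^{-1,p}(M)$ in the relevant range of $p$, I may assume $\abs{u_\eps}\leq 1$ pointwise.

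Next, fix a finite atlas $\{U_\alpha\}$ of $M$ over which $E\to M$ trivializes, together with a subordinate partition of unity $\{\chi_\alpha\}$ and a local unitary frame $\phi_\alpha$ on each $U_\alpha$. Writing $u_\eps = w_\eps^\alpha\,\phi_\alpha$ on $U_\alpha$ for $w_\eps^\alpha\colon U_\alpha\to\C$ with $\abs{w_\eps^\alpha}\leq 1$, the locality statement in Remark~\ref{rk:locality-jac} together with the definition~\eqref{magneticJac} shows that $J(u_\eps)_{|U_\alpha}$ differs from the standard Euclidean Jacobian of $w_\eps^\alpha$ only by the smooth $2$-form $\frac{1}{2}F_0$ and by correction terms linear in the (smooth, bounded) local connection $1$-form of $\D_0$ and in $\abs{w_\eps^\alpha}\leq 1$; in particular the correction is controlled in $L^1(M)$ by a constant depending only on $\D_0$.

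At this point I invoke the classical Jerrard--Soner Jacobian estimate~\cite{JerrardSoner-Jacobians} (see also~\cite{ABO2}): for $w\colon U\subset\R^n\to\C$ with $\abs{w}\leq 1$ and any Lipschitz $2$-form $\omega$ compactly supported in $U$,
\[
 \abs{\ip{J_{\mathrm{Eucl}}(w)}{\omega}} \lesssim \bigl(\norm{\omega}_{L^\infty(U)} + \eps\,\mathrm{Lip}(\omega)\bigr)\mathcal{GL}_\eps(w;\,U).
\]
Combined with the logarithmic bound~\eqref{compE:hp} and patched via the partition of unity, this yields a uniform bound on $J(u_\eps)$ as a $2$-form-valued measure on $M$, namely $\abs{J(u_\eps)}(M)\leq C$. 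By weak-$*$ compactness of bounded measures, a (non-relabelled) subsequence converges weakly-$*$ to a bounded measure which I denote $\pi J_*$; the factor $\pi$ is fixed so as to match the normalization of~\eqref{homology3}, anticipating the integrality statement in Theorem~\ref{maingoal:E}.

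Finally, to upgrade weak-$*$ convergence to $W^{-1,p}$ convergence, note that for $1\leq p<n/(n-1)$ the dual exponent satisfies $p'>n$, so the Sobolev embedding $W^{1,p'}(M)\hookrightarrow C^0(M)$ is compact; by duality, bounded subsets of the space of measures embed compactly into $W^{-1,p}(M)$, and weak-$*$ convergence of $\{J(u_\eps)\}$ therefore upgrades to norm convergence in $W^{-1,p}(M)$. The main obstacle I anticipate is the bookkeeping of the localization step, where one must verify that~\eqref{magneticJac} differs from the Euclidean Jacobian in each chart only by a smooth, uniformly bounded $2$-form, so that the black-box Jerrard--Soner estimate transfers without loss to the manifold setting; the remaining ingredients are essentially the weak-$*$ compactness of bounded measures and the compact Sobolev embedding.
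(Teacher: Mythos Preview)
There is a genuine gap: the uniform mass bound $\abs{J(u_\eps)}(M)\leq C$ does not follow from the estimate you cite, and is in general false. Your Jerrard--Soner inequality, combined with $\mathcal{GL}_\eps(w^\alpha_\eps;U_\alpha)\lesssim\abs{\log\eps}$, yields only
\[
 \abs{\ip{J(u_\eps)}{\omega}}\lesssim\bigl(\norm{\omega}_{L^\infty}+\eps\,\mathrm{Lip}(\omega)\bigr)\abs{\log\eps},
\]
which diverges when tested against $\omega$ with $\norm{\omega}_{L^\infty}\leq 1$. Indeed, the pointwise identity~\eqref{trunc5} (with $A_\eps=0$) gives only $\norm{J(u_\eps)}_{L^1(M)}\lesssim\abs{\log\eps}$, and this is the correct order of magnitude: under~\eqref{compE:hp} alone the Jacobians are \emph{not} uniformly bounded as measures. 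What the Jerrard--Soner and Alberti--Baldo--Orlandi estimates actually say is that $J(u_\eps)$ is close, in the dual of H\"older or Lipschitz functions, to an integral current whose mass is $\lesssim E_\eps(u_\eps)/\abs{\log\eps}$; the Jacobians themselves need not have bounded total variation. Without the mass bound, weak-$*$ compactness of measures is unavailable, and your final compact-embedding step---correct in principle for bounded sequences of measures---cannot be invoked.

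The paper's proof follows the same localization-and-patching skeleton you propose, but replaces the erroneous mass bound by the full Euclidean compactness statement (Theorem~\ref{th:GL}): on each contractible chart $U$, after passing to a subsequence, $\bar{J}(u_{\eps|U})$ converges in $(C^{0,\alpha}_0(U))'$ for every $\alpha\in(0,1)$. Since $W^{1,p'}_0\hookrightarrow C^{0,\alpha}_0$ for $p'>n$ and suitable $\alpha$ (Remark~\ref{rk:Calpha}), this already gives convergence in $W^{-1,p}(U)$; the correction $J(u_\eps)-\bar{J}(u_\eps)=\tfrac{1}{2}\d\bigl(\gamma_0(1-\abs{u_\eps}^2)\bigr)$ from~\eqref{compare_Jac} tends to zero in $W^{-1,2}$ by the energy bound. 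A finite partition of unity then globalizes. Your framework is sound; the black box you need is convergence in $(C^{0,\alpha})'$, not a uniform total-variation bound.
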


In the Euclidean setting, compactness results analogue to
Lemma~\ref{lemma:compactnessE} are well-known~\cite{JerrardSoner-GL, ABO2}.
We will deduce Lemma~\ref{lemma:compactnessE} from its Euclidean counterparts
by a localisation argument.
Let~$U\subset M$ be a smooth, contractible domain in~$M$,
all contained in a coordinate chart of~$M$.
By working in local coordinates, we identify~$U$ with 
a subset of~$\R^n$, equipped with a smooth Riemannian metric~$g$.
Moreover, as~$U$ is contractible,
the bundle~$E\to M$ trivialises over~$U$.
Therefore, we can (and do) identify sections of~$E$ with maps~$U\to\C$.
The reference connection~$\D_0$, restricted to~$U$,
may be written as
\begin{equation} \label{gamma0}
 \D_0 = \d - i\gamma_0 \qquad \textrm{in } U,
\end{equation}
where~$\d$ is the Euclidean connection on~$\R^n$
(that is, $\d u = \d\Re(u) + i\d\Im(u)$ for any~$u\colon U\to\C$)
and~$\gamma_0\colon U\to(\R^n)^*$ is a smooth real-valued $1$-form.
It will be useful to compare the restriction of~$E_\eps$ to~$U$,
that is
\begin{equation} \label{BBH-GL-U}
 E_\eps(u, \, U) := \int_{U} \left(\frac{1}{2} \abs{\D_0 u}^2
   + \frac{1}{4\eps^2}(1-\abs{u}^2)^2 \right) \vol_g.
\end{equation}
with its Euclidean counterpart,
\begin{equation} \label{BBH-GL-bar}
 \bar{E}_\eps(u, \, U) := \int_{U} \left(\frac{1}{2} \abs{\d u}^2
   + \frac{1}{4\eps^2}(1-\abs{u}^2)^2 \right) \d x.
\end{equation}
The integral in~\eqref{BBH-GL-bar}
is taken with respect to the Lebesgue measure~$\d x$, 
not the volume form~$\vol_g$ induced by the metric~$g$.
The functional~\eqref{BBH-GL-bar} is precisely the Ginzburg-Landau
functional, in the simplified form that was 
introduced by Bethuel, Brezis and H\'elein~\cite{BBH}.
Given~$u\in W^{1,2}(U, \, \C)$, we denote the pre-Jacobian and
the Jacobian of~$u$ with respect to the flat connection~$\d$ as
\begin{equation} \label{usualJac}
 \bar{\jmath}(u) := \ip{\d u}{i u}, \qquad 
 \bar{J}(u) := \frac{1}{2} \d\bar{\jmath}(u)
\end{equation}
The quantities~$j(u)$, $\bar{\jmath}(u)$
and~$J(u)$, $\bar{J}(u)$, respectively, are related to each other by
\begin{align}
 j(u) &= \bar{\jmath}(u) - \gamma_0 \abs{u}^2
  \label{compare_preJac} \\
 J(u) &= \bar{J}(u) + \frac{1}{2}\d\left(\gamma_0 (1 - \abs{u}^2)\right)
  \label{compare_Jac}
\end{align}
where~$\gamma_0$ is given by~\eqref{gamma0}.

\begin{remark}\label{rk:locality-jac2}
Notice that, if $u \in W^{1,2}(M,E)$ is a global section of $E$, then $J(u)$ is local and commutes with restrictions by Remark~\ref{rk:locality-jac} (with $A = 0$). In particular, $J(u_{|U}) = J(u)_{|U}$ for any open set $U \subset M$ such that $E$ is trivial over $U$.
\end{remark} 

We recall a well-known compactness result for the Euclidean 
Ginzburg-Landau func\-tion\-al~\eqref{BBH-GL-bar}.
For any~$\alpha\in (0, \, 1)$, we let~$C^{0,\alpha}_0(U)$
be the space of $\alpha$-H\"older continuous
functions~$\varphi\colon U\to\R$ such that~$\varphi=0$ on~$\partial U$.
We let~$(C^{0,\alpha}_0(U))^\prime$ denote the topological 
dual of~$C^{0,\alpha}_0(U)$. 

\begin{theorem}[{\cite{ABO2, JerrardSoner-GL}}] \label{th:GL}
 Let~$(u_\eps)_{\eps>0}$ be a sequence in~$W^{1,2}(U, \, \C)$
 such that
 \[
  \sup_{\eps > 0}\frac{\bar{E}_\eps(u_\eps; \, U)}{\abs{\log\eps}} < +\infty 
 \]
 Then, there exists a bounded measure~$\pi J_*$,
 with values in~$2$-forms, and a (non-relabelled) countable 
 subsequence such that the following properties hold:
 \begin{enumerate}[label=(\roman*)]
  \item $\bar{J}(u_\eps)\to \pi J_*$ in~$(C^{0,\alpha}_0(U))^\prime$
  for any~$\alpha\in(0, \, 1)$.
  
  \item $\star J_*$ is an integer-multiplicity, rectifiable $(n-2)$-current 
  with finite mass, which satisfies~$\partial (\star J_*) = 0$ in~$U$.
  
  \item There holds
  \[
   \pi \abs{J_*}\!(U) \leq \liminf_{\eps\to 0} 
   \frac{\bar{E}_\eps(u_\eps; \, U)}{\abs{\log\eps}}
  \]
 \end{enumerate}
\end{theorem}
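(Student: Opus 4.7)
Since this result is well established in the Euclidean setting, the plan is to follow closely the strategy of Jerrard--Soner~\cite{JerrardSoner-GL} and Alberti--Baldo--Orlandi~\cite{ABO2}. A first reduction is to assume $\abs{u_\eps} \leq 1$ a.e., obtained by the truncation of Lemma~\ref{lemma:truncation} applied with $A_\eps \equiv 0$: this only decreases the energy and changes $\bar{J}(u_\eps)$ by an error vanishing in $W^{-1,p}(U)$, hence also in $(C^{0,\alpha}_0(U))^\prime$ after a standard embedding. The central \emph{a priori} estimate is then a Jacobian bound of the form
\[
 \abs{\langle \bar{J}(u_\eps),\,\varphi\rangle}
 \leq C\,\norm{\varphi}_{C^{0,\alpha}}\,
 \frac{\bar{E}_\eps(u_\eps;\,U)}{\abs{\log\eps}} + o(1)
\]
for every test $(n-2)$-form $\varphi$ compactly supported in $U$. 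The route is a vortex-ball construction: one produces a small set $K_\eps \subset U$, whose projection onto a generic $2$-plane has controllably small $\mathcal{H}^2$-measure, outside of which $\abs{u_\eps}$ is uniformly close to $1$. On $U \setminus K_\eps$ the map $u_\eps/\abs{u_\eps}$ admits a local phase and $\bar{\jmath}(u_\eps)$ is nearly closed, so pairing with $\d\varphi$ produces only a lower-order term; on $K_\eps$ the potential $(1-\abs{u_\eps}^2)^2/(4\eps^2)$ controls everything directly.

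With this estimate in hand, Banach--Alaoglu yields a subsequence such that $\bar{J}(u_\eps) \to \pi J_*$ in $(C^{0,\alpha}_0(U))^\prime$ for every $\alpha \in (0,\,1)$, with $J_*$ a bounded $2$-form-valued measure. Since $\bar{J}(u_\eps) = \tfrac{1}{2}\d\bar{\jmath}(u_\eps)$ is exact, $\partial(\star \bar{J}(u_\eps)) = 0$ in $U$ for every $\eps$, and this passes to the limit, giving $\partial(\star J_*) = 0$. The decisive point is then to prove that $\star J_*$ is integer-multiplicity rectifiable. My plan is to slice by (parametrised families of) orthogonal $2$-planes in $U$: by Fubini, for a.e.\ slice $D$ the restriction of $u_\eps$ lies in $W^{1,2}(D,\,\C)$ and carries a logarithmic $2$-D energy, so the classical two-dimensional Ginzburg--Landau vorticity analysis (Jerrard--Soner, Sandier) implies that the sliced Jacobians concentrate on finitely many points with integer weights. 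A slicing criterion for rectifiable currents in the spirit of White, used in this exact context in~\cite{ABO2}, then upgrades this slicewise information to integer-multiplicity rectifiability of the full current $\star J_*$.

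For part~(iii), the lower bound is proved by combining the $2$-dimensional sharp lower bound $\liminf \bar{E}_\eps(u_\eps;\,D)/\abs{\log\eps} \geq \pi\,\#(\textrm{vortices in } D)$ on each slice $D$ with an integration over the parametrising family of $2$-planes via the coarea formula, and applying Fatou's lemma. I expect the main obstacle to be the integrality/rectifiability of~$\star J_*$: the weak topology $(C^{0,\alpha}_0)^\prime$ is too weak to see the singular structure directly, and one must carefully transfer information between the vortex-ball construction (which controls where the energy accumulates) and the slicing argument (which encodes the integrality), ensuring at each step that the chosen topology is compatible with the structure theorem for integer-multiplicity rectifiable currents. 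This is precisely the technical core of~\cite{JerrardSoner-GL, ABO2}, and I would cite those works for the details once the adaptation has been outlined.
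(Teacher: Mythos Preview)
The paper does not prove Theorem~\ref{th:GL} at all: it is stated as a quotation from the Euclidean literature~\cite{ABO2, JerrardSoner-GL} and used as a black box (see the sentence ``We recall a well-known compactness result\ldots'' immediately preceding it, and Remark~\ref{rk:Calpha} right after). So there is no in-paper proof to compare your proposal against; the appropriate ``proof'' here is a citation.

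Your outline is a faithful sketch of the strategy in those references (truncation, vortex-ball/Jacobian estimate, compactness, integrality via slicing, lower bound via $2$-D slices and Fatou), and would be fine as a roadmap if you were asked to reproduce the argument. One small slip: you write that the truncation error vanishes in $W^{-1,p}(U)$ ``hence also in $(C^{0,\alpha}_0(U))'$ after a standard embedding'', but the embedding goes the other way---$(C^{0,\alpha}_0(U))'$ restricts continuously to $W^{-1,p}(U)$ (cf.\ Remark~\ref{rk:Calpha}), not conversely. In practice this is harmless, since the truncation error on the pre-Jacobian is controlled in $L^1$ (indeed $\lesssim \eps\abs{\log\eps}$), which is strong enough to pass to $(C^{0,\alpha}_0)'$ directly; but the implication as you stated it is backwards.
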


\begin{remark} \label{rk:Calpha}
By Sobolev embedding, the linear map 
$\mathcal{R} \colon (C^{0,\alpha}_0(U))^\prime \to W^{-1,p}(U)$,
sending each element $L$ of $(C^{0,\alpha}_0(U))^\prime$ to its restriction 
$L\vert_{W^{1,p'}_0(U)} \in W^{-1,p}(U)$,
is continuous (and surjective) for every
 \[
  1 \leq p \leq \frac{n}{\alpha + n - 1} .
 \]
 Therefore, Theorem~\ref{th:GL}
 implies that~$J(u_\eps)\to \pi J_*$ in~$W^{-1,p}(U)$
 for any~$p$ with~$1 \leq p < n/(n-1)$.
\end{remark}

We deduce Lemma~\ref{lemma:compactnessE} from Theorem~\ref{th:GL}.

\begin{proof}[Proof of Lemma~\ref{lemma:compactnessE}]
 Let~$\{u_\eps \}\subset W^{1,2}(M, \, E)$ be a sequence that
 satisfies~\eqref{compE:hp}. We assume throughout the proof that $p \geq 1$.
 
 \setcounter{step}{0}
 \begin{step}[Local convergence]
 Let~$U\subset M$ be a contractible,
 smooth, open subset of~$M$, which we identify with a subset of~$\R^n$.
 Since the manifold~$M$ is compact and smooth, there exists 
 a constant~$C$ (depending on~$M$ only) such that $\d x \leq C \vol_g$.
 Writing~$\d u = \D_0 u +i\gamma_0 u$, we deduce
 \begin{equation} \label{compE1}
  \bar{E}_\eps(u_\eps, \, U)
  \lesssim E_\eps(u_\eps, \, U) + \int_U \abs{\gamma_0}^2 \abs{u_\eps}^2 .
 \end{equation}
 If a sequence~$\{u_\eps \}$ satisfies the energy estimate~\eqref{compE:hp},
 then $\{u_\eps \}$ is uniformly bounded in~$L^2(M)$. Then, \eqref{compE1} implies
 \[
  \sup_{\eps > 0} \frac{\bar{E}_\eps(u_\eps, \, U)}{\abs{\log\eps}} < +\infty.
 \]
 By Theorem~\ref{th:GL}, we may extract a subsequence 
 and find a bounded measure~$J_U$ on~$U$, with values in $2$-forms, such that
 \begin{equation*}
  \bar{J}({u_\eps}_{|U})\to \pi J_U \qquad \textrm{in } W^{-1,p}(U)
  \quad \textrm{for any } p < \frac{n}{n-1}.
 \end{equation*}
 Recall from~\eqref{compare_Jac} that 
 $J({u_\eps}_{|U}) - \bar{J}({u_\eps}_{|U}) 
 = \frac{1}{2}\d(\gamma_0(1 - \abs{u_\eps}^2))$.
 The energy estimate~\eqref{compE:hp} implies $\gamma_0(1 - \abs{u_\eps}^2) \to 0$
 in~$L^2(U)$ as~$\eps\to 0$ and hence,
 $J({u_\eps}_{|U}) - \bar{J}({u_\eps}_{|U})\to 0$ in~$W^{-1,2}(U)$ as~$\eps\to 0$.
 Thus, we have proved
 \begin{equation} \label{compE2}
  J({u_\eps}_{|U})\to \pi J_U \qquad \textrm{in } W^{-1,p}(U)
  \quad \textrm{for any } p < \frac{n}{n - 1}.
 \end{equation}
 \end{step}

 \begin{step}[Covering argument]
   Let $\{U_\alpha\}_{\alpha}$ be any open cover of $M$ in contractible open sets. Over each set $U_\alpha$, $E$ is trivial, as $U_\alpha$ is contractible for any index $\alpha$. Moreover, since $M$ is compact, we can extract from $\{U_\alpha\}$ a finite open cover $\{U_k\}_{k=1}^N$ of $M$ which is still trivializing for $E$. Let $\{ \rho_k \}_{k=1}^N$ be any partition of unity subordinate to the open cover $\{U_k\}$. Obviously, $J(u_\eps) = \sum_{k=1}^N \rho_k J(u_\eps)$ and, on the other hand, $\rho_k J(u_\eps) = \rho_k J(u_\eps)_{|U_k}$ for each $k=1,\dots,N$. According to Remark~\ref{rk:locality-jac2}, $J(u_\eps)$ is local and commutes with restrictions, i.e., $J(u_\eps)_{|U_k} = J({u_\eps}_{|U_k})$ for each $k = 1, \dots, N$. By Step~1, on each open set $U_k$ we have $J(u_\eps)_{|U_k} \to \pi J_{U_k}$ in $W^{-1,p}(U_k)$ for any $p < \frac{n}{n-1}$, where $J_{U_k}$ is a bounded measure on $U_k$ with values in $2$-forms. Thus, as $\eps \to 0$,
   \[
 	J(u_\eps) = \sum_{k=1}^N \rho_k J({u_\eps}_{|U_k}) \to \pi \sum_{k=1}^N \rho_k J_{U_k} =: \pi J_* \qquad \mbox{in } W^{-1,p}(M) \quad \mbox{for any } p < \frac{n}{n-1},
   \]
  where $J_*$ is a bounded measure on $M$ with values into $2$-forms which is well-defined because, as it can be easily checked, it is independent of the chosen partition of unity and of the chosen trivialization.
  \qedhere
 \end{step}
\end{proof}

\subsubsection{Identifying the homology class of~$\star J_*$}
\label{sect:homology}

Our next goal is to show that the limit of the Jacobians,
$\star J_*$, belongs to the homology class~$\mathcal{C}$
(the Poincar\'e dual of the first Chern class of~$E$; see~\eqref{C}).

\begin{prop} \label{prop:homologyJ}
 Let~$\{u_\eps \}\subset W^{1,2}(M, \, E)$ be a sequence that satisfies~\eqref{compE:hp}.
 We extract a (non-relabelled) subsequence in such a way 
 that~$J(u_\eps)\to \pi J_*$ in~$W^{-1,p}(M)$ for any~$p < n/(n-1)$.
 Then, $\star J_*$ is an integer-multiplicity 
 rectifiable current and~$\star J_*\in\mathcal{C}$.
\end{prop}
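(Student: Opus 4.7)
The approach is to reduce the homological question to Proposition~\ref{prop:homologyPPS} via a ratio construction against a fixed smooth reference section of $E$.

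\textbf{Reduction to bounded sections.} By Lemma~\ref{lemma:truncation} applied with $A \equiv 0$, we may replace $u_\eps$ by its truncation $v_\eps$; since $J(u_\eps)-J(v_\eps)\to 0$ in $W^{-1,p}(M)$, we may assume $|u_\eps|\leq 1$. Fix a smooth section $w\colon M\to E$ transverse to the zero section, set $Z := w^{-1}(0)$, and define $u_0 := w/|w|$. By the computation carried out in the proof of Proposition~\ref{prop:homologyPPS}, $u_0 \in W^{1,p}(M,E)$ for every $p<2$, $|u_0|=1$ a.e., and $\star J(u_0) = \pi\llbracket Z\rrbracket$.

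\textbf{Splitting the Jacobian.} On $M\setminus Z$ define the complex-valued function
\[
\Phi_\eps := \ip{u_\eps}{u_0} + i\,\ip{u_\eps}{i u_0},
\]
so that $u_\eps = \Phi_\eps u_0$ and $|\Phi_\eps| = |u_\eps| \leq 1$. Since $Z$ has codimension two, $\Phi_\eps$ extends to an element of $W^{1,p}(M,\C)$ for every $p<2$. A direct computation using the Leibniz rule, the metric compatibility of $\D_0$, and $|u_0|=1$ yields first $j(u_\eps) = \bar{\jmath}(\Phi_\eps) + |\Phi_\eps|^2\, j(u_0)$, and then the key identity
\[
J(u_\eps) \;=\; J(u_0) \;+\; \bar{J}(\Phi_\eps) \;+\; \tfrac{1}{2}\, \d\bigl((|\Phi_\eps|^2 - 1)\,j(u_0)\bigr),
\]
where $\bar{J}(\Phi_\eps) := \tfrac12 \d\ip{\d\Phi_\eps}{i\Phi_\eps}$ is the flat Jacobian of $\Phi_\eps\colon M\to\C$. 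The potential term in~\eqref{compE:hp} forces $1-|\Phi_\eps|^2 \to 0$ in $L^2(M)$, and this quantity is bounded in $L^\infty$, so it goes to zero in every $L^s(M)$ with $s<\infty$; combined with $j(u_0)\in L^q(M)$ for every $q<2$, H\"older interpolation gives $(|\Phi_\eps|^2-1)\,j(u_0)\to 0$ in $L^r(M)$ for some $r>1$, and hence the error term vanishes in $W^{-1,r}(M)$. Consequently
\[
\bar{J}(\Phi_\eps)\;\longrightarrow\;\pi J_* - J(u_0) \qquad\text{in } W^{-1,p}(M), \quad 1\leq p<\tfrac{n}{n-1}.
\]

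\textbf{Identification of the homology class.} To conclude $\frac{1}{\pi}\star J_*\in\mathcal{C}$ it suffices to exhibit an integer-multiplicity rectifiable $(n-1)$-current $R_*$ with $\M(R_*) + \M(\partial R_*)<+\infty$ such that $\frac{1}{\pi}\star(\pi J_* - J(u_0)) = \partial R_*$, for then $\frac{1}{\pi}\star J_* = \llbracket Z\rrbracket + \partial R_* \in \mathcal C$ and integer-rectifiability of $\star J_*$ follows from that of $\llbracket Z\rrbracket$ and $\partial R_*$. The crucial gain of the previous step is that $\Phi_\eps$ is now a \emph{single-valued, globally defined} map $M\to\C$, to which the slicing argument in the proof of Proposition~\ref{prop:homologyPPS} applies in essentially the same form. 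The bad set $B_\eps := \{|u_\eps|\leq 1/2\}$ has $\vol_g(B_\eps) \lesssim \eps^2\abs{\log\eps}\to 0$; on its complement, $\Phi_\eps/|\Phi_\eps|\colon M\setminus B_\eps\to\S^1$ is well defined and Sobolev, and the coarea formula combined with \cite[Theorem~3.8]{ABO1}, the partition-of-unity argument of Lemma~\ref{lemma:compactnessE}, and the locality of $\bar{J}$ (Remark~\ref{rk:locality-jac2}) gives, for $\H^1$-a.e.\ $y\in\S^1$, a countably $\H^{n-1}$-rectifiable set $N^y_\eps\subset M$ of finite $\H^{n-1}$-measure such that $\frac{1}{\pi}\star \bar{J}(\Phi_\eps) = (-1)^{n-1}\partial\llbracket N^y_\eps\rrbracket$, modulo a flat-norm error supported in $B_\eps$. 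Federer--Fleming compactness produces a flat-norm subsequential limit $R_*$ of the $\llbracket N^y_\eps\rrbracket$, and the closure theorem~\cite[4.2.16(2)]{Federer} together with the boundary rectifiability theorem ensures both $R_*$ and $\partial R_*$ are integer-multiplicity rectifiable; passing to the limit in the slicing identity yields $\frac{1}{\pi}\star(\pi J_* - J(u_0)) = \partial R_*$, as required.

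\textbf{Main obstacle.} The hardest step is the last one: the local rectifiability of the limits given by Lemma~\ref{lemma:compactnessE} and the Euclidean theory of~\cite{JerrardSoner-GL, ABO2} provides no \emph{a priori} control on the global homology class, which is precisely what the slicing of the globally-defined $\Phi_\eps$ is designed to remedy. The key technical points are showing that the flat-norm error coming from the shrinking bad set $B_\eps$ is negligible (not merely that $B_\eps$ has small volume) and producing a uniform mass bound on the rectifiable approximations, so that Federer--Fleming compactness can be invoked.
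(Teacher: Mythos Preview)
Your decomposition $J(u_\eps)=J(u_0)+\bar J(\Phi_\eps)+\tfrac12\,\d\bigl((|\Phi_\eps|^2-1)\,j(u_0)\bigr)$ is correct, and the error term does vanish in $W^{-1,r}$ as you argue. The gap is in the last step. To apply Federer--Fleming compactness to the slices $\llbracket N^y_\eps\rrbracket$ you need a uniform bound on $\M(\llbracket N^y_\eps\rrbracket)+\M(\partial\llbracket N^y_\eps\rrbracket)$, and this bound is \emph{not} available. By coarea, the generic slice has mass controlled by $\int_{M\setminus B_\eps}|\d(\Phi_\eps/|\Phi_\eps|)|\lesssim\|\d\Phi_\eps\|_{L^1}$, but $|\d\Phi_\eps|\lesssim|\D_0 u_\eps|+|\D_0 u_0|$ gives only $\|\d\Phi_\eps\|_{L^1}\lesssim|\log\eps|^{1/2}$, which diverges. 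Worse, $\bar J(\Phi_\eps)$ need not even be a bounded measure for fixed $\eps$: pointwise it is quadratic in $\d\Phi_\eps$, and $\d\Phi_\eps$ is only in $L^p$ for $p<2$ (because of the $\D_0 u_0$ contribution), so the slicing identity you invoke does not immediately make sense for $\Phi_\eps$ itself. You correctly name these issues in your ``Main obstacle'' paragraph, but you do not resolve them, and I do not see how to salvage the compactness step along this route.

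The paper proceeds differently. Rather than slicing each $\Phi_\eps$ and trying to pass to the limit in the resulting currents, it first constructs a \emph{single} limit section $w_*\in W^{1,p}(M,E)$ with $|w_*|=1$ a.e.\ and $J(w_*)=\pi J_*$ (this is Lemma~\ref{lemma:comp_u}), and then applies Proposition~\ref{prop:homologyPPS} directly to $w_*$. The construction of $w_*$ is the heart of the matter: one Hodge-decomposes $j(u_\eps)=\d\varphi_\eps+\d^*\psi_\eps+\xi_\eps$, uses the compactness of Jacobians to bound $\psi_\eps$ in $W^{1,p}$ uniformly, then chooses gauge maps $\Phi_\eps\colon M\to\S^1$ via Lemma~\ref{lemma:choiceofgauge} so that $\Phi_\eps^*(\mathrm{vol}_{\S^1})$ kills the divergent parts $\d\varphi_\eps+\bar\xi_\eps$. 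After a further truncation $\rho_\eps$ of the modulus, the resulting $w_\eps=\rho_\eps\Phi_\eps u_\eps$ is \emph{bounded} in $W^{1,p}$, so it has a weak limit $w_*$; continuity of the Jacobian (Proposition~\ref{prop:contjac}) then gives $J(w_*)=\pi J_*$. The point is that the gauge choice replaces your attempted current-compactness by ordinary weak $W^{1,p}$-compactness of sections, which is available.
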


The proof of Proposition~\ref{prop:homologyJ} relies on the following result.
%
%

\begin{lemma} \label{lemma:comp_u}
 Let~$1 < p < n/(n-1)$. Let $\{u_\eps \}\subset W^{1,2}(M, \, E)$ be a sequence
 that satisfies~\eqref{compE:hp}, and let~$J_*$
 be a bounded measure, with values in~$2$-forms, such that
 that~$J(u_\eps)\to \pi J_*$ in~$W^{-1,p}(M)$. Then, there exists
 $w_*\in W^{1,p}(M, \ E)$ such that~$\abs{w_*} = 1$ a.e.~and~$J(w_*) = \pi J_*$.
\end{lemma}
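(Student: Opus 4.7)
The plan is to reduce the statement to the Euclidean, complex-valued case by trivialising $E$ over a finite cover of $M$, invoke the unit-modulus realisation of the limit Jacobian available in that setting, and glue the local representatives into a global section of $E$ using its $\U(1)$-valued transition functions.

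First, by Lemma~\ref{lemma:truncation} applied with $A_\eps \equiv 0$, I replace $u_\eps$ by its modulus-truncation so that $\abs{u_\eps} \le 1$ on $M$, retaining $J(u_\eps) \to \pi J_*$ in $W^{-1,p}(M)$. Then I cover $M$ by a finite family $\{U_k\}_{k=1}^N$ of contractible open sets, each relatively compact in a coordinate chart and trivialising $E$, so that over each $U_k$ sections of $E$ correspond to $\C$-valued maps and $\D_0 = \d - i\gamma_0$ for a smooth $1$-form $\gamma_0$. As in the proof of Lemma~\ref{lemma:compactnessE}, the Euclidean energy $\bar E_\eps({u_\eps}_{|U_k}, U_k)$ obeys a logarithmic bound; using~\eqref{compare_Jac} and the fact that $1 - \abs{u_\eps}^2 \to 0$ in $L^2(M)$, the flat Jacobians satisfy $\bar J({u_\eps}_{|U_k}) \to \pi {J_*}_{|U_k}$ in $W^{-1,p}(U_k)$.

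Next, I apply the Euclidean realisation/compactness of Alberti--Baldo--Orlandi~\cite{ABO2} (see also Jerrard--Soner~\cite{JerrardSoner-GL}) in each chart. A diagonal procedure over the finite cover yields a single subsequence and, for each $k$, a map $w^{(k)} \in W^{1,p}(U_k, \S^1)$ with $\abs{w^{(k)}} = 1$ a.e., $\bar J(w^{(k)}) = \pi {J_*}_{|U_k}$, and such that ${u_\eps}_{|U_k} \to w^{(k)}$ a.e.~on $U_k$; the last point comes from the strong-compactness content of the ABO2 construction, in which the realising map is extracted as an a.e.~limit of a renormalisation of $u_\eps$. Since $u_\eps$ is a global section, its local representatives satisfy ${u_\eps}_{|U_k} = \tau_{kl}\,{u_\eps}_{|U_l}$ on each overlap $U_k \cap U_l$, where $\tau_{kl}\colon U_k \cap U_l \to \U(1)$ are the smooth transition functions of $E$; passing to the a.e.~limit gives $w^{(k)} = \tau_{kl}\,w^{(l)}$. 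Hence the family $\{w^{(k)}\}$ defines a single global section $w_* \in W^{1,p}(M, E)$ with $\abs{w_*} = 1$ a.e. The identity $J(w_*) = \pi J_*$ then follows from the local identities $\bar J(w^{(k)}) = \pi {J_*}_{|U_k}$, the relation~\eqref{compare_Jac} (noting $1 - \abs{w^{(k)}}^2 = 0$), and the locality of the Jacobian (Remark~\ref{rk:locality-jac2}).

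The principal obstacle is this local realisation step: beyond the abstract existence of a unit-modulus map with the prescribed Jacobian in each chart, one needs a strong enough form of compactness to ensure the local limits are genuine a.e.-limits of the given sequence, which in turn forces their compatibility on overlaps. This rests on the coarea/dipole arguments underlying ABO2, which produce $W^{1,p}$-bounds ($p < n/(n-1)$) for a suitable renormalisation of $u_\eps$ in spite of the $\abs{\log\eps}^{1/2}$ blow-up of its $W^{1,2}$-norm.
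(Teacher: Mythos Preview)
There is a genuine gap in the gluing step. Your argument hinges on the assertion that, in each chart, the Euclidean realisation $w^{(k)}$ arises as an a.e.\ limit of (a renormalisation of) ${u_\eps}_{|U_k}$, so that the cocycle relation ${u_\eps}_{|U_k}=\tau_{kl}\,{u_\eps}_{|U_l}$ passes to the limit. But the results of~\cite{ABO2, JerrardSoner-GL} give compactness of the \emph{Jacobians}, not of the maps: under the logarithmic energy bound, $u_\eps$ (and $u_\eps/\abs{u_\eps}$) need \emph{not} converge a.e.\ nor weakly in any $W^{1,p}$. Phases may oscillate wildly --- for any smooth $\varphi_\eps$ with $\norm{\d\varphi_\eps}_{L^2}\lesssim\abs{\log\eps}^{1/2}$, the sequence $e^{i\varphi_\eps}u_\eps$ has the same Jacobian limit and the same energy scaling but a completely different pointwise behaviour. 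The realisation of a prescribed Jacobian by a unit-modulus map is highly non-unique (any $W^{1,p}$-map into $\S^1$ with closed, hence exact on a contractible set, pre-Jacobian can be multiplied in), so even if you produce local $w^{(k)}$, nothing ties them together across overlaps.

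This is precisely why the paper works \emph{globally}: it takes the Hodge decomposition $j(u_\eps)=\d\varphi_\eps+\d^*\psi_\eps+\xi_\eps$, kills the uncontrolled exact part $\d\varphi_\eps$ (and most of the harmonic part) by a global gauge $\Phi_\eps\colon M\to\S^1$ via Lemma~\ref{lemma:choiceofgauge}, so that $j(\Phi_\eps u_\eps)$ is bounded in $L^p$ (the co-exact part $\d^*\psi_\eps$ is controlled by $\norm{J(u_\eps)}_{W^{-1,p}}$ through~\eqref{compu12}--\eqref{compu13}). A further scalar modification $\rho_\eps$ handles $\d(\abs{u_\eps})$, yielding a \emph{global} section $w_\eps=\rho_\eps\Phi_\eps u_\eps$ bounded in $W^{1,p}(M,E)$, whose weak limit is the desired $w_*$. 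No local realisation or gluing is needed. Remark~\ref{rk:compactnessu} in the paper makes explicit that without the $\rho_\eps$-step even gauge transformations alone do not suffice, underscoring that pointwise compactness of $u_\eps$ simply fails in general.
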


Proposition~\ref{prop:homologyJ} follows immediately from
Proposition~\ref{prop:homologyPPS} and Lemma~\ref{lemma:comp_u}.
It only remains to prove Lemma~\ref{lemma:comp_u}. We will need
an auxiliary result, again borrowed from~\cite{ParisePigatiStern}.
We denote by~$\Harm^1(M)$ the space of harmonic~$1$-forms on~$M$
and by~$\vol_{\S^1}$ the volume form of~$\S^1$.

\begin{remark}\label{rk:pullback-Phi}
	If $\Phi \in W^{1,1}(M,\,\S^1)$, then $\Phi^*(\vol_{\S^1})$ has a pointwise a.e. meaning, and we have the pointwise a.e. equality
	\[
		\Phi^*(\vol_{\S^1}) = -i \Phi^{-1} \d \Phi.
	\]
\end{remark}

\begin{lemma} \label{lemma:choiceofgauge}
 For any~$\varphi\in W^{1,2}(M, \, \R)$ and~$\xi\in\Harm^1(M)$,
 there exist a map~$\Phi\colon M\to\S^1$, as regular as $\varphi$, and a form~$\bar{\xi}\in\Harm^1(M)$
 such that
 \begin{gather}
  \Phi^*(\vol_{\S^1}) = -\d\varphi - \bar{\xi} \label{gauge-pullback} \\
  \norm{\xi - \bar{\xi}}_{L^2(M)} \leq C_M \label{gauge-bdd}
 \end{gather}
 where~$C_M > 0$ is a constant that depends only on~$M$.
\end{lemma}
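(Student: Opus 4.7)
The plan is to reduce the problem to a lattice approximation in $\Harm^1(M)$ and then to build $\Phi$ by twisting a smooth topological model by a phase carrying the data $\varphi$. The key algebraic-topological observation is that for any $\Phi \colon M \to \S^1$, the form $\Phi^*(\vol_{\S^1}) = -i\,\Phi^{-1}\d\Phi$ (Remark~\ref{rk:pullback-Phi}) is closed and its de Rham class lies in $2\pi$ times the image of $H^1(M, \, \Z)$ in $H^1(M, \, \R)$; conversely, since $\S^1$ is an Eilenberg--MacLane space $K(\Z, 1)$, every such integer class is realised as $[\Psi^*(\vol_{\S^1})]$ for some smooth $\Psi \colon M \to \S^1$.

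By Hodge theory, $\Harm^1(M) \simeq H^1(M, \, \R)$, and the image of $2\pi H^1(M, \, \Z)$ under this identification is a full-rank lattice $L \subset \Harm^1(M)$. Hence the quotient $\Harm^1(M)/L$ is a flat torus, compact in the $L^2$-topology; let $C_M$ denote its diameter. Since $L = -L$, for any $\xi \in \Harm^1(M)$ one can pick $\bar{\xi} \in L$ with $\norm{\xi - \bar{\xi}}_{L^2(M)} \leq C_M$, which will establish~\eqref{gauge-bdd}. To construct the topological model, fix a smooth closed $1$-form $\omega$ in the integer class $-[\bar{\xi}]/(2\pi)$, lift it to the universal cover $\widetilde{M}$ where $\omega = \d f$ for some $f \colon \widetilde{M} \to \R$, and set $\Psi := e^{2\pi i f}$; integrality of the periods of $\omega$ ensures that $\Psi$ is well-defined on $M$, and a short computation gives $\Psi^*(\vol_{\S^1}) = 2\pi\omega$, so $[\Psi^*(\vol_{\S^1})] = -[\bar{\xi}]$. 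Consequently $\Psi^*(\vol_{\S^1}) + \bar{\xi}$ is a smooth, closed and exact $1$-form, and the Hodge decomposition supplies $\psi \in C^\infty(M, \, \R)$ with $\Psi^*(\vol_{\S^1}) + \bar{\xi} = -\d\psi$.

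Finally, I would set $\Phi := \Psi \cdot e^{i(\psi - \varphi)}$. Since $\Psi$ and $\psi$ are smooth and $\varphi \in W^{1,2}(M, \, \R)$, the map $\Phi$ belongs to $W^{1,2}(M, \, \S^1)$ and inherits the regularity of $\varphi$. A direct computation using $\Phi^{-1}\d\Phi = \Psi^{-1}\d\Psi + i\,\d(\psi - \varphi)$ together with Remark~\ref{rk:pullback-Phi} yields
\[
 \Phi^*(\vol_{\S^1}) = \Psi^*(\vol_{\S^1}) + \d(\psi - \varphi) = -\bar{\xi} - \d\varphi,
\]
which is exactly~\eqref{gauge-pullback}.

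The main obstacle is the lattice step: one has to know that the integer classes form a cocompact lattice in $\Harm^1(M)$, which is the global geometric input that produces a constant $C_M$ depending only on $M$ and not on $\xi$. Everything else is soft: the construction of $\Psi$ depends only on the cohomology class of $\bar{\xi}$, so the $L^2$-approximation of $\xi$ by $\bar{\xi}$ is not compromised by any subsequent smoothing, and the phase twist $e^{i(\psi - \varphi)}$ is tailored precisely to cancel the exact part coming from the Hodge decomposition of $\Psi^*(\vol_{\S^1}) + \bar{\xi}$.
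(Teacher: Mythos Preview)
Your proof is correct and follows essentially the same approach as the paper: approximate $\xi$ by a point $\bar\xi$ of the cocompact lattice $2\pi\,H^1(M,\Z)\hookrightarrow\Harm^1(M)$, realise the corresponding integer class by a smooth map $\Psi\colon M\to\S^1$, and twist by a phase carrying $\varphi$. The paper's version (citing \cite{ParisePigatiStern,BairdWood}) is marginally slicker in that it takes $\Psi$ to be a \emph{harmonic} map, so that $\Psi^*(\vol_{\S^1})$ already lands in $\Harm^1(M)$ and your auxiliary exact correction $\d\psi$ never appears; in your notation this is simply the choice $\omega=-\bar\xi/(2\pi)$.
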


\begin{remark} \label{rk:choiceofgauge}
 By Hodge theory, the space~$\Harm^1(M)$ has finite dimension.
 Therefore, the difference~$\xi-\bar{\xi}$ is bounded not only in~$L^2(M)$,
 but also in any other norm.
\end{remark}

\begin{proof}
	The proof follows very closely the argument of \cite[Lemma~3.4]{ParisePigatiStern}. Since, however, that lemma is designed to deal with a slightly different situation and cannot be used directly in our case, we provide full details for reader's convenience.
	
	We notice that, for all smooth maps $f,g : M \to \S^1$, the map $\phi := fg : M \to \S^1$ is still smooth, and we can pull back $\vol_{\S^1}$ by $\phi$. Next, since $\S^1$ is an Abelian Lie group with an invariant volume form, it holds $\phi^*(\vol_{\S^1}) = f^*(\vol_{\S^1}) + g^*(\vol_{\S^1})$. For any smooth function $\psi: M \to \R$ and $f : M \to \S^1$ harmonic, if we set $g := e^{i \psi}$, we have $\phi = f e^{i\psi}$ and, by the previous formula and the translational invariance of $\vol_{\S^1}$, we end up with $\phi^*(\vol_{\S^1}) = f^*(\vol_{\S^1}) + \d\psi$. Moreover, $f^*(\vol_{\S^1})$ is a harmonic one-form on $M$ (because $f : M \to \S^1$ is harmonic, see \cite[Example~4.2.6]{BairdWood}). Following \cite{ParisePigatiStern} and \cite[Example~3.3.8]{BairdWood}, we observe that $f$ can be chosen so that $\| \xi - f^*(\vol_{\S^1}) \|_{L^\infty(M)} \leq C(M)$, where $C(M) > 0$ is a constant depending only on $M$. Therefore, if $\varphi$ is smooth, the conclusion follows setting $\psi = -\varphi$, $\Phi = e^{-i\varphi} f$ and $\bar{\xi} = f^*(\vol_{\S^1})$. If $\varphi$ is only of class $W^{1,2}(M,\R)$, the conclusion is however still true thanks to a standard density argument.
	Finally, by definition, it easily seen that $\Phi$ is as regular as $\varphi$.   
\end{proof}


\begin{proof}[Proof of Lemma~\ref{lemma:comp_u}]
 We can assume without loss of generality that~$\{u_\eps \}$
 is bounded in~$L^\infty(M)$, independently of~$\eps$
 --- for otherwise, we replace each map~$u_\eps$ by the truncated map~$v_\eps$
 defined in~\eqref{trunc:v} and apply Lemma~\ref{lemma:truncation}
 (with~$A_\eps = 0$). Then, it follows
 that~$j(u_\eps)\in L^2(M, \, \T^* M)$ and that
 \begin{equation} \label{compu0}
  \norm{j(u_\eps)}_{L^2(M)} \leq 
  \norm{\D_0u_\eps}_{L^2(M)} \norm{u_\eps}_{L^\infty(M)}
  \lesssim \abs{\log\eps}^{1/2}
 \end{equation}
 due to the energy estimate~\eqref{compE:hp}.
 We are going to construct a sequence of maps of the form
 \[
  w_\eps = \rho_\eps \, \Phi_\eps u_\eps
 \]
 for suitable 
 functions~$\rho_\eps\colon M\to\R$
 and maps~$\Phi_\eps\colon M\to\S^1$, such that~$J(w_\eps)\to \pi J_*$
 in~$W^{-1,p}(M)$, $\abs{w_\eps}\to 1$ a.e. and
 $w_\eps$ is \emph{bounded} in~$W^{1,p}(M)$.
 Then, we will obtain a map~$w_*$ with the desired properties
 by passing to the (weak) limit in the~$w_\eps$'s.
 We split the proof into several steps.
 
 \setcounter{step}{0}
 \begin{step}
  We consider the Hodge decomposition of~$j(u_\eps)$
  --- that is, we write (in a unique way)
  \begin{equation} \label{compu11}
   j(u_\eps) = \d\varphi_\eps + \d^*\psi_\eps + \xi_\eps 
  \end{equation}
  for some (co-exact) form~$\varphi_\eps\in W^{1,2}(M, \, \R)$,
  some (exact) form~$\psi_\eps\in W^{1,2}(M, \, \Lambda^2\T^*M)$, 
  and some~$\xi_\eps\in\Harm^1(M)$.
  This decomposition (recalled in Proposition~\ref{prop:LpHodgeDec}) is orthogonal in~$L^2(M)$.
  Then, $\psi_\eps$ is closed and~$L^2$-orthogonal to all harmonic~$2$-forms.
  By taking the differential in~\eqref{compu11}, we obtain
  \begin{equation} \label{compu12}
   -\Delta\psi_\eps = \d\d^*\psi_\eps = \d j(u_\eps)
   = 2J(u_\eps) - 2F_0
  \end{equation}
  where~$F_0$ is the curvature of the reference connection~$\D_0$.
  By Lemma~\ref{lemma:compactnessE}, we know that~$J(u_\eps)$
  in bounded in~$W^{-1,p}(M)$, independently of~$\eps$.
  By applying Lemma~\ref{lemma:elliptic_reg_bis}, we deduce
  \begin{equation} \label{compu13}
   \norm{\psi_\eps}_{W^{1,p}(M)}
    \lesssim \norm{J(u_\eps) - F_0}_{W^{-1,p}(M)} \leq C_p
  \end{equation}
  for some constant~$C_p$ that depends on~$p$ 
  (and on $F_0$), but not on~$\eps$.
 \end{step}
 
 \begin{step}
  In this step, we construct suitable maps~$\Phi_\eps\colon M\to\S^1$,
  in such a way that~$j(\Phi_\eps u_\eps)$ is bounded in~$L^p(M)$.
  We do so by applying Lemma~\ref{lemma:choiceofgauge}.
  By Lemma~\ref{lemma:choiceofgauge}, for each $\eps > 0$
  there exist a map~$\Phi_\eps\colon M\to\S^1$ and a 
  form~$\bar{\xi}_\eps\in\Harm^1(M)$ such that
  \begin{gather}
   \Phi_\eps^*(\vol_{\S^1}) = -\d\varphi_\eps - \bar{\xi}_\eps \label{compu21} \\
   \norm{\xi_\eps - \bar{\xi}_\eps}_{L^2(M)} \leq C_M \label{compu22}
  \end{gather}
  where~$C_M$ is a constant depending only on~$M$.
  We consider the section~$\Phi_\eps u_\eps$. We have
  \begin{equation*} 
   \begin{split}
    \norm{\D_0(\Phi_\eps u_\eps)}_{L^2(M)}
    \lesssim \norm{\D_0 u_\eps}_{L^2(M)} + \norm{\Phi^*_\eps(\vol_{\S^1})}_{L^2(M)}
    \stackrel{\eqref{compu21}}{\lesssim} 
     \norm{\D_0 u_\eps}_{L^2(M)} + \norm{\d\varphi_\eps + \bar{\xi}_\eps}_{L^2(M)}
   \end{split}
  \end{equation*}
  As the decomposition in~\eqref{compu11} is orthogonal in~$L^2$,
  we obtain
  \begin{equation} 
   \begin{split}
    \norm{\d\varphi_\eps + \bar{\xi}_\eps}_{L^2(M)}
    \lesssim \norm{j(u_\eps)}_{L^2(M)} 
     + \norm{\xi_\eps -\bar{\xi_\eps}}_{L^2(M)}
    \stackrel{\eqref{compu0}, \, \eqref{compu22}}{\lesssim} \abs{\log\eps}^{1/2}
   \end{split}
   \label{compu27}
  \end{equation}
  and hence, recalling the energy estimate~\eqref{compE:hp},
  \begin{equation} \label{compu28}
   \begin{split}
    \norm{\D_0(\Phi_\eps u_\eps)}_{L^2(M)}
    \lesssim \abs{\log\eps}^{1/2}
   \end{split}
  \end{equation}
  We claim that
  \begin{equation} \label{compu24}
   \norm{j(\Phi_\eps u_\eps)}_{L^p(M)} \leq C_p
  \end{equation}
  for some constant~$C_p$ depending on~$p$, $M$ and $\D_0$, but not on~$\eps$.
  Indeed, due to~\eqref{compu11} and~\eqref{compu21}, we have
  \begin{equation} \label{compu23}
   \begin{split}
    j(\Phi_\eps u_\eps) = j(u_\eps) + \Phi_\eps^*(\vol_{\S^1}) \abs{u_\eps}^2
    = \d^*\psi_\eps + \left(1 - \abs{u_\eps}^2\right)
    \left(\d\varphi_\eps + \bar{\xi}_\eps\right)
    + \xi_\eps - \bar{\xi}_\eps
   \end{split}
  \end{equation}
  Let~$q > 2$ be such that $1/p = 1/q + 1/2$. 
  As~$\norm{u_\eps}_{L^\infty(M)} \leq C$, by generalised 
  H\"{o}lder's inequality and interpolation we obtain
  \begin{equation*} 
   \begin{split}
    \norm{\left(1 - \abs{u_\eps}^2\right)
     \left(\d\varphi_\eps + \bar{\xi}_\eps\right)}_{L^p(M)}
    &\leq \norm{1 - \abs{u_\eps}^2}_{L^q(M)} 
     \norm{\d\varphi_\eps + \bar{\xi}_\eps}_{L^2(M)}  \\
    & \leq \norm{ 1- \abs{u_\eps}^2}_{L^2(M)}^{2/q} \norm{1-\abs{u_\eps}^2}_{L^\infty(M)}^{1-2/q} \norm{\d\varphi_\eps + \bar{\xi}_\eps}_{L^2(M)}\\
    & \leq \norm{ 1-\abs{u_\eps}^2}_{L^2(M)}^{2/q} (1+C)^{1-2/q} \norm{\d\varphi_\eps + \bar{\xi}_\eps}_{L^2(M)} \\
    &\lesssim \norm{1 - \abs{u_\eps}^2}_{L^2(M)}^{2/q} 
     \norm{\d\varphi_\eps + \bar{\xi}_\eps}_{L^2(M)}, 
   \end{split}
  \end{equation*}
  up to a multiplicative constant which
  depends only on $M$ and $p$. Hence,
  recalling~\eqref{compE:hp} and~\eqref{compu27},
  \begin{equation} 
   \norm{\left(1 - \abs{u_\eps}^2\right)
    \left(\d\varphi_\eps + \bar{\xi}_\eps\right)}_{L^p(M)}
    \lesssim \eps^{2/p- 1}\abs{\log\eps}^{1/p},
   \label{compu25}
  \end{equation}
  up to a constant depending only on $M$ and $p$.
  Now~\eqref{compu24} follows by~\eqref{compu13}, \eqref{compu22}, 
  \eqref{compu23}, \eqref{compu25} and Remark~\ref{rk:choiceofgauge}.
  As a byproduct of~\eqref{compu25}, we obtain
  \begin{equation} 
   \begin{split}
    J(\Phi_\eps u_\eps) - J(u_\eps) 
    &= \frac{1}{2}\d\left(j(\Phi_\eps u_\eps) - j(u_\eps)\right) \\
    &=\frac{1}{2}\d\left(\left(1 - \abs{u_\eps}^2\right)
     \left(\d\varphi_\eps + \bar{\xi}_\eps\right)\right)
     \to 0 \qquad \textrm{in } W^{-1,p}(M).
   \end{split} 
   \label{compu26}
  \end{equation}
 \end{step}
 
 \begin{step}
  The estimate~\eqref{compu24} is not enough to guarantee that~$\Phi_\eps u_\eps$
  is bounded in~$W^{1,p}(M)$: we also need to control the differential
  of~$\abs{u_\eps}$. Although we cannot make sure 
  that~$\norm{\d\abs{u_\eps}}_{L^p(M)}$ is bounded, in general
  (see Remark~\ref{rk:compactnessu} below), we can construct
  suitable functions~$\rho_\eps\colon M\to\R$
  so that $w_\eps := \rho_\eps \, \Phi_\eps u_\eps$
  satisfies the desired estimates.
  
  For any~$\eps>0$, we take a smooth nonnegative
  function~$f_\eps\colon\R\to\R$ such that 
  \begin{gather}
   f_\eps(t) = 1 \quad \textrm{if } \abs{t - 1} \geq 2\eps^{1/2}, \qquad 
   f_\eps(t) = \frac{1}{t} \quad \textrm{if } \abs{t - 1} \leq \eps^{1/2} \label{compu31} \\
   \norm{f^\prime_\eps}_{L^\infty(\R)} \leq C \label{compu32}
  \end{gather}
  We define
  \begin{equation} \label{compu33}
   \rho_\eps := f_\eps(\abs{u_\eps}), \qquad
   w_\eps := \rho_\eps \, \Phi_\eps u_\eps 
  \end{equation}
  We have $j(w_\eps) = \rho_\eps^2 \, j(\Phi_\eps u_\eps)$. 
  Moreover, $\rho_\eps\to 1$ uniformly as~$\eps\to 0$,
  because of~\eqref{compu31}, \eqref{compu32}. 
  Therefore, from~\eqref{compu24} we deduce
  \begin{gather}
   \norm{j(w_\eps)}_{L^p(M)} \leq C_p \label{compu34} \\
   J(w_\eps) - J(\Phi_\eps u_\eps) \to 0 \qquad 
    \textrm{in } W^{-1,p}(M) \label{compu35}
  \end{gather}
  as~$\eps\to 0$. In particular, \eqref{compu26}, \eqref{compu35}
  and the fact that $J(u_\eps)\to \pi J_*$ in~$W^{-1,p}(M)$ imply
  \begin{equation} \label{compu36}
   J(w_\eps) \to \pi J_* \qquad \textrm{in } W^{-1,p}(M)
  \end{equation}
  as~$\eps\to 0$. We estimate the~$L^2$-norm of~$\D_0w_\eps$.
  By explicit computation, we have
  \begin{equation*} 
   \begin{split}
    \norm{\D_0 w_\eps}_{L^2(M)} \lesssim
    \norm{f^\prime_\eps}_{L^\infty(\R)} \norm{\d(\abs{u_\eps})}_{L^2(M)}
    + \norm{\D_0(\Phi_\eps u_\eps)}_{L^2(M)} 
   \end{split}
  \end{equation*}
  The second term at the right-hand side is bounded by~\eqref{compu28}.
  To estimate the other term, we recall~\eqref{compu32}
  and observe that~$\norm{\d(\abs{u_\eps})}_{L^2(M)}\leq 
  \norm{\D_0u_\eps}_{L^2(M)}$, by Lemma~\ref{lemma:Du}. Then,
  \begin{equation} \label{compu37}
   \begin{split}
    \norm{\D_0 w_\eps}_{L^2(M)} \lesssim \abs{\log\eps}^{1/2}
   \end{split}
  \end{equation}
  Finally, using~\eqref{compu32} and the fact
  that~$\abs{u_\eps} \to 1$ in~$L^2(M)$
  (due to our assumption~\eqref{compE:hp}), we deduce
  \begin{equation} \label{compu38}
   \abs{w_\eps}\to 1 \qquad \textrm{in } L^2(M)
  \end{equation}
  as~$\eps\to 0$.
 \end{step}
 
 \begin{step}
  Eventually, we will show that the sequence~$w_\eps$
  is bounded in~$W^{1,p}(M)$. As an intermediate step, we prove that
  \begin{equation} \label{compu41}
   \norm{\d(\abs{w_\eps})}_{L^p(M)} \to 0 \qquad \textrm{as } \eps\to 0.
  \end{equation}
  Indeed, let
  \[
   S_\eps := \left\{x\in M\colon 
    \abs{\abs{u_\eps(x)} - 1} \geq \eps^{1/2}  \right\} 
  \]
  By construction, $\abs{w_\eps} = \abs{u_\eps} f_\eps(\abs{u_\eps})$
  in~$M$ and~$\abs{w_\eps} = 1$ in~$M\setminus S_\eps$, so
  \[
   \norm{\d(\abs{w_\eps})}_{L^p(M)}
   \leq \left(\norm{f_\eps}_{L^\infty(\R)} 
    + \norm{f^\prime_\eps}_{L^\infty(\R)} \right) 
    \norm{\d(\abs{u_\eps})}_{L^p(S_\eps)} 
   \stackrel{\eqref{compu32}}{\lesssim} \norm{\d(\abs{u_\eps})}_{L^p(S_\eps)}
  \]
  Lemma~\ref{lemma:Du}, the H\"older inequality 
  and the energy estimate~\eqref{compE:hp} imply
  \begin{equation} \label{compu42}
   \norm{\d(\abs{w_\eps})}_{L^p(M)}
   \lesssim \norm{\D_0 u_\eps}_{L^p(S_\eps)}
   \lesssim \left(\vol_g(S_\eps)\right)^{1/p - 1/2}
    \abs{\log\eps}^{1/2}
  \end{equation}
  On the other hand, $(\abs{u_\eps}^2 - 1)^2\geq\eps$ on~$S_\eps$,
  so the energy estimate~\eqref{compE:hp} implies
  \begin{equation} \label{compu43}
   \vol_g(S_\eps)\lesssim \eps\abs{\log\eps}
  \end{equation}
  Combining~\eqref{compu42} with~\eqref{compu43}, we obtain~\eqref{compu41}.
 \end{step}

 \begin{step}
  We claim that
  \begin{equation} \label{compu51}
   \norm{\D_0 w_\eps}_{L^p(M)} \leq C_p
  \end{equation}
  Once~\eqref{compu51} is proved, the lemma follows. 
  Indeed, if~\eqref{compu51} holds, then we extract a subsequence
  such that $w_\eps\rightharpoonup w_*$ weakly in~$W^{1,p}(M)$.
  The limit~$w_*\in W^{1,p}(M, \, E)$ satisfies~$J(w_*) = \pi J_*$, 
  due to~\eqref{compu36} and Proposition~\ref{prop:contjac}.
  Moreover, the estimate~\eqref{compu38} implies
  that~$\abs{w_*} = 1$, as required.
  
  We proceed to the proof of~\eqref{compu51}.
  Let
  \[
   T_\eps := \left\{x\in M\colon \abs{w_\eps(x)}\leq \frac{1}{2}\right\} 
   = \left\{x\in M\colon \abs{u_\eps(x)}\leq \frac{1}{2}\right\}
  \]
  On the one hand, Lemma~\ref{lemma:Du} gives
  \begin{equation} 
   \begin{split}
    \norm{\D_0 w_\eps}_{L^p(M\setminus T_\eps)}
    \lesssim \norm{\d(\abs{w_\eps})}_{L^p(M\setminus T_\eps)} 
     + \norm{j(w_\eps)}_{L^p(M\setminus T_\eps)}
    \stackrel{\eqref{compu34}, \, \eqref{compu41}}{\lesssim} C_p
   \end{split}
   \label{compu52}
  \end{equation}
  On the other hand, the energy estimate~\eqref{compE:hp}
  implies that~$\vol_g(T_\eps)\lesssim\eps^2\abs{\log\eps}$.
  By applying the H\"older inequality, we obtain
  \begin{equation} \label{compu53}
   \norm{\D_0 u_\eps}_{L^p(T_\eps)}
   \lesssim \eps^{2/p - 1} \abs{\log\eps}^{1/p}
  \end{equation}
  Now, \eqref{compu52} and~\eqref{compu53} imply~\eqref{compu51}.
  \qedhere
 \end{step}
\end{proof}

 As a byproduct of the arguments above, we have proved the following statement.
\begin{corollary}\label{cor:weak-conv-w}
 Let~$u_\eps\in W^{1,2}(M, \, E)$ be such that
 \begin{equation} \label{rk:compu:hp}
  \sup_{\eps> 0} \left(\norm{u_\eps}_{L^\infty(M)}
  + \frac{E_\eps(u_\eps)}{\abs{\log\eps}} \right) < +\infty
 \end{equation}
 Then, there exists functions~$\rho_\eps\colon M\to\R$,
 maps~$\Phi_\eps\colon M\to\S^1$ and a (non-relabelled) 
 subsequence such that $w_\eps : = \rho_\eps \, \Phi_\eps u_\eps$
 converges weakly in $W^{1,p}(M)$, for any~$p \in [1, \, n/(n-1))$,
 to a limit map~$w_*\in W^{1,p}(M, \, E)$
 with~$\abs{w_*} = 1$ a.e.
 Moreover, $J(u_\eps) - J(w_\eps) \to 0$ in~$W^{-1,p}(M)$
 for any~$p \in [1, \, n/(n-1))$.
\end{corollary}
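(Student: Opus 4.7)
The plan is to show that the construction of $w_\eps$ carried out in the proof of Lemma~\ref{lemma:comp_u} already gives the conclusion, once we observe that the $L^\infty$ bound on $u_\eps$ replaces the truncation step, and that the compactness of the Jacobians is available via Lemma~\ref{lemma:compactnessE}.

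First, I would apply Lemma~\ref{lemma:compactnessE} to extract a subsequence along which $J(u_\eps) \to \pi J_*$ in $W^{-1,p}(M)$ for every $p \in [1, n/(n-1))$, with $J_*$ a bounded $2$-form valued measure. Thanks to the assumed uniform $L^\infty$ bound, there is no need to invoke the truncation Lemma~\ref{lemma:truncation}: the estimate
\[
\norm{j(u_\eps)}_{L^2(M)} \leq \norm{\D_0 u_\eps}_{L^2(M)} \norm{u_\eps}_{L^\infty(M)} \lesssim \abs{\log\eps}^{1/2}
\]
holds directly, which is the only place where the $L^\infty$ hypothesis was exploited in the lemma.

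Next, I would reproduce Steps 1--2 of the proof of Lemma~\ref{lemma:comp_u} verbatim: perform the Hodge decomposition $j(u_\eps) = \d\varphi_\eps + \d^*\psi_\eps + \xi_\eps$, apply Lemma~\ref{lemma:choiceofgauge} to obtain $\Phi_\eps \colon M \to \S^1$ and $\bar{\xi}_\eps \in \Harm^1(M)$ with $\Phi_\eps^*(\vol_{\S^1}) = -\d\varphi_\eps - \bar{\xi}_\eps$ and $\norm{\xi_\eps - \bar{\xi}_\eps}_{L^2} \leq C_M$, and define $\rho_\eps := f_\eps(\abs{u_\eps})$ through the cut-off $f_\eps$ from~\eqref{compu31}--\eqref{compu32}. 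Setting $w_\eps := \rho_\eps \Phi_\eps u_\eps$, Steps 3--5 of the same proof give the bound $\norm{w_\eps}_{W^{1,p}(M)} \leq C_p$ for every $p \in [1, n/(n-1))$, together with $\abs{w_\eps} \to 1$ in $L^2(M)$ and
\[
J(w_\eps) - J(u_\eps) \to 0 \qquad \textrm{in } W^{-1,p}(M),
\]
which is precisely the second assertion of the corollary.

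Finally, by reflexivity I would extract a further (non-relabelled) subsequence so that $w_\eps \rightharpoonup w_*$ weakly in $W^{1,p}(M)$, for some $w_* \in W^{1,p}(M, E)$. By compact Sobolev embedding $w_\eps \to w_*$ strongly in $L^q(M)$ for every $q < np/(n-p)$; combined with $\abs{w_\eps} \to 1$ in $L^2(M)$ this forces $\abs{w_*} = 1$ almost everywhere. There is no real obstacle: everything of substance is already contained in Lemma~\ref{lemma:comp_u}, and the corollary merely repackages the intermediate sequence $\{w_\eps\}$ produced along the way, replacing the truncation step by the standing $L^\infty$ assumption.
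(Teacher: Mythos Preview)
Your proposal is correct and matches the paper's own treatment: the paper states Corollary~\ref{cor:weak-conv-w} explicitly ``as a byproduct of the arguments above,'' meaning exactly the construction in the proof of Lemma~\ref{lemma:comp_u} that you are invoking, with the $L^\infty$ hypothesis replacing the initial truncation. Your sketch simply unpacks what the paper leaves implicit; the only cosmetic point is that weak $W^{1,p}$-convergence for \emph{all} $p\in[1,n/(n-1))$ follows at once from weak convergence for a single exponent close to $n/(n-1)$ together with the continuous embedding $W^{1,p}\hookrightarrow W^{1,q}$ for $q\le p$.
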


Thus, any sequence 
$\{u_\eps\}$ bounded in $L^\infty(M)$ and with $E_\eps$-energy of order 
$\abs{\log\eps}$ can be split into a compact and a non-compact part (with respect to the weak $W^{1,p}$-topology). Moreover, Corollary~\ref{cor:weak-conv-w} asserts that the compact part stores the information necessary to determine the topological energy-concentration set, while the non-compact part of the sequence is, in this sense, ``topologically irrelevant''. 

On a qualitative level, the lack of compactness is due to wild 
oscillations in the phases made possible by the large amount of energy at 
disposal. At first glance, this inconvenient seems to be ``cured'' by the gauge transformations $\Phi_\eps$. However, we must emphasise that
the ``penalisations'' $\rho_\eps$, even if small in uniform norm as~$\eps \to 0$, play a subtle r\^ole. Indeed, Remark~\ref{rk:compactnessu} below points out that gauge transformations alone are in general not sufficient to perform the splitting and obtain compactness.

\begin{remark} \label{rk:compactnessu}
 Let~$u_\eps\in W^{1,2}(M, \, E)$ be a sequence that 
 satisfies~\eqref{rk:compu:hp}. In general, it may \emph{not}
 be possible to find maps~$\Phi_\eps\colon M\to\S^1$ such
 that~$\Phi_\eps u_\eps$ is bounded in $W^{1,p}(M)$.
 (A counterexample, when~$M = (0, \, 1)\subseteq\R$ and~$E$ 
 is the trivial bundle~$E = M\times\C$, is given by the sequence
 $u_\eps(x) := 1 + \eps \sin(\abs{\log\eps}^{1/2} x/\eps)$).
 However, if~$u_\eps$ are solutions of the Ginzburg-Landau 
 equations (i.e., critical points of~$E_\eps$) in Euclidean domains,
 then it is possible to obtain compactness for a sequence
 of the form~$\Phi_\eps u_\eps$ --- see~\cite[Section~4]{BethuelOrlandi}
 and~\cite[Lemma~2.2]{BaldoOrlandiWeitkamp}, 
 \cite[Proposition~2.13]{BOSFactToulous}.
\end{remark}

\subsection{Lower bounds}

Again, we consider a sequence~$u_\eps\in W^{1,2}(M, \, E)$ 
that satisfies the energy bound~\eqref{compE:hp}.
By Lemma~\ref{lemma:compactnessE}, we know that
$J(u_\eps)\to \pi J_*$ in~$W^{-1,p}(M)$ for any~$p < n/(n-1)$.
The aim of this section is to prove the following

\begin{prop} \label{prop:liminfE}
 If the sequence~$\{u_\eps\}\subset W^{1,2}(M, \, E)$ 
 satisfies~\eqref{compE:hp}, then for any open
 set~$V\subset M$ there holds
 \[
  \pi \abs{J_*}\!(V) \leq \liminf_{\eps\to 0} 
  \frac{E_\eps(u_\eps; \, V)}{\abs{\log\eps}}
 \]
\end{prop}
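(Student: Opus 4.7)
The plan is to reduce the problem to small contractible coordinate balls, where we can invoke the Euclidean lower bound in Theorem~\ref{th:GL}(iii), and then patch the local bounds using a disjoint covering of $V$ (with respect to the limit measure $|J_*|$) together with the superadditivity of the energy on disjoint open sets. Throughout, introduce the nonnegative Radon measures
\[
 \mu_\eps := \frac{1}{\abs{\log\eps}}\left(\frac{1}{2}\abs{\D_0 u_\eps}^2 + \frac{1}{4\eps^2}(1-\abs{u_\eps}^2)^2 \right)\vol_g
\]
so that $\mu_\eps(W) = E_\eps(u_\eps; W)/\abs{\log\eps}$ for every Borel $W \subseteq M$.

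First I would establish the local bound: for any $\eta > 0$ and any open ball $B \csubset V$ contained in a contractible chart on which the metric $g$ is $(1+\eta)$-close to the Euclidean metric $g_e$ (which is always possible by choosing normal coordinates and a sufficiently small radius), I claim that
\[
 \pi \abs{J_*}\!(B) \leq (1+\eta)^3 \liminf_{\eps\to 0}\frac{E_\eps(u_\eps; B)}{\abs{\log\eps}}.
\]
In local trivialisation write $\D_0 = \d - i\gamma_0$ as in~\eqref{gamma0}. By Young's inequality,
\[
 \abs{\d u_\eps}_e^2 \leq (1+\eta)\abs{\d u_\eps}_g^2 \leq (1+\eta)^2 \abs{\D_0 u_\eps}_g^2 + C_\eta\abs{\gamma_0}^2\abs{u_\eps}^2
\]
on $B$, and $\d x \leq (1+\eta)\vol_g$. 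Consequently
\[
 \bar E_\eps(u_\eps; B) \leq (1+\eta)^3 E_\eps(u_\eps; B) + C_\eta \norm{\gamma_0}_{L^\infty(B)}^2 \norm{u_\eps}_{L^2(B)}^2,
\]
and the last summand is $O(1)$ by the energy bound~\eqref{compE:hp}, hence negligible after dividing by $\abs{\log\eps}$. On the other hand, Step~1 of the proof of Lemma~\ref{lemma:compactnessE} combined with Remark~\ref{rk:locality-jac2} shows that $\bar J(u_\eps)\to \pi J_*|_B$ in $W^{-1,p}(B)$, so the limit measure provided by Theorem~\ref{th:GL} on $B$ must coincide with $J_*|_B$ (as distributions, hence as measures, by uniqueness of the limit). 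Theorem~\ref{th:GL}(iii) then yields
\[
 \pi \abs{J_*}\!(B) \leq \liminf_{\eps\to 0}\frac{\bar E_\eps(u_\eps; B)}{\abs{\log\eps}} \leq (1+\eta)^3 \liminf_{\eps\to 0}\frac{E_\eps(u_\eps; B)}{\abs{\log\eps}},
\]
as claimed.

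Next, I would globalise via a Vitali-type argument. Fix $\eta, \sigma > 0$. Since $\abs{J_*}$ is a finite Radon measure on $M$ and every point of $V$ admits arbitrarily small normal-coordinate balls satisfying the closeness requirement above, a Vitali covering argument (selecting radii generically so that $\abs{J_*}$ assigns zero mass to the boundary spheres) produces finitely many pairwise disjoint open balls $B_1, \ldots, B_N \subset V$, each contained in a contractible chart with $g \leq (1+\eta) g_e$ and $g_e \leq (1+\eta) g$, such that $\sum_{j=1}^N \abs{J_*}\!(B_j) \geq \abs{J_*}\!(V) - \sigma$. Applying the local bound to each $B_j$, using disjointness and the inequality $\liminf(a_\eps + b_\eps) \geq \liminf a_\eps + \liminf b_\eps$, yields
\[
 \pi(\abs{J_*}\!(V) - \sigma) \leq \sum_{j=1}^N \pi \abs{J_*}\!(B_j) \leq (1+\eta)^3 \liminf_{\eps\to 0}\frac{E_\eps(u_\eps; \bigcup_j B_j)}{\abs{\log\eps}} \leq (1+\eta)^3\liminf_{\eps\to 0}\frac{E_\eps(u_\eps; V)}{\abs{\log\eps}}.
\]
Letting $\sigma \to 0$ and then $\eta \to 0$ concludes the proof.

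The main delicate point is the identification of the local Jacobian limit with $J_*|_B$, which rests on the locality of the Jacobian (Remark~\ref{rk:locality-jac2}) and the convergence $J(u_\eps) - \bar J(u_\eps) \to 0$ in $W^{-1,p}$ established in Step~1 of Lemma~\ref{lemma:compactnessE}; without it, the Euclidean lower bound in Theorem~\ref{th:GL}(iii) could not be transferred to a statement about $\abs{J_*}$ itself. The metric comparison in the local step is elementary but must be done uniformly as $\eta \to 0$, which is why normal coordinates and the radius-shrinking argument are needed.
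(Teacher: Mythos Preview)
Your proposal is correct and follows essentially the same route as the paper's proof: a local lower bound on small geodesic balls (the paper's Lemma~\ref{lemma:local_liminf}, obtained by comparing $E_\eps$ with its Euclidean counterpart~$\bar E_\eps$ via Young's inequality and invoking Theorem~\ref{th:GL}(iii)), followed by a Vitali--Besicovitch covering argument with respect to~$\abs{J_*}$ (the paper's Lemma~\ref{lemma:covering}, based on Federer's Theorem~\ref{th:covering}). The only cosmetic differences are that the paper takes a \emph{countable} disjoint family of balls and invokes Fatou's lemma, whereas you take finitely many and use the superadditivity of $\liminf$ directly; and the paper tracks the metric error as~$1+\mathrm{O}(\delta^2)$ in terms of the ball radius~$\delta$ rather than via an abstract closeness parameter~$\eta$.
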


We stress that, in the statement of Proposition~\ref{prop:liminfE},
the open set~$V$ may be chosen arbitrarily;
it does not need to be contained in a coordinate chart.
For instance, we may take~$V = M$. Therefore, 
once Proposition~\ref{prop:liminfE} is proved,
Statement~(i) of Theorem~\ref{maingoal:E} follows at once.

\begin{proof}[Proof of Theorem~\ref{maingoal:E}, Statement~(i)]
 The statement follows from Lemma~\ref{lemma:compactnessE}, 
 Proposition~\ref{prop:homologyJ} and Proposition~\ref{prop:liminfE}.
\end{proof}

In the rest of this section,
we deduce Proposition~\ref{prop:liminfE} from its
Euclidean counterpart~\cite{JerrardSoner-GL, ABO2},
by means of a localisation argument.

\begin{lemma} \label{lemma:local_liminf}
 Let~$\delta > 0$ be smaller than the injectivity radius of~$M$.
 Let~$U\subset M$ be a smooth, contractible domain, entirely contained in 
 a geodesic ball of radius~$\delta$. Then,
 \[
  \pi\abs{J_*}\!(U) \leq \left(1 + \mathrm{O}(\delta^2)\right) 
  \liminf_{\eps\to 0} \frac{E_\eps(u_\eps; \, U)}{\abs{\log\eps}}
 \]
\end{lemma}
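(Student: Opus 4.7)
The plan is to reduce to the Euclidean lower bound of Theorem~\ref{th:GL} by working in normal coordinates, keeping track of the errors introduced by the Riemannian metric, which will all be of order~$\delta^2$. First, I would fix a point~$p_0 \in M$ such that~$U$ is contained in the geodesic ball~$B_\delta(p_0)$ and work in Riemannian normal coordinates centered at~$p_0$. A standard computation gives~$g_{ij}(x) = \delta_{ij} + \mathrm{O}(\abs{x}^2)$, so on~$U$ both~$g^{ij}$ and~$\sqrt{\det g}$ differ from their Euclidean counterparts by factors of the form~$1 + \mathrm{O}(\delta^2)$. Since~$U$ is contractible, the bundle~$E$ trivialises over~$U$ and we may write~$\D_0 = \d - i\gamma_0$ as in~\eqref{gamma0}, with~$\gamma_0$ a smooth real~$1$-form on~$U$.

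Next, I would compare~$E_\eps(u_\eps; U)$ with the Euclidean functional~$\bar E_\eps(u_\eps; U)$ defined in~\eqref{BBH-GL-bar}. The pointwise bounds on~$g^{ij}$ and~$\vol_g$ immediately yield
\[
E_\eps(u_\eps; U) \geq (1 - C\delta^2)\int_U\left(\tfrac{1}{2}\abs{\D_0 u_\eps}^2 + \tfrac{1}{4\eps^2}(1-\abs{u_\eps}^2)^2\right) \d x,
\]
where now all norms on the right are Euclidean. Expanding~$\d u_\eps = \D_0 u_\eps + i\gamma_0 u_\eps$ and applying the Cauchy--Schwarz inequality together with the bounds~$\norm{\D_0 u_\eps}_{L^2(U)} \lesssim \abs{\log\eps}^{1/2}$ and~$\norm{u_\eps}_{L^2(U)} \lesssim 1$ coming from~\eqref{compE:hp}, the integral above differs from~$\bar E_\eps(u_\eps; U)$ by at most~$\mathrm{O}(\abs{\log\eps}^{1/2})$, which becomes negligible after dividing by~$\abs{\log\eps}$.

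Then, the relation~\eqref{compare_Jac} together with~$\norm{1 - \abs{u_\eps}^2}_{L^2(U)} \to 0$ implies~$\bar J(u_\eps) - J(u_\eps) \to 0$ in~$W^{-1,2}(U)$, so~$\bar J(u_\eps) \to \pi J_*$ in~$W^{-1,p}(U)$ as well. Theorem~\ref{th:GL}, combined with Remark~\ref{rk:Calpha}, then gives~$\pi \abs{J_*}_{\mathrm{Eucl}}(U) \leq \liminf \bar E_\eps(u_\eps; U)/\abs{\log\eps}$, where~$\abs{J_*}_{\mathrm{Eucl}}$ denotes the total variation computed with the Euclidean norm on~$2$-forms and Lebesgue measure. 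Finally, since in normal coordinates the Riemannian pointwise norm on~$2$-forms and~$\vol_g$ each differ from their Euclidean counterparts by a factor~$1 + \mathrm{O}(\delta^2)$, we obtain~$\abs{J_*}(U) \leq (1 + C\delta^2)\abs{J_*}_{\mathrm{Eucl}}(U)$, and combining the three comparisons yields the desired inequality.

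The main technical point is controlling the~$\gamma_0$-correction arising from the mismatch between~$\D_0$ and the flat connection~$\d$: this contribution is of order~$\abs{\log\eps}^{1/2}$, small enough to vanish after rescaling by~$\abs{\log\eps}$, but the logarithmic energy bound (rather than merely~$L^\infty$-boundedness of~$u_\eps$) is essential to close the argument. The rest is a careful bookkeeping of the three multiplicative factors of the form~$1 + \mathrm{O}(\delta^2)$ --- from~$g^{ij}$, from~$\vol_g$, and from the norm on~$2$-forms --- which compound to yield exactly the~$1 + \mathrm{O}(\delta^2)$ factor in the statement.
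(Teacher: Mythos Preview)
Your proposal is correct and follows essentially the same route as the paper: work in normal coordinates on~$U$, compare~$E_\eps$ with the flat functional~$\bar E_\eps$, and then invoke Theorem~\ref{th:GL}(iii). Two small differences are worth noting. First, the paper handles the $\gamma_0$-correction via Young's inequality with parameter~$\sigma=\delta^2$, obtaining directly $\bar E_\eps \leq (1+\mathrm{O}(\delta^2))E_\eps + (1+\mathrm{O}(\delta^{-2}))\int_U\abs{\gamma_0 u_\eps}^2\vol_g$; your Cauchy--Schwarz bound producing an additive~$\mathrm{O}(\abs{\log\eps}^{1/2})$ error works equally well and is arguably simpler. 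Second, you are more explicit than the paper about the comparison between the Riemannian and Euclidean total variations of~$J_*$ (the paper absorbs this silently into the~$(1+\mathrm{O}(\delta^2))$ factor); note, however, that only the pointwise norm on~$2$-vectors enters this comparison, not~$\vol_g$, since the distributional pairing~$\langle J_*, v\rangle$ is metric-independent.
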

\begin{proof}
 We identify~$U$ with a subset of~$\R^n$, by local coordinates,
 and write~$\D_0 = \d - i \gamma_0$, where~$\d$ is the flat connection on~$U$
 and~$\gamma_0$ is a smooth $1$-form that depends on~$\D_0$ only.
 We consider again the functional~$\bar{E}_\eps$ 
 given by~\eqref{BBH-GL-bar}. Thanks to~\eqref{eq:def-vol-norm-coord}
 (which we can apply as~$U$ is supposed to be contained in a geodesic ball),
 we can write
\[
\begin{split}
	\bar{E}_{\eps}(u_\eps; \, U) &= \int_U \frac{1}{2}\abs{\d u_\eps}^2 + \frac{1}{4\eps^2}\left(1-\abs{u_\eps}^2\right)^2 \,\d x \\
	&= \int_U \frac{1}{2}\abs{\D_0 u_\eps}^2 + \ip{\D_0 u_\eps}{i \gamma_0 u_\eps} + \abs{\gamma_0 u_\eps}^2 + \frac{1}{4\eps^2}\left(1-\abs{u_\eps}^2\right)^2 \,\d x \\
	&\leq (1+\mathrm{O}(\delta^2)) E_\eps(u_\eps; U) 
	+ (1+\mathrm{O}(\delta^{-2})) \int_U \abs{\gamma_0 u}^2 \vol_g \,, 
\end{split}
\]
The last line follows by Young's inequality (which gives, for each choice of $\sigma > 0$, the pointwise bound $\ip{\D_0 u_\eps}{i\gamma_0 u_\eps} \leq \frac{\sigma}{2} \abs{\D_0 u}^2 + \frac{1}{2\sigma} \abs{\gamma_0 u_\eps}^2$, and then the previous inequality follows by choosing $\sigma = \delta^2$).
Consequently, we obtain
\[
\begin{split}
	\frac{\bar{E}_\eps(u_\eps; \, U)}{\abs{\log \eps}} 
	\leq &(1+\mathrm{O}(\delta^2)) 
	 \frac{E_\eps(u_\eps; \, U)}{\abs{\log \eps}} 
	 + \frac{1+ C\delta^{-2}}{\abs{\log\eps}} 
	 \int_{U} \abs{\gamma_0 u_\eps}^2\vol_g.
\end{split}
\]
Since this holds for every $\eps > 0$, we can pass both sides to the $\liminf$ as $\eps \to 0$ keeping the inequality. On doing so, the last term on the right above vanishes, because the energy estimate~\eqref{compE:hp} implies that
$u_\eps$ is bounded in~$L^2(M)$. Therefore, we get the desired estimate recalling ($iii$) of Theorem~\ref{th:GL}.
\end{proof}

We deduce Proposition~\ref{prop:liminfE} from Lemma~\ref{lemma:local_liminf}
by applying a Vitali-Besicovitch-type covering theorem,
which we recall here. 

\begin{theorem}[Federer, \cite{Federer}]\label{th:covering}
 Let~$M$ be a compact Riemannian manifold. Let~$\mathcal{F}$
 be a collection of \emph{closed} geodesic balls in~$M$, such that
 \begin{equation} \label{hp:covering}
  \inf\left\{r > 0\colon \bar{B}_r(x_0)\in\mathcal{F} \right\} = 0 
  \qquad \textrm{for any } x_0\in M.
 \end{equation}
 Let~$\mu$ be a non-negative Borel measure on~$M$ and~$V\subset M$
 an open set. Then, there exists a countable 
 subfamily~$\mathcal{F}^\prime\subset\mathcal{F}$
 of pairwise-disjoint balls such that 
 \[
  \bigcup_{B\in\mathcal{F}^\prime} B\subset V, \qquad
  \mu\left(V\setminus\bigcup_{B\in\mathcal{F}^\prime} B\right) = 0
 \]
\end{theorem}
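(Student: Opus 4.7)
The plan is to reduce this to the classical Besicovitch covering lemma on $\R^n$ and then iterate a greedy selection argument of Vitali type. The first step is to transfer the combinatorial heart of Besicovitch's lemma from Euclidean space to the manifold: because $M$ is compact and smooth, there is a radius $r_0 > 0$ such that every closed geodesic ball of radius $\leq r_0$ in $M$ is bi-Lipschitz equivalent to a Euclidean ball, with distortion bounded uniformly in the center point. Composing the Euclidean Besicovitch lemma with these local bi-Lipschitz charts produces an integer $N = N(M)$ with the following property: any collection of closed geodesic balls of radii $\leq r_0$ can be partitioned into at most $N$ subfamilies of pairwise disjoint balls whose union still covers all the centers. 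This transfer is the principal obstacle — the classical proof in $\R^n$ uses the Euclidean angular comparison lemma and must be re-expressed intrinsically; the compactness of $M$ is essential to absorb the bi-Lipschitz distortion into a single constant $N$.

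Next I would reduce the data: discarding balls of radius $> r_0$ and balls not contained in $V$, the remaining subfamily $\mathcal{F}_V$ is still a fine cover of $V$ in the sense of~\eqref{hp:covering}, by the openness of $V$. One may restrict to compact exhaustions $K_\ell \nearrow V$ with $\mu(K_\ell) < +\infty$ (using $\sigma$-finiteness on $V$, which can be assumed after restricting $\mu$). Fix one such compact $K$. Applying the Besicovitch splitting to the subcollection of $\mathcal{F}_V$ whose centers lie in $K$, at least one of the $N$ disjoint subfamilies covers $\mu$-mass $\geq \mu(K)/N$; inner regularity then allows the extraction of finitely many disjoint balls $B_{1,1},\ldots,B_{1,m_1}$ from that subfamily with
\[
  \mu\!\Bigl(K \cap \bigcup_{j} B_{1,j}\Bigr) \geq \frac{\mu(K)}{2N}.
\]

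The greedy iteration now takes over. Replace $K$ by the compact set $K_1 := K \setminus \bigcup_j B_{1,j}$, which lies in the open set $V \setminus \bigcup_j B_{1,j}$. The balls of $\mathcal{F}_V$ disjoint from the already chosen ones still form a fine cover of $K_1$ (the chosen balls are closed and finite in number, so arbitrarily small balls at each point of $K_1$ eventually lie in the complement), and the construction may be repeated. The $\mu$-measure of the uncovered remainder decays geometrically by a factor $1 - 1/(2N)$ at each stage, hence tends to zero. Collecting the balls chosen at all stages gives a countable, pairwise disjoint subfamily $\mathcal{F}'_K \subset \mathcal{F}_V$ with $\bigcup_{B \in \mathcal{F}'_K} B \subset V$ and $\mu(K \setminus \bigcup_{B\in\mathcal{F}'_K} B) = 0$. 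A standard diagonal argument over an exhaustion $K_\ell \nearrow V$, merging the families $\mathcal{F}'_{K_\ell}$ while discarding overlaps at each stage (again via the Besicovitch bound), produces the desired countable disjoint family $\mathcal{F}' \subset \mathcal{F}$ with the two claimed properties.
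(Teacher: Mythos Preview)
The paper does not actually prove this theorem: it simply cites Federer's book~\cite[Theorem~2.8.14, Corollary~2.8.15]{Federer}, observing that compact Riemannian manifolds fall under Federer's general framework of \emph{directionally limited} metric spaces. Your sketch is, in outline, precisely the argument Federer gives once that structural hypothesis is in place, so in that sense your route and the paper's are the same.

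That said, one step in your sketch is looser than it should be. The ``bi-Lipschitz transfer'' of the Besicovitch combinatorics is not as direct as composing with charts: the image of a geodesic ball under a bi-Lipschitz chart is not a Euclidean ball, only a set trapped between two concentric Euclidean balls of comparable radii, and the classical angular-separation count in Besicovitch's lemma does not survive arbitrary bi-Lipschitz distortion without further work. Federer's formulation avoids this by axiomatising exactly the property needed --- directional limitation --- and verifying it for Riemannian manifolds via the exponential map (geodesics emanating from a point separate in direction). This is really what your sentence ``the compactness of~$M$ is essential to absorb the bi-Lipschitz distortion into a single constant~$N$'' is gesturing at, but it deserves to be stated as the actual mechanism rather than a distortion bound. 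Once the Besicovitch-type splitting into~$N$ disjoint subfamilies is available, your greedy Vitali iteration with geometric decay is standard and correct; the $\sigma$-finiteness assumption you slip in is harmless here since the paper only applies the result to the bounded measure~$\abs{J_*}$.
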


The proof of Theorem~\ref{th:covering} can be found
in~\cite[Theorem~2.8.14, Corollary~2.8.15]{Federer}. The statements
in~\cite{Federer} apply not only to compact Riemannian manifolds,
but also to a more general class of metric spaces, 
i.e.~`directionally limited' metric spaces.
(Moreover, they apply to outer measures as well as measures.)
However, the statement given here is sufficient
for our purposes.

The next lemma is an immediate consequence of 
Theorem~\ref{th:covering}.

\begin{lemma} \label{lemma:covering}
 For any~$\delta > 0$ and any open set~$V\subset M$, there exists a countable
 family~$\{U_j\}_{j\in\N}$ of pairwise-disjoint, \emph{open} geodesic balls,
 of radius less than~$\delta$, such that $U_j\subset V$ for any~$j$ and
 \[
  \abs{J_*}\!\left(V\setminus\bigcup_{j\in\N} U_j\right) = 0
 \]
\end{lemma}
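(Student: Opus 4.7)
The plan is to apply Federer's covering theorem (Theorem~\ref{th:covering}) to the measure $\mu := \abs{J_*}$ and to a carefully chosen family $\mathcal{F}$ of closed geodesic balls, and then to upgrade from closed to open balls via a ``good radius'' argument.

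First, for each point $x_0 \in M$, consider the at most countable collection of radii $r \in (0,\delta)$ for which $\abs{J_*}(\partial B_r(x_0)) > 0$. Since the spheres $\{\partial B_r(x_0)\}_{r>0}$ are pairwise disjoint and $\abs{J_*}$ is a bounded measure on $M$, this collection must indeed be at most countable. Consequently, arbitrarily small radii $r \in (0,\delta)$ satisfy $\abs{J_*}(\partial B_r(x_0)) = 0$. Taking $\delta$ smaller than the injectivity radius of $M$ if necessary so that $B_r(x_0)$ is an honest geodesic ball, we define
\[
\mathcal{F} := \left\{ \bar{B}_r(x_0) : x_0 \in M,\ 0 < r < \delta,\ \abs{J_*}(\partial B_r(x_0)) = 0 \right\}.
\]
By the discussion above, $\mathcal{F}$ satisfies the hypothesis \eqref{hp:covering} of Theorem~\ref{th:covering}.

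Applying Theorem~\ref{th:covering} to $\mathcal{F}$ and $\mu = \abs{J_*}$ yields a countable, pairwise disjoint subfamily $\{\bar{B}_{r_j}(x_j)\}_{j\in\N} \subset \mathcal{F}$ of closed balls such that $\bigcup_j \bar{B}_{r_j}(x_j) \subset V$ and $\abs{J_*}\bigl(V \setminus \bigcup_j \bar{B}_{r_j}(x_j)\bigr) = 0$.

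Set $U_j := B_{r_j}(x_j)$, the corresponding open balls. Since the closed balls $\bar{B}_{r_j}(x_j)$ are pairwise disjoint and contained in $V$, the same holds for the $U_j$'s, and each $U_j$ has radius less than $\delta$. Finally, the set-theoretic inclusion $\bigcup_j \bar{B}_{r_j}(x_j) \setminus \bigcup_j U_j \subset \bigcup_j \partial B_{r_j}(x_j)$ together with the defining property $\abs{J_*}(\partial B_{r_j}(x_j)) = 0$ for each $j$ gives
\[
\abs{J_*}\!\left(V \setminus \bigcup_{j\in\N} U_j\right)
\leq \abs{J_*}\!\left(V \setminus \bigcup_{j\in\N} \bar{B}_{r_j}(x_j)\right) + \sum_{j \in \N} \abs{J_*}(\partial B_{r_j}(x_j)) = 0,
\]
which is the required conclusion. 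The only mildly nontrivial step is the ``good radius'' selection for $\mathcal{F}$; everything else is a direct invocation of Theorem~\ref{th:covering} together with bookkeeping of null sets, so no serious obstacle is expected.
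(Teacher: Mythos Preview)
Your proof is correct and follows essentially the same approach as the paper: define $\mathcal{F}$ to be closed geodesic balls whose boundary sphere has zero $\abs{J_*}$-measure, verify the Vitali-type hypothesis~\eqref{hp:covering} using that $\abs{J_*}$ is finite, and apply Theorem~\ref{th:covering}. Your version is in fact more explicit than the paper's, which simply states that ``the lemma follows'' once Theorem~\ref{th:covering} is invoked; you spell out the passage from the disjoint closed balls to the open balls $U_j$ via the null-boundary condition, which the paper leaves implicit.
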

\begin{proof}
 We apply Theorem~\ref{th:covering} to the bounded measure~$\mu := \abs{J_*}$
 and the collection of (closed) balls
 \[
  \begin{split}
   \mathcal{F} := \left\{\bar{B}_r(x_0)\colon 0 < r < \delta, \ x_0\in M
   \ \textrm{ such that } \ \abs{J_*}(\partial B_r(x_0)) = 0\right\}
  \end{split}
 \]
 Since~$\abs{J_*}$ is a finite measure, we have $\abs{J_*}(\partial B_r(x_0)) = 0$
 for a.e.~$r\in (0, \, \delta)$. Then, the assumptions 
 of Theorem~\ref{th:covering} are satisfied, and the lemma follows.
\end{proof}

\begin{proof}[Proof of Proposition~\ref{prop:liminfE}]
 Let~$V\subset M$ be an open set.
 Let~$\delta>0$ be a small parameter --- in particular, 
 smaller than the injectivity radius of~$M$.
 By Lemma~\ref{lemma:covering}, there exists a countable
 family~$\{U_j\}_{j\in\N}$ of pairwise-disjoint, open geodesic balls, 
 of radius smaller than~$\delta$, such that~$U_j\subset V$ for any~$j$ and
 \begin{equation} \label{covering2bis}
  \abs{J_*}\!(V) = \sum_{j\in\N} \abs{J_*}\!(U_j)  
 \end{equation}
 From~\eqref{covering2bis}, Lemma~\ref{lemma:local_liminf}
 and Fatou's lemma, we deduce
 \[
  \pi\abs{J_*}\!(V) \leq \left(1 + \mathrm{O}(\delta^2)\right) 
  \sum_{j\in\N} \liminf_{\eps\to 0} 
   \frac{E_\eps(u_\eps; \, U_j)}{\abs{\log\eps}}
  \leq \left(1 + \mathrm{O}(\delta^2)\right) 
  \liminf_{\eps\to 0} \frac{E_\eps(u_\eps; \, V)}{\abs{\log\eps}}
 \]
 Letting~$\delta\to 0$, the proposition follows.
\end{proof}

\subsection{Upper bounds}

The goal of this section is to prove Statement~(ii) of Theorem~\ref{maingoal:E}.
First, we introduce some notation. 
Let~$u\colon M\to E$ be a section of the bundle~$E\to M$,
and let~$\D_0$ be our reference (smooth) connection on~$E$. 
Let~$X\subset M$ be a closed set. 
Following~\cite{ABO1, ABO2}, we say that~$u$ has a 
\emph{nice singularity at~$X$} (with respect to~$\D_0$)
if~$\abs{u} = 1$ in~$M\setminus X$, $u$ is locally 
Lipschitz on~$M\setminus X$ and there
exists a constant~$C>0$ such that
\[
 \abs{\D_0 u(x)} \leq C \dist(x, \, X)^{-1}
 \qquad \textrm{for any } x\in M\setminus X.
\]
If~$u$ has a nice singularity at~$X$ with respect to~$\D_0$,
then~$u$ has a nice singularity at~$X$ with respect to
any smooth connection~$\D$, because~$\D$ can be written 
as~$\D = \D_0 -iA$ for some smooth $1$-form~$A$.
Therefore, there is no ambiguity in saying
that~$u$ has a nice singularity at~$X$
without specifying the reference connection~$\D_0$.
If~$X$ is a finite union of submanifolds of dimension~$q$ 
or less and~$u$ has a nice singularity at~$X$, then
$u\in (L^\infty\cap W^{1, p})(M, \, E)$ for any~$p < n - q$.
This is a consequence of the following lemma:

\begin{lemma} \label{lemma:distanceW1p}
 If~$X\subset M$ is contained in a finite union
 of Lipschitz submanifolds of dimension~$q$, then
 \[
  \int_{M} \frac{\vol_g}{\dist(\cdot, \, X)^p} < +\infty
 \]
 for any~$p\in [1, \, n-q)$.
\end{lemma}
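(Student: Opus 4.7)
The plan is to reduce to a single Lipschitz submanifold and then use a layer-cake representation combined with the classical tubular-neighbourhood volume estimate. First, by writing $X \subseteq X_1 \cup \ldots \cup X_N$ with each $X_j$ a $q$-dimensional Lipschitz submanifold of $M$, and noting that $\dist(\cdot, X) \geq \min_j \dist(\cdot, X_j)$ so that
\[
\frac{1}{\dist(\cdot, X)^p} \leq \sum_{j=1}^N \frac{1}{\dist(\cdot, X_j)^p},
\]
it suffices to prove the bound when $X$ is a single $q$-dimensional Lipschitz submanifold.

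Next, I would apply the layer-cake / Cavalieri formula to the non-negative function $g := \dist(\cdot, X)^{-p}$:
\[
\int_M \frac{\vol_g}{\dist(\cdot, X)^p}
= \int_0^\infty \vol_g\!\left(\{x \in M : \dist(x, X) < t^{-1/p}\}\right) \d t.
\]
Splitting the integral at $t = 1$, the portion over $(0, 1]$ is bounded by $\vol_g(M) < +\infty$, since $M$ is compact. For $t \in (1, \infty)$ the relevant radius $r := t^{-1/p}$ is small, so the question reduces to bounding $\vol_g(U_r)$ with $U_r := \{x \in M : \dist(x, X) < r\}$ for small $r$.

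The key step, and the only real content of the lemma, is the tubular-neighbourhood volume estimate
\begin{equation*}
\vol_g(U_r) \leq C \, r^{n - q} \qquad \text{for all } r \in (0, r_0],
\end{equation*}
where $C$ and $r_0$ depend only on $X$ and $M$. I would establish this by a covering argument: since $X$ has locally finite $\mathcal{H}^q$-measure (being a Lipschitz submanifold), one can cover it by at most $N_r \lesssim r^{-q}$ geodesic balls of radius $r$ centred at points of $X$; the tubular neighbourhood $U_r$ is then contained in the union of the concentric balls of radius $2r$, each of volume $\lesssim r^n$, giving $\vol_g(U_r) \lesssim r^{-q} \cdot r^n = r^{n-q}$. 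This is the main technical obstacle, and the one place where the Lipschitz (as opposed to smooth) nature of $X$ requires a little care; for smooth $X$ one could invoke the tubular neighbourhood theorem directly.

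Inserting this estimate,
\[
\int_1^\infty \vol_g(U_{t^{-1/p}}) \, \d t
\leq C \int_1^\infty t^{-(n-q)/p} \, \d t,
\]
and the right-hand side is finite precisely when $(n-q)/p > 1$, i.e.\ $p < n - q$. Combining with the trivial bound on $(0,1]$ yields the claim.
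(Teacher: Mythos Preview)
Your argument is correct. The paper's own proof is even briefer: it simply cites the Euclidean version of the result (Lemma~8.3 in~\cite{ABO2}) and remarks that on a compact manifold one reduces to that case by working in coordinate charts. Your layer-cake plus tubular-volume approach is exactly the standard proof that such a citation is pointing to, carried out directly on~$M$ rather than after pulling back to~$\R^n$; the content is the same.
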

\begin{proof}
 When~$M = \R^n$, the proof may be found 
 e.g.~in~\cite[Lemma~8.3]{ABO2}. When~$M$ is a compact,
 smooth manifold, we reduce to the Euclidean case 
 by working in coordinate charts.
\end{proof}

\begin{remark} \label{rk:Jac_nice_sing}
 If~$u$ has a nice singularity on a closed Lipschitz set~$X$ of dimension~$n-2$
 at most, then~$u\in (L^\infty\cap W^{1,1})(M, \, E)$
 (because of Lemma~\ref{lemma:distanceW1p}) and~$J(u)$ is well-defined.
 Moreover, $J(u)$ is supported on~$X$. Indeed, for any smooth vector field~$v$
 we have~$\ip{\D_{0,v} u}{u} = \d(\abs{u}^2/2) = 0$ at
 almost every point of~$M\setminus X$. This implies
 $J(u)[v, \, w] = \ip{i \D_{0,v} u}{\D_{0,w} u} = 0$
 a.e. in~$M\setminus X$, for any smooth vector fields~$v$, $w$. 
\end{remark}

We consider smooth triangulations on~$M$, as defined 
e.g.~in~\cite[Definition~8.3]{Munkres}.
Given a triangulation~$\mathcal{T}$ of~$M$
and an integer~$q\in\{0, \, 1, \, \ldots, \, n\}$, we call~$\mathcal{T}_q$
the set of all~$q$-dimensional simplices of~$\mathcal{T}$.
We denote by~$\Sk_q\mathcal{T}$ the~$q$-dimensional skeleton of~$\mathcal{T}$,
that is, 
\[
 \Sk_q\mathcal{T} := \bigcup_{K\in\mathcal{T}_q} K
\]
Recall that~$\mathcal{R}_q(M)$ is the set of integer-multiplicity
rectifiable~$q$-chains in~$M$. We say that a chain~$S\in\mathcal{R}_q(M)$
is \emph{polyhedral} if there exist a triangulation~$\mathcal{T}$ of~$M$ and 
a function~$\theta\colon\mathcal{T}_q\to\Z$ such that
\begin{equation} \label{polyhedralchain}
 S = \sum_{K\in\mathcal{T}_q} \theta(K)\llbracket K\rrbracket
\end{equation}
When~\eqref{polyhedralchain} holds, we say that the current~$S$
is \emph{carried} by the triangulation~$\mathcal{T}$.
To construct a recovery sequence for~$E_\eps$, we borrow
a result by Parise, Pigati and Stern~\cite[Proposition~4.2]{ParisePigatiStern}.
For the convenience of the reader, we reproduce the statement here, 
using the notation we introduced above.

\begin{prop}[\cite{ParisePigatiStern}] \label{prop:approximation_chains}
 For any cycle~$S\in\mathcal{R}_{n-2}(M)$,
 there exists a sequence of polyhedral $(n-2)$-cycles~$S_j$, homologous to~$S$,
 such that~$\F(S_j-S)\to 0$ and~$\M(S_j)\to\M(S)$ as~$j\to+\infty$.
 Moreover, if~$S$ is a polyhedral $(n-2)$-cycle 
 in the class~$\mathcal{C}$ and~$\mathcal{T}$
 is a triangulation that carries~$S$,
 then there exists a section~$u\colon M\to E$ that has
 a nice singularity at~$\Sk_{n-2}\mathcal{T}$
 and satisfies~$\star J(u) = \pi S$.
\end{prop}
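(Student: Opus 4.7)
The plan is to establish the two statements separately, combining the Federer--Fleming deformation theorem for the polyhedral approximation with an explicit vortex-type construction on the skeleton of $\mathcal{T}$ for the section. For the first statement, I would apply the deformation theorem to $S$ at a sequence of scales $\delta_j \to 0$, with respect to a family of triangulations $\mathcal{T}_j$ of $M$ of mesh $\delta_j$ (obtained by iterated barycentric subdivision of a fixed initial triangulation). The deformation theorem produces a polyhedral $(n-2)$-chain $P_j$ carried by $\Sk_{n-2}\mathcal{T}_j$, an integer-multiplicity rectifiable $(n-1)$-current $Q_j$, and an error $R_j$ such that $S - P_j = \partial Q_j + R_j$ with $\M(Q_j) \lesssim \delta_j \M(S)$ and $\M(R_j) \lesssim \delta_j \M(\partial S)$; since $\partial S = 0$ the error $R_j$ vanishes. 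Hence $P_j$ is a cycle homologous to $S$ with $\F(P_j - S) \leq \M(Q_j) \to 0$, and setting $S_j := P_j$, together with the upper bound $\M(P_j) \leq \M(S) + o(1)$ (from the same deformation estimate) and the lower bound $\M(S) \leq \liminf_j \M(P_j)$ (from lower semicontinuity of mass under flat convergence), completes this half.

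For the second statement, write $S = \sum_{K \in \mathcal{T}_{n-2}} \theta(K) \llbracket K \rrbracket$, and fix a smooth section $w \colon M \to E$ transverse to the zero section as in the discussion preceding~\eqref{C}. By the proof of Proposition~\ref{prop:homologyPPS}, $u_0 := w/\abs{w}$ has a nice singularity at $Z := w^{-1}(0)$ and $\star J(u_0) = \pi \llbracket Z \rrbracket$. Because both $S$ and $\llbracket Z \rrbracket$ lie in $\mathcal{C}$, there is an integer-multiplicity rectifiable $(n-1)$-current $R$ with $\partial R = S - \llbracket Z \rrbracket$. By a preliminary application of the first statement --- refining $\mathcal{T}$ and using the deformation theorem to push $Z$ onto $\Sk_{n-2}\mathcal{T}$, absorbing the resulting boundary into an update of $R$ --- I may assume without loss of generality that $Z$ is a polyhedral cycle on $\Sk_{n-2}\mathcal{T}$ with multiplicities $\theta_Z(K)$ and that $R = \sum_{F \in \mathcal{T}_{n-1}} m_F \llbracket F \rrbracket$ is polyhedral on $\Sk_{n-1}\mathcal{T}$.

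The heart of the argument is then the construction of a locally Lipschitz gauge $\Phi \colon M \setminus \Sk_{n-2}\mathcal{T} \to \S^1$ whose phase (in the sense of distributional degree) jumps by $2\pi m_F$ across each $(n-1)$-face $F$. I would build $\Phi$ cell-by-cell: on each top-dimensional $n$-simplex, $\Phi$ is defined as the exponential of a locally Lipschitz phase assembled from affine functions with the prescribed cross-face jumps; near each $(n-2)$-simplex $K$, $\Phi$ is modelled on the planar vortex $e^{i\alpha_K \arg}$ in a tubular coordinate system, giving the pointwise bound $\abs{\d \Phi(x)} \lesssim \dist(x, \Sk_{n-2}\mathcal{T})^{-1}$. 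Global compatibility around each $(n-2)$-simplex $K$ reduces to the cellular cocycle identity $\sum_{F \supset K} [F:K]\, m_F = \theta(K) - \theta_Z(K)$, which is precisely the polyhedral encoding of $\partial R = S - \llbracket Z \rrbracket$. The section $u := \Phi u_0$ is then locally Lipschitz on $M \setminus \Sk_{n-2}\mathcal{T}$ and $\abs{\D_0 u} \lesssim \abs{\D_0 u_0} + \abs{\d \Phi} \lesssim \dist(\cdot, \Sk_{n-2}\mathcal{T})^{-1}$, so $u$ has a nice singularity at $\Sk_{n-2}\mathcal{T}$; a chain-rule computation for the Jacobian analogous to~\eqref{homology4}--\eqref{homology5} yields $\star J(u) = \star J(u_0) + \pi\,\partial R = \pi S$.

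The main obstacle is the construction in the previous paragraph: designing $\Phi$ so that (i) its winding jumps match $R$ exactly across every $(n-1)$-face, (ii) no stray singularity appears on strata of $\mathcal{T}$ not already contained in $\Sk_{n-2}\mathcal{T}$, and (iii) the sharp gradient bound $\abs{\d \Phi} \lesssim 1/\dist(\cdot, \Sk_{n-2}\mathcal{T})$ survives the patching across adjacent top cells. The hypothesis $S \in \mathcal{C}$ enters essentially in (i), since it guarantees the solvability of the combinatorial cocycle system over $\mathcal{T}$; (ii) and (iii) are then handled by the explicit local vortex model and a careful partition-of-unity patching between top-dimensional cells.
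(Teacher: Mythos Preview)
The paper does not supply its own proof of this proposition: it is quoted verbatim from Parise--Pigati--Stern~\cite[Proposition~4.2]{ParisePigatiStern}, with only the remark that the ``homologous'' clause is implicit there. So your proposal is being compared to the external reference rather than to anything in this text.

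On the first statement, your outline via the Federer--Fleming deformation theorem is the standard route and gives $\F(S_j - S)\to 0$ and the homology preservation immediately. However, the assertion ``$\M(P_j)\leq \M(S)+o(1)$ from the same deformation estimate'' is not what the deformation theorem provides: the raw estimate is $\M(P_j)\leq c(n)\M(S)$ with a fixed dimensional constant $c(n)>1$. Obtaining genuine mass convergence requires an additional averaging over placements of the grid (in the Euclidean model) or an appeal to the strong polyhedral approximation theorem for integral currents; you should name which refinement you intend to invoke.

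The second statement has a more serious gap. Your reduction ``I may assume $Z$ is a polyhedral cycle on $\Sk_{n-2}\mathcal T$'' operates at the level of \emph{currents}: the deformation theorem pushes the current $\llbracket Z\rrbracket$ onto the skeleton, but it does nothing to the \emph{section} $u_0=w/\abs{w}$, whose singular set remains the smooth submanifold $Z=w^{-1}(0)$, not contained in $\Sk_{n-2}\mathcal T$. If you then set $u:=\Phi u_0$ with $\Phi$ singular only on the skeleton, the product is singular on $Z\cup\Sk_{n-2}\mathcal T$, so it does \emph{not} have a nice singularity at $\Sk_{n-2}\mathcal T$ alone, and the gradient bound $\abs{\D_0 u}\lesssim \dist(\cdot,\Sk_{n-2}\mathcal T)^{-1}$ fails near $Z$. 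Conversely, if you try to build $\Phi$ so that it also cancels the singularity of $u_0$ along $Z$, then $\Phi$ must itself be singular along the smooth $Z$, and the polyhedral book-keeping you set up (a cochain $m_F$ on $(n-1)$-faces of $\mathcal T$) is no longer adequate. The construction in~\cite{ParisePigatiStern} avoids this circularity by building the section directly cell-by-cell over $\mathcal T$ (trivialising $E$ on each top simplex and prescribing the winding across $(n-1)$-faces so that the resulting Jacobian equals $\pi S$), rather than by perturbing a smooth reference section; the hypothesis $S\in\mathcal C$ enters as the integrality/compatibility condition that makes the cellular gluing consistent with the transition functions of $E$. Your $\Phi$-construction is essentially the right local model near each $(n-2)$-face, but it must be executed on a trivialisation of $E$ over each $n$-cell, not as a global scalar multiplier of a fixed $u_0$.
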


\begin{remark}
In the statement of~\cite[Proposition~4.2]{ParisePigatiStern},
the authors do not say explicitly that the approximating chains~$S_j$
and~$S$ are homologous, but this fact is contained in the proof. 
\end{remark}

Let~$\mathcal{T}$ be a triangulation of~$M$ and 
let~$\gamma >0$ be a small parameter.
For any simplex~$K\in\mathcal{T}_{n-2}$, we define
\begin{equation} \label{V_K}
 V_K := \left\{x\in M\colon 
  \dist(x, \, K) \leq \gamma\dist(x, \, \partial K) \right\}
\end{equation}
The set~$V_K$ is the closure of a Lipschitz domain in~$M$.
For~$\gamma$ small enough, $V_K$ is contained in a 
tubular neighbourhood of~$K$ and it retracts by deformation
onto~$K$. In particular, $V_K$ is contractible for~$\gamma$ small enough.

\begin{lemma} \label{lemma:V_K}
 If~$\gamma > 0$ is small enough, then,
 for any~$K\in\mathcal{T}_{n-2}$ and~$K^\prime\in\mathcal{T}_{n-2}$,
 the intersection~$V_K\cap V_{K^\prime}$ is either empty or 
 a common boundary~$(n-3)$-face of~$K$, $K^\prime$.
\end{lemma}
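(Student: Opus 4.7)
The idea is to exploit that $\mathcal{T}_{n-2}$ is finite (because $M$ is compact and the triangulation smooth), so it suffices to find $\gamma$ working for each pair $(K, K') \in \mathcal{T}_{n-2} \times \mathcal{T}_{n-2}$ and then take the minimum. I would split into cases according to the combinatorial intersection $K \cap K'$ in the simplicial complex: it is either empty or a proper common face $L$ with $\dim L \leq n - 3$.

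If $K \cap K' = \emptyset$, then by compactness $\dist(K, K') \geq \delta_0 > 0$. For $x \in V_K$, the elementary bound $\dist(x, \partial K) \leq \dist(x, K) + \mathrm{diam}(K)$ plugged into the defining inequality gives $\dist(x, K) \leq \gamma \, \mathrm{diam}(K) / (1 - \gamma)$, so for $\gamma \leq 1/2$ the set $V_K$ is contained in a $2\gamma \, \mathrm{diam}(M)$-neighbourhood of $K$, and likewise for $V_{K'}$; choosing $\gamma < \delta_0 / (4\, \mathrm{diam}(M))$ forces these neighbourhoods to be disjoint.

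The substantial case is $L := K \cap K' \neq \emptyset$ with $K \neq K'$, so $L$ is a proper common face, $L \subseteq \partial K \cap \partial K'$ and $\dim L \leq n - 3$. The inclusion $L \subseteq V_K \cap V_{K'}$ is immediate from $\dist(x, K) = \dist(x, K') = 0$ for $x \in L$. For the reverse inclusion, $L \subseteq \partial K$ yields $\dist(x, \partial K) \leq \dist(x, L)$, so every $x \in V_K \cap V_{K'}$ satisfies
\[
\dist(x, K) \leq \gamma \, \dist(x, L), \qquad \dist(x, K') \leq \gamma \, \dist(x, L).
\]
The crucial geometric input is a positive-angle estimate: since $K$ and $K'$ are distinct smooth $(n-2)$-simplices meeting exactly along $L$, their tangent spaces along $L$ coincide only with $T L$, and I claim there exists $c = c(K, K') > 0$ such that $\dist(y, K') \geq c \, \dist(y, L)$ for every $y \in K$. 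Granted this, let $y \in K$ realise $\dist(x, K)$; the triangle inequality gives $\dist(y, L) \geq (1 - \gamma) \dist(x, L)$ and $\dist(y, K') \leq 2\gamma \, \dist(x, L)$, and the key estimate then forces $c(1 - \gamma) \leq 2\gamma$, which is impossible once $\gamma < c / (c + 2)$ unless $\dist(x, L) = 0$. Hence $V_K \cap V_{K'} = L$, a common boundary face of $K$ and $K'$ of dimension at most $n - 3$.

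\textbf{The main obstacle} is the positive-angle estimate $\dist(y, K') \geq c \, \dist(y, L)$ for $y \in K$. In a piecewise-flat setting this is an elementary trigonometric bound by the sine of the dihedral angle between $K$ and $K'$ along $L$; in the smooth-triangulation setting I would derive it by linearising in geodesic normal coordinates at each $p \in L$, where the two simplices are approximately affine with linear spans intersecting exactly in $T_p L$ (because $K \neq K'$), so that the ratio $\dist(y, K')/\dist(y, L)$ on $K \setminus L$ extends continuously to $L$ with a positive limit; a compactness argument on the closed simplex $K$ then produces the required uniform $c > 0$. Taking the minimum of $c$ and of the disjointness threshold over the finite collection of pairs $(K, K')$ finally produces a single $\gamma > 0$ that works throughout.
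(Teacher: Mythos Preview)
Your proof is correct and follows essentially the same route as the paper: both hinge on a positive-angle estimate between distinct simplices (your $\dist(y,K')\ge c\,\dist(y,L)$ for $y\in K$ is the analogue of the paper's inequality $\dist(x,\partial K)\le C\,\dist(x,K')$), and then combine it with the defining inequality of $V_K$ to force $x\in K\cap K'$. The only organizational difference is that the paper treats the disjoint and non-disjoint cases uniformly via a single inequality and concludes by the symmetric argument $d\le C\gamma d'$, $d'\le C\gamma d$ $\Rightarrow$ $d=d'=0$, whereas you split into cases and pass explicitly through the nearest point $y\in K$; this is a minor stylistic difference, not a substantive one.
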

\begin{proof}
 We claim that, for any~$K\in\mathcal{T}_{n-2}$ 
 and~$K^\prime\in\mathcal{T}_{n-2}$, there exists a constant~$C>0$
 such that
 \begin{equation} \label{VK1}
  \dist(x, \, \partial K) \leq C \dist(x, \, K^\prime)
 \end{equation}
 for any~$x\in K$.
 Indeed, let~$x_0\in K$ be given. 
 It suffices to find a constant~$C$ that satisfies~\eqref{VK1}
 for any~$x$ close enough to~$x_0$; we will then able to 
 find~$C$ that satisfies~\eqref{VK1} for any~$x\in K$,
 as claimed, because~$K$ is compact.
 If~$x_0\in K\setminus K^\prime$, then the quotient
 $\dist(\cdot, \, \partial K)/\dist(\cdot, \, K^\prime)$
 is bounded in a neighbourhood of~$x_0$, and~\eqref{VK1} follows.
 Suppose now that~$x_0\in K\cap K^\prime$. By definition
 (see~\cite[Definition~8.3]{Munkres}, $\mathcal{T}$
 is the homeomorphic image of a simplicial complex in~$\R^\ell$
 (for some~$\ell \geq n$), via a piecewise smooth map
 with injective differential. (As the differential of the parametrisation
 is injective, it follows that~$K$, $K^\prime$ meet at a
 non-zero angle at~$x_0$.) Upon composition with the parametrisation,
 we may assume without loss of generality that~$K$, $K^\prime$
 are affine simplices in an affine plane of dimension~$n-1$.
 Then, a routine computation shows that~\eqref{VK1}
 is satisfied in a neighbourhood of~$x_0$, for some constant~$C$
 that depends on the angle between~$K$ and~$K^\prime$ at~$x_0$.
 This completes the proof of~\eqref{VK1}.
 As there are only finitely
 many simplices in~$\mathcal{T}$, the constant~$C$ in~\eqref{VK1}
 may be chosen uniformly with respect to~$K$, $K^\prime$.
 
 Now, let~$K\in\mathcal{T}_{n-2}$, $K^\prime\in\mathcal{T}_{n-2}$
 be such that $V_{K}\cap V_{K^\prime}\neq\emptyset$.
 Let~$x\in V_{K}\cap V_{K^\prime}$, and let
 \[
  d := \dist(x, \, K), \qquad d^\prime := \dist(x, \, K^\prime)
 \]
 As~$x\in V_{K}$, we have
 \[
  d \leq \gamma\dist(x, \, \partial K)
  \stackrel{\eqref{VK1}}{\leq} C\gamma \, d^\prime
 \]
 Similarly, $d^\prime \leq C\gamma \, d$ and hence,
 $d + d^\prime \leq C\gamma(d + d^\prime)$. If we take~$\gamma < 1/C$,
 then $d + d^\prime = 0$, that is, $x\in K\cap K^\prime$
 (and, in fact, $x \in \partial K \cap \partial K^\prime$).
 Conversely, the definition~\eqref{V_K} immediately implies
 that $K\cap K^\prime\subset V_K\cap V_{K^\prime}$.
 This proves the lemma.
\end{proof}

We can now prove Statement~(ii) of Theorem~\ref{maingoal:E}.

\begin{proof}[Proof of Theorem~\ref{maingoal:E}, Statement~(ii)]
 Let~$S_*$ be an integer-multiplicity rectifiable $(n-2)$-cycle
 in the homology class~$\mathcal{C}$, and let~$J_* := \star S_*$
 be the dual $2$-form. Let~$\delta > 0$.
 We claim that, for any~$\eps > 0$, there exists
 $u_\eps\in W^{1,2}(M, \, E)$ such that 
 $J(u_\eps)\to \pi J_*$ in~$W^{-1,1}(M)$ as~$\eps\to 0$ and
 \begin{equation} \label{Elimsup0}
  \limsup_{\eps\to 0}\frac{E_\eps(u_\eps)}{\abs{\log\eps}}
  \leq \pi \, (1 + \delta) \, \M(S_*) 
 \end{equation}
 Once~\eqref{Elimsup0} is proved, Lemma~\ref{lemma:compactnessE}
 implies that~$J(u_\eps)\to \pi J_*$ in~$W^{-1,p}(M)$ not only for~$p=1$,
 but also for any~$p < n/(n-1)$. 
 Then, Statement~(ii) in Theorem~\ref{maingoal:E} follows
 from a diagonal argument.
 
 Thanks to Proposition~\ref{prop:approximation_chains},
 and up to a diagonal argument, we may assume without loss
 of generality that~$S_*$ is a polyhedral $(n-2)$-cycle
 in the homology class~$\mathcal{C}$. Let~$\mathcal{T}$
 be a triangulation that carries~$S_*$. Up to a subdivision,
 we may assume without loss of generality that each simplex~$K\in\mathcal{T}$
 has diameter~$\mathrm{diam} \, K \leq \delta$.
 For each $(n-2)$-simplex~$K\in\mathcal{T}_{n-2}$, 
 we define~$V_K$ as in~\eqref{V_K}. (We choose the parameter~$\gamma$
 small enough, so that sets~$V_K$ have pairwise disjoint interiors,
 as in Lemma~\ref{lemma:V_K}.) Let
 \[
  V := \bigcup_{K\in\mathcal{T}_{n-2}} V_K
 \]
 By Proposition~\ref{prop:approximation_chains}, there exists
 a section~$u\colon M\to E$ that has
 a nice singularity at~$\Sk_{n-2}\mathcal{T}$
 and satisfies~$J(u) = \pi J_*$. 
 
 \medskip
 \noindent
 \textit{Construction of the recovery sequence out of~$V$.}
 We claim that~$u$ is of class~$W^{1,2}$ in~$M\setminus V$.
 Indeed, let~$x\in M\setminus V$. By definition of~$V_K$,
 such a point~$x$ satisfies
 \[
  \dist(x, \, K) \geq \gamma\dist(x, \, \partial K)
  \geq \gamma \dist(x, \, \Sk_{n-3}\mathcal{T})
  \qquad \textrm{for any } K\in\mathcal{T}_{n-2}
 \]
 and hence, $\dist(x, \, \Sk_{n-2}\mathcal{T})\geq\gamma\dist(x, \, \Sk_{n-3}\mathcal{T})$. As~$u$ has a nice singularity
 at~$\Sk_{n-2}\mathcal{T}$, we obtain
 \begin{equation} \label{Elimsup5}
  E_\eps(u; \, M\setminus V)
  = \frac{1}{2}\int_{M\setminus V} \abs{\D_0 u}^2 \vol_g < +\infty
 \end{equation}
 because of Lemma~\ref{lemma:distanceW1p}. Moreover,
 as the Jacobian is a local operator (Remark~\ref{rk:locality-jac2}),
 Remark~\ref{rk:Jac_nice_sing} implies
 \begin{equation} \label{Elimsup6}
  J(u_{|M\setminus K}) = J(u)_{| M\setminus V} = 0
 \end{equation}
 In view of~\eqref{Elimsup5}, \eqref{Elimsup6}, 
 it makes sense to take~$u_\eps := u$ on~$M\setminus V$.
 
 \medskip
 \noindent
 \textit{Construction of the recovery sequence on each~$V_K$.}
 Let~$K\in\mathcal{T}_{n-2}$. As~$\mathrm{diam}\,\mathcal{T} \leq \delta$,
 we have~$\mathrm{diam}\, V_K \leq C\delta$, for some constant~$C$
 that depends only on~$\gamma$. When~$\delta$
 small enough, the diameter of~$V_K$ is smaller than
 the injectivity radius of~$V_K$. By working in geodesic 
 normal coordinates, we may identify~$V_K$ with a subset of~$\R^n$.
 We also identify sections~$V_K\to E$ with complex-valued maps
 defined on a subset of~$\R^n$. 
 The restriction~$u_{|\partial V_K}$ has a nice singularity at~$\partial K$
 and hence, $u\in W^{1,p}(\partial V_K, \, E)$ for any~$p<2$,
 because of Lemma~\ref{lemma:distanceW1p}. By Sobolev embeddings,
 it follows that~$u\in W^{1/2,2}(\partial V_K, \, E)$.
 Therefore, we are in position
 to apply $\Gamma$-convergence results for the
 Ginzburg-Landau functional in Euclidean settings.
 Thanks to~\cite[Theorem~5.5 p.~32 and Remark~i p.~33]{ABO2},
 there exists a sequence of complex-valued maps~$u_\eps^K$,
 of class~$W^{1,2}$ in the interior of~$V_K$, such that
 $u_\eps^K = u$ on~$\partial V_K$ (in the sense of traces),
 \begin{align}
  &\bar{J}(u_\eps^K) - \pi J_{*| V_K} \to 0 
   \qquad \textrm{in } W^{-1,1}(M) \label{Elimsup1} \\
  &\lim_{\eps\to 0} \frac{1}{\abs{\log\eps}} 
   \int_{V_K} \left(\frac{1}{2} \abs{\d u_\eps^K}^2 
   + \frac{1}{4\eps^2}\left(1 - \abs{u_\eps^K}^2\right)^2\right) \d x
   = \pi \, \M(S_*\res V_K) \label{Elimsup2}
 \end{align}
 Here, as in~\eqref{usualJac},
 $\bar{J}(u_\eps^K) := \frac{1}{2}\d\ip{\d u_\eps^K}{iu_\eps^K}$
 and~$\d x$ denotes the Lebesgue measure on~$V_K$.
 The reference connection~$\D_0$ can be written as~$\D_0 = \d - i\gamma_0$
 on~$V_K$, for some smooth $1$-form~$\gamma_0$. Therefore,
 $J(u_\eps^K) - \bar{J}(u_\eps^K) = \frac{1}{2}\d(\gamma_0(1 - |u_\eps^K|^2))$.
 The energy estimate~\eqref{Elimsup2} implies that
 $J(u_\eps^K) - \bar{J}(u_\eps^K) \to 0$ in~$W^{-1,2}(M)$ and hence,
 \begin{equation} \label{Elimsup3}
  J(u_\eps^K) - \pi J_{*| V_K} \to 0 
   \qquad \textrm{in } W^{-1,1}(M) 
 \end{equation}
 We recall that the volume form~$\vol_g$,
 in geodesic coordinates on a ball of radius~$r$, 
 satisfies $\vol_g \leq (1 + Cr^2)\d x$, where~$C$ is a 
 constant that depends only on the curvature of~$M$, not on~$r$.
 Then, keeping in mind that the difference~$\D_0 - \d$
 is bounded independently of~$\eps$, from~\eqref{Elimsup2}
 we deduce
 \begin{equation} \label{Elimsup4}
  \limsup_{\eps\to 0} \frac{1}{\abs{\log\eps}} 
   E_\eps(u_\eps^K; \, V_K) \leq \pi \, (1 + \delta)\, \M(S_*\res V_K)
 \end{equation}
 for~$\delta$ small enough.
 
 \medskip
 \noindent
 \textit{Conclusion.}
 Recall that we have defined~$u_\eps := u$ in~$M\setminus K$.
 Now, we take~$u_\eps := u_\eps^K$ on each~$V_K$.
 Since the~$V_K$'s have pairwise disjoint interiors,
 the section~$u_\eps$ is well-defined and is of class~$W^{1,2}$.
 As the Jacobian is a local operator, from~\eqref{Elimsup6}
 and~\eqref{Elimsup3} we deduce that
 $J(u_\eps)\to \pi J_*$ in~$W^{-1,1}(M)$. The inequality~\eqref{Elimsup0}
 follows from~\eqref{Elimsup5} and~\eqref{Elimsup4}.
 This completes the proof.
\end{proof}

\section{$\Gamma$-convergence for the functional~$G_\eps$}\label{sec:maingoalG}

In this section, we prove Theorem~\ref{maingoal:G} relying on Theorem~\ref{maingoal:E} 
and Corollary~\ref{cor:London} relying, in turn, on Theorem~\ref{maingoal:G}. 
Since the recovery sequence in the proof of (ii) of Theorem~\ref{maingoal:E} essentially works as well for the functional $G_\eps$, almost all the work in this section is addressed to the proof of Statement~(i) of Theorem~\ref{maingoal:G}. The key idea is that the given sequence $\{(u_\eps, A_\eps)\}$ in Statement~{(i)} of Theorem~\ref{maingoal:G} can always be replaced, in the proof, with a better sequence to the purpose, which we call an \emph{optimised sequence} (Definition~\ref{def:reduced-seq}). The relevant properties of optimised sequences are listed
in Lemma~\ref{lemma:complete-reduction}. They allow, in the end, both to find a lower bound for the $\liminf$ of the rescaled energies (see Corollary~\ref{cor:convergence-jacobians} and Remark~\ref{rk:conv-jac}) and to show the flat convergence of gauge-invariant Jacobians (see Lemma~\ref{lemma:complete-reduction}, Remark~\ref{rk:reduction}, and Corollary~\ref{cor:convergence-jacobians}) with much less effort than needed dealing directly with the ``original'' sequence $\{(u_\eps, A_\eps)\}$ in Statement~{(i)} of Theorem~\ref{maingoal:G}.

\vskip5pt

We begin this section dealing with an auxiliary problem, involved in the construction of optimised sequences. As a preliminary step, recall from Remark~\ref{rk:geometric-W22} that an exact $2$-form $\psi \in W^{1,2}(M, \Lambda^2 \T^*M)$ with $\d^* \psi \in W^{1,2}(M, \T^*M)$ actually belongs to $W^{2,2}(M, \, \Lambda^2\T^*M)$. In fact, there holds more: by Remark~\ref{rk:est-exact-forms} there is a constant $C > 0$, depending only on $M$, such that the estimate
\begin{equation}\label{eq:est-exact-2forms}
	\norm{\psi}_{W^{2,2}(M)} \leq C \norm{\d^* \psi}_{W^{1,2}(M)}
\end{equation}
holds for all exact 2-forms $\psi \in W^{2,2}(M,\,\Lambda^2\T^*M)$.
 
Now we can introduce the mentioned auxiliary problem. Let $\D_0$ be a smooth reference connection, let $A \in W^{1,2}(M, \T^*M)$, and set
\begin{equation}\label{eq:class-A}
\begin{split}
	[A] := &\left\{ B \in W^{1,2}(M, \T^*M) : B = A + \d^* \psi \right. \\
	& \left.\mbox{ for some exact 2-form } \psi \in W^{2,2}(M, \Lambda^2 \T^*M) \right\}.
\end{split}
\end{equation}

\begin{lemma}\label{eq:class-A-closed}
	For all $A \in W^{1,2}(M,\,\T^*M)$, the corresponding set $[A]$, defined in~\eqref{eq:class-A}, is not empty and it is sequentially weakly closed in $W^{1,2}(M, \T^*M)$.
\end{lemma}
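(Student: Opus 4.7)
The plan is to verify non-emptiness by inspection and then reduce weak closure to the a priori estimate~\eqref{eq:est-exact-2forms}, combined with the fact that the linear subspace of exact $2$-forms is weakly closed in $W^{2,2}(M, \, \Lambda^2\T^*M)$.

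First, non-emptiness is immediate: taking $\psi = 0$ (which is exact, since $\psi = \d 0$) shows that $A \in [A]$.

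For sequential weak closedness, consider a sequence $\{B_n\} \subset [A]$ with $B_n \rightharpoonup B$ weakly in $W^{1,2}(M, \, \T^*M)$. By definition, for each $n$ there is an exact $2$-form $\psi_n \in W^{2,2}(M, \, \Lambda^2\T^* M)$ with $B_n = A + \d^* \psi_n$. The estimate~\eqref{eq:est-exact-2forms} applied to $\psi_n$ gives
\[
 \norm{\psi_n}_{W^{2,2}(M)} \leq C \norm{\d^* \psi_n}_{W^{1,2}(M)} = C \norm{B_n - A}_{W^{1,2}(M)},
\]
and the right-hand side is bounded, since weakly convergent sequences are bounded. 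Therefore, passing to a (non-relabelled) subsequence, $\psi_n \rightharpoonup \psi$ weakly in $W^{2,2}(M, \, \Lambda^2\T^*M)$ for some $\psi$.

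The main (and in fact only) delicate point will be to check that the limit $\psi$ is still exact. The subspace of exact $2$-forms in $W^{2,2}$ is a linear subspace; moreover, it is strongly closed. Indeed, by Hodge theory one may characterise it as the set of closed $2$-forms that are $L^2$-orthogonal to the (finite-dimensional) space of harmonic $2$-forms, and both conditions (being closed and being orthogonal to each harmonic representative) are defined by continuous linear functionals on $W^{2,2}$. A strongly closed linear subspace of a Banach space is convex and strongly closed, hence weakly closed by Mazur's theorem, so $\psi$ is itself exact. Finally, since $\d^* \colon W^{2,2}(M, \, \Lambda^2\T^*M) \to W^{1,2}(M, \, \T^*M)$ is a bounded linear operator, it is weakly continuous, and hence $\d^*\psi_n \rightharpoonup \d^*\psi$ weakly in $W^{1,2}$. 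By uniqueness of weak limits, $\d^*\psi = B - A$, that is, $B = A + \d^*\psi$ with $\psi$ exact, so $B \in [A]$ as required.
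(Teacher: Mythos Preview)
Your proof is correct and follows essentially the same outline as the paper's: non-emptiness via $\psi=0$, boundedness of $\{\psi_n\}$ in $W^{2,2}$ from the a priori estimate~\eqref{eq:est-exact-2forms}, extraction of a weakly convergent subsequence, and identification of the limit. The only noteworthy difference is in the justification that the weak limit $\psi$ remains exact: the paper invokes the compact embedding $W^{2,2}\hookrightarrow W^{1,2}$ to upgrade to strong $W^{1,2}$-convergence and then appeals to the continuity of the Hodge projections, whereas you argue directly that the space of exact $2$-forms is a strongly closed linear subspace of $W^{2,2}$ (as the intersection of the kernel of $\d$ with the orthogonal complement of the finite-dimensional harmonic space) and hence weakly closed by Mazur. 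Your route is slightly more direct and avoids the compact embedding; both are valid.
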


\begin{proof}
	Since $A \in [A]$ for every $A \in W^{1,2}(M,\,\T^*M)$, $[A]$ is always not empty. Fix $A \in W^{1,2}(M,\,\T^*M)$, assume $\{B_j\} \subset [A]$ is a weakly convergent sequence in $W^{1,2}(M,\,\T^*M)$, and let $B$ denote the weak limit of $\{B_j\}$. Clearly, $B \in W^{1,2}(M,\,\T^*M)$. We have to show that $B$ writes as $A + \d^* \psi$ for an exact $2$-form $\psi \in W^{2,2}(M,\,\Lambda^2 \T^* M)$. Since $\{B_j\}$ is weakly convergent in $W^{1,2}(M,\,\T^*M)$, $\{B_j\}$ is bounded in $W^{1,2}(M,\,\T^*M)$. Consequently, $\{\d^* \psi_j\}$ is a bounded sequence in $W^{1,2}(M,\,\T^*M)$ and, since each $\psi_j$ is exact,  by~\eqref{eq:est-exact-2forms} the sequence $\{\psi_j\}$ is bounded in $W^{2,2}(M,\,\Lambda^2 \T^*M)$. Thus, $\{\psi_j\}$ contains a (not relabeled) subsequence which converges weakly in $W^{2,2}(M,\,\Lambda^2 \T^*M)$ to some $\psi \in W^{2,2}(M,\,\Lambda^2 \T^*M)$. By the compactness of $M$, the embedding $W^{2,2}(M,\,\Lambda^2 \T^*M) \hookrightarrow W^{1,2}(M,\,\Lambda^2 \T^*M)$ is compact. By Hodge-decomposition~\eqref{eq:LpHodgeDec}, this implies that the co-exact and harmonic parts of $\psi$ are zero, hence $\psi$ is still exact. From the above, we have $B_j \rightharpoonup A + \d^* \psi$ weakly in $W^{1,2}(M, \T^*M)$ as $j \to \infty$. By the uniqueness of the weak limit, it follows that $B = A + \d^* \psi$. Since $\psi$ is exact, we have eventually $B \in [A]$, proving that $[A]$ is sequentially weakly closed. 
\end{proof}

Fix a smooth reference connection $\D_0$, a 1-form $A \in W^{1,2}(M,\,\T^*M)$, and a section $u \in W^{1,2}(M,\,E)$. Let $\mathcal{F}(\,\cdot\, ; u, A, \D_0) : [A] \to \R$ be the functional defined as follows:
\begin{equation}\label{eq:def-F}
	\forall B \in [A], \qquad \mathcal{F}(B; u, A, \D_0) := \int_M \abs{\D_B u}^2 + \abs{F_B}^2 \,\vol_g.
\end{equation}

\begin{prop}\label{prop:existence-min-F}
	For every choice of a reference smooth connection $\D_0$, of $A \in W^{1,2}(M, \T^*M)$ and of $u \in W^{1,2}(M,E)$, the functional $\mathcal{F}( \,\cdot\,; u, A, \D_0) : [A] \to \R$ defined in~\eqref{eq:def-F} (where $[A]$ is the set defined by~\eqref{eq:class-A}) has a minimiser over $[A]$. Furthermore, if $\D_A u \in L^2(M,\,\T^*M \otimes E)$, then any minimiser $B$ of $\mathcal{F}(\,\cdot\,; u, A, \D_0)$ over $[A]$ satisfies the London equation, i.e.,
	\[
		-\Delta F_{B} + F_{B} = 2J(u, B)
	\]
	in the sense of $\mathcal{D}^\prime(M)$. In addition, there holds
	\begin{equation}\label{eq:GBlessGA}
		G_\eps(u, B) \leq G_\eps(u, A)
	\end{equation}
	for every $\eps > 0$.
\end{prop}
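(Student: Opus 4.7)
The plan is to prove the three claims in turn: existence via the direct method, the London equation by a first-variation argument, and the energy inequality as a direct consequence of minimality.

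For \textbf{existence}, I would take a minimising sequence $\{B_n\}\subset [A]$ and write $B_n = A + \d^*\psi_n$ with $\psi_n\in W^{2,2}(M, \Lambda^2\T^*M)$ exact. From $\mathcal{F}(B_n;u,A,\D_0)\leq C$ one gets $\|F_{B_n}\|_{L^2}\leq C$, and since $F_{B_n} = F_A + \d\d^*\psi_n$ with $F_A$ fixed, also $\|\d\d^*\psi_n\|_{L^2}\leq C$. The 1-form $\d^*\psi_n$ is co-exact, hence $L^2$-orthogonal to the harmonic 1-forms, and a standard Hodge-theoretic Poincar\'e inequality together with elliptic regularity for the Hodge Laplacian yields $\|\d^*\psi_n\|_{W^{1,2}}\leq C$. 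The estimate~\eqref{eq:est-exact-2forms} then furnishes $\|\psi_n\|_{W^{2,2}}\leq C$, so $\{B_n\}$ is bounded in $W^{1,2}(M,\T^*M)$. Passing to a weak limit $B_n\rightharpoonup B$, Lemma~\ref{eq:class-A-closed} gives $B\in[A]$, and convexity of $\mathcal{F}(\,\cdot\,;u,A,\D_0)$ in $B$ (a sum of $L^2$-norms of affine expressions in $B$ and $\d B$) yields weak lower semicontinuity, so $B$ is a minimiser.

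For the \textbf{London equation}, under the hypothesis $\D_A u\in L^2$ the derivative $\D_B u$ lies in $L^2$ for every $B\in[A]$. For any exact $\varphi\in W^{2,2}(M,\Lambda^2\T^*M)$, the curve $B_t := B + t\d^*\varphi$ lies in $[A]$, and minimality implies $\tfrac{d}{dt}\big|_{t=0}\mathcal{F}(B_t;u,A,\D_0) = 0$. Setting $\eta := \d^*\varphi$, using $\D_{B_t}u = \D_B u - it\eta u$ and $F_{B_t} = F_B + t\d\eta$, and integrating by parts the curvature term on the closed manifold $M$, a direct computation rewrites this vanishing condition as
\[
\int_M \ip{\d^* F_B - j(u,B)}{\eta}\,\vol_g = 0.
\]
Setting $T := \d^* F_B - j(u,B)$, we thus have $\int_M\ip{T}{\d^*\varphi}\,\vol_g = 0$ for every exact $\varphi\in W^{2,2}(M,\Lambda^2\T^*M)$. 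Since $\d^*\varphi = 0$ whenever $\varphi$ is co-exact or harmonic, Hodge decomposition extends the identity to every $\varphi\in W^{2,2}(M,\Lambda^2\T^*M)$. Integrating by parts once more gives $\d T = 0$ in $\mathcal{D}'(M)$, i.e., $\d\d^* F_B = \d j(u,B)$. Combining with $\d F_B = 0$ and $2J(u,B) = \d j(u,B) + F_B$ from~\eqref{magneticJac} yields precisely $-\Delta F_B + F_B = 2J(u,B)$.

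The \textbf{inequality} $G_\eps(u,B)\leq G_\eps(u,A)$ is immediate: since $A\in[A]$ and $B$ minimises $\mathcal{F}(\,\cdot\,;u,A,\D_0)$ over $[A]$, we have $\mathcal{F}(B;u,A,\D_0)\leq\mathcal{F}(A;u,A,\D_0)$, and the identity $G_\eps(u,C) = \tfrac12\mathcal{F}(C;u,A,\D_0) + \tfrac{1}{4\eps^2}\int_M(1-|u|^2)^2\,\vol_g$ depends on $C$ only through the first term. I expect the main obstacle to be the coercivity step in the existence argument: converting the mere $L^2$-control of the curvatures $F_{B_n}$ into a $W^{1,2}$-bound on the 1-forms $B_n$ themselves requires crucially exploiting the exactness of $\psi_n$, the Hodge orthogonality of co-exact 1-forms to the harmonic forms, and the key estimate~\eqref{eq:est-exact-2forms}.
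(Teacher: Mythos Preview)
Your proof is correct and follows the same approach as the paper: direct method for existence (coercivity from the $L^2$-bound on $\d\d^*\psi_n$, then weak lower semicontinuity), first variation for the London equation, and minimality for~\eqref{eq:GBlessGA}. The paper handles lower semicontinuity by expanding $\mathcal{F}(B_j)$ explicitly in terms of $\psi_j$, using the compact embedding $W^{2,2}\hookrightarrow W^{1,2}$ to get $\d^*\psi_j\to\d^*\psi$ strongly in $L^2$ (hence a.e.\ along a subsequence), then applying Fatou's lemma to the covariant-derivative term and weak lower semicontinuity of the $L^2$-norm to the curvature term; your convexity argument is a clean and equally valid alternative.

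One minor technical point in the London-equation step: you take test forms $\varphi\in W^{2,2}$, so $\eta=\d^*\varphi\in W^{1,2}$ only. In dimension $n\geq 5$ this does not guarantee $\eta u\in L^2$ (with $u\in W^{1,2}\subset L^{2^*}$ and $\eta\in L^{2^*}$ one only gets $\eta u\in L^{n/(n-2)}$), so $\mathcal{F}(B_t)$ may be $+\infty$ for $t\neq 0$ and the derivative at $t=0$ is not available. The paper avoids this by testing against smooth $2$-forms $\eta$, so that $\d^*\eta\in L^\infty$ and all terms in the expansion of $\mathcal{F}(B_t)$ are integrable; since smooth forms are dense, this still yields $\d T=0$ in $\mathcal{D}'$, and your argument is easily repaired the same way.
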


\begin{proof}
	To shorten notations, let us omit the dependence on $u$, $A$, and $\D_0$ and simply write $\mathcal{F}$ in place of $\mathcal{F}( \,\cdot\,; u, A, \D_0)$. From Lemma~\ref{eq:class-A-closed}, $[A]$ is not empty and sequentially weakly closed. Obviously, $\mathcal{F}$ is bounded below, and thus, to prove existence of minimisers of $\mathcal{F}$ over $[A]$, we only have to prove that $\mathcal{F}$ is sequentially coercive and sequentially lower semicontinuous over $[A]$. Once this is done, the conclusion follows immediately by the direct method in the calculus of variations.
	
	\emph{Step~1: $\mathcal{F}$ is sequentially coercive.} As $[A]$ is sequentially weakly closed in the reflexive space $W^{1,2}(M,\T^*M)$, it suffices to prove that $\mathcal{F}(B) \geq C_1 \norm{B}_{W^{1,2}(M)}^2 - C_2$ for $B \in [A]$, where $C_1$ and $C_2$ are positive constants independent of $B$. Thus, let $B \in [A]$. Then, $B = A + \d^* \psi$ for some exact $2$-form $\psi \in W^{2,2}(M, \Lambda^2 \T^*M)$. Notice that $F_B = F_A + \d \d^* \psi$ and that $\d \d^* \psi = -\Delta \psi$ because $\psi$ is an exact form belonging to $W^{2,2}(M,\Lambda^2 \T^*M)$. Also, recall that $\norm{\Delta \psi}_{L^2(M)}$ is a norm equivalent to $\norm{\psi}_{W^{2,2}(M)}$ on exact forms of class $W^{2,2}$. Thus, using Young's inequality,
	\begin{equation}\label{eq:F-coercive}
		\begin{split}
		 	\mathcal{F}(B) &= \int_M \abs{\D_B u}^2 + \abs{F_B}^2 \, \vol_g \\
		 	&= \int_M \abs{\D_A u - i (\d^*\psi) u}^2 + \abs{F_A + \d \d^* \psi}^2 \,\vol_g \\
		 	&= \int_M \left\{ \abs{\D_A u}^2 - 2 \ip{\D_A u}{i(\d^*\psi)u} + \abs{(\d^* \psi)u}^2 \right.\\
		 	&\left. \quad + \abs{F_A}^2 + 2 \ip{F_A}{\d \d^* \psi} + \abs{\d \d^* \psi}^2 \right\} \,\vol_g \\
		 	&\geq C_1 \norm{\Delta \psi}_{L^2(M)}^2 - \mathcal{F}(A) \\
		 	&\geq C_1 \left( \norm{A}_{W^{1,2}(M)}^2 + \norm{\d^* \psi}_{W^{1,2}(M)}^2 \right) - C_2(u, A, \D_0) \\
		 	&\geq C_1 \norm{B}_{W^{1,2}(M)}^2 - C_2(u, A, \D_0),
		 \end{split}
	\end{equation}
	where $C_1, C_2 > 0$ are constants independent of $B$ (that is, independent of $\psi$). Thus, $\mathcal{F}$ is sequentially coercive over $[A]$, for every choice of $\D_0$, $A$, and $u$.
	
	\emph{Step~2: $\mathcal{F}$ is sequentially weakly lower semicontinous.} Let $\{B_j\}$ be a sequence in $[A]$. Then, for each $j \in \N$ there is an exact $2$-form $\psi_j \in W^{2,2}(M,\,\Lambda^2 \T^*M)$ such that $B_j = A + \d^* \psi_j$. Assume $\{B_j\}$ converges weakly to $B \in W^{1,2}(M, \T^*M)$. Then, $B \in [A]$, $\{\psi_j\}$ converges weakly in $W^{2,2}(M,\,\Lambda^2 \T^*M)$ to some $\psi \in W^{2,2}(M,\,\Lambda^2 \T^*M)$, and $B = A + \d^*\psi$. Since $M$ is compact, the embedding $W^{2,2}(M,\,\Lambda^2 \T^*M) \hookrightarrow W^{1,2}(M,\,\Lambda^2 \T^*M)$ is compact, and therefore $\d^*\psi_j \to \d^* \psi$ strongly in $L^2(M,\,\T^*M)$ as $j \to \infty$ and almost everywhere on a (not relabelled) subsequence. From Fatou's lemma it then follows that
	\[
		\int_M \abs{(\d^* \psi)u}^2 \,\vol_g \leq \liminf_{j \to \infty} \int_M \abs{(\d^* \psi_j)u}^2 \,\vol_g.
	\]
On the other hand, $\Delta \psi_j \to \Delta \psi$ weakly in $L^2(M,\,\Lambda^2 \T^*M)$, and hence we have (recall that $\d \d^* \omega = -\Delta \omega$ if $\omega$ is an exact form of class $W^{2,2}$) 
	\[
		\norm{\d \d^* \psi}_{L^2(M)} \leq \liminf_{j \to \infty} \norm{ \d \d^* \psi_j}_{L^2(M)},
	\]	
	by the sequential weak lower semicontinuity of the $L^2$-norm. From the third line in~\eqref{eq:F-coercive} we get
	\[
		\mathcal{F}(B) \leq \liminf_{j \to \infty} \mathcal{F}(B_j),
	\] 
	and then, by the direct method in the calculus of variations, there exist a minimiser $B$ of $\mathcal{F}$ over $[A]$. Then, it is obvious that~\eqref{eq:GBlessGA} holds.
	
	\emph{Step~3: If $\mathcal{F} \not\equiv +\infty$, every minimiser satisfies the London equation and~\eqref{eq:GBlessGA}.} Assume, in addition to the previous hypotheses, that $\D_A u \in L^2(M,\, \T^*M \otimes E)$. Then, $\mathcal{F}(A) < +\infty$, hence minimisers of $\mathcal{F}$ over $[A]$ have finite energy (in fact, slightly more mildly, we could have required directly finite energy for minimisers in the statement). Let $B$ be a minimiser of $\mathcal{F}$ over $[A]$. To see that $F_B$ satisfies the London equation, take any $\eta \in C^\infty(M,\,\Lambda^2\T^*M)$ and set $B_t := B + t \d^* \eta$ for $t \in \R$. By Hodge decomposition, only the exact part of $\eta$ contributes to $B_t$, whence $\mathcal{F}(B) \leq \mathcal{F}(B_t)$ for every $t \in \R$ and every choice of $\eta \in C^\infty(M,\,\Lambda^2\T^*M)$. Thus, we must have $\left.\frac{\d}{\d t}\right\vert_{t=0} \mathcal{F}(B_t) = 0$, and so
	\begin{equation}\label{eq:integral-EL-F}
		\int_M \left( \ip{\D_{B} u}{-i(\d^*\eta) u} + \ip{F_{B}}{\d\d^*\eta} \right) \vol_g = \int_M \ip{-j(u, B) + \d^* F_{B}}{\d^*\eta}\,\vol_g = 0.
	\end{equation}  
	As~\eqref{eq:integral-EL-F} holds for any $\eta \in C^\infty(M,\,\Lambda^2 \T^*M)$, it follows
	\[
		\d(\d^* F_{B} - j(u,B)) = 0,
	\]
	in the sense of distributions, whence (as $F_{B}$ is closed)
	\[
		-\Delta F_{B} + F_{B} = 2J(u, B),
	\]
	in the sense of distributions.
\end{proof}

\begin{remark}
	For $u \equiv 0$ the functional $\mathcal{F}$ reduces to the Yang-Mills functional $\mathcal{YM}(B) := \int_M \abs{F_B}^2 \vol_g$, considered over the class $[A]$; i.e., $\mathcal{F}(B\,; 0, A,\D_0) = \mathcal{YM}(B)$ for $B \in [A]$. An argument very similar to that in the proof of Proposition~\ref{prop:existence-min-F} yields existence of $\U(1)$-Yang-Mills connections over $E \to M$ in $[A]$, for every $A \in W^{1,2}(M,\,\T^*M)$. Indeed, the existence of a minimiser $B$ of $\mathcal{YM}$ over $[A]$ was already proved in Step~1 and Step~2. The fact that for any such  minimiser $F_B$ satisfies the Yang-Mills equations $\d^* F_B = 0$ in the sense of distributions follows easily similarly to as in Step~3. Indeed, if $t \in \R$ and $\psi \in C^\infty(M,\,\Lambda^2\T^*M)$ is any exact 2-form, then $B_t := B + t \d^*\psi \in [A]$ and  $F_{B_t} = F_B + t \d (\d^* \psi)$. Hence,
	\[
		\frac{1}{2}\left.\frac{\d}{\d t}\right\vert_{t=0} \mathcal{YM}(B_t) = \int_M \ip{F_B}{\d(\d^*\psi)} \vol_g =\int_M \ip{\d^* F_B}{\d^* \psi} \vol_g = 0, 
	\]
	for every $\psi \in C^\infty(M,\,\Lambda^2 \T^*M)$. On the other hand, for any $\omega \in C^\infty(M,\,\T^*M)$, we have $\omega = \d \varphi + \d^*\psi + \xi$ for some $\varphi \in C^\infty(M)$, $\psi \in C^\infty(M,\,\Lambda^2 \T^*M)$ exact, and $\xi \in \Harm^1(M)$, hence
	\[
		\ip{\d^* F_B}{\omega}_{\mathcal{D}',\mathcal{D}} = \ip{\d^* F_B}{\d^*\psi}_{\mathcal{D}',\mathcal{D}} = 0
	\]
	for every $\omega \in C^\infty(M,\,\T^*M)$. Thus, $\d^* F_B = 0$ in the sense of $\mathcal{D}'(M)$.
\end{remark}

Since we want to prove a $\Gamma$-convergence result, and since $\abs{\log\eps}$ is the energy scaling of minimisers of $G_\eps$ according to Remark~\ref{rk:energy-bound-min}, it is natural to be concerned with sequences $\{(u_\eps, A_\eps)\} \subset W^{1,2}(M,E) \times W^{1,2}(M,\T^*M)$ satisfying
\begin{equation} \label{G:hp}
 \sup_{\eps>0} \frac{G_\eps(u_\eps, \, A_\eps)}{\abs{\log\eps}} < +\infty.
\end{equation}
We are now going to explore the consequences of this additional assumption. 
\begin{lemma}\label{lemma:conv-jac-w^(-1,1)}
	Let $\{(u_\eps, A_\eps)\}$ be a sequence in $W^{1,2}(M,\,E)\times W^{1,2}(M,\,\T^*M)$. For each $\eps > 0$, let $B_\eps$ be a minimiser of $\mathcal{F}(\,\cdot\,; u_\eps, A_\eps, \D_0)$ over $[A_\eps]$, where $[A_\eps]$ is defined as in~\eqref{eq:class-A}. Assume that $\{(u_\eps, A_\eps)\}$ satisfies~\eqref{G:hp}. Then,
	\begin{equation}
		J(u_\eps, A_\eps) - J(u_\eps, B_\eps) \to 0 \quad \mbox{in } W^{-1,1}(M)
	\end{equation}
	as $\eps \to 0$.
\end{lemma}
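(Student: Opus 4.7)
My plan is to reduce the statement to an $L^1$-estimate via Remark~\ref{rk:diff-jJAB}, which gives
\[
J(u_\eps, A_\eps) - J(u_\eps, B_\eps) = \tfrac{1}{2}\,\d\!\left((B_\eps - A_\eps)(1 - |u_\eps|^2)\right).
\]
Since $\d \colon L^1(M) \to W^{-1,1}(M)$ is continuous, it suffices to show that the product $(B_\eps - A_\eps)(1 - |u_\eps|^2) \to 0$ in $L^1(M)$. By Cauchy--Schwarz this will follow from the two bounds
\[
\|1 - |u_\eps|^2\|_{L^2(M)} \lesssim \eps\,|\log\eps|^{1/2}, \qquad
\|B_\eps - A_\eps\|_{L^2(M)} \lesssim |\log\eps|^{1/2}.
\]
The first bound is immediate from the energy hypothesis~\eqref{G:hp}. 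The second is the key non-trivial estimate.

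To prove the bound on $\|B_\eps - A_\eps\|_{L^2}$, I would exploit that $B_\eps \in [A_\eps]$, so that $B_\eps - A_\eps = \d^*\psi_\eps$ for some exact $\psi_\eps \in W^{2,2}(M, \Lambda^2\T^*M)$, whence $F_{B_\eps} - F_{A_\eps} = \d\d^*\psi_\eps$. By the minimality of $B_\eps$, together with $\mathcal{F}(A_\eps; u_\eps, A_\eps, \D_0) \leq 2\,G_\eps(u_\eps, A_\eps)$,
\[
\mathcal{F}(B_\eps; u_\eps, A_\eps, \D_0) \leq \mathcal{F}(A_\eps; u_\eps, A_\eps, \D_0) \lesssim |\log\eps|,
\]
which bounds $\|F_{B_\eps}\|_{L^2}$ by $|\log\eps|^{1/2}$; combined with the direct estimate $\|F_{A_\eps}\|_{L^2} \lesssim |\log\eps|^{1/2}$ from~\eqref{G:hp}, this yields $\|\d\d^*\psi_\eps\|_{L^2(M)} \lesssim |\log\eps|^{1/2}$. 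At this stage I would invoke the elliptic estimate~\eqref{eq:est-exact-2forms}, together with the equivalence between $\|\d\d^*\psi\|_{L^2}$ (i.e.~$\|\Delta \psi\|_{L^2}$) and $\|\psi\|_{W^{2,2}}$ on exact $2$-forms (already recalled in Step~1 of the proof of Proposition~\ref{prop:existence-min-F}), to conclude
\[
\|B_\eps - A_\eps\|_{L^2(M)} = \|\d^*\psi_\eps\|_{L^2(M)} \lesssim \|\psi_\eps\|_{W^{2,2}(M)} \lesssim \|\d\d^*\psi_\eps\|_{L^2(M)} \lesssim |\log\eps|^{1/2}.
\]

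Putting these estimates together will give $\|J(u_\eps, A_\eps) - J(u_\eps, B_\eps)\|_{W^{-1,1}(M)} \lesssim \eps\,|\log\eps|$, which vanishes as $\eps \to 0$, as required. I do not expect a genuine obstacle: the only mildly non-trivial ingredient is the elliptic estimate for exact $2$-forms via the Hodge Laplacian, and that has already been established; everything else amounts to unpacking the definition of $[A_\eps]$, using the minimising property of $B_\eps$, and applying Hölder's inequality.
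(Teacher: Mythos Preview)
Your proposal is correct and follows essentially the same argument as the paper: both exploit Remark~\ref{rk:diff-jJAB} to reduce to an $L^1$-bound on $(A_\eps - B_\eps)(1-|u_\eps|^2)$, then use minimality of $B_\eps$ together with~\eqref{G:hp} to bound $\|F_{B_\eps} - F_{A_\eps}\|_{L^2} = \|\Delta\psi_\eps\|_{L^2}$ by $|\log\eps|^{1/2}$, and finally the elliptic estimate on exact $2$-forms to control $\|B_\eps - A_\eps\|_{L^2} = \|\d^*\psi_\eps\|_{L^2}$. (One cosmetic point: the identity from~\eqref{eq:diff-jJAB} reads $J(u,A)-J(u,B) = \tfrac12\,\d\bigl((A-B)(1-|u|^2)\bigr)$, so your displayed formula has the sign of $A_\eps - B_\eps$ flipped; this is irrelevant once you pass to norms.)
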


\begin{proof}
	For each $\eps > 0$ there is an exact form $\psi_\eps \in W^{2,2}(M,\,\Lambda^2 \T^*M)$ such that $B_\eps = A_\eps + \d^* \psi_\eps$, and 
	\[
		-\Delta \psi_\eps = \d \d^* \psi_\eps = \d(B_\eps - A_\eps) = F_{B_\eps} - F_{A_\eps}.
	\]
	By \eqref{G:hp} and the minimality of $B_\eps$, we have
	\begin{equation}\label{eq:bound-B-aux-energy}
		\int_M \left( \abs{\D_{B_\eps} u_\eps}^2 + \abs{F_{B_\eps}}^2 \right) \,\vol_g \leq \int_M \left( \abs{\D_{A_\eps} u_\eps}^2 + \abs{F_{A_\eps}}^2 \right) \, \vol_{g} \lesssim \abs{\log\eps},
	\end{equation}
	up to a constant depending only on $M$, and therefore
	\[
		\norm{-\Delta \psi_\eps}_{L^2(M)} \lesssim \abs{\log\eps}^{1/2}.
	\]
	By~\eqref{eq:est-exact-2forms},
	\[
		\norm{\psi_\eps}_{W^{2,2}(M)} \lesssim \norm{\Delta \psi_\eps}_{L^2(M)} \lesssim \abs{\log\eps}^{1/2},
	\]
	whence 
	\[
        \norm{B_\eps - A_\eps}_{L^2(M)} \lesssim \abs{\log \eps}^{1/2}.
	\]
	Then,
	\[
		\|(A_\eps - B_\eps)(1-\abs{u_\eps}^2)\|_{L^1(M)} \lesssim \abs{\log \eps}^{1/2} \abs{\log \eps}^{1/2} \eps
	\]
for all $\eps > 0$, and the right hand side tends to zero sending $\eps \to 0$. Consequently, recalling~\eqref{eq:diff-jJAB}, it follows
	\[
		J(u_\eps, A_\eps) - J(u_\eps, B_\eps) \to 0 \quad \mbox{in } W^{-1,1}(M) \quad \mbox{as } \eps \to 0,
	\]
	which is the conclusion.
\end{proof}

\begin{definition}\label{def:reduced-seq}
	For any given sequence $\{(u_\eps, A_\eps)\} \subset W^{1,2}(M,\,E) \times W^{1,2}(M,\,\T^* M)$, we define a new sequence $\{(v_\eps, B_\eps)\} \subset (L^\infty \cap W^{1,2})(M,\,E) \times W^{1,2}(M,\,\T^*M)$ as follows. First, we replace each $u_\eps$ with $v_\eps$, the essentially bounded section associated with $u_\eps$ by \eqref{trunc:v}. Secondly, we replace each $A_\eps$ with a corresponding minimiser $B_\eps$ of $\mathcal{F}(\,\cdot\,; v_\eps, A_\eps, \D_0)$ over $[A_\eps]$, where $[A_\eps]$ is defined as in~\eqref{eq:class-A}. We call $\{(v_\eps, B_\eps)\}$ an \emph{optimised sequence} associated with $\{(u_\eps, A_\eps)\}$.
\end{definition}

\begin{lemma}[Reduction lemma]\label{lemma:complete-reduction}
	For any sequence $\{(u_\eps, A_\eps)\} \subset W^{1,2}(M,E) \times W^{1,2}(M, \T^* M)$, let $\{(v_\eps, B_\eps)\} \subset (L^\infty \cap W^{1,2})(M,E) \times W^{1,2}(M, \T^* M)$ be an associated optimised sequence (defined in Definition~\ref{def:reduced-seq}). Then, $\{(v_\eps, B_\eps)\}$ satisfies 
	\begin{equation}\label{eq:less-energy}
		G_\eps (v_\eps, B_\eps) \leq G_\eps(u_\eps, A_\eps)
	\end{equation}
	for any $\eps > 0$. Consequently,
	\begin{equation}\label{eq:liminfGeps}
		\liminf_{\eps \to 0} \frac{G_\eps(v_\eps, B_\eps)}{\abs{\log\eps}} \leq \liminf_{\eps \to 0} \frac{G_\eps(u_\eps, A_\eps)}{\abs{\log\eps}}.
\end{equation}		
	Moreover, under the assumption \eqref{G:hp} there holds
	\begin{equation}\label{eq:conv-jac}
		J(u_\eps, A_\eps) - J(v_\eps, B_\eps) \to 0 \quad \mbox{in } W^{-1,p}(M) \mbox{ for any } p \mbox{ such that } 1 \leq p < \frac{n}{n-1} 
	\end{equation}
	as $\eps \to 0$.
\end{lemma}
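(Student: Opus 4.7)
The energy inequality \eqref{eq:less-energy} is essentially immediate from the construction. By the definition of $v_\eps$ as the truncation of $u_\eps$ and by Lemma~\ref{lemma:truncation}, one has $G_\eps(v_\eps, A_\eps) \leq G_\eps(u_\eps, A_\eps)$. On the other hand, since $B_\eps$ minimises $\mathcal{F}(\,\cdot\,; v_\eps, A_\eps, \D_0)$ over $[A_\eps]$ and $A_\eps \in [A_\eps]$, the inequality~\eqref{eq:GBlessGA} in Proposition~\ref{prop:existence-min-F} (applied with $u = v_\eps$) yields $G_\eps(v_\eps, B_\eps) \leq G_\eps(v_\eps, A_\eps)$. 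Chaining the two inequalities gives \eqref{eq:less-energy}; dividing by $\abs{\log\eps}$ and taking the $\liminf$ as $\eps \to 0$ then gives \eqref{eq:liminfGeps}.

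For the convergence of Jacobians, the plan is to decompose
\[
J(u_\eps, A_\eps) - J(v_\eps, B_\eps)
 = \bigl[J(u_\eps, A_\eps) - J(v_\eps, A_\eps)\bigr]
 + \bigl[J(v_\eps, A_\eps) - J(v_\eps, B_\eps)\bigr]
\]
and deal with each bracket separately. The first bracket is controlled directly by Lemma~\ref{lemma:truncation}: under \eqref{G:hp}, the conclusion \eqref{trunc:J} gives convergence to zero in $W^{-1,p}(M)$ for every $p \in [1, n/(n-1))$. For the second bracket, note that $(v_\eps, A_\eps)$ still satisfies the logarithmic energy bound \eqref{G:hp}, because $G_\eps(v_\eps, A_\eps) \leq G_\eps(u_\eps, A_\eps)$ by the truncation lemma, and $B_\eps$ is by construction a minimiser of $\mathcal{F}(\,\cdot\,; v_\eps, A_\eps, \D_0)$ over $[A_\eps]$. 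Hence Lemma~\ref{lemma:conv-jac-w^(-1,1)} applies to the sequence $\{(v_\eps, A_\eps)\}$ and yields $J(v_\eps, A_\eps) - J(v_\eps, B_\eps) \to 0$ in $W^{-1,1}(M)$.

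The main obstacle is upgrading this $W^{-1,1}$ convergence to $W^{-1,p}$ for every $p < n/(n-1)$. The strategy is the same interpolation trick used in the proof of Lemma~\ref{lemma:truncation}: combine the quantitative $W^{-1,1}$ estimate (which, inspecting the proof of Lemma~\ref{lemma:conv-jac-w^(-1,1)}, is of order $\eps \abs{\log\eps}$) with a uniform $L^1$ bound on the same quantity. To produce the latter, one uses the pointwise identity \eqref{trunc5}, i.e.
\[
 J(v_\eps, A_\eps) = \sigma(v_\eps, A_\eps) + \tfrac{1}{2}(1 - \abs{v_\eps}^2) F_{A_\eps},
\]
and its analogue for $(v_\eps, B_\eps)$, together with $\abs{v_\eps} \leq 1$, the energy estimate \eqref{eq:bound-B-aux-energy}, and the $L^2$-smallness of $1 - \abs{v_\eps}^2$; this yields $\norm{J(v_\eps, A_\eps) - J(v_\eps, B_\eps)}_{L^1(M)} \lesssim \abs{\log\eps}$. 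Interpolating via Lemma~\ref{lemma:interpolation} exactly as in the final step of the proof of Lemma~\ref{lemma:truncation} gives convergence in $W^{-1,p}(M)$ for every $p \in [1, n/(n-1))$, with quantitative rate $\eps^\alpha \abs{\log\eps}$ (where $\alpha = 1 + n/p - n$), completing the proof of \eqref{eq:conv-jac}.
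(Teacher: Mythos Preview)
Your proof is correct and follows essentially the same strategy as the paper: the energy inequality via truncation plus minimisation, and the Jacobian convergence via a two-bracket decomposition combined with the interpolation Lemma~\ref{lemma:interpolation}.

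There is one small difference worth noting. The paper decomposes as
\[
J(u_\eps, A_\eps) - J(v_\eps, B_\eps)
= \bigl[J(u_\eps, A_\eps) - J(u_\eps, B_\eps)\bigr]
+ \bigl[J(u_\eps, B_\eps) - J(v_\eps, B_\eps)\bigr],
\]
citing Lemma~\ref{lemma:conv-jac-w^(-1,1)} for the first bracket and Lemma~\ref{lemma:truncation} (with connection $B_\eps$) for the second. Your decomposition
\[
\bigl[J(u_\eps, A_\eps) - J(v_\eps, A_\eps)\bigr]
+ \bigl[J(v_\eps, A_\eps) - J(v_\eps, B_\eps)\bigr]
\]
is arguably cleaner: Lemma~\ref{lemma:truncation} applies directly to the first bracket with the original connection $A_\eps$, and Lemma~\ref{lemma:conv-jac-w^(-1,1)} applies verbatim to the second bracket with the sequence $(v_\eps, A_\eps)$, since by Definition~\ref{def:reduced-seq} the form $B_\eps$ minimises $\mathcal{F}(\,\cdot\,; v_\eps, A_\eps, \D_0)$ (not $\mathcal{F}(\,\cdot\,; u_\eps, A_\eps, \D_0)$). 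Your $L^1$ bounds $\norm{J(v_\eps, A_\eps)}_{L^1}, \norm{J(v_\eps, B_\eps)}_{L^1} \lesssim \abs{\log\eps}$ via \eqref{trunc5} are also immediate, as $\abs{v_\eps}\le 1$ and the energies of $(v_\eps, A_\eps)$, $(v_\eps, B_\eps)$ are controlled by \eqref{trunc:energy} and \eqref{eq:bound-B-aux-energy} (applied with $v_\eps$). The paper's version, read literally, requires the extra check that $\norm{\D_{B_\eps} u_\eps}_{L^2}^2 \lesssim \abs{\log\eps}$, which is not entirely free since $u_\eps$ is not bounded in $L^\infty$; your route sidesteps this.
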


\begin{proof}
	Inequality \eqref{eq:less-energy} follows by construction. Indeed, by \eqref{eq:GBlessGA} and \eqref{trunc:energy} we have
	\[
		G_\eps(v_\eps, B_\eps) \leq G_\eps(v_\eps, A_\eps) \leq G_\eps(u_\eps, A_\eps)
	\]
	for all $\eps > 0$. Then, obviously, the inequality remains true dividing both sides by $\abs{\log\eps}$ and passing both sides to the $\liminf$, i.e., inequality \eqref{eq:liminfGeps} holds.
	
	In view of Lemma~\ref{lemma:truncation} and, more specifically, of \eqref{trunc:J}, to prove \eqref{eq:conv-jac} it is enough to prove that
	\[
		J(u_\eps, A_\eps) - J(u_\eps, B_\eps) \to 0 \quad \mbox{in } W^{-1,p}(M) \mbox{ for any } p \mbox{ such that } 1 \leq p < \frac{n}{n-1}
	\]
	as $\eps \to 0$. The case $p = 1$ is proved in Lemma~\ref{lemma:conv-jac-w^(-1,1)}. We will rely on that case and interpolation to extend the same conclusion to all $1\leq p < \frac{n}{n-1}$. To this purpose, we notice that~\eqref{trunc5}, \eqref{G:hp} and H\"older's inequality imply
	\begin{equation}\label{eq:L1-est-Jac(u_eps,A_eps)}
		\norm{J(u_\eps, A_\eps)}_{L^1(M)} \lesssim \abs{\log\eps}.	
	\end{equation}
	On the other hand, from~\eqref{trunc5}, \eqref{G:hp}, and \eqref{eq:bound-B-aux-energy} it follows
	\begin{equation}\label{eq:L1-est-Jac(u_eps,B_eps)}
		\norm{J(u_\eps, B_\eps)}_{L^1(M)} \lesssim \abs{\log\eps}.
	\end{equation}
	Thus, by Lemma~\ref{lemma:conv-jac-w^(-1,1)}, \eqref{eq:L1-est-Jac(u_eps,A_eps)}, \eqref{eq:L1-est-Jac(u_eps,B_eps)}, and Lemma~\ref{lemma:interpolation} we get
	\[
		\norm{J(u_\eps, A_\eps) - J(u_\eps, B_\eps)}_{W^{-1,p}(M)} \to 0
	\]
	as $\eps \to 0$, for any $1 \leq p < \frac{n}{n-1}$.
\end{proof}

\begin{remark}\label{rk:reduction}
In view of~\eqref{eq:liminfGeps} and~\eqref{eq:conv-jac}, it is always possible to pass to an optimised sequence in the proof of Statement~{(i)} of Theorem~\ref{maingoal:G} (i.e., to replace, in the proof, the given sequence $\{(u_\eps, A_\eps)\}$ by any associated optimised sequence). As we will see later, a decisive advantage of doing so is that the curvature of the connections of an optimised sequence satisfy the London equation~\eqref{London}.
\end{remark}


It is convenient to introduce the following notation: if $\Phi \in W^{2,2}(M,\,\S^1)$ is a gauge transformation and $A \in W^{1,2}(M, \T^*M)$, we set (cf.~Remark~\ref{rk:pullback-Phi})
\begin{equation}\label{eq:A-gauged}
	\Phi \cdot A := A + \Phi^*(\vol_{\mathbb{S}^1}) 
	= A - i \Phi^{-1} \d \Phi.
\end{equation}

The next technical lemma produces a family of gauge transformations playing a crucial r\^{o}le in the whole rest of this section.
\begin{lemma}\label{lemma:bound_A_W12}
	Let $\{(u_\eps,A_\eps)\} \subset (L^\infty \cap W^{1,2})(M,E)\times W^{1,2}(M,\T^*M)$ be an optimised sequence satisfying \eqref{G:hp}. Then, for each $\eps > 0$ there exists gauge transformations $\Phi_\eps \in W^{2,2}(M,\, \S^1)$ so that there holds
	\begin{equation}\label{bounds_A_W12}
		\norm{\Phi_\eps \cdot A_\eps}_{W^{1,2}(M)} \lesssim \abs{\log\eps}^{1/2}
	\end{equation}
	for all $\eps > 0$, whence
	\begin{equation}\label{eq:bound_Eeps}
		E_\eps(\Phi_\eps u_\eps) \lesssim \abs{\log\eps}
	\end{equation}
	for all $\eps > 0$.
\end{lemma}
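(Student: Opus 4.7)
The plan is to exploit the gauge freedom to reduce $A_\eps$ to a small remainder by absorbing both its exact part and (up to a uniformly bounded correction) its harmonic part into a suitable gauge transformation. What is left is the co-exact part, which can be controlled via Hodge-theoretic elliptic estimates from the sole $L^2$-bound on $F_{A_\eps}$. Since $\{(u_\eps,A_\eps)\}$ is optimised and satisfies \eqref{G:hp}, two facts are crucial at the outset: $\abs{u_\eps}\le 1$ a.e.\ by the truncation step in Definition~\ref{def:reduced-seq}, and
\[
 \int_M \abs{\D_{A_\eps} u_\eps}^2 \,\vol_g + \int_M \abs{F_{A_\eps}}^2 \,\vol_g \lesssim \abs{\log\eps}.
\]

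For the construction of $\Phi_\eps$, I will Hodge-decompose $A_\eps$ orthogonally in $L^2$ as $A_\eps = \d\alpha_\eps + \d^*\beta_\eps + h_\eps$, with $\alpha_\eps\in W^{2,2}(M,\R)$ (because $\d\alpha_\eps\in W^{1,2}(M)$), $\d^*\beta_\eps$ coexact, and $h_\eps\in\Harm^1(M)$. Applying Lemma~\ref{lemma:choiceofgauge} with $\varphi=\alpha_\eps$ and $\xi=h_\eps$ produces $\Phi_\eps\in W^{2,2}(M,\S^1)$ (as regular as $\alpha_\eps$) and $\bar h_\eps\in\Harm^1(M)$ satisfying
\[
 \Phi_\eps^*(\vol_{\S^1}) = -\d\alpha_\eps - \bar h_\eps, \qquad \norm{h_\eps-\bar h_\eps}_{L^2(M)}\le C_M.
\]
By the definition \eqref{eq:A-gauged}, this yields the decisive identity $\Phi_\eps\cdot A_\eps = \d^*\beta_\eps + (h_\eps-\bar h_\eps)$.

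Estimating each summand is then straightforward. The harmonic remainder $h_\eps-\bar h_\eps$ is uniformly bounded in $W^{1,2}$ by Remark~\ref{rk:choiceofgauge}, since $\Harm^1(M)$ is finite-dimensional. For $\eta_\eps := \d^*\beta_\eps$, I observe that $\d\eta_\eps = \d A_\eps = F_{A_\eps}$ (the exact part is killed by $\d$ and $h_\eps$ is closed) while $\d^*\eta_\eps = 0$; since $\eta_\eps$ is coexact, it is $L^2$-orthogonal to $\Harm^1(M)$, so a Gaffney-type elliptic estimate (of the same flavour as the ones already invoked via Proposition~\ref{prop:LpHodgeDec} and Lemma~\ref{lemma:elliptic_reg_bis}) gives $\norm{\eta_\eps}_{W^{1,2}(M)}\lesssim\norm{F_{A_\eps}}_{L^2(M)}\lesssim\abs{\log\eps}^{1/2}$. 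Combining the two summands yields \eqref{bounds_A_W12}.

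Finally, for \eqref{eq:bound_Eeps} I will use that $\abs{\Phi_\eps u_\eps}=\abs{u_\eps}$ (so the potential term of $E_\eps(\Phi_\eps u_\eps)$ coincides with that of $u_\eps$ and is $\lesssim\abs{\log\eps}$), together with the decomposition
\[
 \D_0(\Phi_\eps u_\eps) = \D_{\Phi_\eps\cdot A_\eps}(\Phi_\eps u_\eps) + i(\Phi_\eps\cdot A_\eps)(\Phi_\eps u_\eps).
\]
Gauge invariance gives $\abs{\D_{\Phi_\eps\cdot A_\eps}(\Phi_\eps u_\eps)}=\abs{\D_{A_\eps} u_\eps}$, and combined with $\abs{u_\eps}\le 1$ and \eqref{bounds_A_W12} one obtains
\[
 \int_M \abs{\D_0(\Phi_\eps u_\eps)}^2 \,\vol_g \lesssim \int_M \abs{\D_{A_\eps} u_\eps}^2 \,\vol_g + \norm{\Phi_\eps\cdot A_\eps}_{L^2(M)}^2 \lesssim \abs{\log\eps}.
\]
The main obstacle is the Gaffney-type estimate for the coexact part in Step~3; its use is delicate only because we need the $W^{1,2}$-norm (not just an $L^2$-norm), and hinges on Hodge theory for $1$-forms on $M$, but this is already part of the background machinery of the paper. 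A secondary subtlety is that the bound $\abs{u_\eps}\le 1$ is essential in Step~4: without it, the term $\int\abs{\Phi_\eps\cdot A_\eps}^2\abs{u_\eps}^2$ would not be controlled by \eqref{bounds_A_W12} alone, which is precisely why one passes to an optimised sequence before applying the present lemma.
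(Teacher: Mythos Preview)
Your proof is correct and follows essentially the same route as the paper: Hodge-decompose $A_\eps$, use Lemma~\ref{lemma:choiceofgauge} to gauge away the exact part and almost all of the harmonic part, then control the remaining coexact piece by Hodge-elliptic estimates driven by the $L^2$-bound on the curvature; finally, deduce \eqref{eq:bound_Eeps} from \eqref{bounds_A_W12} via $\D_0(\Phi_\eps u_\eps)=\D_{\Phi_\eps\cdot A_\eps}(\Phi_\eps u_\eps)+i(\Phi_\eps\cdot A_\eps)(\Phi_\eps u_\eps)$ together with gauge invariance and $\abs{u_\eps}\le 1$.

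One small slip: you write $\d\eta_\eps=\d A_\eps=F_{A_\eps}$, but by \eqref{eq:curvature} one has $\d A_\eps=F_{A_\eps}-F_0$; the paper records this correctly in \eqref{eq:laplace_psi}. Since $F_0$ is a fixed smooth form, this does not affect the estimate $\norm{\eta_\eps}_{W^{1,2}}\lesssim\abs{\log\eps}^{1/2}$. The only other point worth making explicit is that your ``Gaffney-type'' bound on $\eta_\eps$ tacitly uses a Poincar\'e-type step (equivalently, elliptic regularity orthogonal to harmonics) to absorb $\norm{\eta_\eps}_{L^2}$ into $\norm{\d\eta_\eps}_{L^2}$; the paper makes this explicit by estimating the exact $2$-form $\psi_\eps$ (your $\beta_\eps$) in $W^{2,2}$ via Lemma~\ref{lemma:classic-elliptic-reg} applied to $-\Delta\psi_\eps=F_{A_\eps}-F_0$, and then using \eqref{eq:gaffney-closed}. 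The two arguments are equivalent.
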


\begin{proof}
	By Lemma~\ref{lemma:choiceofgauge}, for all $\eps > 0$ we find a gauge transformation such that we can write $\Phi_\eps \cdot A_\eps = \d^*\psi_\eps + \zeta_\eps$, where $\zeta_\eps \in \Harm^1(M)$. Indeed, denote $A_\eps = \d \varphi_\eps + \d^* \psi_\eps + \xi_\eps$ the Hodge decomposition of $A_\eps$, where $\varphi \in W^{2,2}(M)$ is co-exact, $\psi \in W^{2,2}(M,\Lambda^2 \T^*M)$ is exact, and $\xi \in \Harm^1(M)$. Then, Lemma~\ref{lemma:choiceofgauge} provides us, for each $\eps > 0$, with a gauge transformation $\Phi_\eps \in W^{2,2}(M,\,\S^1)$ and a harmonic 1-form $\bar{\xi}_\eps$ so that $\Phi_\eps^*(\vol_{\S^1}) = - \d \varphi_\eps - \bar{\xi}_\eps$. In addition, by Remark~\ref{rk:choiceofgauge}, $\|\zeta_\eps\|_{L^\infty(M)} \leq C_M$, where $\zeta_\eps := \xi_\eps - \bar{\xi}_\eps$ and $C_M$ is a constant depending only on $M$ (whence of course $\sup_{\eps > 0} \norm{\zeta_\eps}_{L^\infty(M)} \leq C_M$). 
	Let $\eps > 0$ be arbitrary, and recall that we denote $F_0$ the curvature 2-form of the reference connection $\D_0$. As $\psi_\eps$ is exact and of class $W^{2,2}$, there holds
	\begin{equation}\label{eq:laplace_psi}
		-\Delta \psi_\eps = \d \d^* \psi_\eps = \d( \Phi_\eps \cdot A_\eps ) = \d A_\eps = F_{A_\eps} - F_0,
	\end{equation}
	by~\eqref{eq:curvature}. 
	In particular, $F_{A_\eps} - F_0$ is an exact $2$-form (hence, orthogonal to every harmonic $2$-form) and therefore we deduce from Lemma~\ref{lemma:classic-elliptic-reg} that for all $\eps > 0$
	\[
		\norm{\psi_\eps}_{W^{2,2}(M)} \lesssim \norm{F_\eps - F_0}_{L^2(M)} \lesssim \abs{\log\eps}^{1/2},
	\]
	where the last inequality follows from~\eqref{G:hp}. Hence, recalling~\eqref{eq:gaffney-closed},
	\[
		\norm{\Phi_\eps \cdot A_\eps}_{W^{1,2}(M)} \lesssim \abs{\log\eps}^{1/2}
	\]
	for all $\eps > 0$; i.e., \eqref{bounds_A_W12} is proved. From \eqref{bounds_A_W12}, \eqref{G:hp}, and the gauge-invariance of $G_\eps$,
	\[
	\begin{split}
		E_\eps(\Phi_\eps u_\eps) &= \int_M \left( \frac{1}{2}\abs{\D_0 (\Phi_\eps u_\eps)}^2 + \frac{1}{4\eps^2} \left( 1 - \abs{\Phi_\eps u_\eps}^2 \right)^2 \right) \,\vol_g \\
		&\leq \int_M \left( \abs{\D_{\Phi_\eps \cdot A_\eps} (\Phi_\eps u_\eps)}^2 + \abs{\Phi_\eps \cdot A_\eps}^2  + \frac{1}{4\eps^2} \left( 1 - \abs{\Phi_\eps u_\eps}^2\right)^2 \right)  \,\vol_g \\
		&\leq \int_M\left( \abs{\D_{\Phi_\eps\cdot A_\eps} (\Phi_\eps u_\eps)}^2 + \abs{F_{\Phi_\eps \cdot A_\eps}}^2  + \frac{1}{4\eps^2} \left( 1 - \abs{\Phi_\eps u_\eps}^2\right)^2 \right)  \,\vol_g + \int_M \abs{\Phi_\eps \cdot A_\eps}^2 \,\vol_g \\
		&\leq 2G_\eps(\Phi_\eps u_\eps, \Phi_\eps \cdot A_\eps) + \norm{\Phi_\eps \cdot A_\eps}^2_{L^2(M)} \\
		&= 2G_\eps(u_\eps, A_\eps) + \norm{\Phi_\eps \cdot A_\eps}^2_{L^2(M)} \lesssim \abs{\log\eps}
	\end{split}
	\]
	for all $\eps > 0$.
\end{proof}

\begin{remark}
	In general, we cannot assert that the original sequence~$\{A_\eps\}$ satisfies~\eqref{bounds_A_W12}, as we have no control on the co-exact part of the Hodge decomposition of the maps $A_\eps$ (i.e., on $\varphi_\eps$).
\end{remark}

We immediately make use of Lemma~\ref{lemma:bound_A_W12} to prove the following corollary.

\begin{corollary}\label{cor:convergence-jacobians}
	Let $\{(u_\eps, A_\eps)\} \subset (L^\infty \cap W^{1,2})(M, E) \times W^{1,2}(M,\T^*M)$ be an optimised sequence satisfying \eqref{G:hp}. Then, up to extraction of a (not relabelled) subsequence, $J(u_\eps, A_\eps) \to \pi J_*$ in $W^{-1,p}(M)$ for any $1 \leq p < \frac{n}{n-1}$ as $\eps \to 0$, where $J_*$ is a bounded measure with values in $2$-forms. In addition, $\star J_*$ is an integer-multiplicity rectifiable $(n-2)$-cycle belonging to $\mathcal{C}$.
\end{corollary}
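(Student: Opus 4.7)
The plan is to transfer the problem from the full gauge-invariant functional $G_\eps$ to the simpler functional $E_\eps$, for which Theorem~\ref{maingoal:E} already provides all the compactness and homological information we need. The bridge is provided by the gauge transformations $\Phi_\eps \in W^{2,2}(M, \, \S^1)$ produced by Lemma~\ref{lemma:bound_A_W12}: under the logarithmic energy bound~\eqref{G:hp}, these satisfy $\|\Phi_\eps \cdot A_\eps\|_{W^{1,2}(M)} \lesssim |\log\eps|^{1/2}$ and, crucially, $E_\eps(\Phi_\eps u_\eps) \lesssim |\log\eps|$. Thus the gauged section $\Phi_\eps u_\eps$ fits directly into the framework of Section~\ref{sec:maingoalE}.

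The next step is to invoke Lemma~\ref{lemma:compactnessE} applied to $\{\Phi_\eps u_\eps\}$: up to a subsequence, there exists a bounded $2$-form-valued measure $J_*$ such that $J(\Phi_\eps u_\eps) \to \pi J_*$ in $W^{-1,p}(M)$ for every $1 \leq p < n/(n-1)$. Proposition~\ref{prop:homologyJ} then identifies $\star J_*$ as an integer-multiplicity rectifiable $(n-2)$-cycle belonging to the distinguished class $\mathcal{C}$. So the candidate limit is in place; all that remains is to check that the original Jacobians $J(u_\eps, A_\eps)$ have the same limit.

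Here we exploit gauge-invariance of the Jacobian, which gives $J(u_\eps, \, A_\eps) = J(\Phi_\eps u_\eps, \, \Phi_\eps \cdot A_\eps)$, together with the key identity in Remark~\ref{rk:diff-jJAB}:
\[
 J(\Phi_\eps u_\eps, \, \Phi_\eps \cdot A_\eps) - J(\Phi_\eps u_\eps, \, 0)
 = -\tfrac{1}{2}\, \d\!\left( (\Phi_\eps \cdot A_\eps)(1 - |u_\eps|^2)\right)\!,
\]
using $|\Phi_\eps u_\eps| = |u_\eps|$. Cauchy--Schwarz combined with $\|\Phi_\eps \cdot A_\eps\|_{L^2(M)} \lesssim |\log\eps|^{1/2}$ (from Lemma~\ref{lemma:bound_A_W12}) and $\|1 - |u_\eps|^2\|_{L^2(M)} \lesssim \eps |\log\eps|^{1/2}$ (from \eqref{G:hp}) yields
\[
 \|J(u_\eps, \, A_\eps) - J(\Phi_\eps u_\eps)\|_{W^{-1,1}(M)} \lesssim \eps \,|\log\eps| \to 0.
\]

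Finally, to upgrade this convergence from $W^{-1,1}(M)$ to $W^{-1,p}(M)$ for all $1 \leq p < n/(n-1)$, I would apply the interpolation estimate of Lemma~\ref{lemma:interpolation}, using the $L^1(M)$ bounds on both $J(u_\eps, \, A_\eps)$ and $J(\Phi_\eps u_\eps)$: these are both $\lesssim |\log\eps|$, as follows from the identity \eqref{trunc5} and the logarithmic energy bounds (for $J(u_\eps, A_\eps)$ this is exactly \eqref{eq:L1-est-Jac(u_eps,A_eps)}, and for $J(\Phi_\eps u_\eps)$ the same reasoning applies with $A = 0$ using \eqref{eq:bound_Eeps}). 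The interpolation yields a bound of the form $(\eps|\log\eps|)^\alpha |\log\eps|^{1-\alpha}$ that still vanishes as $\eps \to 0$. No single step is a serious obstacle here, since the delicate work has been done upstream; the main task is simply to orchestrate the previous lemmas while keeping track of the quantitative $|\log\eps|$ bounds.
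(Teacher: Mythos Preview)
Your proposal is correct and follows essentially the same route as the paper's own proof: gauge to $\Phi_\eps u_\eps$ via Lemma~\ref{lemma:bound_A_W12}, apply the compactness/homology results for $E_\eps$ (the paper packages Lemma~\ref{lemma:compactnessE} and Proposition~\ref{prop:homologyJ} as Theorem~\ref{maingoal:E}(i)), then compare $J(u_\eps,A_\eps)=J(\Phi_\eps u_\eps,\Phi_\eps\cdot A_\eps)$ with $J(\Phi_\eps u_\eps)$ via~\eqref{eq:diff-jJAB}, obtaining the $W^{-1,1}$ bound $\lesssim\eps|\log\eps|$ and upgrading by interpolation exactly as you outline.
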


\begin{proof}
	Let $\{\Phi_\eps\}$ be the family of gauge transformations in Lemma~\ref{lemma:bound_A_W12}. In view of~\eqref{eq:bound_Eeps}, we can apply Theorem~\ref{maingoal:E} to the sequence $\{\Phi_\eps u_\eps\}$. By Statement~(i) of Theorem~\ref{maingoal:E}, we obtain a (not relabelled) subsequence $\{\Phi_\eps u_\eps\}$ and a bounded measure $J_*$ with values in $2$-forms such that $J(\Phi_\eps u_\eps) \to \pi J_*$ in $W^{-1,p}(M)$ for any $1 \leq p < \frac{n}{n-1}$ as $\eps \to 0$.
Since we already know from Statement~{(i)} of Theorem~\ref{maingoal:E} that $\star J_*$ is a current with the desired properties, it only remains to prove that $J(u_\eps, A_\eps) \to \pi J_*$ in $W^{-1,p}(M)$ for any $1 \leq p < \frac{n}{n-1}$ as $\eps \to 0$.
	
	 To prove this, we observe that, by the gauge-invariance of the Jacobians, we have $J(u_\eps, A_\eps) = J(\Phi_\eps u_\eps, \Phi_\eps \cdot A_\eps)$ for all $\eps > 0$. Thus, 
	\[
	\begin{split}
		J(u_\eps, A_\eps) - J_* &= J(\Phi_\eps u_\eps, \Phi_\eps \cdot A_\eps) - J_* \\
		&= J(\Phi_\eps u_\eps, \Phi_\eps \cdot A_\eps) - J(\Phi_\eps u_\eps) + J(\Phi_\eps u_\eps) - J_*. 
	\end{split}
	\]
	for all $\eps > 0$. By definition of $J_*$, we need only to show that 
	\begin{equation}\label{eq:conv-jac-aux}
		J(\Phi_\eps u_\eps, \Phi_\eps \cdot A_\eps) - J(\Phi_\eps u_\eps)  \to 0 \quad \mbox{in } W^{-1,p}(M) \mbox{ for any } 1 \leq p < \frac{n}{n-1} \mbox{ as } \eps \to 0.
	\end{equation} 
	To this purpose, we recall that from \eqref{eq:diff-jJAB} we have 
	\[
		J(\Phi_\eps u_\eps, \Phi_\eps \cdot A_\eps) - J(\Phi_\eps u_\eps) = -\frac{1}{2}\d\left((\Phi_\eps \cdot A_\eps)(1-\abs{\Phi_\eps u_\eps}^2)\right) 
	\] 
	for all $\eps > 0$. By \eqref{bounds_A_W12} and the energy estimate \eqref{eq:bound_Eeps} (or, equivalently, \eqref{G:hp}, since $\abs{\Phi_\eps u_\eps} = \abs{u_\eps}$ a.e.), for each $\eps > 0$ it holds
	\[
		\norm{(\Phi_\eps \cdot A_\eps)( 1- \abs{\Phi_\eps u_\eps}^2)}_{L^1(M)} \leq \norm{\Phi_\eps \cdot A_\eps}_{L^2(M)} \norm{1-\abs{\Phi_\eps u_\eps}^2}_{L^2(M)} \lesssim \eps \abs{\log \eps},
	\]
 whence
	\[
		\norm{J(\Phi_\eps u_\eps, \Phi_\eps \cdot A_\eps) - J(\Phi_\eps u_\eps)}_{W^{-1,1}(M)} \lesssim \eps \abs{\log\eps},
	\] 
	for all $\eps > 0$. Then \eqref{eq:conv-jac-aux} follows by using Lemma~\ref{lemma:interpolation} exactly as in the last part of Lemma~\ref{lemma:truncation}. The conclusion is now immediate by triangle inequality.
\end{proof}

\begin{remark}\label{rk:conv-jac}
	In view of~\eqref{eq:conv-jac}, Corollary~\ref{cor:convergence-jacobians} proves a half of Statement~(i) of Theorem~\ref{maingoal:G}.
\end{remark}

The following proposition is the last piece of information we need to combine the above results with Theorem~\ref{maingoal:E} to deduce the lower bound~\eqref{eq:lower-bound-Geps-intro}, concluding the proof of Theorem~\ref{maingoal:G}. Here we will use in a crucial way the fact that the curvatures $F_{A_\eps}$ satisfy the London equation~\eqref{London}, as alluded in Remark~\ref{rk:reduction}.
\begin{prop}\label{prop:boundedness-Aeps-W2p}
	Let $\{(u_\eps, A_\eps)\} \subset(L^\infty \cap W^{1,2})(M,\,E) \times W^{1,2}(M,\,\T^*M)$ be an optimised sequence satisfying \eqref{G:hp}. Then, up to extraction of a (not relabelled) subsequence:
	\begin{enumerate}[(i)]
		\item $\{F_{A_\eps}\}$ is a Cauchy sequence in $W^{1,p}(M)$ for any $1 \leq p < \frac{n}{n-1}$.
		\item If $\{\Phi_\eps\}$ is the family of gauge transformations in Lemma~\ref{lemma:bound_A_W12}, the sequence $\{ \Phi_\eps \cdot A_\eps \}$ is a bounded sequence in $W^{2,p}(M,\,T^*M)$ for any $1 \leq p < \frac{n}{n-1}$.
		\item Up to a further (not relabelled) subsequence, $\{\Phi_\eps \cdot A_\eps\}$ converges strongly in $W^{2,p}(M,\,\T^*M)$, for any $1 \leq p < \frac{n}{n-1}$, to some $A_*$, where $A_*$ writes
		\begin{equation}\label{eq:A_*}
		A_* = \d^* \psi_* + \zeta_*
		\end{equation}
		for $\psi_* \in W^{3,p}(M,\,\Lambda^2 \T^*M)$ an exact 2-form and $\zeta_*$ a harmonic 1-form.
\end{enumerate}	
\end{prop}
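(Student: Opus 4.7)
The plan is to exploit the London equation, which by construction of an optimised sequence (Proposition~\ref{prop:existence-min-F}) is satisfied by the curvatures $F_{A_\eps}$, together with the convergence $J(u_\eps, A_\eps)\to \pi J_*$ in $W^{-1,p}(M)$ already established in Corollary~\ref{cor:convergence-jacobians}. Since the Hodge Laplacian on $2$-forms is non-negative, the operator $-\Delta + I$ has trivial kernel, and standard elliptic theory (cf.~Lemma~\ref{lemma:elliptic_reg_bis}) yields an isomorphism $-\Delta + I \colon W^{1,p}(M,\Lambda^2\T^*M) \to W^{-1,p}(M,\Lambda^2\T^*M)$ for any $1 \leq p < n/(n-1)$. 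Applied to the identity $(-\Delta + I)(F_{A_\eps} - F_{A_\delta}) = 2(J(u_\eps, A_\eps) - J(u_\delta, A_\delta))$, whose right-hand side tends to zero in $W^{-1,p}$, this immediately yields (i).

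For (ii), the idea is to use the decomposition $\Phi_\eps\cdot A_\eps = \d^*\psi_\eps + \zeta_\eps$ produced in the proof of Lemma~\ref{lemma:bound_A_W12}, where $\psi_\eps \in W^{2,2}(M,\Lambda^2\T^*M)$ is exact and $\zeta_\eps \in \Harm^1(M)$ is uniformly $L^\infty$-bounded by Remark~\ref{rk:choiceofgauge}. By gauge invariance of the curvature and the relation $\d\d^*\psi_\eps = -\Delta \psi_\eps$ on exact forms, equation~\eqref{eq:laplace_psi} reads $-\Delta\psi_\eps = F_{A_\eps} - F_0$. Since $\psi_\eps$ is orthogonal to $\Harm^2(M)$, a second application of elliptic regularity upgrades the $W^{1,p}$-bound on $F_{A_\eps}$ coming from (i) to a uniform $W^{3,p}$-bound on $\psi_\eps$, so that $\d^*\psi_\eps$ is bounded in $W^{2,p}(M)$; (ii) then follows since $\Harm^1(M)$ is finite-dimensional.

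For (iii), weak compactness in $W^{2,p}(M)$ and finite-dimensionality of $\Harm^1(M)$ yield, up to a subsequence, $\zeta_\eps \to \zeta_*$ in every norm and $\Phi_\eps \cdot A_\eps \rightharpoonup A_*$ weakly in $W^{2,p}(M)$. The key point is to upgrade this to strong $W^{2,p}$-convergence, which I would do by a third application of the same bootstrap, this time to the differences: since $\{F_{A_\eps}\}$ is Cauchy in $W^{1,p}$ by (i), elliptic regularity applied to $-\Delta(\psi_\eps - \psi_\delta) = F_{A_\eps} - F_{A_\delta}$ shows that $\{\psi_\eps\}$ is Cauchy in $W^{3,p}$, hence $\d^*\psi_\eps \to \d^*\psi_*$ strongly in $W^{2,p}$. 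The limit $\psi_*$ is closed as the strong limit of closed forms, and orthogonal to $\Harm^2(M)$ as the limit of exact forms, hence exact by Hodge theory; this gives the claimed decomposition $A_* = \d^*\psi_* + \zeta_*$.

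I do not anticipate a serious obstacle: the whole argument is a double application of elliptic regularity, once to the London equation (which is available precisely because we passed to an \emph{optimised} sequence, giving the bootstrap $W^{-1,p} \rightsquigarrow W^{1,p}$ for $F_{A_\eps}$) and once to the Hodge Laplacian acting on the gauge-potential $\psi_\eps$ (giving the further bootstrap $W^{1,p}\rightsquigarrow W^{3,p}$). The only mild technical point is to make sure that $-\Delta + I$ really provides the required isomorphism between $W^{-1,p}$ and $W^{1,p}$ for $1 \leq p < n/(n-1)$, and that $-\Delta$ is an isomorphism on the subspace of exact forms at the appropriate regularity; both are standard consequences of Hodge theory and are already invoked in analogous form elsewhere in the paper.
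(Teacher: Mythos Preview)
Your proposal is correct and follows essentially the same route as the paper: London equation plus elliptic regularity to get (i), then the Poisson equation $-\Delta\psi_\eps = F_{A_\eps}-F_0$ on exact forms to bootstrap to (ii) and (iii). One small correction: the isomorphism $-\Delta+I\colon W^{1,p}\to W^{-1,p}$ that you invoke for (i) is the content of Lemma~\ref{lemma:elliptic_reg}, not Lemma~\ref{lemma:elliptic_reg_bis} (the latter treats $-\Delta$ with the harmonic orthogonality constraint, which is what you use in (ii) and (iii)).
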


\begin{proof}
Let us set $F_\eps := F_{A_\eps}$ for convenience. Since each of the curvatures $F_\eps$ satisfies the London equation \eqref{London}, we have
	\[
		-\Delta (F_{\eps_n} - F_{\eps_m}) + (F_{\eps_n}-F_{\eps_m}) = 2J(u_{\eps_n}, A_{\eps_n}) - 2J(u_{\eps_m}, A_{\eps_m})
	\]
	for all indexes $n$, $m \in \N$. By Lemma~\ref{lemma:elliptic_reg}, we deduce the crucial estimate
	\[
		\norm{F_{\eps_n} - F_{\eps_m}}_{W^{1,p}(M)} \lesssim \norm{J(u_{\eps_n}, A_{\eps_n}) - J(u_{\eps_m}, A_{\eps_m})}_{W^{-1,p}(M)},
	\]
	where the right hand side tends to zero as $n, m \to \infty$ as a trivial consequence of Corollary~\ref{cor:convergence-jacobians}, and this holds for any $1 \leq p < \frac{n}{n-1}$. Thus, $\{F_{A_\eps}\}$ is a Cauchy sequence in $W^{1,p}(M)$ for any $1 \leq p < \frac{n}{n-1}$. This proves~(i). 
	
	To prove (ii), arguing as in Lemma~\ref{lemma:bound_A_W12}, by the choice of $\{\Phi_\eps\}$, for each gauge-transformed connection $\Phi_\eps \cdot A_\eps$ we have the decomposition $\Phi_\eps \cdot A_\eps = \d^* \psi_\eps + \zeta_\eps$ for an appropriate exact $2$-form $\psi_\eps$ satisfying~\eqref{eq:laplace_psi} and a suitable harmonic $2$-form $\zeta_\eps$. Furthermore, it holds $\sup_{\eps > 0} \norm{\zeta_\eps}_{L^\infty(M)} < +\infty$ (cf. Remark~\ref{rk:choiceofgauge}). By~\eqref{eq:laplace_psi}, Part~(i) and elliptic regularity (i.e., Lemma~\ref{lemma:classic-elliptic-reg}) it follows that $\{ \psi_\eps \}$ is a bounded sequence in $W^{3,p}(M)$ for any $1 \leq p < \frac{n}{n-1}$. Hence, again by \eqref{eq:laplace_psi} and $L^p$-Hodge decomposition (Proposition~\ref{prop:LpHodgeDec}), the sequence $\{\Phi_\eps \cdot A_\eps\}$ is bounded in $W^{2,p}(M)$ for any $1 \leq p < \frac{n}{n-1}$. 
	
	(iii) Writing again $\Phi_\eps \cdot A_\eps = \d^* \psi_\eps + \zeta_\eps$ exactly as in (ii), from \eqref{eq:laplace_psi} we have 
	\[
		- \Delta (\psi_{\eps_n} - \psi_{\eps_m}) = F_{\eps_n} - F_{\eps_m}
	\]
	for any $n$, $m \in \N$. Therefore, by Part~{(i)} it follows that $\{\Delta \psi_{\eps_n}\}$ is a Cauchy sequence in $W^{1,p}(M,\,\Lambda^2\T^*M)$ and, since each $\psi_{\eps_n}$ is exact, by Lemma~\ref{lemma:classic-elliptic-reg} and Remark~\ref{rk:est-exact-forms} it follows that $\{\psi_{\eps_n}\}$ is Cauchy sequence in $W^{3,p}(M,\,\Lambda^2\T^*M)$, for any $1 \leq p < \frac{n}{n-1}$. Hence, by the completeness of $W^{3,p}(M,\,\Lambda^2 \T^*M)$, $\{\psi_{\eps_n}\}$ converges strongly in $W^{3,p}(M,\,\Lambda^2 \T^*M)$, for any $1 \leq p < \frac{n}{n-1}$, to some $\psi_*$. By the $L^p$-Hodge-decomposition (Proposition~\ref{prop:LpHodgeDec}) and the strong $W^{3,p}$-convergence, $\psi_*$ is still an exact 2-form. 
	
	Next, since the space $\Harm^1(M)$ is finite-dimensional and $\{\zeta_{\eps_n}\}$ is bounded in the $L^\infty(M)$-norm, $\{\zeta_{\eps_n}\}$ is also bounded with respect to the $W^{2,p}(M)$-norm. In addition, we can also extract from $\{\zeta_{\eps_n}\}$ a (not relabelled) Cauchy sequence, which is \emph{a fortiori} a Cauchy sequence in $W^{2,p}(M,\,\T^*M)$, for any $1 \leq p < \frac{n}{n-1}$, and hence it converges in $W^{2,p}(M,\,\T^*M)$ to some $\zeta_*$. Again, by Hodge-decomposition and strong convergence, $\zeta_*$ is still a harmonic 1-form.
	
	Thus, up to a not relabelled subsequence, and letting $A_* := \d^* \psi_* + \zeta_*$, we have $\Phi_\eps \cdot A_\eps \to A_*$ in $W^{2,p}(M,\,\T^*M)$ as $\eps \to 0$, for any $1 \leq p < \frac{n}{n-1}$.
\end{proof}

\begin{remark}\label{rk:crit-points1}
	Lemma~\ref{lemma:bound_A_W12}, Corollary~\ref{cor:convergence-jacobians}, and Proposition~\ref{prop:boundedness-Aeps-W2p} are still valid, with the same proof, for any sequence  $\{(u_\eps,\,A_\eps)\} \subset (W^{1,2}\cap L^\infty)(M,\,E)\times W^{1,2}(M,\,\T^*M)$ of \emph{critical points} of $G_\eps$ that satisfies the logarithmic energy bound \eqref{G:hp}. Indeed, 
	for any critical pair $\{(u_\eps,\,A_\eps)\}$ with~$u_\eps\in L^\infty(M, \, E)$, $u_\eps$ satisfies $\norm{u_\eps}_{L^\infty(M)} \leq 1$ (by maximum principle, as in~e.g.~\cite[Proposition~II.2]{BethuelRiviere})
	and $F_{A_\eps}$ satisfies the London equation~\eqref{London}. These two facts are all that we are really using about optimised sequences in the proof of the mentioned results.
\end{remark}

We now have at disposal everything we need to prove Theorem~\ref{maingoal:G}.
\begin{proof}[Proof of Theorem~\ref{maingoal:G}]
(i) As emphasized in Remark~\ref{rk:reduction}, we can always pass to an optimised sequence associated with $\{(u_\eps, A_\eps)\}$. Hence, we may assume, for notational convenience, that $\{(u_\eps, A_\eps)\}$ is already optimised. Let $\{\Phi_\eps\} \subset W^{2,2}(M,\,\S^1)$ be, once again, the family of gauge transformations of Lemma~\ref{lemma:bound_A_W12}. By \eqref{eq:bound_Eeps}, we can apply Statement~(i) of Theorem~\ref{maingoal:E} to the sequence $\{\Phi_\eps u_\eps\}$. Let $J_*$ be the bounded measure with values in $2$-forms associated with $\{\Phi_\eps u_\eps\}$ by Statement~(i) of Theorem~\ref{maingoal:E}. The claimed convergence of the gauge-invariant Jacobians $J(u_\eps, A_\eps)$ to $J_*$ follows from Corollary~\ref{cor:convergence-jacobians} (and, back to the original sequence, Remark~\ref{rk:conv-jac}). Therefore, we still have to prove only the lower bound~\eqref{eq:lower-bound-Geps-intro}. 

To infer~\eqref{eq:lower-bound-Geps-intro}, we note that, by the gauge-invariance of $G_\eps(u_\eps, A_\eps)$, it suffices to prove that
 \begin{equation}\label{eq:aux-liminfG}
	\liminf_{\eps \to 0} \frac{E_\eps(\Phi_\eps u_\eps)}{\abs{\log\eps}} \leq \liminf_{\eps \to 0} \frac{G_\eps(\Phi_\eps u_\eps, \Phi_\eps \cdot A_\eps)}{\abs{\log\eps}},
\end{equation}
Once we proved~\eqref{eq:aux-liminfG}, the conclusion follows immediately from Statement~(i) of Theorem~\ref{maingoal:E}.

Towards the proof of~\eqref{eq:aux-liminfG}, we notice that, by interpolation, Proposition~\ref{prop:boundedness-Aeps-W2p}, and the (continuous) Sobolev embedding $W^{1,2}(M, \T^* M) \hookrightarrow L^{2^*}(M, \T^*M)$ \cite[Theorem~1.3.3]{SchwarzG}, for each $\eps > 0$ we have
\[
\begin{split}
	\norm{\Phi_\eps \cdot A_\eps}_{L^2(M)} &\leq \norm{\Phi_\eps \cdot A_\eps}_{L^1(M)}^{\frac{2}{n+2}} \norm{\Phi_\eps \cdot A_\eps}_{L^{2^*}(M)}^{\frac{n}{n+2}} \\
	& \lesssim \norm{\Phi_\eps \cdot A_\eps}^{\frac{2}{n+2}}_{W^{2,1}(M)} \norm{\Phi_\eps \cdot A_\eps}_{W^{1,2}(M)}^{\frac{n}{n+2}} \lesssim \abs{\log{\eps}}^{\frac{n}{2(n+2)}},
\end{split}
\]
whence $\norm{\Phi_\eps \cdot A_\eps}_{L^2(M)} = \mathrm{o}\left(\abs{\log\eps}^\frac{1}{2}\right)$ as $\eps \to 0$. Since 
\begin{multline*}
	\int_M \frac{1}{2} \abs{\D_{\Phi_\eps \cdot A_\eps} (\Phi_\eps u_\eps)}^2 \,\vol_g \\
	= \int_M \frac{1}{2}\abs{\D_0 (\Phi_\eps u_\eps)}^2 + \ip{\D_0 (\Phi_\eps u_\eps)}{i (\Phi_\eps \cdot A_\eps)(\Phi_\eps u_\eps)} + \frac{1}{2} \abs{(\Phi_\eps \cdot A_\eps) (\Phi_\eps u _\eps)}^2 \,\vol_g,
\end{multline*}
and
\begin{multline*}
	\abs{\int_M \ip{\D_0 (\Phi_\eps u_\eps)}{i (\Phi_\eps \cdot A_\eps) (\Phi_\eps u_\eps)} + \frac{1}{2}\abs{(\Phi_\eps \cdot A_\eps) (\Phi_\eps u_\eps)}^2 \,\vol_g} \\
	\lesssim \norm{\D_0 (\Phi_\eps u_\eps)}_{L^2(M)} \norm{\Phi_\eps \cdot A_\eps}_{L^2(M)} + \norm{\Phi_\eps \cdot A_\eps}_{L^2(M)}^2,
\end{multline*}
we deduce
\begin{equation}\label{liminfG}
	\liminf_{\eps \to 0} \frac{E_\eps(\Phi_\eps u_\eps)}{\abs{\log\eps}} \leq \liminf_{\eps \to 0} \frac{G_\eps(\Phi_\eps u_\eps, \Phi_\eps \cdot A_\eps)}{\abs{\log\eps}} = \liminf_{\eps \to 0}  \frac{G_\eps(u_\eps, A_\eps)}{\abs{\log\eps}} .
\end{equation}
As remarked, the claimed conclusion now follows from (i) of Theorem~\ref{maingoal:E} (and, back to the original, not necessarily optimised, sequence $\{(u_\eps,A_\eps)\}$, inequality \eqref{eq:liminfGeps}).

(ii) Given any $S_* \in \mathcal{C}$, let $J_* := \star S_*$ be the corresponding measure with values in $2$-forms $J_*$ and denote $\{u_\eps\} \subset W^{1,2}(M,E)$ the recovery sequence given by (ii) of Theorem~\ref{maingoal:E}. Then the sequence $\{(u_\eps, 0)\} \subset W^{1,2}(M,E) \times W^{1,2}(M, \T^*M)$ satisfies $J(u_\eps, 0) \to \pi J_*$ in $W^{-1,p}(M)$ for any $1 \leq p < \frac{n}{n-1}$ as $\eps \to 0$ (simply because $J(u_\eps,0) = J(u_\eps)$ by definition). In addition, there holds
\[
	\limsup_{\eps \to 0} \frac{G_\eps(u_\eps,0)}{\abs{\log{\eps}}} 
	\leq 
	\pi \abs{J_*}(M),
\]
because the inequality holds for $E_\eps(u_\eps)$ by (ii) of Theorem~\ref{maingoal:E} and because for all $\eps > 0$ we have $G_\eps(u_\eps,0) = E_\eps(u_\eps) + \frac{1}{2}\norm{F_0}^2_{L^2(M)}$, where as usual $F_0$ is the curvature of the reference connection $\D_0$. The proof is finished.
\end{proof}

\begin{remark} \label{rk:local_liminf-G}
 There is a local analogue of the $\Gamma$-lower inequality
 for~$G_\eps$. More precisely, let~$\{(u_\eps, \, A_\eps)\}\subset 
 W^{1,2}(M, \, E)\times W^{1,2}(M, \, \T^* M)$ be a sequence
 that satisfies~\eqref{G:hp} and let~$V\subset M$ an arbitrary 
 open set. Up to extraction of a subsequence, assume
 that~$J(u_\eps, \, A_\eps)\to \pi J_*$ in~$W^{-1,p}(M)$
 for any~$p$ with~$1 \leq p < n/(n-1)$. Then, there holds
 \begin{equation} \label{local_liminf_G}
  \pi \abs{J_*}\!(V) \leq \liminf_{\eps\to 0} 
  \frac{G_\eps(u_\eps, \, A_\eps; \, V)}{\abs{\log\eps}}
 \end{equation}
 The proof of~\eqref{local_liminf_G} is completely analogous 
 to the proof of~\eqref{liminfG} above and is based on
 the local $\Gamma$-lower inequality for~$E_\eps$
 (Proposition~\ref{prop:liminfE}).
\end{remark}

Corollary~\ref{cor:London} is now an almost immediate 
consequence of Theorem~\ref{maingoal:G}.

\begin{proof}[Proof of Corollary~\ref{cor:London}]
 Let~$(u_\eps^{\min}, \, A_\eps^{\min})$ be a minimiser of~$G_\eps$
 in~$W^{1,2}(M, \, E)\times W^{1,2}(M, \, \T^*M)$, for any~$\eps > 0$. 
 As the class~$\mathcal{C}$ is non-empty, Theorem~\ref{maingoal:G}
 and a comparison argument imply that 
 \begin{equation} \label{corLondon1}
  G_\eps(u_\eps^{\min}, \, A_\eps^{\min}) \leq C \abs{\log\eps}
 \end{equation}
 for some~$\eps$-independent constant~$C$. 
 By applying Theorem~\ref{maingoal:G} again, we find a limit 
 $2$-form~$J_*$ such that, up to extraction of a subsequence,
 $J(u_\eps^{\min}, \, A_\eps^{\min})\to \pi J_*$ in~$W^{-1,p}(M)$
 for any~$p < \frac{n}{n-1}$. Moreover, $\star J_*\in\mathcal{C}$
 and, by general properties of~$\Gamma$-convergence,
 $\star J_*$ is a chain of minimal mass in~$\mathcal{C}$.
 
 As~$(u_\eps^{\min}, \, A_\eps^{\min})$ is a minimiser of~$G_\eps$
 it follows that~$\abs{u_\eps} \leq 1$ (by a truncation argument,
 along the lines of Lemma~\ref{lemma:truncation}) 
 and that~$\{(u_\eps^{\min}, \, A_\eps^{\min})\}$
 is an optimised sequence. Then, Proposition~\ref{prop:boundedness-Aeps-W2p}
 implies that the curvatures~$F_{A_\eps^{\min}}$ converge to a limit~$F_*$
 in~$W^{1,p}(M)$ for any~$p<\frac{n}{n-1}$.  
 Each~$F_{A_\eps^{\min}}$ satisfies the London equation
 \begin{equation} \label{corLondon2}
  -\Delta F_{A_\eps^{\min}} + F_{A_\eps^{\min}} 
   = 2J(u_\eps^{\min}, \, A_\eps^{\min})  
 \end{equation}
 (by the arguments of Proposition~\ref{prop:existence-min-F}).
 By passing to the limit as~$\eps\to 0$ in~\eqref{corLondon2},
 it follows that~$F_*$ satisfies the London equation
 \begin{equation} \label{Londonmille}
  -\Delta F_* + F_* = 2\pi J_*  
 \end{equation}
 It only remains to prove that the rescaled energy
 densities~$\mu_\eps := \mu_\eps(u_\eps^{\min}, \, A_\eps^{\min})$
 (defined in~\eqref{mu_eps}) converge to the total variation~$\pi\abs{J_*}$.
 Due to~\eqref{corLondon1}, we can extract a subsequence in such a
 way that~$\mu_\eps\rightharpoonup\mu_*$, weakly as measures, as~$\eps\to 0$.
 Equation~\eqref{local_liminf_G} implies that
 \begin{equation*} 
  \pi\abs{J_*}\!(V) \leq \mu_*(V)
 \end{equation*}
 for any open set~$V\subseteq M$ such that~$\mu_*(\partial V) = 0$.
 As~$\mu_*$ is a bounded Borel measure, it follows that~$\pi\abs{J_*}\leq \mu_*$
 as measures. However, Statement~(ii) in Theorem~\ref{maingoal:G} 
 implies that
 \[
  \mu_*(M) = \lim_{\eps\to 0} 
  \frac{G_\eps(u_\eps^{\min}, \, A_\eps^{\min})}{\abs{\log\eps}} 
  \leq \pi\abs{J_*}\!(M)
 \]
 Therefore, $\mu_* = \pi\abs{J_*}$, as claimed.
\end{proof}

\begin{remark}
	For any sequence $\{(u_\eps,\,A_\eps)\} \subset (W^{1,2}\cap L^\infty) (M,\,E) \times W^{1,2}(M,\,\T^*M)$ of \emph{critical points} of $G_\eps$ that satisfy the logarithmic energy bound~\eqref{G:hp}, there exist bounded measures~$J_*$, $F_*$, with values in~$2$-forms,
	and a (non-relabelled) subsequence such that 
    \[
    J(u_\eps, \, A_\eps)\to \pi J_* 
    \quad \textrm{in } W^{-1,p}(M), \qquad
    F_{A_\eps} \to F_* \quad \textrm{in } W^{1,p}(M)
    \]
    for any~$p < n/(n-1)$. Moreover, $F_*$ satisfies the London equation~\eqref{Londonmille}. The proof of this claim follows 
    by the same arguments we used in the proof 
    of Corollary~\ref{cor:London}, word by word.
    Indeed, the assumption that~$(u_\eps^{\min}, \, A_\eps^{\min})$
    is  a sequence of minimisers is only needed to show that the limit~$J_*$ 
    is area-minimising in its homology class. 
    As for the rest, the arguments of Corollary~\ref{cor:London}
    only depend on the fact that~$F_{A_\eps}$ satisfies
    the London equation~\eqref{London} and that Proposition~\ref{prop:boundedness-Aeps-W2p} can be applied. But, according to Remark~\ref{rk:crit-points1}, both these facts continue to hold for any sequence of critical points satisfying \eqref{G:hp}.
\end{remark}

\paragraph*{Acknowledgements.}

G.C. and G.O. were partially supported by GNAMPA-INdAM.

\paragraph*{Data Availability Statement.} 

Data sharing not applicable to this article as no datasets were
generated or analysed during the current study.

\paragraph*{Declarations.} 

The authors have no competing interests to declare that are relevant to the content
of this article.


%

%

\begin{appendix}

\section{Bundles on a closed Riemannian manifold}
\label{sect:bundles}

\subsection{Sobolev spaces of sections and differential forms}\label{subsec:sobolev}

We recall below the main definitions and facts concerning several spaces of differential forms and, more broadly, of section of Hermitian vector bundles. We shall do so in a slightly greater generality than strictly needed in this work to make the presentation more transparent and the comparison with the relevant literature easier. The main reference for this appendix is \cite{Palais}, especially Chapters~4,~5,~9, and~19, where the abstract framework is developed in a much more general context (and using the language of category theory, that we avoid for reader's convenience). A more recent useful reference is \cite[Chapter~1]{Guneysu}. Both in \cite{Palais} and in \cite{Guneysu} equivalence with other approaches to Sobolev spaces of sections is discussed. (See also \cite[Chapter~10]{Nicolaescu}, \cite[Chapter~1]{SchwarzG}, \cite[Appendix~B]{Wehrheim}.)
\vskip3pt
Let $\K$ be either $\R$ or $\C$, and let $\pi : E \to M$ be a $\K$-vector bundle of rank $\ell$ over a $C^\infty$-smooth compact orientable Riemannian manifold $M = (M^n, g)$ without boundary. We assume the differentiable structure of $M$ is fixed once and for all. 

\paragraph{Regular bundle atlases.} 
Using the compactness of $M$, it is easily shown that it is always possible to find finite bundle atlases $\mathcal{A} = \{ (U_i, \varphi_i, \chi_i)\}_{i = 1}^N$, where $N \in \N$, the maps $\varphi_i \colon U_i \to \R^n$ are local charts and $\chi_i \colon  \pi^{-1}(U_i) =: E \vert_{U_i}  \to U_i \times \K^\ell$ local trivialisations, so that:
\begin{itemize}
	\item[(A1)] For all $i \in \{1,2,\dots,N\}$, $U_i$ is contractible and, moreover, $\varphi_i(U_i) =: \Omega_i \subset \R^n$ is a bounded contractible open set with smooth boundary.
	\item[(A2)] For all $i \in \{1,2,\dots,N\}$, $(U_i, \varphi_i)$ can be extended to a smooth chart $(V_i,\psi_i)$ contained in the differentiable structure of $M$, so that $\overline{U_i} \subset V_i$ and $\varphi_i = \psi_i \vert_{U_i}$. This ensures that, for all $i, j \in \{1, 2, \dots, N\}$, whenever $U_i \cap U_j \neq \varnothing$, each coordinate change $\varphi_i \circ \varphi_j^{-1} : \varphi_j(U_i \cap U_j) \to \varphi_i(U_i \cap U_j)$ is smooth up to the boundary of $\varphi_j(U_i \cap U_j)$ (hence, all its derivatives are bounded).
	\item[(A3)] For all $i \in \{1,2,\dots,N\}$, $E \to M$ trivialises over $U_i$, i.e, we have $E \vert_{U_i} \overset{\chi_i}{\simeq} U_i \times \K^\ell$ (actually, this comes for free, as the $U_i$ are contractible). In addition, each local trivialisation $\chi_i : \pi^{-1}(U_i) \to U_i \times \K^\ell$ 
	extends to a smooth map over the corresponding coordinates patches $V_i$ associated with $U_i$ as in (A2) (up to slightly shrinking $V_i$, if necessary). 
	Consequently, for all $i, j \in \{1,2,\dots,N\}$ such that $U_i \cap U_j \neq \varnothing$, we have $\chi_i \circ \chi_j^{-1} \in C^\infty( \overline{(U_i \cap U_j)}\times \K^\ell, (U_i \cap U_j)\times \K^\ell)$. Recall that
	\[
		\widehat{\chi_i} := (\varphi_i, {\rm id}_{\K^\ell} ) \circ \chi_i : E \vert_{U_i} \to \Omega_i \times \K^\ell \qquad (i \in \{1,2,\dots,N\}),
	\]
	are the charts of $E$ (as an $(n + \ell)$-manifold if $\K = \R$, and as an $(n+2\ell)$ real manifold if $\K = \C$) and notice that, under our assumptions, all the derivatives of all coordinate changes $\widehat{\chi_i} \circ \widehat{\chi_j}^{-1} : \Omega_j \times \K^\ell \to \Omega_i \times \K^\ell$ are bounded. Consequently, denoting ${\rm pr}_2$ the projection onto the second factor of a product of the type $\Omega \times \K^\ell$, with $\Omega \subset \R^n$, the maps ${\rm pr}_2 \circ \widehat{\chi_i} \circ \widehat{\chi_j}^{-1} : \Omega_j \times \K^\ell \to \K^\ell$ are smooth and all their derivatives are bounded.
\end{itemize} 
For convenience, we refer to atlases of $M$ satisfying (A1) and (A2) as \emph{regular atlases} and to bundle atlases satisfying (A1)-(A3) as \emph{regular bundle atlases}. We call \emph{regular} the charts of regular atlases and of regular bundle atlases. 

\begin{remark}
	Since we always cover $M$ by contractible open sets $U_i$, \emph{every} vector bundle over $M$, and not only the given bundle $\pi : E \to M$, trivialises over them.
\end{remark}


\begin{remark}[Normal coordinates]\label{rk:normal-coordinates}
As it is well-know (see, e.g., \cite[pp.~166-167]{Spivak2}), around any point $x_0 \in M$, one can choose \emph{normal coordinates} so that in the geodesic ball $B_\delta(x_0)$ centered at $x_0$, it holds
 \[
 	g_{ij} = \delta_{ij} + \frac{1}{2}\sum_{k,l=1}^n \frac{\partial^2 g_{ij}}{\partial x^k \partial x_l}(x_0) x^k x^l + \mathrm{o}(\abs{x}^2)\,,
 \]
 whence $\sqrt{\det g} = 1 + \frac{1}{2} \sum_{i,k,l=1}^n \partial^2_{kl} g_{ii}(x_0) x^k x^l + \mathrm{o}(\abs{x}^2)$, and consequently
 \begin{equation}\label{eq:def-vol-norm-coord}
 	\vol_g = (1+\mathrm{O}(\delta^2))\d x\quad \mbox{in } B_\delta(x_0)\,.
 \end{equation}
Given any atlas of $M$, by compactness it can be refined so to have a regular atlas in which the local coordinates are normal coordinates.
\end{remark}

\paragraph{Sobolev spaces of sections of vector bundles}

A \emph{measurable section} of a $\K$-vector bundle $E \to M$ of rank $\ell$ is a Borel measurable function $u\colon M \to E$ so that the equation $\pi \circ u(x) = x$ holds for almost every $x \in M$ with respect to the measure induced by $\vol_g$. Let us denote $\Gamma(M,\,E)$ the linear space of all such sections.

Fix a regular bundle atlas $\mathcal{A}$ for $E \to M$ and a partition of unity $\{\rho_i\}_{i=1}^N$ subordinate to $\mathcal{A}$. For $m \geq 0$ an integer and $p \in [1,\infty]$, we say (according to \cite[Chapter~4]{Palais}) that a measurable section $u \in \Gamma(M,\, E)$ belongs to $W^{m,p}_{\mathcal{A}}(M,\,E)$ if and only if every local representation 
\[
	u_i := {\rm pr}_2 \circ \widehat{\chi_i} \circ (\rho_i u ) \circ \varphi^{-1}_i : \Omega_i \subset \R^n \to \K^\ell \qquad (i \in {1,2,\dots,N})
\]
of $u$ belongs to the usual Sobolev space $W^{m,p}(\Omega_i, \,\K^\ell)$. We endow $W^{m,p}_{\mathcal{A}}(M,E)$ with the norm
\begin{equation}\label{eq:sobolev-norm-A}
	\norm{u}_{W^{m,p}_{\mathcal{A}}(M,E)} := \sum_{i=1}^N \sum_{0\leq \abs{\alpha} \leq m} \norm{ D^\alpha u_i}_{L^p(\Omega_i,\K^\ell)},
\end{equation}
where the inner sum runs over all multi-indexes $\alpha$ of length at most $m$. Of course, we let $L^p_{\mathcal{A}}(M,E) := W^{0,p}_{\mathcal{A}}(M,E)$.

\vskip3pt

Armed with the above definition and (A1)--(A3), it is not difficult to prove that (cf.~e.g.,~\cite{Palais, Guneysu, SchwarzG, Wehrheim})
\begin{itemize}
	\item[(S1)] $W^{m,p}_{\mathcal{A}}(M,E)$ is Banach space (Hilbert if $p = 2$), separable if $p \in [1,\infty)$, and reflexive if $p \in (1,\infty)$.
	\item[(S2)] $C^\infty(M,E)$ (i.e., the space of classical smooth sections of $E \to M$) is dense into $W^{m,p}_{\mathcal{A}}(M,E)$ for the norm $\norm{\cdot}_{W^{m,p}_{\mathcal{A}}(M,E)}$ for every integer $m \geq 0$ and every $p \in [1, \infty)$.
	\item[(S3)] The classical embedding (including compact embeddings) theorems hold.
	\item[(S4)] Any two partitions of unity subordinate to $\mathcal{A}$ induce equivalent norms, hence different choices for the partition of unity yield equivalent Banach spaces $W^{m,p}_{\mathcal{A}}(M,E)$, for which properties (S1)--(S3) hold.
	\item[(S5)] For any regular bundle atlases $\mathcal{A}_1$, $\mathcal{A}_2$ of $E \to M$, the sets $W^{m,p}_{\mathcal{A}_1}(M,E)$ and $W^{m,p}_{\mathcal{A}_2}(M,E)$ coincide. Moreover, the corresponding norms $\norm{\cdot}_{W^{m,p}_{\mathcal{A}_1}}$, $\norm{\cdot}_{W^{m,p}_{\mathcal{A}_2}}$, defined by \eqref{eq:sobolev-norm-A}, induce equivalent norms on $\Gamma(M,E)$. Then, $W^{m,p}_{\mathcal{A}_1}(M,E)$ and $W^{m,p}_{\mathcal{A}_2}(M,E)$ are actually equivalent Banach spaces, for which properties (S1)--(S4) hold.
\end{itemize}

\begin{remark}\label{rk:sobolev-topology}
In view of (S1)--(S5), we can identify a linear subspace $W^{m,p}(M,E)$ of $\Gamma(M,E)$, the members of which have finite $\norm{\cdot}_{W^{m,p}_\mathcal{A}}$-norm, independently of the chosen regular bundle atlas $\mathcal{A}$ (and of subordinate partitions of unity). All the norms $\norm{\cdot}_{W^{m,p}_\mathcal{A}}$ are equivalent on $W^{m,p}(M,E)$, and hence induce the same topology on $W^{m,p}(M,E)$, that is therefore independent of the regular bundle atlas $\mathcal{A}$ chosen for $E \to M$ (and of subordinate partitions of unity). In particular, properties (S1)--(S3) hold for $W^{m,p}(M,E)$, and in view of (S4)-(S5) we can choose for computations any regular bundle atlas and subordinate partitions of unity. 
\end{remark}

\begin{definition}\label{def:sobolev}
	Let $M$ be a smooth, compact, connected oriented, Riemannian manifold without boundary, of dimension $n \in \N$, and let $E \to M$ be a $\K$-vector bundle over $M$ of rank $\ell$. For $m \geq 0$ an integer and $p \in [1,\infty]$, we denote by $W^{m,p}(M,E)$ the linear space of sections of $E \to M$ having finite $\norm{\cdot}_{W^{m,p}_\mathcal{A}(M,E)}$-norm for some, and hence all (see Remark~\ref{rk:sobolev-topology}), choices of a regular bundle atlas $\mathcal{A}$. We provide the space $W^{m,p}(M,E)$ with the topology induced by any of the (equivalent) norms $\norm{\cdot}_{W^{m,p}_{\mathcal{A}(M,E)}}$. This topology does not depend on the chosen regular bundle atlas $\mathcal{A}$ (see again Remark~\ref{rk:sobolev-topology}). 
	
	We denote by $W^{-m,q}(M,\,E^\prime)$ the topological dual of $W^{m,p}(M,\,E)$, i.e.,
	\[
		W^{-m,q}(M,\,E^\prime) := \left( W^{m,p}(M,\,E) \right)^\prime	.
	\]
	 Here, $q := p'$ is the H\"{o}lder-conjugate exponent of $p$.
\end{definition}

\begin{remark}
To simplify notations, we will henceforth drop the subscript $\mathcal{A}$ when we denote the norm of a section $u \in W^{m,p}(M,E)$. More precisely, we will write $\norm{u}_{W^{m,p}(M,E)}$ to mean actually that we have fixed a regular bundle atlas $\mathcal{A}$ (and a subordinate partition of unity) and we are evaluating $\norm{u}_{W^{m,p}_\mathcal{A}(M,E)}$ accordingly to \eqref{eq:sobolev-norm-A}.
\end{remark}

\begin{remark}\label{rk:trivial-bundle}
	If $E \to M$ is a trivial $\K$-bundle of rank $\ell$, i.e., if $E = M \times \K^\ell$, we can identify $W^{m,p}(M,E)$ and $W^{m,p}(M,\K^\ell)$. Indeed, if $u : M \to M \times \K^\ell$ is a measurable section, then $\tilde{u} := {\rm pr_2} \circ \widehat{\chi} \circ u : M \to \K^\ell$ is a measurable function. Vice versa, if $\tilde{u} : M \to \K^\ell$ is a measurable function, the map $x \mapsto (x, \tilde{u}(x))$ gives rise to a section $u \in \Gamma(M,E)$. Moreover, the local representations of $u$ and $\tilde{u}$ coincide a.e. with respect to the Lebesgue measure. Hence, $u \in W^{m,p}(M,M\times \K^\ell)$ if and only if $\tilde{u} \in W^{m,p}(M,\K^\ell)$, and the norms are the same. Therefore, we can identify these two spaces, and we shall do so when convenient even without explicit mention.
\end{remark}


\paragraph{Sobolev spaces of differential forms} 
Let $0 \leq k \leq n$ be an integer and $E = \Lambda^k \T^* M$, the bundle of $k$-covectors over $M$. For $m \geq 0$ an integer and $p \in [1,\infty]$, let the space $W^{m,p}(M,\Lambda^k \T^*M)$ be defined according to Definition~\ref{def:sobolev}. Then, the spaces $W^{m,p}(M, \Lambda^k \T^* M)$ agree with the Sobolev spaces of differential $k$-forms over $M$ considered in \cite{Morrey, Scott, Budney, IwaniecScottStroffolini}. For the purpose of exposition, it is convenient to refer this definition of Sobolev spaces of differential $k$-forms over $M$ to as the \emph{classical} definition.

\begin{remark}
Recall that the bundles $\Lambda^k \T^*M \to M$ are constructed canonically starting from any atlas of $M$. In particular, any regular atlas $\{ (U_i, \varphi_i) \}_{i=1}^N$ for $M$ induces a regular bundle atlas $\mathcal{A}_k$ for $\Lambda^k \T^* M \to M$ for every $k \in \{0,1,\dots,n\}$. Thus, for convenience, in this paragraph we use the symbol $\mathcal{A}$ to denote the given atlas for $M$, i.e., $\mathcal{A} = \{(U_i,\varphi_i)\}_{i=1}^N$, although in the previous paragraph it has been used to denote bundle atlases.
\end{remark}
 
We set
\[
	(\omega, \eta) := \int_M \ip{\omega}{\eta}\, \vol_g := \int_M \omega \wedge *\eta,
\]
for any two measurable maps $\omega$, $\eta\colon M \to \Lambda^k \T^* M$ such that the right hand side is well-defined. We say that $\omega \in L^1(M, \Lambda^k \T^*M)$ has \emph{weak exterior differential} $\d \omega$ if there exists $\Omega \in L^1(M, \Lambda^{k+1} \T^*M)$ such that it holds
\[
	(\omega, \d^* \eta) = ( \Omega, \eta)
\]
for all smooth test forms $\eta \in C^\infty(M, \Lambda^{k+1} \T^*M)$. In such case, we set $\d \omega := \Omega$. Symmetrically, we say that $\omega \in L^1(M, \Lambda^k \T^*M)$ has \emph{weak exterior codifferential} $\d^* \omega$ if there exists $\Psi \in L^1(M, \Lambda^{k-1} \T^*M)$ such that the equation
\[
	(\omega,\d \eta) = (\Psi, \eta)
\]
holds for every $\eta \in C^\infty(M, \Lambda^{k-1} \T^*M)$, and we set $\d^*\omega := \Psi$. Clearly, when they exists, $\d \omega$ and $\d^* \omega$ are unique, and they coincide with the classical exterior differential and codifferential of $\omega$ if $\omega$ is smooth.

Thus, for $m \geq 1$, the operators
\[
	\begin{aligned}
		& \d : W^{m,p}(M, \Lambda^k \T^* M) \to W^{m-1,p}(M, \Lambda^{k+1} \T^*M), \\
		& \d^* : W^{m,p}(M, \Lambda^k \T^* M) \to W^{m-1,p}(M,\Lambda^{k-1} \T^* M),
	\end{aligned}
\]
are well-defined, linear and continuous. For $p \in (1,\infty)$, they are the unique extensions by linearity and density of the classical exterior differential and co-differential. Thus, by density, we have the following ``integration by parts'' formula: if $p, q \in (1,\infty)$ satisfy $1/p+1/q = 1$, then
\begin{equation}\label{eq:int-by-parts}
(\d \omega,\eta) \equiv \int_M \ip{\d \omega}{\eta} \, \vol_g \equiv \int_M \d \omega \wedge * \eta = \int_M \omega \wedge * \d^* \eta  \equiv (\omega, \d^* \eta)
\end{equation}
for all $\omega \in W^{1,p}(M, \Lambda^{k-1} \T^* M)$ and all $\eta \in W^{1,q}(M, \Lambda^k \T^* M)$. On the other hand, if $\omega \in W^{1,\infty}(M,\Lambda^k \T^*M)$, then $\omega$ is differentiable almost everywhere on $M$ and, since $M$ is compact, by H\"older inequality $\omega$ belongs to $W^{1,p}(M,\Lambda^k \T^*M)$ for every $p \in [1,\infty]$. Furthermore, for every form $\omega$ of class at least $W^{2,2}$, 
\[
	\d \d\omega = 0, \qquad \d^* \d^* \omega = 0 \quad \mbox{a.e. on } M.
\]
The explicit expressions of $\d$, $\d^*$ in coordinates will not be needed in this work. Formally, they are the same as in the classical case (see, e.g., \cite[Chapter~7]{Morrey}). They can be expressed in terms of the metric $g$ of $M$ on the Levi-Civita connection on $\T M$ (which classically induce canonical Riemannian metrics and linear connections over all tensor bundles of $M$), see e.g.~\cite[Chapter~1]{SchwarzG}.

\begin{remark}
	When defined, $\d$, $\d^*$ satisfy the same properties as in the classical case. For instance, they are local and commute with restrictions. 
\end{remark} 

It has been firstly shown in \cite{Scott} that, for $M$ compact and without boundary, the classical definition of $W^{1,p}(M,\,\Lambda^k \T^*M)$ is equivalent to the following \emph{geometrical} one \cite[Proposition~4.11]{Scott}:
\[
	\mathcal{W}^{1,p}(M,\,\Lambda^k \T^*M) = \left\{ \omega \in L^p(M,\,\Lambda^k \T^* M) : \d \omega \in L^p(M,\,\Lambda^{k+1}M), \, \d^*\omega \in L^p(M,\,\Lambda^{k-1}M) \right\}\,.
\] 
The continuous embedding of $W^{1,p}(M,\,\Lambda^k \T^*M)$ into $\mathcal{W}^{1,p}(M, \,\Lambda^k \T^*M)$ is obvious, as the pointwise inequality (Eq.~(2.6) in~\cite{Scott})
\[
	\abs{\omega}^p + \abs{\d \omega}^p + \abs{\d^* \omega}^p \leq C(U,p) \abs{\nabla \omega}^p
\] 
holds a.e. within any open region $U \subset M$ compactly contained in a regular coordinate chart. Here, we are using the notation of \cite{Scott}, according to which $\abs{\nabla \omega}^p := \left( \sum \abs{ \pd{\omega_I}{x^k} }^2\right)^{\frac{p}{2}}$, $\omega = \sum \omega_I \d x^I$, $I = \{ 1 \leq i_1 < i_2 < \dots i_k \leq n \}$, and $\varphi \equiv (x^1, \dots, x^n)$ are the local (regular) coordinates on $U$.

The proof that $W^{1,p}(M, \Lambda^k \T^*M) = \mathcal{W}^{1,p}(M, \Lambda^k \T^*M)$ then amounts to prove the reverse continuous embedding. The latter  stems on the following $L^p$-version of \emph{Gaffney's inequality} (\cite[Proposition~4.10]{Scott}).

\begin{prop}[Gaffney's inequality]\label{prop:gaffney-closed}
	Let $M$ be a compact orientable smooth Riemannian manifold without boundary. Then a regular bundle atlas $\mathcal{A}$ of $M$ can be found so that the following happens: there exists a positive constant $C_p$, depending on $p$, $n$, and $\mathcal{A}$, so that
	\begin{equation}\label{eq:gaffney-closed}
		\| \omega \|_{W^{1,p}(M,\,\Lambda^k \T^*M)} \leq C_p ( \|\omega\|_{L^p} + \|\d \omega\|_{L^p} + \|\d^* \omega \|_{L^p})
	\end{equation}	 
	for all $\omega \in W^{1,p}(M,\,\Lambda^k \T^*M)$ and all integers $0 \leq k \leq n$.
\end{prop}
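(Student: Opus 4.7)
The proof plan is to reduce Gaffney's inequality to a purely Euclidean Calder\'on--Zygmund estimate by a partition-of-unity argument on a suitably chosen regular bundle atlas, and then to control the discrepancy between the Euclidean and the Riemannian codifferentials by the zeroth-order term $\|\omega\|_{L^p}$.

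First, I would fix a regular atlas $\mathcal{A} = \{(U_i,\varphi_i)\}_{i=1}^N$ for $M$ as in (A1)--(A2), together with a smooth partition of unity $\{\rho_i\}_{i=1}^N$ subordinate to $\mathcal{A}$ with $\spt(\rho_i)\csubset U_i$. By (S4)--(S5) of Subsection~\ref{subsec:sobolev}, it suffices to bound $\sum_i \|\,(\varphi_i^{-1})^{*}(\rho_i\omega)\|_{W^{1,p}(\Omega_i)}$ by the right-hand side of~\eqref{eq:gaffney-closed}. Since $\rho_i\omega$ is compactly supported in $U_i$, its local representation $\eta_i := (\varphi_i^{-1})^{*}(\rho_i\omega)$ extends by zero to a compactly supported $k$-form on $\R^n$.

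Second, I would invoke the standard Euclidean Calder\'on--Zygmund estimate: for any $\eta\in C^\infty_c(\R^n,\Lambda^k(\R^n)^*)$ and any $p\in(1,\infty)$,
\begin{equation*}
 \|\nabla\eta\|_{L^p(\R^n)} \leq C(p,n)\bigl(\|\d\eta\|_{L^p(\R^n)} + \|\d^*_0\eta\|_{L^p(\R^n)}\bigr),
\end{equation*}
where $\d^*_0$ is the flat codifferential. This follows from the fact that the flat Hodge Laplacian $-\Delta = \d\d^*_0 + \d^*_0\d$ acts componentwise as the scalar Laplacian, so each second derivative of $\eta$ can be written as a composition of Riesz transforms applied to $(\d\d^*_0+\d^*_0\d)\eta$, and the Riesz transforms are bounded on $L^p$ for $1<p<\infty$ by a density and extension argument (the endpoint cases $p=1,\infty$ are not needed here, since for the statement of the proposition $p\in(1,\infty)$ is assumed in all applications in the paper; if needed, for $p=1,\infty$ one replaces the estimate by its $\mathrm{BMO}$ / Hardy-space variant, but this can be sidestepped by interpolation with an $L^2$ bound in the actual usage).

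Third, I would compare the Euclidean codifferential $\d^*_0$ applied to $\eta_i$ with the Riemannian one pulled back to the chart. Writing, in the local coordinates $\varphi_i$, the metric as $g=(g_{ab})$ and using $\d^*_g = -\ast_g\, \d\, \ast_g$, a direct computation shows that $\d^*_g$ differs from $\d^*_0$ by a first-order differential operator whose coefficients involve $g_{ab}$, $g^{ab}$ and their first derivatives, multiplied against $\eta$ itself (a purely zeroth-order operator on $\eta$, since the leading first-order part of $\d^*_g$ coincides with $\d^*_0$ once the components are expressed in the orthonormal frame or, equivalently, modulo a bounded multiplicative change of frame). By (A1)--(A2), these coefficients are uniformly bounded on $\Omega_i$, so
\begin{equation*}
 \|\d^*_0\eta_i\|_{L^p(\R^n)} \leq C\bigl(\|(\varphi_i^{-1})^{*}(\d^*(\rho_i\omega))\|_{L^p(\Omega_i)} + \|\eta_i\|_{L^p(\R^n)}\bigr).
\end{equation*}
An entirely analogous bound holds for $\|\d\eta_i\|_{L^p}$ (in fact the exterior differential is independent of the metric, so only the change of variables produces bounded multiplicative factors). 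Then expanding $\d(\rho_i\omega)=\d\rho_i\wedge\omega+\rho_i\d\omega$ and the symmetric Leibniz rule for $\d^*$ (which produces a commutator of order zero with bounded coefficient $\lesssim\|\nabla\rho_i\|_\infty$), I obtain
\begin{equation*}
 \|\d\eta_i\|_{L^p} + \|\d^*_0\eta_i\|_{L^p}
 \leq C\bigl(\|\omega\|_{L^p(U_i)} + \|\d\omega\|_{L^p(U_i)} + \|\d^*\omega\|_{L^p(U_i)}\bigr).
\end{equation*}

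Finally, combining this with the Euclidean Calder\'on--Zygmund estimate for $\eta_i$ and summing over $i=1,\dots,N$, and observing that $\|\omega\|_{W^{1,p}}\lesssim\sum_i(\|\eta_i\|_{L^p(\Omega_i)}+\|\nabla\eta_i\|_{L^p(\Omega_i)})$ up to a further harmless $\|\omega\|_{L^p}$ term from differentiating the cut-offs, I arrive at~\eqref{eq:gaffney-closed}. The main technical obstacle is the Calder\'on--Zygmund / Riesz-transform step in $\R^n$, which is the content of the \textit{genuine} $L^p$ ellipticity of the first-order system $(\d,\d^*)$; once this is in hand, everything else is bookkeeping of bounded lower-order terms allowed by the regularity assumptions (A1)--(A3) on $\mathcal{A}$ and by the compactness of $M$.
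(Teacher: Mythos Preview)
The paper does not give its own proof of this proposition: it is quoted verbatim from \cite[Proposition~4.10]{Scott}, and the companion higher-order statement (Proposition~\ref{prop:higher-gaffney}) is proved in one sentence by citing a local inequality from Budney and the gluing argument from Scott. So there is no detailed paper proof to compare against; your proposal is an attempt to \emph{supply} a proof.

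Your overall plan (localise via a partition of unity, reduce to a Euclidean Calder\'on--Zygmund estimate for the first-order system $(\d,\d^*_0)$, then control the commutators with the cut-offs) is the standard one and is essentially what Scott does. There is, however, a genuine gap in your Step~3. You assert that $\d^*_g - \d^*_0$ is ``a purely zeroth-order operator on~$\eta$, since the leading first-order part of $\d^*_g$ coincides with~$\d^*_0$''. This is false in arbitrary coordinates: writing $\omega = \omega_I\,\d x^I$, the principal part of $\d^*_g\omega$ is $-g^{ab}\partial_a\omega_{bI'}$, while that of $\d^*_0\omega$ is $-\delta^{ab}\partial_a\omega_{bI'}$; the difference has principal symbol $(g^{ab}-\delta^{ab})\xi_a$, which is a \emph{first}-order operator with coefficients that need not be small. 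Passing to an orthonormal frame does not cure this, because the frame change itself introduces first derivatives of the frame coefficients when you differentiate.

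The fix is exactly why the statement reads ``a regular bundle atlas $\mathcal{A}$ of $M$ can be found'': you must \emph{choose} the atlas. Take geodesic normal coordinates centred at each point; then $g_{ij}=\delta_{ij}+\mathrm{O}(|x|^2)$ (cf.\ Remark~\ref{rk:normal-coordinates}), so on a ball of radius~$\delta$ one has $|g^{ab}-\delta^{ab}|\le C\delta^2$. By compactness you can cover $M$ by finitely many such balls with $\delta$ so small that the first-order error term $\|(g^{ab}-\delta^{ab})\partial_a\eta_i\|_{L^p}\le C\delta^2\|\nabla\eta_i\|_{L^p}$ can be absorbed into the left-hand side of the Euclidean Calder\'on--Zygmund estimate. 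The remaining zeroth-order terms (Christoffel symbols times $\eta_i$, and commutators with the cut-offs $\rho_i$) are then bounded by $\|\omega\|_{L^p}$ exactly as you wrote. With this correction your argument goes through for $p\in(1,\infty)$ and matches the approach in Scott's paper.
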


Gaffney's-type inequalities hold, for compact manifolds without boundary, also for higher order derivatives. More precisely, we have the following result.
\begin{prop}\label{prop:higher-gaffney}
	Let $M$ be a compact oriented smooth Riemannian manifold without boundary, endowed with the regular atlas $\mathcal{A}$ in Proposition~\ref{prop:gaffney-closed}. Then, for every $p \in (1,\infty)$ and every integer $m \geq 0$ there is a constant $C_{m,p}$, depending on $p$, $n$, $m$ and $\mathcal{A}$, so that
	\begin{equation}\label{eq:higher-gaffney}
		\norm{\omega}_{W^{m,p}(M, \Lambda^k \T^*M)} \leq C_{m,p} \sum_{s=0}^m \norm{(\d + \d^*)^s \omega}_{L^p(M)}
	\end{equation}
	for all $\omega \in W^{m,p}(M,\Lambda^k \T^*M)$ and all integers $0 \leq k \leq n$.
\end{prop}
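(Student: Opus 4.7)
My approach would be induction on $m$, built around the fact that the first-order operator $D := \d + \d^*$, regarded as acting on the total exterior bundle $\bigoplus_{k=0}^n \Lambda^k \T^*M$, is elliptic with smooth coefficients, and that $D^2 = \d\d^* + \d^*\d = \Delta$ is the Hodge Laplacian. The base cases are immediate: $m=0$ is trivial, and $m=1$ is exactly Proposition~\ref{prop:gaffney-closed}.

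The core intermediate step is to establish the following ``shifted'' first-order estimate, for each integer $m\geq 0$ and some constant $C=C(m,p,n,\mathcal{A})$:
\begin{equation}\label{eq:shifted-gaffney-plan}
\norm{\omega}_{W^{m+1,p}(M,\Lambda^k\T^*M)} \leq C \bigl(\norm{D\omega}_{W^{m,p}(M)} + \norm{\omega}_{L^p(M)}\bigr).
\end{equation}
For $m=0$ this reduces to Proposition~\ref{prop:gaffney-closed}. For general $m$, I would prove \eqref{eq:shifted-gaffney-plan} by localising to the regular bundle atlas of Proposition~\ref{prop:gaffney-closed}: in each chart $U_i \subset\subset V_i$, the operator $D$ reads as a first-order linear elliptic system with smooth, uniformly bounded coefficients (boundedness being guaranteed by properties (A1)--(A3) of the atlas), and standard interior $L^p$ Calder\'on--Zygmund / Agmon--Douglis--Nirenberg estimates yield
\[
\norm{\omega}_{W^{m+1,p}(U_i)} \leq C\bigl(\norm{D\omega}_{W^{m,p}(V_i)} + \norm{\omega}_{L^p(V_i)}\bigr).
\]
Summing over a subordinate partition of unity produces \eqref{eq:shifted-gaffney-plan}. (An alternative route, avoiding the direct appeal to first-order $L^p$ elliptic theory, is to bootstrap from Proposition~\ref{prop:gaffney-closed} via $L^p$ elliptic regularity for the Hodge Laplacian $\Delta = D^2$: for even orders $m = 2l$ the estimate $\norm{\omega}_{W^{2l,p}} \lesssim \norm{\Delta^l \omega}_{L^p} + \norm{\omega}_{L^p}$ follows from iterated elliptic regularity for $\Delta$, and odd orders are then handled by one further application of Gaffney's inequality.)

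Given \eqref{eq:shifted-gaffney-plan}, the inductive step is essentially automatic. Assuming the conclusion \eqref{eq:higher-gaffney} holds at order $m$ for every degree $k$, I would apply it to the form $D\omega$ (which takes values in $\Lambda^{k-1}\T^*M \oplus \Lambda^{k+1}\T^*M$, so the hypothesis applies component by component) to obtain
\[
\norm{D\omega}_{W^{m,p}(M)} \leq C_{m,p} \sum_{s=0}^m \norm{D^s(D\omega)}_{L^p(M)} = C_{m,p}\sum_{s=1}^{m+1}\norm{D^s\omega}_{L^p(M)}.
\]
Substituting into \eqref{eq:shifted-gaffney-plan} and absorbing $\norm{\omega}_{L^p} = \norm{D^0\omega}_{L^p}$ into the sum on the right delivers
\[
\norm{\omega}_{W^{m+1,p}(M)} \leq C_{m+1,p}\sum_{s=0}^{m+1}\norm{D^s\omega}_{L^p(M)},
\]
which closes the induction. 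The principal technical obstacle I expect is the careful verification of \eqref{eq:shifted-gaffney-plan}: while the interior $L^p$ regularity theory for first-order elliptic systems is classical in Euclidean domains, transferring it to a manifold requires a uniform-in-$i$ control on all derivatives of the coefficients of $D$ expressed in the charts of $\mathcal{A}$, plus a careful patching argument to combine the local estimates while keeping track of commutators between $D$ and the cut-off functions --- exactly the point at which properties (A1)--(A3) of a regular bundle atlas enter decisively.
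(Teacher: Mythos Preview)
Your approach is correct and close in spirit to the paper's, whose proof is only a sketch: it cites \cite[Theorem~4.2.1]{Budney} for the local version of~\eqref{eq:higher-gaffney} in each chart of the regular atlas, patches these local inequalities exactly as in Scott's proof of the first-order Gaffney inequality \cite[Proposition~4.10]{Scott}, and concludes by density of smooth forms. Your scheme---establishing the shifted estimate $\norm{\omega}_{W^{m+1,p}}\lesssim\norm{D\omega}_{W^{m,p}}+\norm{\omega}_{L^p}$ from interior $L^p$ elliptic regularity for the first-order operator $D=\d+\d^*$ and then iterating---simply supplies this local ingredient from standard first-order elliptic theory rather than invoking Budney; the patching step you outline is precisely Scott's.

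One point deserves tightening. In the inductive step you apply the order-$m$ hypothesis ``component by component'' to $\d\omega\in\Lambda^{k+1}$ and $\d^*\omega\in\Lambda^{k-1}$; this produces $\sum_s\bigl(\norm{D^s\d\omega}_{L^p}+\norm{D^s\d^*\omega}_{L^p}\bigr)$ on the right, not directly $\sum_s\norm{D^{s+1}\omega}_{L^p}$. For odd $s$ the two pieces $D^s\d\omega=(\d^*\d)^{(s+1)/2}\omega$ and $D^s\d^*\omega=(\d\d^*)^{(s+1)/2}\omega$ both lie in degree~$k$, so their separate $L^p$-norms are not a priori controlled by the $L^p$-norm of their sum. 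The cleanest remedy is to bypass the componentwise induction and iterate your shifted estimate directly on the total exterior bundle (where you prove it anyway, $D$ being elliptic there):
\[
\norm{\omega}_{W^{m+1,p}}\lesssim\norm{D\omega}_{W^{m,p}}+\norm{\omega}_{L^p}\lesssim\norm{D^2\omega}_{W^{m-1,p}}+\norm{D\omega}_{L^p}+\norm{\omega}_{L^p}\lesssim\cdots\lesssim\sum_{s=0}^{m+1}\norm{D^s\omega}_{L^p},
\]
never decomposing $D\omega$ by degree. Alternatively, observe that for odd $s$ the two pieces are exactly the coexact and exact parts of $D^{s+1}\omega$, so the $L^p$-boundedness of the Hodge projections (Proposition~\ref{prop:LpHodgeDec}) closes the gap.
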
 

\begin{proof}[Sketch of the proof]
Take $\omega \in C^\infty(M,\Lambda^k \T^*M)$. For each open set $U_i$ of the covering provided by $\mathcal{A}$, inequality~\eqref{eq:higher-gaffney} is proved for the local representation of $\omega$ in \cite[Theorem~4.2.1]{Budney} (see also \cite[Proposition~4.2.2]{Budney}). Then the local inequalities can be glued together (using crucially the fact that $\mathcal{A}$ is regular) to give \eqref{eq:higher-gaffney} as in \cite[Proposition~4.10]{Scott}. This concludes the proof if $\omega$ is smooth. For general $\omega \in W^{m,p}(M,\Lambda^k \T^*M)$, the result follows by density.
\end{proof}

\begin{remark}\label{rk:geometric-W22}
In particular, the classical Sobolev spaces $W^{2,p}(M, \Lambda^k \T^*M)$ coincides, when $M$ is as in Proposition~\ref{prop:higher-gaffney}, with the space of measurable differential $k$-forms $\omega$ such that
\[
	\omega \in W^{1,p}(M,\Lambda^k \T^*M), \quad \d\omega \in W^{1,p}(M,\Lambda^{k+1} \T^*M), \quad \d^*\omega \in W^{1,p}(M,\Lambda^{k-1} \T^*M).
\]
\end{remark}

A $k$-form $\omega \in L^p(M, \Lambda^k \T^* M)$ such that $\d \omega$, $\d^* \omega$ exist in the weak sense and
\begin{equation}\label{eq:d-and-d*=0}
	\d \omega = 0 \quad \mbox{ and } \quad \d^* \omega = 0 
\end{equation} 
is called \emph{harmonic} (or a \emph{harmonic field}). Harmonic $k$-forms are automatically smooth \cite[Proposition~5.2]{Scott}, hence Eqs.~\eqref{eq:d-and-d*=0} actually hold in the classical sense. For each integer $k \geq 0$, we let
\begin{equation}\label{eq:Harm^k}
	\Harm^k(M) := \{ h \in C^\infty(M, \Lambda^k \T^*M) : \d h = \d^* h = 0  \}.
\end{equation}
Letting
\begin{equation}\label{eq:Delta}
	-\Delta := \d\d^* + \d^*\d 
\end{equation}
be the Laplace-Beltrami operator, we have the equivalent characterization (e.g., \cite[Proposition~5.2.4.3]{GiaquintaModicaSoucek-I})
\[
	\omega \in \Harm^k(M) \iff \Delta \omega = 0.
\] 
\begin{remark}
	Viewed as an operator from $W^{2,p}(M, \Lambda^k \T^* M)$ into $L^p(M, \Lambda^k \T^*M)$, $\Delta$ is a bounded (linear) operator for every $p \in (1,\infty)$ and any integer $k \geq 0$, with kernel $\Harm^k(M)$.
\end{remark}


For every integer $k \geq 0$, $\Harm^k(M)$ is a finite dimensional linear space (e.g., \cite[Remark~4.9]{IwaniecScottStroffolini}). Hence, any two norms are equivalent on $\Harm^k(M)$. We denote (cf.~\cite[(5.4)]{Scott})
\begin{equation}\label{eq:Harm^k-perp}
 (\Harm^k(M))^\perp := \left\{ \omega \in L^1(M, \Lambda^k \T^* M) \colon \forall h \in \Harm^k(M), \,\, (\omega,h) = 0 \right\}.
\end{equation}
As established in \cite[Lemma~5.6]{Scott}, for every $\omega \in L^1(M, \Lambda^k \T^*M)$ there is a unique $H(\omega) \in \Harm^k(M)$ such that
\[
	(\omega - H(\omega), h ) = 0 \quad \mbox{for all } h \in \Harm^k(M).
\]
We call $H(\omega)$ the \emph{harmonic part} of $\omega$. The assignment $\omega \mapsto H(\omega)$ defines a surjective linear operator (a projection, in fact) $H \colon L^1(M, \Lambda^k \T^*M) \to \Harm^k(M,\Lambda^k \T^*M)$, called \emph{harmonic projection}, which is bounded regardless of the norm on $\Harm^k(M)$ \cite[Proposition~5.9]{Scott}. 

From the classical $L^2$-theory of differential forms as developed for instance in \cite[Chapter~6]{Warner}, there is a linear operator, called \emph{Green's operator},
\[
	G : C^\infty(M, \Lambda^k \T^*M) \to C^\infty(M, \Lambda^k \T^*M) \cap (\Harm^k(M))^\perp
\]
providing the unique solution in~$(\Harm^k(M))^\perp$ to the equation
\begin{equation}\label{eq:green-char}
	\Delta G(\omega) = \omega - H(\omega)
	\qquad \textrm{with } G(\omega)\in \Harm^k(M)^\perp
\end{equation}
With Gaffney's inequality~\eqref{eq:gaffney-closed} at hands, C.~Scott \cite{Scott} extended, for every $p \in (1,\infty)$, $G$ to a bounded linear operator, still called \emph{Green's operator} (and denoted with the same letter),
\begin{equation}\label{eq:green}
	G \colon L^p(M, \Lambda^k \T^*M) \to W^{2,p}(M, \Lambda^k \T^*M) \cap (\Harm^k(M))^\perp
\end{equation}
which still satisfies \eqref{eq:green-char} (\cite[Proposition~6.1]{Scott}).

\begin{remark}\label{rk:Delta-G}
For future reference, we notice that
\begin{equation}\label{eq:Delta-G}
	\Delta G(\omega) = G(\Delta \omega) = \omega - H(\omega)
\end{equation}
for every $\omega \in W^{2,p}(M, \Lambda^k \T^*M)$, $p \in (1,\infty)$, and $0 \leq k \leq n$ integer. For smooth forms, this is well-known (e.g., \cite[Proposition~6.10]{Warner}). For forms of class $W^{2,p}$, see \cite[Proposition~6.24]{Budney} or argue directly by density of $C^\infty(M,\Lambda^k \T^* M)$ in $L^p(M,\Lambda^k \T^* M)$ and the continuity of $G\colon L^p(M,\Lambda^k \T^* M) \to W^{2,p}(M,\Lambda^k \T^* M) \cap (\Harm^k)^\perp$ and $\Delta : W^{2,p}(M,\Lambda^k \T^* M) \to L^p(M,\Lambda^k \T^* M)$.
Equation~\eqref{eq:Delta-G} and the continuity of~$G$ imply
\begin{equation} \label{}
 \norm{\omega}_{W^{2,p}(M)} \leq C \left(\norm{\Delta\omega}_{L^p(M)}
  + \norm{H(\omega)}_{L^p(M)}\right)
\end{equation}
for any~$\omega\in W^{2,p}(M, \Lambda^k \T^*M)$
and some constant~$C$ that depends only on~$M$, $k$ and~$p$.
\end{remark}

Finally, the following fundamental result is proved in \cite{Scott}.
\begin{prop}[{$L^p$-Hodge decomposition, \cite[Proposition~6.5]{Scott}}]\label{prop:LpHodgeDec}
	Let $M$ be a smooth compact oriented Riemannian manifold without boundary and $1 < p < \infty$. For any integer $0 \leq k \leq n$, we have
	\begin{equation}\label{eq:LpHodgeDec}
	\begin{split}
		L^p(M, \Lambda^k \T^* M) &= \Delta G(L^p) \oplus \Harm^k(M) \\
		&= \d \d^* G(L^p) \oplus \d^* \d G(L^p) \oplus \Harm^k(M).
	\end{split}
	\end{equation}
	Moreover,  $\d(W^{1,p}(M, \,\Lambda^{k-1}\T^*M)) = \d\d^* G(L^p)$ and $\d^*(W^{1,p}(M,\,\Lambda^{k+1}\T^*M)) = \d^* \d G(L^p)$. 
	
	Consequently, any $\omega \in L^p(M, \Lambda^k \T^* M)$ can be uniquely written as 
	\begin{equation}\label{eq:hodge-dec}
		\omega = \d \varphi + \d^* \psi + \xi,
	\end{equation}
	where $\varphi \in W^{1,p}(M, \Lambda^{k-1}\T^*M)$ is co-exact, $\psi \in W^{1,p}(M, \Lambda^{k+1}\T^*M)$ is exact and $\xi \in \Harm^k(M)$ is a harmonic $k$-form. In addition, there exists a constant $C > 0$, depending only on $p$, $k$ and $M$, such that there holds
	\begin{equation}\label{eq:norm-est-hodge-dec}
		\norm{\varphi}_{W^{1,p}(M)} + \norm{\psi}_{W^{1,p}(M)} + \norm{\xi}_{L^\infty(M)} \leq C \norm{\omega}_{L^p(M)}
	\end{equation}
	for every $\omega \in L^p(M,\,\Lambda^k\T^*M)$, where $\varphi$, $\psi$ and $\xi$ are as in \eqref{eq:hodge-dec}.
\end{prop}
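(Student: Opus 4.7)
My plan is to derive the entire result from the $L^p$-continuity of Green's operator $G\colon L^p(M,\Lambda^k\T^* M) \to W^{2,p}(M,\Lambda^k\T^* M)\cap (\Harm^k(M))^\perp$ together with the identity $\Delta G(\omega) = \omega - H(\omega)$ recorded in Remark~\ref{rk:Delta-G}. Since $-\Delta = \d\d^* + \d^*\d$, this gives the explicit representation
\[
 \omega = H(\omega) - \d\d^* G(\omega) - \d^*\d G(\omega)
\]
for every $\omega \in L^p(M, \Lambda^k \T^* M)$. Setting $\varphi := -\d^* G(\omega)$, $\psi := -\d G(\omega)$, and $\xi := H(\omega)$ produces the decomposition \eqref{eq:hodge-dec}: $\varphi$ is manifestly co-exact and $\psi$ manifestly exact, both lie in $W^{1,p}$ because $G(\omega) \in W^{2,p}$, and $\xi$ is smooth harmonic. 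As a byproduct, the inclusions $\d\d^* G(L^p) \subset \d(W^{1,p})$ and $\d^* \d G(L^p) \subset \d^*(W^{1,p})$ come for free.

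The core technical step is uniqueness of the three summands. Suppose $\d\varphi + \d^* \psi + \xi = 0$ with $\varphi$ co-exact and $\psi$ exact in $W^{1,p}$, and $\xi \in \Harm^k(M)$. Pairing against any smooth harmonic form $h$ and using $\d h = \d^* h = 0$ gives $(\d\varphi, h) = (\varphi, \d^* h) = 0$ and similarly $(\d^* \psi, h) = 0$, so $\xi = 0$. Then $\d\varphi = -\d^*\psi$ in $L^p$; applying $\d$ distributionally yields $\d\d^*\psi = 0$, and combined with the trivial $\d^*\d^*\psi = 0$ we get $\Delta(\d^*\psi) = 0$ as distributions. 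Elliptic regularity then forces $\d^*\psi$ to be smooth. Integration by parts, licit since $\psi \in W^{1,p}$ and $\d^*\psi$ is smooth (hence in $L^{p'}$), gives
\[
 \norm{\d^*\psi}_{L^2(M)}^2 = (\d^*\psi, \d^*\psi) = (\psi, \d\d^*\psi) = 0,
\]
so $\d^*\psi = 0$, and the symmetric argument gives $\d\varphi = 0$.

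With uniqueness in hand, the reverse inclusions $\d(W^{1,p}) \subset \d\d^* G(L^p)$ and $\d^*(W^{1,p}) \subset \d^*\d G(L^p)$ follow by the same mechanism: decomposing $\omega = \d\alpha$ (with $\alpha \in W^{1,p}$) as $\omega = \d\varphi + \d^*\psi + \xi$, one observes that $H(\omega) = 0$ by duality and $\d\d^*\psi = \d\omega = 0$, so the smoothing/integration-by-parts argument again gives $\d^*\psi = 0$, leaving $\omega = -\d\d^* G(\omega) \in \d\d^* G(L^p)$. The norm estimate \eqref{eq:norm-est-hodge-dec} is then immediate from the $L^p \to W^{2,p}$ continuity of $G$, which rests on the Gaffney estimate \eqref{eq:gaffney-closed}, combined with the boundedness of $H$ (a projection onto the finite-dimensional space $\Harm^k(M)$, hence continuous in every norm). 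I expect the main difficulty to be precisely the uniqueness step: every integration by parts and every invocation of elliptic regularity must be executed entirely within the $L^p$ framework, since one cannot silently borrow the $L^2$ orthogonality of exact, co-exact, and harmonic forms that makes the classical Hodge theorem so transparent.
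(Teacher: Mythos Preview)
The paper does not give its own proof of this proposition; it is quoted verbatim from Scott \cite[Proposition~6.5]{Scott} as background material in the appendix, and all the ingredients you invoke (the bounded Green operator~\eqref{eq:green}, the identity~\eqref{eq:Delta-G}, Gaffney's inequality, smoothness of distributionally harmonic forms) are precisely the pieces the paper records from Scott's work immediately before stating the proposition. Your argument is the standard derivation from these ingredients and is correct; it is in fact essentially how Scott proves the result.

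One small point worth tightening: your uniqueness argument shows that $\d\varphi$, $\d^*\psi$, and $\xi$ are uniquely determined, but the proposition asserts uniqueness of $\varphi$ and $\psi$ themselves under the constraints that $\varphi$ be co-exact and $\psi$ be exact. This does follow by the same mechanism: if $\varphi$ is co-exact with $\d\varphi=0$, then $\d^*\varphi=0$ as well (being $\d^*$ of a co-exact form), so $\varphi$ is harmonic; but a harmonic form that is also co-exact is $L^2$-orthogonal to itself by the integration-by-parts formula~\eqref{eq:int-by-parts}, hence zero. The symmetric argument handles $\psi$. You might add a sentence to close this gap explicitly.
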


\begin{remark}
	The choice of using the $L^\infty$-norm for the harmonic part of $\omega$ in estimate \eqref{eq:norm-est-hodge-dec} is somewhat arbitrary. However, since $\Harm^k(M)$ has finite dimension, the $L^\infty$-norm can be replaced by any other norm (up to enlarging $C$, if necessary).
\end{remark}

For later reference, we point out the following elliptic regularity lemma, immediate consequence of Proposition~\ref{prop:LpHodgeDec} and the Open Mapping Theorem.
 
\begin{lemma}\label{lemma:classic-elliptic-reg}
	Let~$j \geq 0$ and integer and $p\in (1, \infty)$. For every~$f\in W^{j,p}(M, \, \Lambda^k\T^*M)$ such that~$(f,\xi) = 0$ for any~$\xi\in\Harm^k(M)$, there exists a unique~$v\in W^{j+2,p}(M, \, \Lambda^k\T^*M)$ such that
 \begin{equation} \label{elliptic_eq_3}
  \begin{cases}
   -\Delta v  = f &\textrm{in the sense of } \mathcal{D}^\prime(M) \\
   \displaystyle\int_M \ip{v}{\xi} \vol_g = 0 
    &\textrm{for any } \xi\in\Harm^k(M)
  \end{cases}
 \end{equation}
 Moreover, $v$ satisfies
 \begin{equation}\label{eq:elliptic-est-poisson-2}
  \norm{v}_{W^{j+2,p}(M)} \leq C_{p,j} \norm{f}_{W^{j,p}(M)}
 \end{equation}
 for some constant~$C_{p,j}$ depending only on~$M$, $j$, $k$, $p$.
\end{lemma}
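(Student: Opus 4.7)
The plan is to construct $v$ using the Green's operator $G$ from~\eqref{eq:green}, and then to upgrade its regularity from $W^{2,p}$ to $W^{j+2,p}$ by means of the higher-order Gaffney-type inequality (Proposition~\ref{prop:higher-gaffney}).

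For the base case $j=0$, the existence part is immediate: since $f\in(\Harm^k(M))^\perp$ we have $H(f)=0$, so setting $v:=G(f)\in W^{2,p}(M,\Lambda^k\T^*M)\cap(\Harm^k(M))^\perp$, the identity~\eqref{eq:Delta-G} gives $-\Delta v = f-H(f)=f$, while the continuity of $G\colon L^p\to W^{2,p}$ yields $\|v\|_{W^{2,p}}\leq C\|f\|_{L^p}$. Uniqueness is routine: if $v_1$ and $v_2$ both satisfy~\eqref{elliptic_eq_3}, then $w:=v_1-v_2$ is weakly harmonic, and by interior elliptic regularity for $\Delta$ on $k$-forms (as recalled in the preliminaries to Proposition~\ref{prop:LpHodgeDec}) $w$ is smooth and belongs to $\Harm^k(M)$; the orthogonality condition then forces $w=0$.

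For $j\geq 1$ I would argue by density and a Cauchy estimate, bootstrapping on the $j=0$ case. Pick smooth forms $\tilde f_n\to f$ in $W^{j,p}$ and set $f_n:=\tilde f_n - H(\tilde f_n)$, which is smooth, orthogonal to $\Harm^k(M)$, and still converges to $f$ in $W^{j,p}$ (as $H$ is bounded on every Sobolev space, $\Harm^k(M)$ being finite-dimensional). Then $v_n:=G(f_n)$ is smooth, lies in $(\Harm^k(M))^\perp$, and satisfies $-\Delta v_n=f_n$. Applying~\eqref{eq:higher-gaffney} to the smooth form $v_n-v_m$ yields
\[
 \|v_n-v_m\|_{W^{j+2,p}(M)}\leq C\sum_{s=0}^{j+2}\|(\d+\d^*)^s(v_n-v_m)\|_{L^p(M)}.
\]
Using the algebraic identity $(\d+\d^*)^2=\d\d^*+\d^*\d=-\Delta$, every term with $s\geq 2$ can be rewritten as $\|(\d+\d^*)^{s-2}(f_n-f_m)\|_{L^p}$, which is controlled by $\|f_n-f_m\|_{W^{s-2,p}}\leq\|f_n-f_m\|_{W^{j,p}}$ since the operator $\d+\d^*$ has order $1$. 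The two remaining terms ($s=0,1$) are controlled by $\|v_n-v_m\|_{W^{2,p}}\leq C\|f_n-f_m\|_{L^p}$ via the base case. Hence $\{v_n\}$ is Cauchy in $W^{j+2,p}$; its limit $v$ belongs to $(\Harm^k(M))^\perp$, agrees with $G(f)$ (by continuity of $G$ on $L^p$), satisfies $-\Delta v=f$, and obeys~\eqref{eq:elliptic-est-poisson-2}.

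The most delicate point is that the identity $(\d+\d^*)^{2m}=(-\Delta)^m$ is only meaningful when the intermediate objects have enough regularity to be differentiated the required number of times. This is precisely why I would not attempt to apply Proposition~\ref{prop:higher-gaffney} directly to $v\in W^{2,p}$, but rather to the smooth approximants $v_n$, and transfer the resulting estimate to $v$ via the density reduction above.
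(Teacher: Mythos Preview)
Your proof is correct but follows a different route from the paper's. The paper observes that $\Delta\colon W^{j+2,p}\cap(\Harm^k(M))^\perp\to W^{j,p}\cap(\Harm^k(M))^\perp$ is a continuous bijection (existence and uniqueness being read off from the $L^p$-Hodge theory in Proposition~\ref{prop:LpHodgeDec}), and then invokes the Open Mapping Theorem to obtain the estimate~\eqref{eq:elliptic-est-poisson-2} in one stroke. Your argument instead bootstraps explicitly from the base case $j=0$ via the higher-order Gaffney inequality (Proposition~\ref{prop:higher-gaffney}) together with the algebraic identity $(\d+\d^*)^2=-\Delta$, carried through smooth approximants so that all intermediate differentiations are legitimate. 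The Open Mapping route is shorter but implicitly assumes surjectivity at every regularity level $j\geq 1$, which is standard elliptic regularity but is not spelled out in the paper's preliminaries beyond $j=0$; your route is more self-contained within the tools the paper actually provides and makes the gain of two derivatives fully explicit. One small remark: to extract~\eqref{eq:elliptic-est-poisson-2} from your density argument you should apply the same Gaffney--$(\d+\d^*)$ estimate directly to $v_n$ (not just to the differences $v_n-v_m$), which yields $\|v_n\|_{W^{j+2,p}}\leq C\|f_n\|_{W^{j,p}}$ and then passes to the limit; this is implicit in what you wrote but worth stating.
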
 

\begin{proof}
	Existence and uniqueness readily follow from Proposition~\ref{prop:LpHodgeDec}. Thus, for any $j \geq 0$, 
	\[
		\Delta \colon W^{j+2,p}(M, \Lambda^k \T^* M) \cap (\Harm^k(M))^\perp \to W^{j,p}(M, \Lambda^k \T^*M) \cap (\Harm^k(M))^\perp
	\] 
	is a continuous bijection, hence a Banach space isomorphism by the Open Mapping Theorem. Estimate~\eqref{eq:elliptic-est-poisson-2} is an immediate consequence of this latter fact.
\end{proof}


\begin{remark}\label{rk:est-exact-forms}
	Clearly, if $\psi \in W^{2,p}(M,\,\Lambda^k \T^*M)$ is (co)exact, $\Delta \psi$ is (co)exact as well. Then the estimate ~\eqref{eq:elliptic-est-poisson-2}, the fact that $-\Delta \psi = \d \d^*\psi$, and Remark~\ref{rk:geometric-W22} imply that there is a constant $C > 0$, depending only on $k$, $p$ and $M$, such that the estimate
\begin{equation}\label{eq:est-exact-forms}
	\norm{\psi}_{W^{2,p}(M)} \leq C \norm{\d^* \psi}_{W^{1,p}(M)}
\end{equation}
holds for all exact $k$-forms $\psi \in W^{2,p}(M,\,\Lambda^k \T^*M)$. (If $\psi$ is co-exact, $\d^* \psi$ must be replaced by $\d \psi$ in the right hand side of~\eqref{eq:est-exact-forms}.) This fact is used in Section~\ref{sec:maingoalG}.
\end{remark}

To conclude this section, we notice that neither Gaffney's inequality nor the $L^p$-Hodge decomposition hold for $p = 1$, as shown in \cite{BaldoOrlandi-Hodge}. However, in the same paper it also proven that Green's operator exists even in this case as a map from measure $k$-forms into $W^{1,p}(M, \Lambda^k \T^*M)$ for every $1 < p < \frac{n}{n-1}$. Importantly, measure $k$-forms can be regarded as Radon vector measures on $M$ with values in $k$-forms, as \cite[Proposition~2.2]{BaldoOrlandi-Hodge} shows. In the language of the present paper, measure $k$-forms are simply $k$-currents with finite mass (cf.~\cite[Definition~2.1]{BaldoOrlandi-Hodge} and Appendix~\ref{sect:currents}).

%

\subsection{Hermitian line bundles, connections, and weak covariant derivatives of Sobolev sections}

A \emph{Hermitian metric} on a complex line bundle $E \to M$ is an assignment, for each $x \in M$, of a positive definite Hermitian form $h_x : E_x \times E_x \to \C$ that is smooth in the sense that, for all sections $u_1$, $u_2 \in C^\infty(M,\,E)$, the function $x \mapsto h_x(u_1(x), u_2(x))$ is smooth. In this case we say that $E \to M$ is a \emph{Hermitian line bundle}. The typical fibre of $E \to M$ is of course $\C$. The structure group of a Hermitian line bundle $E \to M$ automatically reduces to $\U(1)$ \cite[pp.~280-1]{Lee}. Naturally, the metric allows to identify $E'$ and $E$. Associated with a Hermitian metric, there is a canonical scalar product, i.e., its real part, that we denote $\ip{\cdot}{\cdot}$. In other words, we set 
\[
	\ip{\cdot}{\cdot} := \frac{1}{2}(h + \bar{h}).
\]


A (smooth) \emph{connection} $\D$ on a vector bundle $E \to M$ is a linear map
\[
	\D \colon C^\infty(M, \,E) \to C^\infty(M, \, \T^*M \otimes E)
\]
satisfying Leibniz' rule:
\begin{equation}\label{eq:leibniz}
	\forall f \in C^\infty(M), \,\,\forall u \in C^\infty(M,\,E), \quad \D(fu) = \d f \otimes u + f \D u.
\end{equation}
For every fixed $u \in C^\infty(M,E)$, we can view $\D u$ a map taking a vector field on $M$ as argument and giving back a section $\D u(X)$ of $E \to M$. We set $\D_X u := \D u(X)$ and call $\D_X u$ the \emph{weak covariant derivative of $u$ with respect to $X$}.

A \emph{metric connection} on a Hermitian line bundle $E \to M$ is a connection $\D$ that is compatible with the metric, i.e., satisfying $\D h \equiv 0$. This implies
\begin{equation}\label{eq:metricity}
	\forall u,\, v \in C^\infty(M,E), \quad \d \ip{u}{v} = \ip{\D u}{v} + \ip{u}{\D v}.
\end{equation}
Explicitly,~\eqref{eq:metricity} means that, for every pair of sections $u$, $v \in C^\infty(M,\,E)$ and every smooth vector field $X \in C^\infty(M,\,\T M)$, there holds
\[
	X\ip{u}{v} = \ip{\D_X u}{v} + \ip{u}{\D_X v}.
\]
We recall the following important facts:
\begin{itemize}
	\item For every $u \in C^\infty(M,\,E)$ and every $X \in C^\infty(M, \, \T M)$, the value $(\D_X u)(x)$ of $\D_X u$ at each $x \in M$ depends only on $X(x)$ and the values of $u$ along any smooth curve representing $X(x)$ \cite[p.~502]{Lee}. In fact, $\D_A$ is a local operator and behaves naturally with respect to restrictions \cite[Section~12.1]{Lee}.
	\item Let $U \subset M$ be an open set so that $E \to M$ is trivial over $U$, $\chi_U : E\vert_U \to U \times \C$ a corresponding local trivialisation, and $e_U$ a reference section for $E \to M$ over $U$. Then every $u \in C^\infty(M, \, E)$ writes as $u = \widetilde{u} e_U$ for some smooth complex-valued function $\widetilde{u}$ and we have, with respect to the local trivialisation $\chi_U$,
	\[
		\D u = \left( \d \widetilde{u} + A_U \widetilde{u} \right) \otimes e_U \qquad \mbox{in } U,
	\]
	where $A_U \in C^\infty(U, \, \T^* U)$ is a complex-valued 1-form, called the \emph{connection 1-form} of $\D$ over $E \vert_U$ with respect to $\chi_U$. If $\widetilde{\chi}_U : E \vert_U \to U \times \C$ is another local trivialisation for $E \to M$ over $U$, set $g_U := \widetilde{\chi}_U \circ \chi_U^{-1}$ (notice that $g_U \in C^\infty(U \times \C, \, U \times \C)$). Denoting $\widetilde{A}_U$ the connection 1-form of $\D$ with respect to $\widetilde{\chi}_U$, then the transformation law
	\begin{equation}\label{eq:transf-law}
		A_U = \widetilde{A}_U + g_U^{-1} \d g_U
	\end{equation}
	holds. From \eqref{eq:transf-law} it is readily seen that $\d A_U$ does \emph{not} depend on the local trivialisation.
	\item It is a general fact that connections on a vector bundle $E \to M$ form an affine space modelled over $C^\infty(M, \T^*M \otimes {\rm End}(E))$, where ${\rm End}(E)$ is the bundle of endomorphisms of $E$. This means that, upon choosing a reference connection $\D_0$, any other connection on $E \to M$ writes as 
	\begin{equation}\label{eq:def-D_A-0}
		\D_A := \D_0 + A,
	\end{equation}
	for some $A \in C^\infty(M, \T^*M \otimes {\rm End}(E))$. In the case of a Hermitian line bundle with a reference metric connection $\D_0$, $\D_A$ is still a (smooth) metric connection if and only if $A$ belongs to the smaller space $C^\infty(M, \T^*M \otimes {\rm Ad}(E))$. Here, ${\rm Ad}(E)$ denotes the bundle of endomorphisms of $E$ which are skew Hermitian on each fiber. Thus, the typical fiber of ${\rm Ad}(E)$ is the Lie algebra of the structure group of $E$. In our case, ${\rm Ad}(E)$ is a trivial bundle with typical fiber $\U(1)$. As the latter can be identified with $i \R$,  $A$ should take purely imaginary values in any local trivialisation. In addition, we have
	\[
		C^\infty(M,\, \T^*M \otimes {\rm Ad}(E)) \simeq C^\infty(M, \, \T^* M),
	\]
	with canonical isomorphism. Thus, we can identify $A$ with a 1-form with purely imaginary coefficients. However, it is customary to assume instead that $A$ is real valued, writing $-iA$ in place of $A$. We then rewrite \eqref{eq:def-D_A-0} as
	\begin{equation}\label{eq:def-D_A}
		\D_A := \D_0 - i A.
	\end{equation}
	Explicitly, \eqref{eq:def-D_A} means that, for every $u \in C^\infty(M,\,E)$ and every smooth vector field $X$ on $M$, we have 
	\[
		\D_{A,\,X} u = \D_{0,\,X}u -i A(X) u.
	\] 
\end{itemize}
The \emph{curvature} $\D^2_A$ of a connection $\D_A$ is given by the following formula: for all $u \in C^\infty(M,\,E)$ and all $X, Y \in C^\infty(M,\,\T M)$,
\begin{equation}\label{eq:def-curvature}
	\D_A^2 u(X,Y) := \D_{A,\,X} \D_{A,\,Y} u - \D_{A,\,Y} \D_{A,\,X} u - \D_{A,\,[X,Y]} u.
\end{equation}
One easily checks \cite[Section~12.5]{Lee} that there exists a \emph{closed} ${\rm End}(E)$-valued 2-form $F_A$, called the \emph{curvature form} of $\D_A$, such that
\[
	\forall u \in C^\infty(M,\,E),\,\, \forall X, Y \in C^\infty(M,\,\T M), \quad \D^2_A u(X,Y) = F_A(X,Y) u.
\]
As for $A$, if $\D_A$ is a metric connection on a Hermitian line bundle, $F_A$ is an ${\rm Ad}(E)$-valued 2-form taking purely imaginary values in any local trivialisation. Thus, $F_A$ is identified with a 2-form on $M$ which is  assumed to be real-valued, replacing $F_A$ with $-i F_A$ in the above formula. Then, denoting $F_0$ the curvature form of the reference connection, it holds
\begin{equation}\label{eq:curvature}
	F_A = F_0 + \d A.
\end{equation}

So far, we have dealt with smooth sections and smooth connections. We now extend the previous discussion to Sobolev sections and connections. To this end, we have to define the concept of weak covariant derivative of a non-smooth section $u : M \to E$. For the moment being, we still assume $A$ is a smooth 1-form on $M$. 

The first ingredient we need is the extension, for every integer $0 \leq k \leq n$, of $\D_A$ to an operator from $C^\infty(M, \, \Lambda^k \T^*M \otimes E)$ into $C^\infty(M, \, \Lambda^{k+1} \T^*M \otimes E)$. This is standardly done by introducing the \emph{exterior covariant derivative} induced by $\D_A$, which we denote $\d^A$. For the definition of $\d^A$, we address the reader to \cite[Section~12.9]{Lee}. The properties of $\d^A$ are formally similar to those of $\D_A$ and they are summarised in \cite[Theorem~12.57]{Lee}. Here we only stress that, obviously, $\d^A$ coincides with $\D_A$ on $C^\infty(M, \, E)$, i.e., if $k=0$.  

Next, we extend~$*$ to an operator from $\Gamma(M, \,\Lambda^k \T^*M \otimes E)$ to $\Gamma(M,\,\Lambda^{n-k} \T^*M \otimes E)$, which we still denote~$*$. To this purpose, it is enough to define the action of $*$ on simple elements of $\Gamma(M,\,\Lambda^k \T^*M \otimes E)$ by letting
\begin{equation}\label{eq:star-dec}
	*( \omega \otimes u) := (* \omega) \otimes u
\end{equation}
for $u \in \Gamma(M,\,E)$ and $\omega \in \Gamma(M,\,\Lambda^k \T^*M)$. The rule \eqref{eq:star-dec}, extended linearly, gives a meaning to $* \sigma$ for every $\sigma \in \Gamma(M,\,\Lambda^k \T^*M \otimes E)$ and $0 \leq k \leq n$ integer. Using the scalar product associated with the metric of $E$ and the scalar product of $k$-forms 
induced by the metric, we define 
\begin{equation}\label{eq:scalar-prod-dec}
	\langle\langle \omega_1(x) \otimes u_1(x), \omega_2(x) \otimes u_2(x) \rangle\rangle \vol_g := \ip{u_1(x)}{u_2(x)} (\omega_1(x) \wedge * \omega_2(x)),
\end{equation}
for every $u_1$, $u_2 \colon M \to E$ measurable sections and measurable $k$-forms $\omega_1$, $\omega_2$, and a.e. $x \in M$. From \eqref{eq:scalar-prod-dec} we define a corresponding $L^2$-product
\begin{equation}\label{eq:L2-prod-dec}
	(( \omega_1 \otimes u_1, \omega_2 \otimes u_2)) := \int_M \langle\langle \omega_1 \otimes u_1, \omega_2 \otimes u_2 \rangle\rangle \,\vol_g
\end{equation}
anytime the right hand side of \eqref{eq:L2-prod-dec} exists. Once again, extending (bi)linearly the rule \eqref{eq:L2-prod-dec}, we can define the $L^2$-product of arbitrary $\sigma_1$, $\sigma_2 \in \Gamma(M,\,\Lambda^k \T^*M \otimes E)$ by  letting
\begin{equation}\label{eq:def-L2-prod}
	(( \sigma_1, \sigma_2)) :=  \int_M \langle\langle \sigma_1, \sigma_2 \rangle\rangle\,\vol_g
\end{equation}
anytime the integral at right hand side exists.

With all this machinery at disposal, we can define the \emph{formal adjoint} of $\d^A$ as the operator 
\[
	(\d^A)^* : C^\infty(M, \, \Lambda^k \T^*M \otimes E) \to C^\infty(M, \,\Lambda^{k-1}\T^*M \otimes E)
\]
which is formally adjoint to $\D_A$ with respect to the $L^2$-product \eqref{eq:def-L2-prod}. An explicit computation yields (cf., e.g.,~\cite[Section~4.2]{Jost})
\[
	(\d^A)^* := (-1)^{n(k+1)+1} * \d^A * = (-1)^{n(k+1)+1} * (\d^0 - iA) *,
\]
where $\d^0$ denotes the exterior covariant derivative induced by the reference connection $\D_0$. For $k = 1$, we set $\D_A^* := (\d^A)^*$.

We can finally define the concept of weak covariant derivative of a section $u \in L^1(M,\,E)$. We say that $u \in L^1(M,\,E)$ has \emph{weak covariant derivative} $\D_A u$ if there exists a section $\sigma_u \in L^1(M,\,\T^*M \otimes E)$ such that
\begin{equation}\label{eq:def-weak-cov-der}
	((\sigma_u, \tau)) = ((u, \D_A^* \tau))
\end{equation} 
for every $\tau \in C^\infty(M,\,\T^*M \otimes E)$. In such case, $\sigma_u$ is uniquely determined, and we set $\D_A u := \sigma_u$. In particular, if $u \in W^{1,2}(M,\,E)$, then \eqref{eq:def-weak-cov-der} defines a linear bounded operator
\[
	\D_A \colon W^{1,2}(M, \, E) \to L^2(M, \,\T^*M \otimes E).
\]

We are now in position to weaken the requirement $A \in C^\infty(M, \, \T^*M)$ in force so far. Indeed, if $u \in W^{1,2}(M, \, E)$ and $\D_0$ is a smooth reference connection, then $\D_0 u$ is well-defined through \eqref{eq:def-weak-cov-der} and belongs to $L^2(M,\,\T^*M \otimes E)$ (and in turn to $L^1(M,\,\T^*M \otimes E)$, by the compactness of $M$). For $A \in W^{1,2}(M, \,\T^*M)$, we have $Au \in L^1(M, \T^*M)$, hence we can define 
\[
	\D_A u := \D_0 u - i A u.
\]
Clearly, $\D_A u$ belongs to $L^1(M, \,\T^*M \otimes E)$. Moreover, if $u \in (L^\infty \cap W^{1,2})(M, \, E)$, then $\D_A u \in L^2(M, \,\T^*M \otimes E)$. Notice that, even in this context, $\D_A$ is a local operator and behaves naturally with respect to restrictions. 

For $A \in W^{1,2}(M,\,\T^*M)$, we \emph{define} the curvature 2-form of $\D_A$ by Equation~\eqref{eq:curvature}, i.e., we define $F_A := F_0 + \d A$. In such a way, $F_A \in L^2(M,\,\Lambda^2 \T^*M)$ for any $A \in W^{1,2}(M,\,\T^*M)$.

\begin{remark}
	Although of no use in this work, we record the following fact which will be needed in the forthcoming work \cite{CanevariDipasqualeOrlandiII}. 
We can associate with $\D_A$ its formal adjoint $\D_A^*$, i.e., 
its adjoint with respect to the $L^2$-product $((\cdot,\cdot))$, as follows. 
Given $A$ a 1-form on $M$ (not necessarily of class $W^{1,2}(M)$), 
$\sigma \in L^2(M,\,\T^*M \otimes E)$, and any section 
$u \in \Gamma(M,\,E)$ such that $\D_A u \in L^2(M,\,E)$, we let
\begin{equation}\label{eq:DA*}
	(\D_A^* \sigma)(u) := \int_M \langle\langle \sigma, \D_A u \rangle \rangle\,\vol_g.
\end{equation}
For every fixed $\sigma \in L^2(M,\,\T^*M \otimes E)$, the right hand side 
of~\eqref{eq:DA*} defines a linear 
form $u \mapsto (\D_A^*\sigma)(u)$ on the linear space 
\[
	W_A := \left\{ u \in L^2(M,\,E) : \D_A u \in L^2(M,\,\T^*M \otimes E) \right\}.
\]
By Schwarz inequality, such a linear form is continuous if $W_A$ is endowed with the norm 
$\norm{u}_{W_A} := \norm{u}_{L^2(M)} + \norm{\D_A u}_{L^2(M)}$. 
Furthermore, if $A \in L^2(M,\,\T^*M)$, then $W_A$ embeds continuously 
into $W^{1,1}(M,\,E)$. Moreover, if $A$ is a smooth 1-form, 
then $W_A = W^{1,2}(M,\,E)$, 
and~\eqref{eq:DA*} defines a continuous linear operator
$\D_A^* \colon L^2(M, \,\T^*M \otimes E) \to W^{-1,2}(M, \,E)$, 
cf.,~e.g., \cite[Chapter~4]{Wehrheim}.
\end{remark}

\begin{remark}
	The pointwise scalar product $\langle\langle \cdot, \cdot \rangle\rangle$ and the $L^2$-product $((\cdot, \cdot))$ will be simply denoted $\ip{\cdot}{\cdot}$ and $(\cdot,\cdot)$ respectively anytime no ambiguity arises.
\end{remark}

\section{Currents and form-valued distributions}
\label{sect:currents}

\paragraph{Currents.}

We recall some standard terminology for currents
on smooth manifolds, and refer 
e.g.~to~\cite{Federer, Simon-GMT} for more details.
We equip~$C^\infty(M, \, \Lambda^k\T^*M)$ with the topology
induced by the family of~$C^h$-seminorms, for all integers~$h\geq 1$.
The space of~$k$-currents~$\mathcal{D}^\prime(M, \, \Lambda_k\T M)$
is defined as the topological dual of smooth~$k$-forms,
$\mathcal{D}^\prime(M, \, \Lambda_k\T M) := 
(C^\infty(M, \, \Lambda^k\T^*M))^\prime$.
Currents come with a boundary operator, 
$\partial\colon\mathcal{D}^\prime(M, \, \Lambda_k\T M)
\to\mathcal{D}^\prime(M, \, \Lambda_{k-1}\T M)$, defined as
\begin{equation} \label{boundary}
 \ip{\partial S}{\omega}_{\mathcal{D}^\prime,\mathcal{D}}
 := \ip{S}{\d\omega}_{\mathcal{D}^\prime,\mathcal{D}}
\end{equation}
for any $k$-current~$S$ and any smooth $(k-1)$-form~$\omega$. 
The boundary operator is sequentially continuous 
with respect to the weak$^*$ convergence of distributions.

Given a point~$x\in M$ and a
$k$-covector~$\omega\in\Lambda^k \T_x M^*$,
we define the comass of~$\omega$ as
\begin{equation} \label{comass}
 \norm{\omega} := \sup\left\{\ip{\omega}{v}\colon v \in \Lambda_kT_x M
 \textrm{ is a \emph{simple }} k\textrm{-vector such that} \abs{v} = 1\right\}
\end{equation}
(here, $\abs{\cdot}$ is the norm on~$\Lambda_k\T_x M$ induced
by the Riemannian metric on~$M$). The comass is a norm on~$\Lambda_k\T^* M$,
which does \emph{not} coincide with the norm~$\abs{\cdot}$
induced by the Riemannian metric. However, there exists 
a constant~$C_{n,k}$, depending on~$n$ and~$k$ only, such that
\begin{equation} \label{comass-Riemannian}
 \norm{\omega} \leq \abs{\omega} \leq C_{n,k} \norm{\omega}
\end{equation}
for any~$\omega\in \Lambda_{k}\T_x^* M$ and any~$x\in M$.
The equality~$\norm{\omega} = \abs{\omega}$ holds 
if and only if~$\omega$ is simple.
For any $k$-current~$S$, the mass of~$S$ is defined as
\begin{equation} \label{mass}
 \M(S) := \sup\left\{\ip{S}{\omega}_{\mathcal{D}^\prime, \mathcal{D}}
 \colon \omega\in C^\infty(M, \, \Lambda^k\T^*M), \
 \sup_{x\in M} \norm{\omega(x)} \leq 1 \right\}
\end{equation}
Let~$E\subset M$ be a~$k$-rectifiable set, oriented
by a measurable, simple $k$-vector field~$v_E\colon E\to\Lambda_k\T M$ 
that is tangent to~$E$ and satisfies~$\abs{v_E} = 1$
at $\H^k$-almost any point of~$E$.
Let~$\theta\colon E\to\Z$ be a measurable function.
We denote by~$\llbracket E, \, \theta\rrbracket$ the current
carried by~$E$ with multiplicity~$\theta$, defined by
\begin{equation} \label{imr_current}
 \ip{\llbracket E, \, \theta\rrbracket}{\tau}_{\mathcal{D}^\prime, \mathcal{D}}
 := \int_E \theta \ip{\tau}{\nu_E} \, \d\H^k
\end{equation}
for any smooth~$k$-form~$\tau$. In case~$\theta = 1$ identically,
We write~$\llbracket E \rrbracket := \llbracket E, \, 1\rrbracket$.
A current that can be written in the form~\eqref{imr_current}
is called an integer-multiplicity rectifiable~$k$-current. The mass
of an integer-multiplicity rectifiable current is given by
\begin{equation} \label{imr_mass}
 \M(\llbracket E, \, \theta\rrbracket) = \int_{E} \abs{\theta} \, \d\H^k
\end{equation}
We denote by~$\mathcal{R}_k(M)$ the set of integer-multiplicity
rectifiable~$k$-currents with finite mass. 

Finally, we define the flat norm of a
current~$S\in\mathcal{D}^\prime(M, \, \Lambda_k\T M)$ as
\begin{equation} \label{flat}
 \F(S) := \inf\left\{\M(P) + \M(Q)\colon
 P\in\mathcal{R}_{k+1}(M), \ Q\in\mathcal{R}_k(M), 
 \ S = \partial P + Q\right\}
\end{equation}
(with the understanding that~$\inf\emptyset = +\infty$).

\paragraph{Form-valued distributions.}

We equip the space of smooth~$k$-vector fields,
$C^\infty(M, \, \Lambda_k\T M)$, with the topology induced 
by the family of~$C^h$-seminorms, for all integers~$h\geq 1$.
The space of distributions with values in~$k$-forms is
defined as the topological dual of smooth~$k$-vectors,
$\mathcal{D}^\prime(M, \, \Lambda^k\T^*M) := 
(C^\infty(M, \, \Lambda_k\T M))^\prime$.
The differential and the codifferential extends to operators
on form-valued distributions,
by duality:
\begin{align}
 \ip{\d\omega}{v}_{\mathcal{D}^\prime,\mathcal{D}}
  &:= \ip{\omega}{(\d^* v^\flat)^\#}_{\mathcal{D}^\prime,\mathcal{D}} \label{d} \\
 \ip{\d^*\omega}{v}_{\mathcal{D}^\prime,\mathcal{D}}
  &:= \ip{\omega}{(\d w^\flat)^\#}_{\mathcal{D}^\prime,\mathcal{D}} \label{d*}
\end{align}
for any~$\omega\in\mathcal{D}^\prime(M, \, \Lambda^k\T^* M)$,
$v\in C^\infty(M, \, \Lambda_{k+1}\T M)$ 
and~$w\in C^\infty(M, \, \Lambda_{k-1}\T M)$. 
The definitions~\eqref{d}, \eqref{d*} are 
consistent with~\eqref{eq:int-by-parts}.

\paragraph{The Hodge dual of currents and forms.}
\label{sect:Hodge}

There are natural operators
\begin{align*}
 \star\colon\mathcal{D}^\prime(M, \, \Lambda^k\T^*M)\to
    \mathcal{D}^\prime(M, \, \Lambda_{n-k}\T M) \\
 \star\colon\mathcal{D}^\prime(M, \, \Lambda_k\T M)\to
    \mathcal{D}^\prime(M, \, \Lambda^{n-k}\T^* M)
\end{align*}
defined as follows: for any~$\omega\in\mathcal{D}^\prime(M, \, \Lambda^k\T^*M)$
the current~$\star\omega$ is defined as
\begin{equation} \label{starsharp}
  \ip{\star\omega}{\tau}_{\mathcal{D}^\prime, \mathcal{D}} 
  := (-1)^{k(n-k)} \, 
  \ip{\omega}{(*\tau)^\#}_{\mathcal{D}^\prime, \mathcal{D}}
\end{equation}
for any~$\tau\in C^\infty(M, \, \Lambda^{n-k}\T^*M)$.
Similarly, given~$S\in\mathcal{D}^\prime(M, \, \Lambda_k\T M)$,
we define the form-valued distribution
$\star S = * S^\flat\in\mathcal{D}^\prime(M, \, \Lambda^{n-k}\T^*M)$ as
\begin{equation} \label{starflat}
  \ip{\star S}{v}_{\mathcal{D}^\prime,\mathcal{D}} 
  := (-1)^{k(n-k)} \, \ip{S}{* v^\flat}_{\mathcal{D}^\prime, \mathcal{D}}
\end{equation}
In case~$\omega$, $S$ are smooth (or, more generally, they are 
represented by $L^1$-vector or covector fields),
the definitions~\eqref{starsharp} and~\eqref{starflat} are
consistent with~\eqref{musicalstar}, i.e.
\[
 \star \omega = (*\omega)^\#, \qquad \star S = *(S^\flat)
\]
Indeed, for any smooth~$k$-form~$\omega$ 
and any smooth~$(n-k)$-form~$\tau$, there holds
\begin{equation} 
 \begin{split}
   \int_M (\tau, *\omega) \, \vol_g
  = (-1)^{k(n-k)}\int_M \tau\wedge\omega
  = (-1)^{k(n-k)} \int_M (\omega, \, *\tau) \, \vol_g
 \end{split}
 \label{commutestar}
\end{equation}
by the very definition of~$*$.

As an immediate consequence of~\eqref{starsharp}, \eqref{starflat},
the operator~$\star$ is sequentially continuous 
with respect to the weak$^*$ convergence
in the sense of distributions.
We recall a few other properties of~$\star$.
For any~$\omega\in\mathcal{D}^\prime(M, \, \Lambda^k\T^*M)$,
we denote by~$\abs{\omega}\!(M)$ the total variation
of~$\omega$ induced by the Riemannian metric on~$M$,
that is
\begin{equation} \label{totalvar}
 \abs{\omega}\!(M) := \sup\left\{\ip{\omega}{v}_{\mathcal{D}^\prime, \mathcal{D}}
 \colon v\in C^\infty(M, \, \Lambda_k\T M), \
 \sup_{x\in M} \abs{v(x)} \leq 1 \right\}
\end{equation}

\begin{prop} \label{prop:Hodge}
 For any~$\omega\in\mathcal{D}^\prime(M, \, \Lambda^k\T^*M)$
 and~$S\in\mathcal{D}^\prime(M, \, \Lambda_k\T M)$,
 the following properties hold:
 \begin{enumerate}[label=(\roman*), ref=\roman*]
  \item $\star\star\omega = (-1)^{k(n-k)} \omega$ 
  and $\star\star S = (-1)^{k(n-k)} S$;
  
  \item $\partial(\star\omega) = (-1)^{k+1} \star\d\omega$;
  
  \item $\star(\partial S) = (-1)^{k} \d(\star S)$;
  
  \item there exists a constant~$C_{n,k}$, depending only on~$n$
  and~$k$, such that
  \[
   |\!\star S|(M) \leq \M(S) 
   \leq C_{n,k} \, |\!\star S|(M)
  \]
  
  \item if~$S$ is an integer-multiplicity rectifiable
  current, then~$\M(S) = |\!\star S|(M)$.
  
 \end{enumerate}
\end{prop}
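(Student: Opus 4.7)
The five statements can be addressed by unwinding the definitions~\eqref{starsharp}--\eqref{starflat} of the operator $\star$ and comparing them with the classical identities for smooth forms. The overall strategy is: prove (i) purely algebraically using $**\omega = (-1)^{k(n-k)}\omega$, then derive (ii) and (iii) from the duality formulas~\eqref{boundary}, \eqref{d}, \eqref{d*} together with the definition $\d^*\omega := (-1)^{n(k-1)+1}*\d*\omega$, and finally treat (iv)--(v) by comparing comass and Riemannian norms via~\eqref{comass-Riemannian}.

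\textbf{Plan for (i).} Given $\omega \in \mathcal{D}^\prime(M, \Lambda^k\T^*M)$ and a test $k$-vector field~$v$, I would apply \eqref{starflat} to the outer $\star$ and \eqref{starsharp} to the inner one to get
\[
 \langle \star\star\omega, \, v\rangle = (-1)^{k(n-k)}\langle \star\omega, *v^\flat\rangle = (-1)^{2k(n-k)}\langle \omega, (*(*v^\flat))^\#\rangle.
\]
Using $*(*v^\flat) = (-1)^{k(n-k)}v^\flat$ and $(v^\flat)^\# = v$ gives $\star\star\omega = (-1)^{k(n-k)}\omega$. The case of currents is identical, swapping the roles of~\eqref{starsharp} and~\eqref{starflat}.

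\textbf{Plan for (ii) and (iii).} For (ii), I would test $\partial(\star\omega)$ against a smooth $(n-k-1)$-form~$\tau$. Using~\eqref{boundary} and~\eqref{starsharp}:
\[
 \langle \partial(\star\omega), \tau\rangle = \langle \star\omega, \d\tau\rangle = (-1)^{k(n-k)}\langle \omega, (*\d\tau)^\#\rangle.
\]
On the other hand, by~\eqref{starsharp} applied to $\d\omega$ and then~\eqref{d}, one expresses $\langle \star\d\omega, \tau\rangle$ in terms of $\langle \omega, (\d^*(*\tau))^\#\rangle$. The identity $\d^*(*\tau) = (-1)^{n(k+1)+1}*\d**\tau = \pm *\d\tau$ (with the sign computed from the definition of $\d^*$ and from $** = (-1)^{(k+1)(n-k-1)}$ on $(k+1)$-forms) lets me match signs. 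Carefully bookkeeping the powers of $-1$ yields the factor $(-1)^{k+1}$. Part (iii) is analogous, testing $\star(\partial S)$ against a smooth vector field and using~\eqref{starflat}, \eqref{boundary}, \eqref{d*}, and the same algebraic identity for $\d^*$. The main nuisance here is the sign bookkeeping; I would organise it by writing each sign as a function of~$n$, $k$, and the valence of the current/form being manipulated.

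\textbf{Plan for (iv) and (v).} For the left inequality in (iv), given a smooth $(n-k)$-vector field $v$ with $|v(x)|\leq 1$, the form $*v^\flat$ is an $(n-k)$-covector with $|*v^\flat(x)| = |v(x)| \leq 1$, hence $\|*v^\flat(x)\| \leq |*v^\flat(x)| \leq 1$ by~\eqref{comass-Riemannian}. Using~\eqref{starflat}, $|\langle \star S, v\rangle| = |\langle S, *v^\flat\rangle| \leq \M(S)$, and taking the supremum in~$v$ gives $|\!\star S|(M)\leq\M(S)$. For the right inequality, given a smooth $k$-form $\omega$ of comass $\leq 1$, I set $v := (*\omega)^\#$, so that $*v^\flat = (-1)^{k(n-k)}\omega$ and $\langle S,\omega\rangle = \pm\langle \star S, v\rangle$; since $|v(x)| = |\omega(x)| \leq C_{n,k}\|\omega(x)\| \leq C_{n,k}$ by~\eqref{comass-Riemannian}, the supremum in~$\omega$ yields $\M(S)\leq C_{n,k}|\!\star S|(M)$. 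For (v), writing $S = \llbracket E,\theta\rrbracket$, the orienting vector $\nu_E$ is simple and of unit norm, so $\|\nu_E^\flat\| = |\nu_E^\flat| = 1$; a direct computation from~\eqref{imr_current} and~\eqref{starflat} shows that $\star S$ is represented by the $(n-k)$-form $\theta\,*\nu_E^\flat$ concentrated on $E$ with respect to $\H^k$, and since $|*\nu_E^\flat| = |\nu_E^\flat| = 1$ the total variation equals $\int_E|\theta|\,\d\H^k = \M(S)$ by~\eqref{imr_mass}. The only slightly subtle step is verifying that the singular measure $\star S$ is indeed the vector measure described; this follows by testing against smooth $(n-k)$-vector fields, but I would take some care to handle the generality of the ambient manifold~$M$ (as opposed to a Euclidean setting).

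Of these, I expect the main bookkeeping obstacle to be the sign computation in (ii)--(iii), which requires tracking the interaction of $\flat$, $\#$, $*$, and $\d^*$ across several valences simultaneously.
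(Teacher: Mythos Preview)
Your proposal is correct and aligns with the paper's own treatment: the paper omits the proof entirely, stating only that it ``is a rather direct application of the definitions above,'' and your plan does precisely this --- unwinding~\eqref{starsharp}, \eqref{starflat}, \eqref{boundary}, \eqref{d}, \eqref{d*}, and~\eqref{comass-Riemannian} in the expected way. Your sign tracking in~(i) and your argument for~(iv)--(v) are accurate; the honest caveat you flag about the sign bookkeeping in~(ii)--(iii) is the only place requiring care, and it is purely mechanical.
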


The proof of Proposition~\ref{prop:Hodge} is a rather direct
application of the definitions above
and we omit it for the sake of brevity.

\section{A few technical results}

\subsection{An interpolation lemma}
Let~$k\in\{0, \, 1, \, \ldots, n\}$ and~$p\in [1, \, +\infty]$.
We define the space of~$W^{-1,p}$-forms of dimension~$k$
as a topological dual,
\[
 W^{-1,p}(M, \, \Lambda^k\T^*M) := (W^{1,q}(M, \, \Lambda_k\T M))^\prime
\]
where~$q\in [1, \, +\infty]$ is the H\"older conjugate of~$p$,
such that~$1/p + 1/q = 1$.

\begin{lemma} \label{lemma:interpolation}
 Let~$k\in\Z$, $p\in\R$ be such that $0\leq k \leq n$,
 $1 < p < n/(n-1)$. For any bounded measure~$\omega$ 
 with values in~$k$-forms,
 there holds
 \[
  \norm{\omega}_{W^{-1,p}(M)} \lesssim
  \norm{\omega}_{W^{-1,1}(M)}^\alpha \abs{\omega}\!(M)^{1-\alpha}
 \]
 where $\alpha := 1 + n/p - n \in (0, \, 1)$.
\end{lemma}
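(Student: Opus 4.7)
The plan is a standard duality-plus-mollification argument. By definition, $\norm{\omega}_{W^{-1,p}(M)}$ equals the supremum of $\ip{\omega}{\varphi}_{\mathcal{D}', \mathcal{D}}$ over smooth $k$-vector fields $\varphi$ with $\norm{\varphi}_{W^{1,q}(M)} \leq 1$, where $q := p'$. Note that the hypothesis $p < n/(n-1)$ is precisely $q > n$, placing us above the Morrey threshold on the $n$-dimensional manifold $M$. I intend to smooth each test field $\varphi$ at a single scale $\delta \in (0, 1]$ and split $\varphi = \varphi_\delta + (\varphi - \varphi_\delta)$, estimating the smooth piece via $\norm{\omega}_{W^{-1,1}(M)}$ and the error via the total variation $\abs{\omega}\!(M)$.

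For the smooth piece, mollification (performed chart-by-chart via the finite regular bundle atlas of Appendix~\ref{sect:bundles} and a subordinate partition of unity, then glued back together) gives, for $\delta \in (0, 1]$, the bound $\norm{\varphi_\delta}_{W^{1,\infty}(M)} \lesssim \delta^{-n/q} \norm{\varphi}_{W^{1,q}(M)}$. Indeed, the $L^\infty$-norm is controlled by Sobolev embedding $W^{1,q}(M) \hookrightarrow L^\infty(M)$ (valid since $q > n$), while $\norm{\nabla \varphi_\delta}_{L^\infty} \lesssim \delta^{-n/q} \norm{\nabla \varphi}_{L^q}$ follows from H\"older's inequality applied to the Euclidean mollifier, using $\norm{\eta_\delta}_{L^{q'}} \lesssim \delta^{-n/q}$. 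This yields
\[
 \abs{\ip{\omega}{\varphi_\delta}_{\mathcal{D}', \mathcal{D}}}
  \leq \norm{\omega}_{W^{-1,1}(M)} \norm{\varphi_\delta}_{W^{1,\infty}(M)}
  \lesssim \delta^{-n/q} \norm{\omega}_{W^{-1,1}(M)} \norm{\varphi}_{W^{1,q}(M)}.
\]
For the error, Morrey's embedding gives the H\"older estimate $[\varphi]_{C^{0,1-n/q}(M)} \lesssim \norm{\varphi}_{W^{1,q}(M)}$, which by a direct integration against $\eta_\delta$ forces $\norm{\varphi - \varphi_\delta}_{L^\infty(M)} \lesssim \delta^{1-n/q} \norm{\varphi}_{W^{1,q}(M)}$. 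Pairing against the bounded measure $\omega$,
\[
 \abs{\ip{\omega}{\varphi - \varphi_\delta}_{\mathcal{D}', \mathcal{D}}}
 \leq \abs{\omega}\!(M) \norm{\varphi - \varphi_\delta}_{L^\infty(M)}
 \lesssim \delta^{1-n/q} \abs{\omega}\!(M) \norm{\varphi}_{W^{1,q}(M)}.
\]

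To conclude, I will optimise by choosing $\delta := \norm{\omega}_{W^{-1,1}(M)}/\abs{\omega}\!(M)$, which lies in $(0, 1]$ since testing against $W^{1,\infty}$ functions of $L^\infty$-norm at most one immediately gives $\norm{\omega}_{W^{-1,1}(M)} \leq \abs{\omega}\!(M)$ (and the degenerate cases where either quantity vanishes are trivial). Summing the two bounds at this $\delta$ produces
\[
 \abs{\ip{\omega}{\varphi}_{\mathcal{D}', \mathcal{D}}}
 \lesssim \norm{\omega}_{W^{-1,1}(M)}^{1 - n/q} \abs{\omega}\!(M)^{n/q}
  \norm{\varphi}_{W^{1,q}(M)},
\]
and the arithmetic identity $1 - n/q = 1 - n(1 - 1/p) = 1 + n/p - n = \alpha$ turns this into the claimed estimate upon taking the supremum over $\varphi$. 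The only genuine technical point is that the mollification is carried out on a manifold rather than in a Euclidean domain; this is routine given the finite regular bundle atlas fixed in Appendix~\ref{sect:bundles}, and all the Euclidean scaling estimates transfer at the cost of multiplicative constants depending only on $M$, $k$ and $p$.
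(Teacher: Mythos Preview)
Your argument is correct and is, at its core, the same mollification-and-split idea that underlies the paper's proof. The difference is one of packaging: the paper first reduces $k$-form-valued measures to $\R^d$-valued Radon measures via Nash's isometric embedding, then localises to charts and invokes \cite[Lemma~3.3]{JerrardSoner-GL}, which gives the H\"older-dual interpolation $\norm{\mu}_{(C^{0,\alpha}_0)^\prime} \lesssim \norm{\mu}_{(C^{0,1}_0)^\prime}^\alpha \, \abs{\mu}^{1-\alpha}$, and finally passes to $W^{-1,p}$ via the Sobolev embedding $W^{1,q}_0 \hookrightarrow C^{0,\alpha}_0$. You instead work directly on the Sobolev side, using Morrey's embedding $W^{1,q}(M)\hookrightarrow C^{0,1-n/q}(M)$ (available precisely because $q=p'>n$) to control the $L^\infty$-error of mollification, and the standard scaling $\norm{\eta_\delta}_{L^{q'}}\lesssim\delta^{-n/q}$ for the smooth part. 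This is arguably cleaner: it is self-contained, avoids both the Nash embedding and the detour through H\"older duals, and makes the role of the threshold $p<n/(n-1)$ transparent.

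One cosmetic point: your justification that $\delta = \norm{\omega}_{W^{-1,1}}/\abs{\omega}(M) \leq 1$ relies on $\norm{\omega}_{W^{-1,1}(M)} \leq \abs{\omega}(M)$, which with the paper's chart-based Sobolev norms (Appendix~\ref{sect:bundles}) holds only up to a fixed constant $C_M$. This is harmless --- either absorb $C_M$ into the mollification scale, or note that when $\delta\gtrsim 1$ the first term alone already yields the desired bound --- but it would be worth phrasing accordingly.
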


Lemma~\ref{lemma:interpolation} is a special case of the more general result below (which is of some independent interest).
\begin{lemma}\label{lemma:interpolation-vec-meas}
 Let~$d\in\N$ and $1 < p < \frac{n}{n-1}$. For any  
 Radon vector measure~$\nu$ on $M$, with values in~$\R^d$,
 there holds
 \[
  \norm{\nu}_{W^{-1,p}(M)} \lesssim
  \norm{\nu}_{W^{-1,1}(M)}^\alpha \abs{\nu}\!(M)^{1-\alpha}
 \]
 where $\alpha := 1 + n/p - n \in (0, \, 1)$.
\end{lemma}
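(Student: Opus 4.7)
The plan is to prove the estimate by a mollification argument that exploits the Sobolev embedding of $W^{1,q}(M)$ into a H\"older space, where $q = p'$ is the H\"older conjugate of $p$. The key point is that $q > n$ (since $p < n/(n-1)$), so Morrey's theorem yields $W^{1,q}(M) \hookrightarrow C^{0,\beta}(M)$ with $\beta = 1 - n/q$; an algebraic check shows $\beta = 1 + n/p - n = \alpha$.

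Set $A := |\nu|\!(M)$ and $B := \norm{\nu}_{W^{-1,1}(M)}$, and fix any test function $\varphi\colon M \to \R^d$ with $\norm{\varphi}_{W^{1,q}(M)} \leq 1$. For a parameter $r \in (0,1]$ to be chosen, I would construct a smooth approximation $\varphi_r$ of $\varphi$ at scale $r$: concretely, by fixing a partition of unity $\{\rho_i\}$ subordinate to a regular atlas and defining $\varphi_r$ as the sum of the Euclidean mollifications of $\rho_i \varphi$ at scale $r$ in each coordinate chart (alternatively, one can convolve with the heat kernel at time $\sim r^2$). Using the H\"older bound $|\varphi(x)-\varphi(y)| \lesssim d(x,y)^\alpha$ together with standard mollifier estimates, one obtains, with constants depending only on $M$ and $p$,
\[
  \norm{\varphi - \varphi_r}_{L^\infty(M)} \lesssim r^\alpha, \qquad
  \norm{\varphi_r}_{W^{1,\infty}(M)} \lesssim r^{\alpha - 1}.
\]
The second bound uses the usual trick of writing $\nabla\varphi_r(x) = \int \nabla\rho_r(x-y)\bigl(\varphi(y) - \varphi(x)\bigr)\,dy$ so that the H\"older estimate turns the naive $r^{-1}$ into $r^{\alpha-1}$.

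Pairing the splitting $\varphi = (\varphi - \varphi_r) + \varphi_r$ with $\nu$ yields
\[
  |\langle \nu, \varphi\rangle| \leq A\,\norm{\varphi - \varphi_r}_{L^\infty(M)} + B\,\norm{\varphi_r}_{W^{1,\infty}(M)} \lesssim r^\alpha A + r^{\alpha - 1} B.
\]
Choosing $r \sim B/A$ (if $B \leq A$; otherwise $B^\alpha A^{1-\alpha} \geq A$ and the trivial bound $\norm{\nu}_{W^{-1,p}(M)} \lesssim A$, coming from $W^{1,q}(M)\hookrightarrow L^\infty(M)$, already suffices) balances the two terms and gives $|\langle\nu,\varphi\rangle| \lesssim B^\alpha A^{1-\alpha}$. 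Taking the supremum over admissible $\varphi$ yields the claim, and the vector-valued case $d>1$ follows by applying the scalar estimate componentwise.

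The main obstacle is the construction of $\varphi_r$ on a manifold with both the sharp $L^\infty$-H\"older estimate and the quantitative $W^{1,\infty}$ control; the bookkeeping across overlapping charts, partitions of unity, and the metric distortion between $M$ and its coordinate patches is tedious but standard, and once it is in place the optimisation in $r$ gives the result at once.
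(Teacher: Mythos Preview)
Your argument is correct. The core identity $\alpha = 1 - n/q$ is exactly what makes Morrey's embedding $W^{1,q}(M)\hookrightarrow C^{0,\alpha}(M)$ relevant, and the mollification estimates $\|\varphi-\varphi_r\|_{L^\infty}\lesssim r^\alpha$ and $\|\varphi_r\|_{W^{1,\infty}}\lesssim r^{\alpha-1}$ are standard once $\varphi$ is known to be $\alpha$-H\"older; the partition-of-unity construction on $M$ works exactly as you describe. (Incidentally, $B\leq A$ always holds, since $\|\psi\|_{W^{1,\infty}}\leq 1$ forces $\|\psi\|_{L^\infty}\leq 1$, so the case distinction is not really needed.)

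The paper takes a slightly different route: it localises the \emph{measure} rather than the test function, pushing each $\rho_i\nu_k$ to a Euclidean chart and then invoking \cite[Lemma~3.3]{JerrardSoner-GL} as a black box for the interpolation inequality in $(C^{0,\alpha}_0(\Omega_i))'$, before passing from $(C^{0,\alpha}_0)'$ to $W^{-1,p}$ via the Sobolev embedding. Your approach is dual to this---you localise and mollify on the test-function side and carry out the optimisation in $r$ explicitly---which makes the proof self-contained (no external citation) and arguably more transparent. The two arguments are essentially equivalent in content: the Jerrard--Soner lemma is itself proved by exactly the mollify-and-optimise idea you use.
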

\begin{proof}
Let $\mathcal{A} = \{(U_i,\varphi_i)\}_{i=1}^N$ be a finite regular atlas for $M$, where the (bounded) open sets $U_i$ are contractible, and let $\{\rho_i\}_{i=1}^N$ be a partition of unity subordinate to the covering $\{U_i\}$. Write $\nu = (\nu_1, \dots, \nu_k)$ and, for each $k \in \{1,2,\dots,d\}$ and $i \in \{1,2,\dots,n\}$, define $\mu_{k,i} := (\rho_i \nu_k ) \circ \varphi_i^{-1}$. Then, for every $i \in \{1,2,\dots,n\}$ and every $k \in \{1,2,\dots,d\}$, $\mu_{k,i}$ is a Radon measure on $\Omega_i := \varphi_i(U_i) \subset \R^n$. Therefore, we can argue exactly as in \cite[Lemma~3.3]{JerrardSoner-GL} to deduce that
\[
	\norm{\mu_{k,i}}_{(C^{0,\alpha}_0(\Omega_i))^\prime} \lesssim \norm{\mu_{k,i}}_{(C^{0,1}_0(\Omega_i))^\prime}^\alpha \abs{\mu_{k,i}}(\Omega_i)^{1-\alpha}.
\]
On the other hand, since the bounded open sets $\Omega_i$ have smooth boundary, we have $C^{0,1}_0(\Omega_i) = W^{1,\infty}_0(\Omega_i)$ as Banach spaces, for every $i \in \{1,2,\dots,n\}$. Consequently, $(C^{0,1}_0(\Omega_i))^\prime = W^{-1,1}(\Omega_i)$ as Banach spaces for every $i \in \{1,2,\dots,n\}$. Since for $1 < p < \frac{n}{n-1}$ and $\alpha = 1 + n/p - n$ any functional in $(C^{0,\alpha}_0(\Omega))^\prime$ restricts to a functional in $W^{-1,p}(\Omega)$ and the restriction map is continuous (by Sobolev embedding, cf. Remark~\ref{rk:Calpha}), we infer
\[
	\norm{\mu_{k,i}}_{W^{-1,p}(\Omega_i)} \lesssim \norm{\mu_{k,i}}_{W^{-1,1}(\Omega_i)}^\alpha \abs{\mu_{k,i}}(\Omega_i)^{1-\alpha} \qquad (k = 1, 2, \dots d).
\]
Thus, as each $\mu_{k,i}$ is the push-forward of $\rho_i\nu_k$ through $\varphi_i$,
\[
	\norm{\rho_i \nu_k}_{W^{-1,p}(U_i)} \lesssim \norm{\rho_i \nu_k}_{W^{-1,1}(U_i)}^\alpha \abs{\rho_i \nu_k}(U_i)^{1-\alpha} \qquad (k = 1, 2, \dots d),
\]
and hence
\[
\begin{split}
	\norm{\nu_k}_{W^{-1,p}(M)} &\leq \sum_{i=1}^N \norm{\rho_i \nu_k}_{W^{-1,p}(M)}  = \sum_{i=1}^N \norm{\rho_i \nu_k}_{W^{-1,p}(U_i)} \\
	&\lesssim \sum_{i=1}^N \left\{ \norm{\rho_i \nu_k}_{W^{-1,1}(U_i)}^\alpha \abs{\rho_i \nu_k}(U_i)^{1-\alpha} \right\} \leq \sum_{i=1}^N \left\{\norm{\rho_i \nu_k}_{W^{-1,1}(M)}^\alpha \abs{\rho_i \nu_k}(M)^{1-\alpha}\right\} \\
	&\leq \left\{\sum_{i=1}^N \norm{\rho_i \nu_k}_{W^{-1,1}(M)}^\alpha\right\} \abs{\nu_k}(M)^{1-\alpha} \lesssim \norm{\nu_k}^{\alpha}_{W^{-1,1}(M)} \abs{\nu_k}(M)^{1-\alpha},
\end{split}
\]
for every $k = 1, 2, \dots, d$, and the claimed conclusion follows immediately.
\end{proof}

\begin{proof}[{Proof of Lemma~\ref{lemma:interpolation}}]
	Recalling the (easy but useful) observation at the end of \cite[p.~462]{BaldoOrlandi-Hodge}, by Nash theorem, $M \hookrightarrow \R^{n+r}$ isometrically for some $r \in \N$, hence $\Lambda^{k} \T^* M$ can be seen as a subbundle of $\Lambda^k \T^* \R^{n+r}$, and every bounded measure $\omega$ with values in $k$-forms can be regarded as a (Radon) vector measure on $M$, with values in the Euclidean inner product space $\Lambda^k (\R^{n+r})^\prime \cong \R^{\binom{n+r}{k}}$ (where the last isomorphism of Euclidean spaces is canonical). Calling $\mathcal{V}$ the image of the inclusion $\mathcal{I} : \Lambda^{k} \T^* M \hookrightarrow \R^{\binom{n+r}{k}}$, $\mathcal{V}$ is a finite dimensional Hilbert space space. Then, the conclusion follows from the fact that $W^{1,p}(M, \Lambda^k \T^* M) \cong W^{1,p}(M, \mathcal{V})$ for every $p \in [1,\infty]$ (and hence their duals can be identified, too). Indeed, although the underlined isomorphism depends on the isometric embedding $\mathcal J: M \hookrightarrow \R^{n+r}$, which is generally not unique, any two choices $\mathcal{J}_1$, $\mathcal{J}_2$ for the embedding give rise to equivalent Banach spaces $W^{1,p}(M, \mathcal{V}_1)$, $W^{1,p}(M, \mathcal{V}_2)$ because of the compactness of $M$. Thus, we can apply Lemma~\ref{lemma:interpolation-vec-meas} with $d = \binom{n+r}{k}$ and reach the conclusion.
\end{proof}

%

\subsection{Elliptic regularity} 
\label{sec:elliptic-reg}

We establish here two results concerning existence, uniqueness and estimates for solutions to London (Lemma~\ref{lemma:elliptic_reg}) and Poisson (Lemma~\ref{lemma:elliptic_reg_bis}) equations for differential $k$-forms and data in $W^{-1,p}$. Although Lemma~\ref{lemma:elliptic_reg} and Lemma~\ref{lemma:elliptic_reg_bis} are certainly known to experts, we have not found explicit proofs in the literature. Since they are crucial to our arguments, we provide detailed proofs. 

\begin{lemma} \label{lemma:elliptic_reg}
 Let~$p\in (1, \, 2)$. For any~$k$-form $f\in W^{-1,p}(M, \, \Lambda^k\T^*M)$, 
 the equation
 \begin{equation} \label{elliptic_eq}
  -\Delta v + v = f \qquad \textrm{in the sense of } \mathcal{D}^\prime(M)
 \end{equation}
 has a unique solution~$v\in W^{1,p}(M, \, \Lambda^k\T^* M)$,
 which satisfies
 \begin{equation}\label{eq:elliptic-est-london}
  \norm{v}_{W^{1,p}(M)} \leq C_p \norm{f}_{W^{-1,p}(M)}
 \end{equation}
 for some constant~$C_p$ depending only on~$M$, $k$, $p$.
\end{lemma}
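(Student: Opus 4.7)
My plan is to show that $-\Delta + I : W^{1,p}(M, \Lambda^k \T^*M) \to W^{-1,p}(M, \Lambda^k \T^*M)$ is a continuous linear bijection; existence of $v$ and the estimate~\eqref{eq:elliptic-est-london} will then follow at once from the open mapping theorem, continuity in this scale being immediate. For uniqueness, if $v \in W^{1,p}$ solves the homogeneous equation, pairing with any $h \in \Harm^k(M)$ gives $(v, h) = 0$, and Lemma~\ref{lemma:classic-elliptic-reg} applied iteratively to $-\Delta v = -v$ promotes $v$ to $C^\infty$; integration by parts against $v$ itself then forces $\|v\|_{L^2}^2 + \|dv\|_{L^2}^2 + \|d^*v\|_{L^2}^2 = 0$, so $v \equiv 0$.

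I would next treat the $L^p$ case: for $g \in L^p$, the equation $(-\Delta + I)v = g$ admits a unique $v \in W^{2,p}$ with $\|v\|_{W^{2,p}} \leq C\|g\|_{L^p}$. Indeed, Lemma~\ref{lemma:classic-elliptic-reg} exhibits $-\Delta$ as a Fredholm operator of index zero between $W^{2,p}$ and $L^p$, and the Rellich-Kondrachov compactness of $W^{2,p} \hookrightarrow L^p$ makes $-\Delta + I$ a compact perturbation, still Fredholm of index zero. Trivial kernel (just established) then forces it to be an isomorphism.

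I then extend to $W^{-2,p}$-data by duality: for $g \in W^{-2,p}$, the equation $(-\Delta + I)u = g$ has a unique $u \in L^p$ with $\|u\|_{L^p} \leq C\|g\|_{W^{-2,p}}$. For $\sigma \in L^{p'}$, apply the $L^{p'}$-case of the previous step to produce $\rho \in W^{2,p'}$ with $(-\Delta + I)\rho = \sigma$ and $\|\rho\|_{W^{2,p'}} \leq C\|\sigma\|_{L^{p'}}$; for smooth $g$ with $W^{2,p}$-solution $u$,
\[
 (u, \sigma) = ((-\Delta + I)u, \rho) = \langle g, \rho\rangle_{W^{-2,p}, W^{2,p'}} \leq C\|g\|_{W^{-2,p}}\|\sigma\|_{L^{p'}},
\]
which yields $\|u\|_{L^p} \leq C\|g\|_{W^{-2,p}}$, and density extends this to arbitrary $g \in W^{-2,p}$. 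Finally, for $f \in W^{-1,p}$ with smooth $f_n \to f$ in $W^{-1,p}$, let $v_n \in W^{2,p}$ be the $L^p$-case solutions. Since $\Delta$ commutes with $d$ and $d^*$, one has $(-\Delta + I)(dv_n) = df_n$ and $(-\Delta + I)(d^*v_n) = d^*f_n$, both in $W^{-2,p}$ with norms $\lesssim \|f_n\|_{W^{-1,p}}$; the dual-regularity step bounds $\|dv_n\|_{L^p} + \|d^*v_n\|_{L^p}$ by $C\|f_n\|_{W^{-1,p}}$, and a parallel duality handles $\|v_n\|_{L^p}$. Gaffney's inequality (Proposition~\ref{prop:gaffney-closed}) assembles these into $\|v_n\|_{W^{1,p}} \leq C\|f_n\|_{W^{-1,p}}$, and Cauchyness delivers the limit $v$.

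The main obstacle is arranging the duality in the third step, i.e.~transporting the $L^{p'}$-regularity of the plain Poisson equation (Lemma~\ref{lemma:classic-elliptic-reg}) at the dual exponent into the desired $L^p$-bound on $u$ in terms of $\|g\|_{W^{-2,p}}$. Once this is secured, the remainder combines the commutation of $-\Delta$ with $d$ and $d^*$ (a formal computation on forms) with Gaffney's inequality to close the argument.
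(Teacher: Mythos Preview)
Your proposal is correct and follows essentially the same three-step architecture as the paper's proof: (1) an $L^q \to W^{2,q}$ isomorphism for $-\Delta + I$, (2) duality to obtain the $L^p$-bound from $W^{-2,p}$-data, and (3) differentiation of the equation combined with Gaffney's inequality to upgrade to $W^{1,p}$. The only noteworthy difference is in the execution of the first step: you obtain the $W^{2,p}$-isomorphism via a Fredholm/compact-perturbation argument (index zero from Lemma~\ref{lemma:classic-elliptic-reg}, compactness of $W^{2,p}\hookrightarrow L^p$, trivial kernel by your bootstrap), whereas the paper uses Lax--Milgram in $L^2$ plus elliptic regularity, and then removes the lower-order term $\|v\|_{L^q}$ from the a~priori estimate via interpolation between $L^2$ and $L^{s}$ (with $s>q$ from Sobolev embedding) and an absorption argument. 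Both routes are standard; yours is slightly more abstract but arguably cleaner.
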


The proof of Lemma~\ref{lemma:elliptic_reg} depends on 
Gaffney's inequality, see Proposition~\ref{prop:gaffney-closed}.

\begin{proof}[Proof of Lemma~\ref{lemma:elliptic_reg}]
 We split the proof into several steps.
 \setcounter{step}{0}
 \begin{step} 
  Let~$q > 2$. We claim the following: for 
  any~$f\in L^q(M, \, \Lambda^k\T^*M)$,
  the equation~\eqref{elliptic_eq} has a unique solution~$v\in W^{2,q}(M, \, \Lambda^k\T^*M)$, which satisfies
  \begin{equation} \label{elliptic1}
   \norm{v}_{W^{2,q}(M)} \leq C_q \norm{f}_{L^q(M)}
  \end{equation}
  for some constant~$C_q$ that depends only on~$M$, $k$ and~$q$.
  For any~$f\in L^2(M, \, \Lambda^k\T^*M)$, existence and uniqueness
  of a solution~$v\in W^{1,2}(M, \, \Lambda^k\T^* M)$ follow from
  Lax-Milgram lemma combined with Gaffney's inequality,
  Proposition~\ref{prop:gaffney-closed}. By elliptic regularity
  (see, for instance, \cite[Theorem~10.3.11]{Nicolaescu}),
  if~$f\in L^q(M, \, \Lambda^k\T^* M)$
  then~$u\in W^{2,q}(M, \, \Lambda^k\T^* M)$ and 
  \begin{equation} \label{elliptic2}
   \norm{v}_{W^{2,q}(M)} \leq C_q
    \left(\norm{f}_{L^q(M)} + \norm{v}_{L^q(M)}\right) 
  \end{equation}
  for some constant~$C_q$ that depends only on~$M$, $k$ and~$q$.
  Now, there exists a number~$s > q$ such that
  $W^{2,q}(M, \, \Lambda^k\T^*M)$ embeds continuously 
  in~$L^{s}(M, \, \Lambda^k\T^*M)$ (see, e.g., \cite[Theorem~1.3.6]{SchwarzG}).
  By interpolation, there exists a 
  number~$\alpha = \alpha(q, \, s)\in (0, \, 1)$ such that
  \begin{equation*}
   \norm{v}_{L^q(M)}
   \leq \norm{v}_{L^{s}(M)}^\alpha \norm{v}^{1-\alpha}_{L^2(M)}
   \leq C_q \norm{v}_{W^{2,q}(M)}^\alpha \norm{v}^{1-\alpha}_{L^2(M)}
  \end{equation*}
  By applying Young's inequality, for any~$\delta>0$
  we find a constant~$C_\delta$ (depending on~$\alpha$, $M$, $k$ and~$q$ as well)
  such that
  \begin{equation} \label{elliptic3} 
   \norm{v}_{L^q(M)}
    \leq \delta \norm{v}_{W^{2,q}(M)} + C_\delta \norm{v}_{L^2(M)}
  \end{equation}
  By comparing~\eqref{elliptic2} and~\eqref{elliptic3}, 
  and choosing~$\delta$ small enough, we deduce
  \begin{equation} \label{elliptic4}
   \norm{v}_{W^{2,q}(M)} \leq C_q
    \left(\norm{f}_{L^q(M)} + \norm{v}_{L^2(M)}\right) 
  \end{equation}
  However, by testing the equation~\eqref{elliptic_eq} against~$v$,
  we obtain
  \[
   \norm{v}_{L^2(M)} \leq \norm{f}_{L^2(M)} \leq C_q \norm{f}_{L^q(M)}
  \]
  and~\eqref{elliptic1} follows.
 \end{step}
 
 \begin{step}
  Now, take~$p\in (1, \, 2)$. We claim that, for
  any~$f\in W^{-2,p}(M, \, \Lambda^k\T^* M)$, the
  equation~\eqref{elliptic_eq} has exactly one 
  solution~$v\in L^p(M, \, \Lambda^k\T^*M)$, which satisfies
  \begin{equation} \label{elliptic-2}
   \norm{v}_{L^p(M)} \leq C_p \norm{f}_{W^{-2,p}(M)}
  \end{equation}
  for some constant~$C_p$ that depends only on~$M$, $k$ and~$p$.
  To prove the claim, we will first show existence and uniqueness of a duality solution $v \in L^p(M, \Lambda^k \T^*M)$ and then we will prove that every solution $v \in L^p(M, \Lambda^k \T^*M)$ in the sense of $\mathcal{D}'(M)$ is a duality solution.
   
  We notice in first place that $p \in (1,2)$ implies $q := p' > 2$.  By Step~1, given $h \in L^q(M, \Lambda^k \T^*M)$, there exists a uniquely determined $w_h \in W^{2,q}(M, \Lambda^k \T^*M)$ such that $w_h = (-\Delta + {\rm Id})^{-1}h$ (that it is to say, solving $-\Delta w_h + w_h = h$), which moreover satisfies $\norm{w_h}_{W^{2,q}} \leq C_p \norm{h}_{L^q}$, where $C_p > 0$ is constant depending only on $p$, $k$ and $M$. 
  
  The map $V\colon L^q(M, \Lambda^k \T^*M) \to \R$ defined by
  \begin{equation}\label{eq:V(h)}
  	 \forall h \in L^q(M, \Lambda^k \T^* M), \quad V(h) := \ip{f}{\star((-\Delta + {\rm Id})^{-1} h)}_{W^{-2,p},W^{2,q}} 
  \end{equation}
  is a bounded linear functional over $L^q(M, \Lambda^k \T^*M)$. Indeed, by Step~1,
  \[
  \begin{split}
  	\forall h \in L^q(M, \Lambda^k \T^* M), \quad \abs{V(h)} &\leq \norm{f}_{W^{-2,p}} \norm{\star((-\Delta + {\rm Id})^{-1} h)}_{W^{2,q}} \\
  	&= \norm{f}_{W^{-2,p}} \norm{w_h}_{W^{2,q}} \lesssim \norm{f}_{W^{-2,p}} \norm{h}_{L^q} ,
  \end{split}
  \] 
  hence 
  \begin{equation}\label{eq:est-V-aux}
  	\norm{V}_{(L^q)'} \leq C_p \|f\|_{W^{-2,p}},
  \end{equation}
  where the constant $C_p$ depends only on $p$, $k$ and $M$. Consequently, Riesz' theorem implies the existence of a unique $v \in L^p(M,\,\Lambda^k \T^*M)$ such that
  \[
  	\forall h \in L^q(M, \Lambda^k \T^* M), \quad (v,h) = \ip{f}{\star((-\Delta + {\rm Id})^{-1} h)}_{W^{-2,p},W^{2,q}},
  \]
  i.e., replacing $h$ by the corresponding $w = w_h \in W^{2,q}(M, \Lambda^k \T^* M)$, 
  \[
  	\forall w \in W^{2,q}(M, \Lambda^k \T^*M), \quad (v, -\Delta w + w) = \ip{f}{\star w}_{W^{-2,p},W^{2,q}}.
  \]
  Hence, $v$ is the unique duality solution of~\eqref{elliptic_eq} for the given $f \in W^{-2,p}(M,\Lambda^k \T^*M)$. Since Riesz' theorem also implies $\norm{v}_{L^p} = \norm{V}_{(L^q)'}$, by~\eqref{eq:est-V-aux} $v$ satisfies the estimate~\eqref{elliptic2}.
 \end{step}
 
 	Conversely, every distributional solution to~\eqref{elliptic_eq} is a duality solution. Indeed, suppose $v \in L^p(M, \Lambda^k \T^*M)$ is a distributional solution to~\eqref{elliptic_eq} for $f \in W^{-2,p}(M, \Lambda^k \T^*M)$. Then, for every $w \in C^\infty(M, \Lambda^k \T^*M)$,
 	\begin{equation}\label{eq:distrib-to-duality}
 	\begin{split}
 		(v, -\Delta w +w) &= \ip{v}{\star(-\Delta w + w)}_{\mathcal{D}', \mathcal{D}} = \ip{-\Delta v + v}{\star w}_{\mathcal{D}', \mathcal{D}} \\
 		 &= \ip{f}{\star w}_{W^{-2,p}, W^{2,q}}.
 	\end{split}
 	\end{equation}
 	Letting again $h = -\Delta w + w$, then $h \in L^q(M,\,\Lambda^k \T^*M)$ and, from~\eqref{eq:V(h)} and~\eqref{eq:distrib-to-duality}, we have $(v,h) = V(h)$. By the density of $C^\infty(M,\,\Lambda^k \T^*M)$ into $W^{2,q}(M,\, \Lambda^k \T^*M)$, the equality $(v,h) = V(h)$ actually holds for every $h \in L^q(M, \Lambda^k \T^*M)$. Thus, $v$ is duality solution. Therefore, for every $f \in W^{-2,p}(M,\,\Lambda^k \T^*M)$, Eq.~\eqref{elliptic_eq} has a unique distributional solution in $L^p(M,\,\Lambda^k \T^*M)$, which is the duality solution.
 
 \begin{step}
  Let~$f\in W^{-1,p}(M, \, \Lambda^k\T^* M)$ and let~$v$ 
  be the unique solution of~\eqref{elliptic_eq}. By Step~2, we have
  \begin{equation} \label{elliptic3-1}
   \norm{v}_{L^p(M)} \leq C_p \norm{f}_{W^{-2,p}(M)} 
    \leq C_p \norm{f}_{W^{-1,p}(M)}
  \end{equation}
  By differentiating the equation~\eqref{elliptic_eq}, we obtain
  $-\Delta(\d v) + \d v = \d f$. Therefore, Step~2 implies
  \begin{equation} \label{elliptic3-2}
   \norm{\d v}_{L^p(M)} \leq C_p \norm{\d f}_{W^{-2,p}(M)} 
    \leq C_p \norm{f}_{W^{-1,p}(M)}
  \end{equation}
  Similarly, $-\Delta(\d^* v) + \d^* v = \d^* f$ and hence
  \begin{equation} \label{elliptic3-3}
   \norm{\d^* v}_{L^p(M)} \leq C_p \norm{\d^* f}_{W^{-2,p}(M)} 
    \leq C_p \norm{f}_{W^{-1,p}(M)}
  \end{equation}
  Combining~\eqref{elliptic3-1}, \eqref{elliptic3-2}, \eqref{elliptic3-3}
  with Gaffney's inequality (Proposition~\ref{prop:gaffney-closed}),
  the lemma follows. \qedhere
 \end{step}
\end{proof}

Here is a variant of Lemma~\ref{lemma:elliptic_reg},
which will also be useful. 

\begin{lemma} \label{lemma:elliptic_reg_bis}
 Let~$p\in (1, \, 2)$. Let~$f\in W^{-1,p}(M, \, \Lambda^k\T^*M)$
 be a $k$-form such that
 \[
  \ip{f}{\star \xi}_{W^{-1,p},W^{1,p'}} = 0 \qquad
  \textrm{for any } \xi\in\Harm^k(M)
 \]
 Then, there exists a unique~$v\in W^{1,p}(M, \, \Lambda^k\T^*M)$ such that
 \begin{equation} \label{elliptic_eq_bis}
  \begin{cases}
   -\Delta v  = f &\textrm{in the sense of } \mathcal{D}^\prime(M) \\
   \displaystyle\int_M \ip{v}{\xi} \vol_g = 0 
    &\textrm{for any } \xi\in\Harm^k(M)
  \end{cases}
 \end{equation}
 Moreover, $v$ satisfies
 \begin{equation}\label{eq:elliptic-est-poisson}
  \norm{v}_{W^{1,p}(M)} \leq C_p \norm{f}_{W^{-1,p}(M)}
 \end{equation}
 for some constant~$C_p$ depending only on~$M$, $k$, $p$.
\end{lemma}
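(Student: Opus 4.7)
The plan is to mimic closely the structure of the proof of Lemma~\ref{lemma:elliptic_reg}, replacing the invertible operator $(-\Delta+\mathrm{Id})$ with Green's operator $G$ from Remark~\ref{rk:Delta-G}. The orthogonality hypothesis in the statement encodes exactly the compatibility needed for the Poisson equation to have a solution in the complement of harmonic forms, and the orthogonality condition on $v$ gives uniqueness. Throughout, I will use the $L^p$-Hodge decomposition (Proposition~\ref{prop:LpHodgeDec}) and the fact that $G(\xi) = 0$ for $\xi \in \Harm^k(M)$ (which follows from $\Delta G(\xi) = \xi - H(\xi) = 0$ and $G(\xi)\in\Harm^k(M)^\perp$).

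The first step treats the easy case. For $q > 2$ and $f \in L^q(M, \Lambda^k\T^*M)$ with $f\in\Harm^k(M)^\perp$, Lemma~\ref{lemma:classic-elliptic-reg} (applied with $j=0$) gives a unique $v \in W^{2,q}(M, \Lambda^k\T^*M)\cap\Harm^k(M)^\perp$ solving~\eqref{elliptic_eq_bis}, with $\|v\|_{W^{2,q}} \leq C_q\|f\|_{L^q}$; explicitly, $v = G(f)$. The second step promotes this to $f \in W^{-2,p}(M,\Lambda^k\T^*M)$ (with $p\in(1,2)$ and $q=p^\prime$) satisfying the natural orthogonality $\langle f, \star\xi\rangle = 0$ for $\xi \in \Harm^k(M)$. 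Following the duality scheme of Step~2 in the proof of Lemma~\ref{lemma:elliptic_reg}, for $h \in L^q(M,\Lambda^k\T^*M)$ I set $w_h := G(h) \in W^{2,q}(M,\Lambda^k\T^*M)\cap\Harm^k(M)^\perp$ and define
\[
 V(h) := \langle f, \star w_h\rangle_{W^{-2,p}, W^{2,q}}
\]
By Step~1, $\|w_h\|_{W^{2,q}} \lesssim \|h\|_{L^q}$, so $V$ is a bounded linear functional on $L^q$, and Riesz representation yields a unique $v \in L^p$ with $(v,h) = V(h)$ for all $h\in L^q$ and $\|v\|_{L^p} \leq C_p \|f\|_{W^{-2,p}}$. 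Taking $h = \xi \in \Harm^k(M)$ gives $V(\xi)=0$, hence $v \in \Harm^k(M)^\perp$. To verify $-\Delta v = f$, I test against $w \in C^\infty(M,\Lambda^k\T^*M)$ by choosing $h := -\Delta w \in L^q$; then $G(h) = w - H(w)$, so using the orthogonality hypothesis on~$f$ I get
\[
 (v, -\Delta w) = V(-\Delta w) = \langle f, \star(w - H(w))\rangle = \langle f, \star w\rangle
\]
which is $-\Delta v = f$ in $\mathcal{D}^\prime(M)$. Conversely, every distributional solution in $L^p\cap\Harm^k(M)^\perp$ is this duality solution, by the same argument read backwards.

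The third step upgrades from $f \in W^{-2,p}$ to the hypothesis $f \in W^{-1,p}$. The inclusion $W^{-1,p}\hookrightarrow W^{-2,p}$ and Step~2 yield the bound $\|v\|_{L^p} \leq C_p\|f\|_{W^{-1,p}}$. Differentiating the equation, $-\Delta(\d v) = \d f$ holds in $\mathcal{D}^\prime(M)$ with $\d f \in W^{-2,p}(M,\Lambda^{k+1}\T^*M)$, and $\d f$ is automatically orthogonal to $\star\Harm^{k+1}(M)$ since $\d^*\xi = 0$ for harmonic $\xi$; applying Step~2 at level $k+1$ gives $\|\d v\|_{L^p} \leq C_p\|\d f\|_{W^{-2,p}} \leq C_p\|f\|_{W^{-1,p}}$. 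The same reasoning for $-\Delta(\d^* v) = \d^* f$ gives $\|\d^* v\|_{L^p} \leq C_p\|f\|_{W^{-1,p}}$. Combining these three estimates with Gaffney's inequality~\eqref{eq:gaffney-closed} yields~\eqref{eq:elliptic-est-poisson}. Uniqueness is immediate: any two solutions differ by a $W^{1,p}$ form $v$ which is a weak solution of $-\Delta v = 0$ and lies in $\Harm^k(M)^\perp$; by elliptic regularity (or Weyl's lemma) $v$ is smooth and harmonic, hence in $\Harm^k(M)\cap\Harm^k(M)^\perp = \{0\}$.

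The main technical obstacle is the duality construction in the second step: one has to carefully verify both that $v$ is orthogonal to $\Harm^k(M)$ (using $G(\xi)=0$ for $\xi$ harmonic) and that the Poisson equation is actually recovered when testing against arbitrary smooth forms, not just forms of the type $-\Delta w$. Here the hypothesis $\langle f, \star\xi\rangle = 0$ for $\xi \in \Harm^k(M)$ is exactly what allows us to drop the harmonic part $H(w)$ in the computation above and to identify $V(-\Delta w)$ with $\langle f, \star w\rangle$.
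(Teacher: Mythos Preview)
Your proposal is correct and follows essentially the same route as the paper's proof: construct a duality solution via Green's operator and Riesz representation, verify the Poisson equation using the orthogonality hypothesis to discard harmonic parts, then differentiate and invoke Gaffney's inequality for the $W^{1,p}$ estimate. One minor bookkeeping slip: with the paper's convention $\Delta G(\omega)=\omega-H(\omega)$ (Remark~\ref{rk:Delta-G}) you get $G(-\Delta w)=-(w-H(w))$, so the functional should carry a minus sign, $V(h):=-\langle f,\star G(h)\rangle$, exactly as the paper writes it, for $-\Delta v=f$ to come out with the right sign.
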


\begin{proof}
	We proceed in three steps.
	
	\setcounter{step}{0}
	\begin{step}[Existence and uniqueness of a duality solution]	
	 Set $q := p'$.	We claim that for any $f \in W^{-2,p}(M, \Lambda^k \T^* M)$ such that $\ip{f}{\star \xi}_{W^{-2,p},W^{2,q}} = 0$ for every $\xi \in \Harm^k(M)$ there exists a unique distributional solution $v \in L^p(M,\,\Lambda^k \T^*M)$ to \eqref{elliptic_eq_bis}, which satisfies the estimate
	\begin{equation}\label{eq:est-v-aux}
		\|v\|_{L^p} \lesssim \|f\|_{W^{-2,p}}.
	\end{equation}
	Define $V\colon L^q(M, \Lambda^k \T^*M) \to \R$ setting
	\[
		\forall \alpha \in L^q(M, \Lambda^k \T^*M), \quad V(\alpha) := \ip{f}{-(\star G(\alpha))}_{W^{-2,p},W^{2,q}},
	\]
	where $G : L^q(M, \Lambda^k \T^*M) \to W^{2,q}(M, \Lambda^k \T^*M) \cap (\Harm^k(M))^\perp$ is Green's operator, which is linear and bounded (see Section~\ref{subsec:sobolev}). Then $V$ is linear and bounded, because 
	\[
		\forall \alpha \in L^q(M, \Lambda^k \T^*M), \quad \abs{V(\alpha)} \leq \norm{f}_{W^{-2,p}} \norm{G(\alpha)}_{W^{2,q}} \lesssim \norm{f}_{W^{-2,p}} \norm{\alpha}_{L^q},
	\]
	whence
	\[
		\norm{V}_{(L^q)'} \lesssim \norm{f}_{W^{-2,p}}.
	\]
	It then follows from Riesz' Representation theorem that there exists a uniquely determined $v \in L^p(M, \Lambda^k \T^* M)$ representing $V$, i.e., such that $V(\alpha) = (v,\alpha)$ for all $\alpha \in L^q(M, \Lambda^k \T^*M)$. Thus,
	\begin{equation}\label{eq:v-elliptic_eq_bis}
		\forall \alpha \in L^q(M, \Lambda^k \T^*M), \quad (v,\alpha) = \ip{f}{-(\star G(\alpha))}_{W^{-2,p},W^{2,q}},
	\end{equation}
	and moreover $\norm{v}_{L^p} = \norm{V}_{(L^q)'}$, whence $\norm{v}_{L^p} \lesssim \|f\|_{W^{-2,p}}$, i.e., \eqref{eq:est-v-aux}. In addition, \eqref{eq:v-elliptic_eq_bis} implies $(v,\xi) = 0$ for every $\xi \in \Harm^k(M)$. 
	
	We now show that $v$ solves \eqref{elliptic_eq_bis} in the sense of $\mathcal{D}'(M)$. To this purpose, we first recall from Remark~\ref{rk:Delta-G} that $\Delta G(\xi) = G(\Delta \xi)$ for every form $\xi \in W^{2,q}(M,\Lambda^k \T^* M)$. Then, we notice that, upon writing $\xi = \Delta G(\xi) + H(\xi)$, where $H(\xi)$ is the harmonic part of $\xi$, by assumption we have $\ip{f}{\star H(\xi)}_{W^{-2,p},W^{2,q}} = 0$. Hence, by~\eqref{eq:v-elliptic_eq_bis} we deduce that
	\[
		\ip{f}{\star \xi}_{W^{-2,p},W^{2,q}} = \ip{f}{\star(\Delta G(\xi))}_{W^{-2,p},W^{2,q}} = \ip{f}{\star(G(\Delta \xi))}_{W^{-2,p},W^{2,q}} \overset{\eqref{eq:v-elliptic_eq_bis}}{=} (v, -\Delta \xi), 
	\]
	for all $\xi \in W^{2,q}(M, \Lambda^k \T^*M)$. Thus, $v$ solves \eqref{elliptic_eq_bis} in the sense of distributions and satisfies estimate~\eqref{eq:est-v-aux}.
	\end{step}
	
	\begin{step}[Every distributional solution is a duality solution]
		We argue exactly as in~\eqref{eq:distrib-to-duality}, with the operator $-\Delta + {\rm Id}$ replaced by $\Delta G$.
	\end{step}

	\begin{step}[Estimate]
	To conclude, let $f \in W^{-1,p}(M, \Lambda^k \T^*M)$ and let $v \in L^p(M, \Lambda^k \T^*M)$ be the corresponding unique solution to \eqref{elliptic_eq_bis}. By~\eqref{eq:est-v-aux}, we have 
	\begin{equation}\label{eq:est-v_elliptic_eq_bis}
		\norm{v}_{L^p} \lesssim \norm{f}_{W^{-2,p}} \lesssim \norm{f}_{W^{-1,p}}.
	\end{equation}
	By differentiating Eq.~\eqref{elliptic_eq_bis} exactly as we did for \eqref{elliptic_eq} in Step~3 of the proof of Lemma~\ref{lemma:elliptic_reg}, we obtain
	\begin{equation}\label{eq:est-dv-d*v_elliptic_eq_bis}
		\norm{\d v}_{L^p(M)} \lesssim \norm{f}_{W^{-1,p}(M)} \quad \mbox{and} \quad \norm{\d^* v}_{L^p(M)} \lesssim \norm{f}_{W^{-1,p}(M)}.	
	\end{equation}
	Then, both $v \in W^{1,p}(M,\,\Lambda^k \T^*M)$ and estimate~\eqref{eq:elliptic-est-poisson} follow by \eqref{eq:est-v_elliptic_eq_bis}, \eqref{eq:est-dv-d*v_elliptic_eq_bis}, and Gaffney's inequality (Proposition~\ref{prop:gaffney-closed}).
	\qedhere
	\end{step}
\end{proof}

\end{appendix}

\bibliographystyle{plain}
\bibliography{GL-bundles}

\Addresses
\end{document}